\setlist[enumerate,1]{(1)}
\setlist{nosep}
\theoremstyle{definition}
\newtheorem{definition}{Definition}[section]
\newtheorem{example}[definition]{Example}
\newtheorem{construction}[definition]{Construction}
\theoremstyle{plain}
\newtheorem{prop}[definition]{Proposition}
\newtheorem{lemma}[definition]{Lemma}
\newtheorem{theorem}[definition]{Theorem}
\newtheorem{cor}[definition]{Corollary}
\theoremstyle{remark}
\newtheorem{remark}[definition]{Remark}
\newcommand{\cHom}{\mathcal{H}\mathnormal{om}}
\newcommand{\fm}{\mathfrak{m}}
\newcommand{\Z}{\mathbb{Z}}
\newcommand{\A}{\mathbb{A}}
\newcommand{\PP}{\mathbb{P}}
\newcommand{\Hom}{\mathrm{Hom}}
\newcommand{\End}{\mathrm{End}}
\newcommand{\Tor}{\mathrm{Tor}}
\newcommand{\Ker}{\mathrm{Ker}}
\newcommand{\Coker}{\mathrm{Coker}}
\newcommand{\Spec}{\mathrm{Spec}}
\newcommand{\Shv}{\mathrm{Shv}}
\newcommand{\rSS}{\mathrm{SS}}
\newcommand{\op}{\mathrm{op}}
\newcommand{\red}{\mathrm{red}}
\newcommand{\cons}{\mathrm{cons}}
\newcommand{\BC}{\mathrm{BC}}
\newcommand{\Tr}{\mathrm{Tr}}
\newcommand{\can}{\mathrm{can}}
\newcommand{\var}{\mathrm{var}}
\newcommand{\Cone}{\mathrm{Cone}}
\newcommand{\cont}{\mathrm{cont}}
\newcommand{\Gal}{\mathrm{Gal}}
\newcommand{\id}{\mathrm{id}}
\newcommand{\ft}{\mathrm{ft}}
\newcommand{\cft}{{c\mathrm{ft}}}
\newcommand{\tr}{\mathrm{tr}}
\newcommand{\pt}{\mathrm{pt}}
\newcommand{\et}{\mathrm{et}}
\newcommand{\Et}{\mathrm{Et}}
\newcommand{\ev}{\mathrm{ev}}
\newcommand{\Co}{\mathrm{Co}}
\newcommand{\cC}{\mathcal{C}}
\newcommand{\cD}{\mathcal{D}}
\newcommand{\cF}{\mathcal{F}}
\newcommand{\cG}{\mathcal{G}}
\newcommand{\cO}{\mathcal{O}}
\newcommand{\cP}{\mathcal{P}}
\newcommand{\atimes}{\stackbin{\leftarrow}{\times}}
\newcommand{\ttimes}{\mathbin{\bar{\times}}}
\newcommand{\simto}{\xrightarrow{\sim}}
\DeclareMathOperator*{\colim}{colim}
\numberwithin{equation}{section}
\begin{document}
\title{Duality and nearby cycles over general bases}
\author{Qing Lu\thanks{School of Mathematical Sciences, Beijing Normal University, Beijing
100875, China; School of Mathematical Sciences, University of the Chinese
Academy of Sciences, Beijing 100049, China; email: \texttt{qlu@bnu.edu.cn}.
Partially supported by National Natural Science Foundation of China Grants
11371043, 11501541; China Scholarship Council; Fundamental Research Funds
for Central Universities of China.}\and Weizhe Zheng\thanks{Morningside
Center of Mathematics and Hua Loo-Keng Key Laboratory of Mathematics,
Academy of Mathematics and Systems Science, Chinese Academy of Sciences,
Beijing 100190, China; University of the Chinese Academy of Sciences,
Beijing 100049, China; email: \texttt{wzheng@math.ac.cn}. Partially
supported by National Natural Science Foundation of China Grants 11621061,
11688101, 11822110; National Center for Mathematics and Interdisciplinary
Sciences, Chinese Academy of Sciences.}
\thanks{Mathematics Subject Classification 2010: 14F20 (Primary); 18F10, 32S30
(Secondary).}}
\maketitle

\begin{abstract}
This paper studies the sliced nearby cycle functor and its commutation
with duality. Over a Henselian discrete valuation ring, we show that this
commutation holds, confirming a prediction of Deligne. As an application
we give a new proof of Beilinson's theorem that the vanishing cycle
functor commutes with duality up to twist. Over an excellent base scheme,
we show that the sliced nearby cycle functor commutes with duality up to
modification of the base. We deduce that duality preserves universal local
acyclicity over an excellent regular base. We also present Gabber's
theorem that local acyclicity implies universal local acyclicity over a
Noetherian base.
\end{abstract}

\tableofcontents

\section*{Introduction}
\subsection{Over a Henselian discrete valuation ring}
Let $S$ be the spectrum of a Henselian discrete valuation ring, of closed
point $s$ and generic point~$\eta$. Let $a\colon X\to S$ be a morphism of
schemes. Let $\Lambda$ be a Noetherian commutative ring such that
$m\Lambda=0$ for some integer $m$ invertible on $S$. We have the classical
nearby cycle functor \cite[XIII 2.1.1]{SGA7II}
\[R\Psi^s_\eta\colon D(X_\eta,\Lambda)\to D(X_s\ttimes_s \eta,\Lambda),\]
where $X_\eta\colonequals X\times_S \eta$ and $X_s\colonequals X\times_S s$.
The vanishing cycle functor $\Phi^s$ is a composition
\[D(X,\Lambda)\xrightarrow{R\Psi^s} D(X_s\ttimes_s S,\Lambda)\xrightarrow{L\Co} D(X_s\ttimes_s \eta,\Lambda),\]
where $L\Co (M)$ computes the cone of the specialization map $M_s\to M_\eta$
\eqref{e.treta1}. We call $R\Psi^s$ the sliced nearby cycle functor. Here
$\ttimes_s$ denotes fiber products of \'etale topoi. (In the case where
$\eta$ is a scheme over $s$, $X\ttimes_s \eta$ is typically not the \'etale
topos of $X\times_s \eta$.)  Unless otherwise indicated, we work in the
unbounded derived categories.

Assume $X$ separated and of finite type over $S$. Gabber proved that the
classical nearby cycle functor $R\Psi^s_\eta$, when restricted to $D_\cft$
(the full subcategory of $D^b_c$ spanned by complexes of finite
tor-amplitude, where $D^*_c$ denotes the full subcategory of $D^*$ spanned
by complexes with constructible cohomology sheaves), commutes with duality
\cite[Th\'eor\`eme 4.2]{IllusieAutour}. Our first result confirms Deligne's
prediction \cite{Deligne} that the same holds for the sliced nearby cycle
functor $R\Psi^s$. Let $D_X=R\cHom(-,Ra^!\Lambda_S)$, $D_{X_s\ttimes_s
S}=R\cHom(-,R(a_s\ttimes_s \id_S)^!\Lambda_S)$.

\begin{theorem}\label{t.S}\leavevmode
\begin{enumerate}
\item The canonical map $R\Psi^s D_XL\to D_{X_s\ttimes_s S} R\Psi^s L$
    \eqref{e.PsiD} is an isomorphism for $L\in D^-_c(X,\Lambda)$.
\item We have a natural isomorphism $D_{X_s\ttimes_s \eta}L\Co\simeq \tau
    L\Co D_{X_s\ttimes_s S}$ \eqref{e.nt}.
\end{enumerate}
\end{theorem}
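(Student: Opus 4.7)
The plan for part (1) is to reduce to Gabber's theorem for the classical nearby cycle functor $R\Psi^s_\eta$ by testing the comparison map slice-wise. I would first construct the canonical map $R\Psi^s D_XL\to D_{X_s\ttimes_s S}R\Psi^s L$ via the standard duality formalism: the evaluation pairing $L\otimes^L D_XL\to Ra^!\Lambda_S$ pushes through $R\Psi^s$ to a pairing on $X_s\ttimes_s S$ landing in $R\Psi^s Ra^!\Lambda_S$, which a base-change identification with $R(a_s\ttimes_s\id_S)^!\Lambda_S$ converts by adjunction into the desired map. To check that this map is an isomorphism, restrict to the two slices of the product topos $X_s\ttimes_s S$.

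On the open slice $X_s\ttimes_s\eta$, the sliced nearby cycle $R\Psi^s$ becomes the classical $R\Psi^s_\eta$ applied to $L|_{X_\eta}$, and $D_{X_s\ttimes_s S}$ restricts to $D_{X_s\ttimes_s\eta}$. The assertion is then exactly Gabber's theorem \cite[Th\'eor\`eme 4.2]{IllusieAutour}, once $L$ is placed in $D_\cft$; the reduction from $D^-_c$ to $D_\cft$ is handled by standard truncation and local tor-amplitude approximation arguments, using that $R\Psi^s$ and $D_X$ interact well with the relevant filtered limits. On the closed slice $X_s=X_s\ttimes_s s$, $R\Psi^s$ becomes $i^*$ for $i\colon X_s\hookrightarrow X$, and the assertion reduces to the standard base-change identity $i^!D_X\simeq D_{X_s}i^*$ coupled with $i^!Ra^!\Lambda_S = R(a_s)^!\Lambda_S$.

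For part (2), write $\alpha\colon X_s\ttimes_s\eta\hookrightarrow X_s\ttimes_s S$ for the open slice inclusion and $\bar\imath\colon X_s\hookrightarrow X_s\ttimes_s S$ for the closed one; the functor $L\Co$ is built from the specialization morphism $\bar\imath^*M\to \alpha^*M$ and is, up to an explicit shift, the cofiber of that map. Applying $D_{X_s\ttimes_s S}$ to the recollement triangle $\alpha_!\alpha^*M\to M\to \bar\imath_*\bar\imath^*M$ and invoking the duality exchanges $D_{X_s\ttimes_s S}\alpha_!\simeq R\alpha_*D_{X_s\ttimes_s\eta}$ and $D_{X_s\ttimes_s S}\bar\imath_*\simeq \bar\imath_*D_{X_s}$, the comparison of $D_{X_s\ttimes_s\eta}L\Co$ with $L\Co D_{X_s\ttimes_s S}$ becomes an identification of $\alpha^!$ with $\alpha^*$ twisted by the relative dualizing complex along $\alpha$: this discrepancy is precisely the twist $\tau$ in the statement.

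The main obstacle throughout is not the slice-wise reduction itself but rather the careful construction of the six-functor formalism and its duality compatibilities in the product-topos setting $X_s\ttimes_s(-)$, which is non-standard; in particular the base-change $R\Psi^s Ra^!\simeq R(a_s\ttimes_s\id_S)^!$ used in building the comparison map, and the naturality of the duality exchanges with respect to the specialization morphism, both require nontrivial bookkeeping. Once that formalism is in place, the proof of both parts amounts to combining Gabber's theorem with a diagram chase.
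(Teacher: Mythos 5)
Your reduction for part (1) — test the map on the open and closed pieces of $X_s\ttimes_s S$ and invoke Gabber's theorem on the open piece — is close in spirit to the paper's argument (which instead dévisses $L$ via $j_{0!}j_0^*L\to L\to i_{0*}i_0^*L$ on $X$), and the open‑slice step, including the reduction from $D^-_c$ to $D_\cft$, is essentially right. The gap is the closed slice. Restricting the map along $i\colon X_s\to X_s\ttimes_s S$, the source does become $i_0^*D_XL$ (the shred of $R\Psi^s$ at $s$ is $i_0^*$), but the target $i^*D_{X_s\ttimes_s S}R\Psi^sL$ is \emph{not} handled by the exchange $Ri^!D_X\simeq D_{X_s}i_0^*$ you cite: for a closed immersion it is $Ri^!$, not $i^*$, that commutes with $R\cHom(-,K)$. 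One needs $i^*D_{X_s\ttimes_s S}\simeq D_{X_s}Ri^!$ (Remark \ref{r.trcomp}, itself a duality statement on the fiber product) together with the identification $Ri^!R\Psi^s\simeq Ri_0^!$ — equivalently, the compatibility $R\Psi^sRj_{0*}\simeq Rj_*R\Psi^s_\eta$, i.e.\ that the special‑fibre stalks of $Rj_*R\Psi^s_\eta$ compute cohomology of Milnor fibres. This is Lemma \ref{l.three}, proved using the oriented product $X\atimes_SS$ and the base‑change results of Section \ref{s.fp}; it is the genuine non‑formal input on the closed piece and is not a ``standard base‑change identity''. (The paper's choice to decompose $L$ rather than the slice makes the closed piece trivial and concentrates exactly this content in \eqref{e.3.4.3}.)

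For part (2) the proposal misses the key idea. The functor $L\Co$ is a \emph{left} adjoint ($L\Co\dashv j_!\dashv j^*\dashv Rj_*$), so the exchange $D_{X_s\ttimes_s\eta}L\Co\simeq\tau L\Co D_{X_s\ttimes_s S}$ cannot be read off by applying $D$ to the recollement triangle; it is obtained by transposing $\gamma\colon j_!D_{X_s\ttimes_s\eta}\to D_{X_s\ttimes_s S}(Rj_*)^{\op}$ across a further right adjoint of $Rj_*$. The whole point of Theorem \ref{t.adj} is that this right adjoint exists and equals $\tau L\Co$ — a quasi‑periodic adjunction built from Poincaré–Verdier duality for the inertia group ($Rp_*\dashv p^*(1)[1]$, refined by the variation map $\iota\colon M\to M(-1)^\tau$). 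In particular the twist $\tau$ is the Iwasawa twist, which alters the monodromy action on the tame part and is the identity on the wild part; it is \emph{not} ``$\alpha^!$ versus $\alpha^*$ twisted by a relative dualizing complex'' — $\alpha=j$ is an open immersion, so $\alpha^!=\alpha^*$ with no twist, and a purely Tate‑twist answer would be wrong on the non‑unipotent part. Without constructing the adjunction $Rj_*\dashv\tau L\Co$ your sketch has no mechanism producing $\tau$, so part (2) is not proved.
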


Here $\tau$ (Definition \ref{d.Iwasawa}) is a functor that modifies only the
tame part by what we call, following Illusie, the Iwasawa twist. We
immediately deduce that the vanishing cycle functor commutes with duality up
to the Iwasawa twist, which is a theorem of Beilinson (at least when $s$ is
separably closed) \cite[2.3]{Beilinson}.

\begin{cor}[Beilinson]\label{c.Beilinson}
We have an isomorphism $D_{X_s\ttimes_s \eta}\Phi^s L\simeq \tau \Phi^s D_X
L$, functorial in $L\in D^-_c(X,\Lambda)$.
\end{cor}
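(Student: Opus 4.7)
The plan is to obtain the corollary as an immediate composition of the two isomorphisms supplied by Theorem \ref{t.S}, using the factorization $\Phi^s = L\Co \circ R\Psi^s$. I would first apply part (2), which is a natural isomorphism of functors $D(X_s\ttimes_s S,\Lambda)\to D(X_s\ttimes_s \eta,\Lambda)$, to the object $R\Psi^s L$; this yields
\[ D_{X_s\ttimes_s \eta}\Phi^s L = D_{X_s\ttimes_s \eta}L\Co\, R\Psi^s L \simeq \tau L\Co\, D_{X_s\ttimes_s S}R\Psi^s L. \]

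Since $L \in D^-_c(X,\Lambda)$, part (1) of Theorem \ref{t.S} applies and gives an isomorphism $R\Psi^s D_X L \simeq D_{X_s\ttimes_s S}R\Psi^s L$. Inverting this isomorphism and substituting into the display above, I would rewrite the right-hand side as
\[ \tau L\Co\, R\Psi^s D_X L = \tau \Phi^s D_X L, \]
which produces the desired isomorphism. Functoriality in $L$ is automatic: both ingredients of Theorem \ref{t.S} are natural in the variable, so their composition is as well.

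There is no substantive obstacle beyond this splicing—all the real work has been absorbed into Theorem \ref{t.S}. The only bookkeeping point is to verify that the hypothesis $L \in D^-_c(X,\Lambda)$ is exactly what is needed to legitimately invert the canonical map of part (1), and that the Iwasawa twist $\tau$ appears on the outside simply because it is inherited from part (2). I would therefore present Corollary \ref{c.Beilinson} as a formal consequence of the theorem, with the two displayed isomorphisms above constituting the entire argument.
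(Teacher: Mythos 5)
Your argument is correct and is precisely what the paper does: it explicitly states (after Theorem~\ref{t.PsiS}) that the corollary is obtained by combining Theorem~\ref{t.PsiS} (= part (1)) with Theorem~\ref{t.LPhi} (= part (2)) along the factorization $\Phi^s = L\Co \circ R\Psi^s$. The splicing of the two natural isomorphisms and the inversion of part (1) under the hypothesis $L \in D^-_c$ are exactly the intended deduction.
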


Though the functors in Theorem \ref{t.S} (1) involve only the fiber product
$X_s\ttimes_s S$, the construction of the map and the proof uses the
\emph{oriented product} $X\atimes_S S$. Theorem \ref{t.S} (2) follows from
quasi-periodic adjunctions for the functor $L\Co$ \eqref{e.adjDj}.
Beilinson's proof of Corollary \ref{c.Beilinson} was rather different, using
his maximal extension functor $\Xi$.

\subsection{Over general bases}
For an arbitrary base scheme $S$ and any scheme $X$ over~$S$, Deligne
defined the \emph{vanishing topos} $X\atimes_S S$ and the nearby cycle
functor (\cite{Laumon}, \cite[XI Section 4]{ILO})
\[R\Psi\colon D(X,\Lambda)\to D(X\atimes_S S,\Lambda).\]
Recently Illusie proved a K\"unneth formula for $R\Psi$ \cite[Theorems 2.3,
A.3]{IZ}, generalizing Gabber's theorem over a Henselian discrete valuation
ring (\cite[Th\'eor\`eme 4.7]{IllusieAutour}, \cite[Lemma 5.1.1]{BB}). It
was used in Saito's work on characteristic cycles, notably in his proof of
the global index formula (\cite{Saito}, \cite{Saito2}).

Following a suggestion of Illusie, we study the commutation with duality of
nearby cycles over general bases. While there is typically no good duality
on the vanishing topos $X\atimes_S S$ as a whole (Remark \ref{r.nodual}),
there is a good duality on the \emph{slice} $X_s\atimes_S S\simeq
X_s\ttimes_s S_{(s)}$ for every point $s$ of $S$, under usual assumptions,
where $S_{(s)}$ denotes the Henselization of $S$ at $s$. We define the
sliced nearby cycle functor $R\Psi^s$ to be the composition
\[D(X,\Lambda)\xrightarrow{R\Psi} D(X\atimes_S S,\Lambda)\to D(X_s\ttimes_s S_{(s)},\Lambda)\]
of $R\Psi$ with the restriction functor. Our main result is that the sliced
nearby cycle functor commutes with duality up to modification of the base,
generalizing the excellent case of Theorem \ref{t.S} (1). More generally, we
have the following result, where $K_S$ is not assumed to be a dualizing
complex.

\begin{theorem}\label{t.Psisc}
Let $S$ be an excellent scheme and let $K_S\in D^b_c(S,\Lambda)$. Let
$a\colon X\to S$ be a separated morphism of schemes of finite type and let
$L\in D^b_c(X,\Lambda)$ such that $R\cHom(L,Ra^!K_S)\in D^b_c(X,\Lambda)$.
Then there exists a modification $S'\to S$ such that for every morphism
$T\to S'$ separated of finite type, and for every point $t\in T$, the
canonical map \eqref{e.A}
\[R\Psi^t D_{X_T} (L|_{X_T})\to D_{X_t\ttimes_t T_{(t)}}R\Psi^t
(L|_{X_T})
\]
is an isomorphism. Here $X_T\colonequals X\times_S T$, $X_t\colonequals
X\times_S t$, $D_{X_T}\colonequals R\cHom(-,K_{X_T})$, $D_{X_t\ttimes_t
T_{(t)}}\colonequals R\cHom(-,K_{X_t\ttimes_t T_{(t)}})$, $K_{X_T}$ and
$K_{X_t\ttimes_t T_{(t)}}$ are $!$-pullbacks of $K_S$.
\end{theorem}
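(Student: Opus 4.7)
The plan is to reduce the general excellent-base statement to the Henselian trait case of Theorem~\ref{t.S}~(1), using the modification theorem for nearby cycles over a general base (due to Orgogozo).

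First, I would apply the modification theorem to both $L$ and the constructible complex $D_X L \colonequals R\cHom(L, Ra^!K_S)$. This produces a modification $S' \to S$ such that, after pullback to the vanishing topos $X' \atimes_{S'} S'$ with $X' \colonequals X \times_S S'$, both $R\Psi L$ and $R\Psi D_X L$ become constructible and their formation commutes with every further base change $T \to S'$. Consequently, for any $T \to S'$ separated of finite type and any $t \in T$ with image $t' \in S'$, the sliced nearby cycle $R\Psi^t(L|_{X_T})$ is the pullback along the induced map $X_t \ttimes_t T_{(t)} \to X_{t'} \ttimes_{t'} S'_{(t')}$ of the sliced nearby cycle at $t'$, and analogously for $D_X L$.

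Second, by this base-change compatibility the canonical map in the theorem is identified with the pullback of the analogous canonical map computed over $S'$ at $t'$. It therefore suffices to verify the isomorphism point by point over $S'$ itself. For each $t' \in S'$, I would choose a strictly Henselian trait $\Spec(R) \to S'_{(t')}$ whose closed point maps to $t'$; restricting along it places us in the Henselian discrete valuation ring setting, where the commutation of $R\Psi^s$ with duality on the restricted complex is precisely Theorem~\ref{t.S}~(1). Conservativity of the family of such trait restrictions then yields the isomorphism on $X_{t'} \ttimes_{t'} S'_{(t')}$, hence on $X_t \ttimes_t T_{(t)}$ after pullback.

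The main obstacle is to produce a single modification $S'$ whose associated base-change property for $R\Psi L$ and $R\Psi D_X L$ is uniform enough to persist for \emph{all} subsequent morphisms $T \to S'$ separated of finite type and \emph{all} points $t \in T$ simultaneously; this is the content of the strongest form of the modification theorem. A secondary delicate point is controlling the duality $D_{X_T}$ itself, since $!$-pullback of $K_S$ does not commute with $R\cHom$ in general: here the hypothesis that $D_X L$ is already constructible is crucial, as it allows one to feed $D_X L$ into the modification theorem on the same footing as $L$, so that the two applications of $R\Psi$ on either side of the canonical map can be compared after reduction to the trait case.
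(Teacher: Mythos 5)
There is a genuine gap in your proposal, and it is the central technical difficulty that the paper is built around. After base change along $g\colon T\to S'$, the dual of $L|_{X_T}$ with respect to $K_{X_T}=Rg_X^!K_{X_{S'}}$ is
\[
D_{X_T}(L|_{X_T})=R\cHom(g_X^*L,Rg_X^!K_{X_{S'}})\simeq Rg_X^!\,D_{X_{S'}}(L|_{X_{S'}}),
\]
which is a $!$-pullback, \emph{not} $(D_XL)|_{X_T}=g_X^*D_XL$. Your step 1 applies Orgogozo's modification theorem to $D_XL$, which produces a modification after which $R\Psi(D_XL)$ commutes with $*$-pullback; but what the canonical map \eqref{e.A} requires (via Lemma \ref{l.Ag}) is that the sliced nearby cycle of the \emph{dual} commute with $!$-pullback, i.e.\ that $\BC^{!t}_{a,g}(D_XL)$ be an isomorphism, not $\BC^{t}_{a,g}(D_XL)$. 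Orgogozo's theorem says nothing about the $!$-pullback side, and this is exactly why the paper introduces Theorem \ref{t.good}, a genuinely new analogue of Orgogozo's theorem asserting that after a modification the pair $(a_{S'},g_X^!D_XL)$ becomes \emph{weakly $\Psi$-good} (commutation with smooth base change, equivalently with sliced $!$-base change by Lemma \ref{l.Psibc!}). The constructibility hypothesis on $D_XL$ is indeed essential, but because it is required to run Theorem \ref{t.good}, not because it licenses a second application of Orgogozo's theorem. Your observation that ``\,$!$-pullback of $K_S$ does not commute with $R\cHom$\,'' acknowledges the existence of the issue without resolving it.

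The second difficulty is your reduction to strictly Henselian traits. The paper does not reduce Theorem \ref{t.Psisc} to the trait case; on the contrary, the DVR statement (Theorem \ref{t.S}~(1), i.e.\ Theorem \ref{t.PsiS}) is \emph{deduced} from the general-base result via Example \ref{e.Psigood}~(2), so invoking Theorem \ref{t.S}~(1) as the base of the reduction would be circular within this paper's architecture. Even granting Gabber's independent proof of the DVR case, you would still need to justify that the family of $!$-restrictions along traits $C\to S'_{(t')}$ (together with the compatibility of $A^{t'}_{a_{S'}}$ with those restrictions) is conservative on the full slice $X_{t'}\ttimes_{t'}S'_{(t')}$; the conservativity statement the paper actually has (Lemma \ref{l.cd}~(2)) concerns $!$-pullback along proper surjective morphisms of finite presentation, not traits, and even with this tool the core of the argument (Theorem \ref{t.Psis}) is a triple induction on $\dim S$, $\dim X_\eta$, and cohomological degree in the style of Orgogozo's proof, passing through alterations, plurinodal fibrations, and the base-change lemmas \ref{l.Ag} and \ref{l.Af} — none of which is replaced by a trait-restriction shortcut.
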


Here a modification means a proper birational morphism. Excellent schemes
are assumed to be Noetherian.

Further restrictions of the nearby cycle functor to shreds and local
sections were previously studied by Orgogozo \cite[Section 6]{Org} and
Illusie \cite[Section 1]{IZ}, but these restrictions carry too little
information on the base and fibers, respectively, for an analogue of Theorem
\ref{t.Psisc} to hold (Remark \ref{r.fail}).

One ingredient of the proof of Theorem \ref{t.Psisc} is Orgogozo's theorem
that the nearby cycle functor commutes with base change after modification
of the base \cite[Th\'eor\`eme 2.1]{Org}. Since duality swaps pullback and
$!$-pullback, we are also lead to study the commutation of the sliced nearby
cycle functor with $!$-pullback. The proof of Theorem \ref{t.Psisc} relies
on both Orgogozo's theorem and an analogue (Theorem \ref{t.good}) thereof
for $!$-pullback.

As an application of Theorem \ref{t.Psisc}, we show that universal local
acyclicity over a regular excellent base is preserved by duality (Corollary
\ref{c.DLA}), answering a question of Illusie. Gabber gave a different proof
of the case of finite tor-amplitude of this corollary. He also showed that
over a Noetherian base, local acyclicity implies universal local acyclicity
(Corollary \ref{c.la}), answering a question of M.~Artin. We deduce that
weak singular support over a regular excellent base is compatible with
duality (Corollary \ref{c.ss}).

The results of this paper have applications to the perversity of nearby
cycles, which we hope to explore in a future article.

\paragraph*{Organization} In Section \ref{s.fp}, we define and study duality and other
operations on fiber products of topoi, on which the sliced nearby cycles
live. In Section~\ref{s.2}, after preliminaries on the Iwasawa twist, we
study adjunctions and duality for the functor $L\Co$ and prove Theorem
\ref{t.S} (2). In Section~\ref{s.2+}, we prove Theorem \ref{t.S} (1) on
duality and the sliced nearby cycle functor over Henselian discrete
valuation rings. In Section~\ref{s.3}, we study the sliced nearby cycle
functor over general bases and prove Theorem \ref{t.good}, the analogue of
Orgogozo's theorem for $!$-pullback, which is a key step toward the proof of
Theorem \ref{t.Psisc}. In Section~\ref{s.3+}, we prove Theorem \ref{t.Psisc}
and the applications to local acyclicity and singular support. In
Section~\ref{s.Gabber}, we present Gabber's results on local acyclicity.

Section \ref{s.3+} depends on Section \ref{s.3}. Sections \ref{s.2} through
\ref{s.3+} depend on Section \ref{s.fp}. For those interested only in the
case over a Henselian discrete valuation ring (Sections \ref{s.2} and
\ref{s.2+}), we recommend consulting Section \ref{s.fp} only when necessary.

\paragraph*{Acknowledgments} This paper grew out of suggestions from Luc Illusie, and
would not have existed without his insights. We thank him for hundreds of
comments on drafts of this paper. We thank Ofer Gabber for kindly allowing
us to present his results on local acyclicity in Section \ref{s.Gabber} and
for many crucial suggestions, including the removal of a finite
dimensionality assumption on the base (Remark \ref{r.Gabber}). We thank
Takeshi Saito and the referee for valuable comments. We thank Lei Fu and
Haoyu Hu for useful discussions. Part of this work was done during visits to
l'Institut des Hautes \'Etudes Scientifiques, the University of Hong Kong,
and Princeton University. We thank these institutions for hospitality and
support.

\section{Fiber products of topoi}\label{s.fp}
The sliced nearby cycles live on fiber products of topoi over the \'etale
topos of a point. The goal of this section is to define and study various
operations on fiber products of topoi, including the dualizing functor in
our main theorems.

\subsection{Over general bases}

In this subsection, we prove a proper base change theorem for fiber products
of topoi over general bases (Proposition \ref{p.Org}) and use it to
construct various operations on such fiber products. The construction
provides operations on slices of the vanishing topos used in our main
theorems. It also applies to the vanishing topos itself (Construction
\ref{c.o!}).

Let $\Lambda$ be a commutative ring. For any topos $S$, we write
$\Shv(S,\Lambda)$ for the category of sheaves of $\Lambda$-modules on $S$
and we write $D(S,\Lambda)$ for its derived category.

Let $X\to S$ and $Y\to S$ be morphisms of topoi. We refer to \cite[XI]{ILO}
for the constructions of the oriented product $X\atimes_S Y$ and the fiber
product $X\ttimes_S Y$. We adopt the notation $\ttimes$ for products and
fiber products of topoi to avoid confusion with fiber products and products
of schemes.

The following base change results for oriented products of topoi will be
used in Proposition \ref{p.prod} and Lemma \ref{l.Psi!}.

\begin{prop}\label{p.obc}
Let $X\xrightarrow{a} S\leftarrow Y\xleftarrow{g} Y'$ be coherent morphisms
\cite[VI D\'efinition 3.1]{SGA4} of coherent topoi \cite[VI D\'efinition
2.3]{SGA4}. Consider $\id_X\atimes_S g\colon X\atimes_S Y'\to X\atimes_S Y$.
For $L\in D^+(X\atimes_S Y',\Lambda)$, the stalk of $R(\id_X\atimes_S g)_*
L$ at any point $(x,y,\phi)$ of $X\atimes_S Y$ is isomorphic to
$R\Gamma(Y'_{(y)},c^*L)$, where $Y'_{(y)}=Y'\ttimes_Y Y_{(y)}$, $Y_{(y)}$ is
the localization of $Y$ at $y$, $c$ is the composite $Y'_{(y)}\to
Y'\xrightarrow{\sigma} X\atimes_S Y'$, and $\sigma$ is the canonical section
induced by $x$ \cite[XI 2.2]{ILO}.
\end{prop}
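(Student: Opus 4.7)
The plan is to apply proper base change for pushforward along coherent morphisms of coherent topoi, reducing the stalk computation to $R\Gamma$ of a geometric fiber, and then to identify this fiber cohomologically with $Y'_{(y)}$ using the section $\sigma$ induced by $x$.

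First, by the standard localization formula for pushforward along coherent morphisms of coherent topoi (cf.\ \cite[VI]{SGA4} or \cite[XI]{ILO}), since $X\atimes_S Y$, $X\atimes_S Y'$, and $\id_X\atimes_S g$ are all coherent, the stalk at $(x,y,\phi)$ is
\[
(R(\id_X\atimes_S g)_*L)_{(x,y,\phi)} \simeq R\Gamma(F, L|_F),
\]
where $F \colonequals (X\atimes_S Y')\ttimes_{X\atimes_S Y}(X\atimes_S Y)_{(x,y,\phi)}$ is the geometric fiber. Next, I would simplify $F$ via the natural equivalence $X\atimes_S Y' \simeq (X\atimes_S Y)\ttimes_Y Y'$, which follows immediately from comparing universal properties: both sides classify triples $(f_X, f_{Y'}, \alpha\colon a f_X\to b g f_{Y'})$ out of a test topos. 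Since the localization map $(X\atimes_S Y)_{(x,y,\phi)}\to Y$ factors through $Y_{(y)}$, substitution yields
\[
F \simeq Y'\ttimes_Y (X\atimes_S Y)_{(x,y,\phi)} \simeq Y'_{(y)}\ttimes_{Y_{(y)}}(X\atimes_S Y)_{(x,y,\phi)}.
\]

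The remaining, most delicate step is to identify $R\Gamma(F, L|_F)$ with $R\Gamma(Y'_{(y)}, c^*L)$. The map $c\colon Y'_{(y)}\to X\atimes_S Y'$ lifts canonically to $F$, since its image under $\id_X\atimes_S g$ factors through $(X\atimes_S Y)_{(x,y,\phi)}$ (the section $\sigma$, together with the specialization $\phi$, lands in this neighborhood). I would then show that the resulting morphism $Y'_{(y)}\to F$ is a cohomological equivalence on $D^+$. The key point is that $(X\atimes_S Y)_{(x,y,\phi)}$ admits $\sigma$ as a cohomological retraction onto $Y_{(y)}$: the $X_{(x)}$-factor of the localized oriented product is cohomologically trivial, since the point $x$ makes it behave as a point in this context. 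Base changing the retraction along $Y'_{(y)}\to Y_{(y)}$ (again via $X\atimes_S Y' \simeq (X\atimes_S Y)\ttimes_Y Y'$) will give the desired equivalence $Y'_{(y)}\to F$ in cohomology, and hence the formula for the stalk.

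The main obstacle is this final step: establishing the cohomological triviality of the $X_{(x)}$-factor in the localized oriented product, i.e., that the section induced by $x$ identifies $R\Gamma$ of $(X\atimes_S Y)_{(x,y,\phi)}$ with that of $Y_{(y)}$ for any coefficient in $D^+$. This will require a careful unwinding of the local structure of oriented products at a point in the sense of \cite[XI]{ILO}, and is where the coherence hypotheses are essential in order to apply cohomological descent in the oriented-product localization.
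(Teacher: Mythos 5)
Your proposal follows essentially the same route as the paper: localize at $(x,y,\phi)$ by a limit/coherence argument to reduce the stalk to $R\Gamma(T,L|_T)$ with $T=X_{(x)}\atimes_{S_{(a(x))}}Y'_{(y)}$, then show that the canonical section over $Y'_{(y)}$ computes the cohomology. The ``main obstacle'' you single out is exactly \cite[XI Proposition 2.3]{ILO} (for an oriented product whose first factor is a local topos of center $x$, the pushforward along the second projection is computed by the section induced by $x$), which the paper simply cites; moreover that result applies directly with $Y'_{(y)}$ as the second factor, so the base change of a ``cohomological retraction'' along $Y'_{(y)}\to Y_{(y)}$ (which would not be automatic for a mere isomorphism on global sections) is unnecessary --- one writes $R\Gamma(T,L|_T)\simeq R\Gamma(Y'_{(y)},Rp_{2*}(L|_T))$ and identifies $Rp_{2*}(L|_T)$ with $c^*L$.
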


By \cite[XI Lemme 2.5]{ILO}, $\id_X\atimes_S g\colon X\atimes_S Y'\to
X\atimes_S Y$ is a coherent morphism of coherent topoi.

\begin{proof}
By limit arguments, the stalk in question can be identified with
$R\Gamma(T,L|_T)$, where $T=X_{(x)}\atimes_{S_{(a(x))}} Y'_{(y)}$. We have
$R\Gamma(T,L|_T)\simeq R\Gamma(Y'_{(y)}, Rp_{2*}(L|_T))$, where $p_2\colon
T\to Y'_{(y)}$ is the projection and $Rp_{2*}(L|_T)\simeq c^*L$ by \cite[XI
Proposition 2.3]{ILO}.
\end{proof}

\begin{cor}\label{c.obc}
Let $X'\xrightarrow{f} X\xrightarrow{a} S\xleftarrow{b} Y\xleftarrow{g} Y'$
be morphisms of coherent topoi with $a$, $af$, $b$, $g$ coherent. Then the
base change map
\[\alpha\colon (f\atimes_S \id_Y)^* R(\id_{X}\atimes_S g)_* \to R(\id_{X'}\atimes_S g)_* (f\atimes_S \id_{Y'})^*\]
associated to the square (Cartesian by \cite[XI Proposition 4.2]{ILO})
\[\xymatrix{X'\atimes_S Y'\ar[d]_{f\atimes_S \id_{Y'}}\ar[r]^{\id_{X'}\atimes_S g}\ar@{}[rd]|\Leftrightarrow & X'\atimes_S Y\ar[d]^{f\atimes_S \id_Y}\\
X\atimes_S Y'\ar[r]^{\id_X\atimes_S g} & X\atimes_S Y}
\]
is an isomorphism on $D^+(X\atimes_S Y',\Lambda)$.
\end{cor}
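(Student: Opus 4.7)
The plan is to check that $\alpha$ is an isomorphism on stalks, using Deligne's theorem that coherent topoi have enough points, and then to compute each side via Proposition \ref{p.obc}. Since the pullback functor $(f\atimes_S\id_Y)^*$ commutes with taking stalks in the obvious way (pulling a stalk back along $f\atimes_S\id_Y$ picks out the stalk at the image point), the work is really to match the two descriptions and to verify that the matching identification is the one induced by the base change map.

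Concretely, I would fix a point $p'=(x',y,\phi)$ of $X'\atimes_S Y$ with image $p=(f(x'),y,\phi)$ in $X\atimes_S Y$. For the source of $\alpha$, the stalk at $p'$ is the stalk of $R(\id_X\atimes_S g)_*L$ at $p$, which by Proposition \ref{p.obc} equals $R\Gamma(Y'_{(y)}, c^*L)$, where $c\colon Y'_{(y)}\to X\atimes_S Y'$ is the composite $Y'_{(y)}\to Y'\xrightarrow{\sigma} X\atimes_S Y'$ of the canonical section $\sigma$ induced by $f(x')$ with the projection. For the target, applying Proposition \ref{p.obc} directly to $\id_{X'}\atimes_S g$ at the point $p'$, the stalk is $R\Gamma(Y'_{(y)}, c'^*(f\atimes_S\id_{Y'})^*L)$, where $c'\colon Y'_{(y)}\to X'\atimes_S Y'$ is built from the canonical section $\sigma'$ induced by $x'$.

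The key compatibility is that the canonical section $\sigma\colon Y'\to X\atimes_S Y'$ induced by $f(x')$ factors as $(f\atimes_S\id_{Y'})\circ\sigma'$; this is immediate from the universal property of oriented products and the functoriality of the construction of the canonical section in \cite[XI 2.2]{ILO}. Hence $c=(f\atimes_S\id_{Y'})\circ c'$, and therefore $c^*L\simeq c'^*(f\atimes_S\id_{Y'})^*L$, so both stalks are canonically $R\Gamma(Y'_{(y)},c^*L)$.

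The step I expect to be the main nuisance — rather than a deep obstacle — is verifying that this canonical identification on stalks is induced by $\alpha$ itself, not merely by some abstract isomorphism. For this I would unwind the definition of $\alpha$ as the adjoint of $R(\id_X\atimes_S g)_*\to R(\id_X\atimes_S g)_*R(f\atimes_S\id_{Y'})_*(f\atimes_S\id_{Y'})^*\simeq R(f\atimes_S\id_Y)_*R(\id_{X'}\atimes_S g)_*(f\atimes_S\id_{Y'})^*$ and check that, on stalks, the construction of the stalk formula in Proposition \ref{p.obc} (which proceeds through the limit $T=X_{(x)}\atimes_{S_{(a(x))}}Y'_{(y)}$ and \cite[XI Proposition 2.3]{ILO}) is functorial in the point, so that changing $(x,\ldots)$ to $(f(x'),\ldots)$ and then pulling back along $f\atimes_S\id_{Y'}$ recovers the formula with $c'$. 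Once this is in place, coherence of all topoi involved allows us to conclude by checking fiber by fiber.
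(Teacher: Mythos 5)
Your proposal is correct and follows essentially the same route as the paper: the paper's proof consists of exactly the observation that locally coherent topoi have enough points (Deligne's theorem) and that the stalk of $\alpha$ at each point $(x',y,\phi)$ is an isomorphism by Proposition \ref{p.obc}. Your expansion — identifying both stalks with $R\Gamma(Y'_{(y)},c^*L)$ via the factorization $\sigma=(f\atimes_S\id_{Y'})\circ\sigma'$ of the canonical section and checking the identification is the one induced by $\alpha$ — is just the detail the paper leaves implicit.
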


Here $\Leftrightarrow$ denotes an isomorphism of morphisms of topoi.

Recall that any locally coherent topos \cite[VI D\'efinition 2.3]{SGA4} has
enough points by Deligne's theorem \cite[VI Th\'eor\`eme 9.0]{SGA4}. The
stalk of $\alpha$ at every point $(x',y,\phi)$ of $X'\atimes_S Y$ is an
isomorphism by Proposition \ref{p.obc}.

Let $f\colon X\to S$ and $g\colon Y\to S$ be morphisms of topoi. For sheaves
$X'$, $Y'$, $S'$ on $X$, $Y$, $S$, equipped with morphisms $X'\to S'$ and
$Y'\to S'$ above $f$ and $g$, we write $X'\ttimes_{S'} Y'$ for the object
$p_1^* X'\times_{h^*S'} p_2^*Y'$ of $X\ttimes_S Y$, where $p_1\colon
X\ttimes_S Y\to X$ and $p_2\colon X\ttimes_S Y\to Y$ are the projections and
$h$ denotes $f p_1\simeq g p_2$.

The following is an analogue of \cite[XI Lemme 2.5]{ILO}, with essentially
the same proof.

\begin{lemma}\label{l.1}
Let $X\to S$ and $Y\to S$ be coherent morphisms of coherent topoi. Then
$X\ttimes_S Y$ is coherent and the projections $p_1$ and $p_2$ are coherent.
\end{lemma}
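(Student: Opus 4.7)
The strategy is to adapt the proof of \cite[XI Lemme 2.5]{ILO} for the oriented product to the ordinary fiber product: I aim to realize $X\ttimes_S Y$ as the topos of sheaves on an explicit site whose coherent objects form a generating family closed under finite inverse limits, and then read off coherence of both the total topos and the projections from this presentation.

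By \cite[VI]{SGA4}, using the assumed coherence, I would first choose small sites $\cC$, $\cD$, $\cA$ for $X$, $Y$, $S$ respectively, each with finite inverse limits, topologies generated by finite epimorphic covering families, and every object coherent, so that $X\to S$ and $Y\to S$ come from continuous morphisms of sites preserving finite inverse limits and finite covers. Next I form the site $\cF$ whose objects are triples $(U,V,T)$ with $U\in\cC$, $V\in\cD$, $T\in\cA$, equipped with morphisms $U\to T_X$ in $\cC$ and $V\to T_Y$ in $\cD$ (where $T_X$, $T_Y$ denote the pullbacks of $T$); morphisms in $\cF$ and the topology are defined componentwise, with finite covers in each coordinate. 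A parallel analysis to \cite[XI 1.2, 2.5]{ILO}, with the directional datum $fU\to gV$ replaced by the pair $fU\to T$, $gV\to T$ into a common target, identifies the topos of sheaves on $\cF$ with $X\ttimes_S Y$. Since $\cF$ has finite inverse limits (computed componentwise) and finite coverings, it is a coherent site, so $X\ttimes_S Y$ is a coherent topos with the triples $(U,V,T)$ forming a generating family of coherent objects.

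The projections $p_1$, $p_2$ are induced by the site morphisms $\cC\to\cF$, $U\mapsto (U,*_{\cD},*_{\cA})$ and $\cD\to\cF$, $V\mapsto (*_{\cC},V,*_{\cA})$, where the structural maps to the terminal objects $*$ are canonical. These functors preserve finite inverse limits and send finite covers to finite covers, so $p_1$ and $p_2$ are coherent by the standard criterion for coherence of a morphism of topoi presented at the level of sites.

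I expect the main obstacle to be the identification step, namely verifying that the site $\cF$ above really presents the fiber product topos rather than a variant such as the oriented product. This reduces to carefully unwinding the universal property of $X\ttimes_S Y$ as a 2-pullback in the 2-category of topoi and checking that a pair of morphisms $fU\to T$, $gV\to T$ encodes, at the level of sheaves and after passing to sufficiently fine refinements, the data needed to identify the images of the two universal points in $S$. This is the precise analogue for our setting of the construction in \cite[XI]{ILO}, and should go through by the same type of 2-categorical argument, with the only substantive change being the direction and commutativity of the structural maps into the common object $T$.
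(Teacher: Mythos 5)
Your proposal matches the paper's (implicit) argument: the paper offers no proof beyond the remark that this lemma is an analogue of \cite[XI Lemme 2.5]{ILO} ``with essentially the same proof,'' and your site of triples $(U,V,T)$ with structure maps $U\to T_X$, $V\to T_Y$ is exactly the fiber-product analogue of the oriented-product site used there, consistent with the generating family $X'\ttimes_{S'}Y'$ recorded right after the lemma. The one point to make precise is that the covering families in the base coordinate must be taken in the base-changed form $\{(U\times_{T_X}T_{\gamma,X},\,V\times_{T_Y}T_{\gamma,Y},\,T_\gamma)\to(U,V,T)\}$ for $\{T_\gamma\to T\}$ a finite cover, so that the topology on your site is the one induced from $X\ttimes_S Y$ and the comparison lemma applies; with that adjustment the coherence of the topos and of $p_1$, $p_2$ follows as you describe.
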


Lemma \ref{l.1} implies that objects of the form $X'\ttimes_{S'} Y'$ with
$X'$, $Y'$, $S'$ coherent form a generating family.

The following is an analogue of \cite[XI Corollaire 2.3.2]{ILO}.

\begin{lemma}\label{l.topos}
Let $f\colon X\to S$ and $g\colon Y\to S$ be local morphisms of local topoi.
Then the fiber product $X\ttimes_S Y$ is a local topos of center
$z=(x,y,\phi)$, where $x$ and $y$ are the centers of $X$ and $Y$,
respectively, and $\phi\colon f(x)\simeq g(y)$ is the unique isomorphism.
\end{lemma}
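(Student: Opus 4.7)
The plan is to adapt the proof of the cited result \cite[XI Corollaire 2.3.2]{ILO}, which treats the oriented-product case, to the fiber-product setting.

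First I would identify the center. Since $f\colon X\to S$ is a local morphism of local topoi, its definition provides a canonical identification $f(x)\simeq s$, where $s$ is the center of $S$; similarly $g(y)\simeq s$. Composing these gives a canonical isomorphism $\phi\colon f(x)\simto g(y)$, unique as a morphism compatible with both localities, and determines the candidate center $z=(x,y,\phi)$. (In the oriented case, one only needs a morphism $f(x)\to g(y)$, but here we crucially get an isomorphism from the composition of the locality data.)

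Next I would verify that $z$ exhibits $X\ttimes_S Y$ as a local topos, i.e., that the global sections functor $\Gamma(X\ttimes_S Y,-)$ is canonically isomorphic to the stalk functor $(-)_z$. By Lemma \ref{l.1}, objects of the form $X'\ttimes_{S'}Y'$ with $X',Y',S'$ coherent constitute a generating family, so it suffices to verify the isomorphism on values of a sheaf at such objects and, in turn, on covering sieves of the terminal object built from them. A covering $\{X'_i\ttimes_{S'_i}Y'_i\}$ of the terminal object of $X\ttimes_S Y$ induces covers of the terminal objects of $X$, $Y$, and $S$ by the three projections. By locality of each of these topoi (together with the locality of $f$ and $g$, which forces compatibility of the respective centers), a member of each induced cover must already be a neighborhood of $x$, $y$, $s$ respectively. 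Combining such components yields a member of the original cover that is a neighborhood of $z$, which is exactly the characterizing property of the center.

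The main obstacle will be making the covering-refinement argument precise: describing the covering sieves on the fiber-product site in workable terms, verifying that covers of $X\ttimes_S Y$ really do decompose into component covers as sketched, and correctly exploiting the uniqueness of $\phi$ (which distinguishes the fiber product from the oriented product, where $\phi$ is merely a morphism). A localization argument analogous to the one used in Proposition \ref{p.obc} may be needed to compute stalks at $z$ in terms of the Henselizations of $X$, $Y$, $S$ at their centers and hence reduce the entire statement to the trivial case where $X$, $Y$, $S$ are themselves localized at their centers.
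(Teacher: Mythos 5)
Your proposal is correct and matches the paper's approach: identify the center $z=(x,y,\phi)$ via the locality data of $f$ and $g$, reduce to sheaves of the form $X'\ttimes_{S'}Y'$ with $X'$, $Y'$, $S'$ sheaves on $X$, $Y$, $S$, and invoke the locality of $X$, $Y$, $S$ componentwise. The paper's verification is more direct than your covering-sieve sketch: for such a sheaf, an element of the stalk at $z$ is a compatible triple of elements of $X'_x$, $Y'_y$, $S'_s$, which by locality of the three factors correspond to global sections of $X'$, $Y'$, $S'$ and hence to a section of $X'\ttimes_{S'}Y'$, so no refinement of covers and no localization argument in the style of Proposition \ref{p.obc} is needed.
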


\begin{proof}
It suffices to show that for every sheaf $\cF$ on $X\ttimes_S Y$, any
element of the stalk $\cF_z$ lifts uniquely to a section. For this we may
assume that $\cF=X'\ttimes_{S'} Y'$ for sheaves $X'$, $Y'$, $S'$ on $X$,
$Y$, $S$, equipped with morphisms $X'\to S'$, $Y'\to S'$ above $f$ and $g$.
Any element of $\cF_z$ corresponds to compatible elements of $X'_x$, $Y'_y$,
$S'_s$, which correspond in turn to sections of $X'$, $Y'$, $S'$, providing
a section of $\cF$. Here $s$ denotes the center of $S$.
\end{proof}

\begin{remark}
As observed in \cite[XI Exemples 3.4 (2)]{ILO}, given morphisms of
    schemes $X\to S\leftarrow Y$, the morphism of topoi
\[(X\times_{S} Y)_\et \to X_\et\ttimes_{S_\et} Y_\et \]
is not an equivalence in general, even if $X$, $Y$, and $S$ are spectra of
fields.\footnote{The claim in \cite[XI Exemples 3.4 (2)]{ILO} that $G\mapsto
BG$ preserves fiber products is false (cf.\ \eqref{e.BH}).} Indeed, if
$S=\Spec(k)$ with $k$ separably closed, and $X=\Spec(k_1)$, $Y=\Spec(k_2)$
with $k_1/k$ and $k_2/k$ transcendental, then $X_\et\ttimes_{S_\et} Y_\et$
has only one isomorphism class of points, while $(X\times_S Y)_{\et}$ has
infinitely many.
\end{remark}

Let $\Lambda$ be a torsion commutative ring. The following is a
generalization of \cite[Lemme 10.1]{Org}.

\begin{prop}\label{p.Org}
Let $f\colon X\to S$ be a proper morphism of schemes. Let $g\colon Y\to
S_\et$ be a locally coherent morphism \cite[VI D\'efinition 3.7]{SGA4} of
locally coherent topoi. Then for any point $y$ of $Y$, any geometric point
$s$ of $S$, and any isomorphism $\phi\colon s_\et\simto g(y)$, the base
change map $\alpha\colon (Rp_{2*}L)_y\to R\Gamma(X_s,c^*L)$ associated to
the square
\begin{equation}\label{e.Org}
\xymatrix{(X_s)_{\et} \ar[r]^c\ar[d]\ar@{}[rd]|{\Leftrightarrow} & X_\et \ttimes_{S_\et} Y\ar[d]^{p_2}\\
\pt\ar[r]^y & Y}
\end{equation}
is an isomorphism for $L\in D(X_\et\ttimes_{S_\et} Y,\Lambda)$. Here
$X_s=X\times_S s$, and $c$ is induced by the diagram
\[\xymatrix{(X_s)_\et\ar[d] \ar[r]\ar@{}[rd]|{\Leftrightarrow} & \pt\ar[r]^y\ar[rd]_{s_\et}^{\Leftrightarrow} & Y\ar[d]^g\\
X_\et \ar[rr] && S_\et,}
\]
where the triangle is given by $\phi$.
\end{prop}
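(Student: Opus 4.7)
I will prove Proposition \ref{p.Org} by adapting the classical proper base change theorem \cite[XII]{SGA4} to the fiber-product-of-topoi setting, in the spirit of Orgogozo's \cite[Lemme 10.1]{Org}.

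The first reduction is to the strict localization $Y_{(y)}$. Since $Y$ is locally coherent, it has enough points by Deligne's theorem, and compatibility of the topos fiber product with localization of one factor identifies the stalk $(Rp_{2*}L)_y$ with $R\Gamma(X_\et \ttimes_{S_\et} Y_{(y)}, L|_{X_\et \ttimes_{S_\et} Y_{(y)}})$. The isomorphism $\phi$ makes the composite $Y_{(y)} \to Y \xrightarrow{g} S_\et$ factor canonically through $s_\et$, giving an equivalence
\[X_\et \ttimes_{S_\et} Y_{(y)} \simeq (X_\et \ttimes_{S_\et} s_\et) \ttimes_{s_\et} Y_{(y)}.\]
This reduces the assertion to a proper base change statement relative to the geometric point $s$.

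The heart of the argument is then to show that the natural morphism
\[c'\colon (X_s)_\et \ttimes_{s_\et} Y_{(y)} \to X_\et \ttimes_{S_\et} Y_{(y)},\]
obtained by base-changing the classical proper base change morphism $(X_s)_\et \to X_\et \ttimes_{S_\et} s_\et$ along $Y_{(y)} \to s_\et$, induces an isomorphism on $R\Gamma$ applied to $L$. By Lemma \ref{l.1}, the topos $X_\et \ttimes_{S_\et} Y_{(y)}$ admits a coherent generating family of the form $X'_\et \ttimes_{S'_\et} Y'$; a cohomological-descent argument using a hypercover built from such generators reduces the claim to the case where $L$ is a tensor of pullbacks from $X_\et$ and $Y_{(y)}$. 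For such $L$, the projection formula combined with the classical proper base change for the proper morphism $X \to S$, applied stalkwise over $Y_{(y)}$, yields the isomorphism. Finally, since $Y_{(y)}$ is a local topos with center $y$ by Lemma \ref{l.topos}, the cohomology of $(X_s)_\et \ttimes_{s_\et} Y_{(y)}$ collapses to $R\Gamma(X_s, c^*L)$, matching the target of $\alpha$.

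The main obstacle I anticipate is the descent step: propagating classical proper base change, known for pullbacks of sheaves from $X$, to arbitrary complexes on the topos fiber product. The cleanest route should be to first reduce, via limit arguments on $L$ exploiting the coherence of $p_2$ supplied by Lemma \ref{l.1}, to the case of bounded torsion constructible sheaves, and then to effect cohomological descent along a hypercover of $X_\et \ttimes_{S_\et} Y_{(y)}$ by coherent generators, on each of which the classical SGA 4 proper base change applies directly.
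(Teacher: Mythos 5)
Your proposal diverges from the paper's argument, and the divergence introduces a gap at the crucial step.

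The paper's proof, after reducing to $S$ strictly local of center $s$ and $Y$ local of center $y$, does \emph{not} attempt any resolution or descent in the $L$-variable. Instead it factors through the other projection $p_1\colon X_\et\ttimes_{S_\et}Y\to X_\et$: the map $\alpha$ is identified with the composite
\[
R\Gamma(X,Rp_{1*}L)\simeq R\Gamma(X_s,i^*Rp_{1*}L)\xrightarrow{R\Gamma(X_s,\beta)} R\Gamma(X_s,c^*L),
\]
where the first isomorphism is classical proper base change for the scheme morphism $f\colon X\to S$ (applied to the complex $Rp_{1*}L$ on $X_\et$), and $\beta\colon i^*Rp_{1*}L\to c^*L$ is a base change map on $X_\et$. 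The whole weight of the argument rests on showing $\beta$ is an isomorphism, which is done stalkwise at geometric points $x$ of $X_s$: the stalk $(\beta)_x$ is the map $R\Gamma((X_{(x)})_\et\ttimes_{S_\et}Y,L)\to L_{(x,y,\phi)}$, and this is an isomorphism because $(X_{(x)})_\et\ttimes_{S_\et}Y$ is a \emph{local} topos of center $(x,y,\phi)$ by Lemma~\ref{l.topos}. This last observation -- that sections over the whole localized fiber-product topos equal the stalk -- is the mechanism that makes the theorem work, and it never appears in your plan.

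Your plan instead tries to reduce, via ``cohomological descent along a hypercover built from coherent generators,'' to the case where $L$ is a tensor of pullbacks from $X_\et$ and $Y_{(y)}$. This step is where the argument breaks down. Descent along a hypercover whose components are coherent generators $X'_\et\ttimes_{S'_\et}Y'$ computes $R\Gamma(-,L)$ from the restrictions of $L$ to the components, but those restrictions are still arbitrary complexes on the components -- descent does not transform $L$ into something of the form $p_1^*A\otimes^L p_2^*B$. What would actually be needed is a resolution of $L$ by such external tensor products, and no such resolution is available over a general base: the analogue in the paper (Lemma~\ref{l.gen}) works only over a classifying topos $s=BG$, not over a general $S_\et$ -- generators of $X_\et\ttimes_{S_\et}Y$ are of the three-term form $X'\ttimes_{S'}Y'$, not two-term external products. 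Additionally, your reduction to bounded constructible $L$ ``by limit arguments'' is not elaborated; the paper handles unbounded $L$ by proving the statement on $D^+$ and then observing this bounds the cohomological dimension of $Rp_{2*}$, so the general case follows by truncation. I'd recommend reorganizing your argument around the projection $p_1$ and the locality of the localized fiber-product topos (Lemma~\ref{l.topos}), which is where the genuine content lies.
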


\begin{proof}
The proof is similar to that of \cite[Lemme 10.1]{Org}. We may assume that
$S$ is strictly local of center $s$ and $Y$ is local of center $y$. There
exists an integer $d$ such that the dimensions of the fibers of $f$ are $\le
d$. It suffices to show that $\alpha$ is an isomorphism for $L\in D^+$.
Indeed, this implies that $Rp_{2*}$ has cohomological dimension $\le 2d$,
and the general case follows.

Consider the diagram of topoi
\[\xymatrix{(X_s)_\et\ar[r]^c\ar[rd]_{i_\et}^\Leftrightarrow &X_\et \ttimes_{S_\et} Y\ar[d]^{p_1}\\
& X_\et}
\]
and the base change map $\beta\colon  i^* Rp_{1*}L\to c^* L$. Then $\alpha$
can be identified with the composition
\[R\Gamma(X, Rp_{1*}L)\simto R\Gamma(X_s,i^* Rp_{1*}L)\xrightarrow{R\Gamma(X_s,\beta)} R\Gamma(X_s, c^*L),\]
where the first arrow is proper base change. For any geometric point $x$ of
$X_s$, $\beta_x$ can be identified with the map
$R\Gamma((X_{(x)})_\et\ttimes_{S_\et} Y, L)\to L_{(x,y,\phi)}$, which is an
isomorphism by Lemma \ref{l.topos}. It follows that $\beta$ and $\alpha$ are
isomorphisms.
\end{proof}

\begin{remark}\leavevmode\label{r.Org0}
\begin{enumerate}
\item The square \eqref{e.Org} is not Cartesian in general. Indeed, in the
    case where $S=\Spec(k)$ with $k$ a separably closed field, $\dim(X)\ge
    1$, $Y=\pt$, and $s=\Spec(k')$ with $k'$ a transcendental (separably
    closed) extension of $k$, the morphism $(X_s)_{\et}\to X_\et$ is not
    an equivalence.

\item For locally coherent morphisms of locally coherent topoi $S_\et\to
    B\xleftarrow{h} B'$, we can take $g\colon S_\et\ttimes_{B} B'\to
    S_\et$ to be the first projection, which is a locally coherent
    morphism of locally coherent topoi by Lemma \ref{l.1}. In this case,
    $p_2$ can be identified with $f_\et\ttimes_B B'\colon X_\et \ttimes_B
    B'\to S_\et \ttimes_B B'$.

\item For locally coherent morphisms of locally coherent topoi $S_\et\to
    B\leftarrow B'$, we can take $g\colon Y=S_\et\atimes_{B} B'\to S_\et$
    to be the first projection, which is a locally coherent morphism of
    locally coherent topoi by \cite[XI Lemme 2.5]{ILO}. In this case,
    $p_2$ can be identified with $f\atimes_B B'\colon X_\et\atimes_B B'\to
    S_\et\atimes_B B'$ by \cite[XI Proposition 4.2]{ILO}, and we recover
    \cite[Lemme 10.1]{Org}. This can be identified with the special case
    of (2) with $h$ being the first projection $B\atimes_B B' \to B$.

\item For $f$ integral, Proposition \ref{p.Org} holds without the
    assumption that $\Lambda$ is torsion. Indeed, in the proof above, it
    suffices to replace proper base change by integral base change
    \cite[VIII Corollaire 5.6]{SGA4}.
\end{enumerate}
\end{remark}

Recall that for any morphism of topoi $f\colon X\to Y$, we have a projection
formula map
\[Rf_* L\otimes^L M\to Rf_*(L\otimes^L f^* M),\]
adjoint to the
composition
\[f^*(Rf_* L\otimes^L M)\simeq f^*Rf_*L\otimes^L f^* M\to
L\otimes^L f^* M.
\]

\begin{construction}\label{con.!}
Applying Nagata's compactification theorem \cite[Theorem 4.1]{Conrad} and
Deligne's gluing formalism \cite[XVII 3.3]{SGA4}, we define for $f\colon
X'\to X$ a separated morphism of finite type of coherent schemes equipped
with locally coherent morphisms of locally coherent topoi $X_\et\to S
\leftarrow Y$, a functor
\[R(f_\et\ttimes_S \id_Y)_!\colon D(X'_\et\ttimes_S
Y,\Lambda)\to D(X_\et\ttimes_S Y,\Lambda),
\]
isomorphic to $R(f_\et\ttimes_S\id_Y)_*$ for $f$ proper and left adjoint to
$(f_\et\ttimes_S\id_Y)^*$ for $f$ an open immersion, and compatible with
composition.

Given a Cartesian square
\[\xymatrix{U'\ar[r]^{j'}\ar[d]_{f_U} & X'\ar[d]^f\\
U\ar[r]^j & X}
\]
of coherent schemes with $f$ proper and $j$ an open immersion, as in
\cite[XVII Lemme 5.1.6]{SGA4} we need to check that the morphism
\begin{equation}\label{e.bc}
(j_\et\ttimes_S\id_Y)_! (f_{U\et}\ttimes_S \id_Y)_*\to (f_\et\ttimes_S
\id_Y)_*(j'_\et\ttimes_S\id_Y)_!
\end{equation}
induced by the inverse of the isomorphism
\[(j_\et\ttimes_S\id_Y)^*(f_{\et}\ttimes_S \id_Y)_*\simto (f_{U\et}\ttimes_S
\id_Y)_*(j'_\et\ttimes_S\id_Y)^*
\]
is an isomorphism. The restriction of \eqref{e.bc} to $U_\et\ttimes_S Y$ is
trivially an isomorphism. Let $X_1=X-U$ be a complement of $U$. The
restriction of the left-hand side of \eqref{e.bc} to $(X_1)_\et\ttimes_S Y$
is zero, and the restriction of the right-hand side of \eqref{e.bc} to
$(X_1)_\et\ttimes_S Y$ is zero by Proposition \ref{p.Org} and Remark
\ref{r.Org0} (2).

We have the following isomorphisms.
\begin{enumerate}
\item (base change on $X$) For any Cartesian square of coherent schemes
\[\xymatrix{X_1'\ar[r]^{h'}\ar[d]_{f'} & X'\ar[d]^f\\ X_1\ar[r]^h & X}\]
we have
\[(h_\et\ttimes_S \id_Y)^*R(f_\et\ttimes_S \id_Y)_!\simeq R(f'_\et\ttimes_S \id_Y)_!(h'_\et\ttimes_S \id_Y)^*.\]
\item (base change on $Y$) For any locally coherent morphism $g\colon
    Y'\to Y$ of locally coherent topoi, we have
\[
(\id_{X_\et}\ttimes_S g)^*R(f_\et\ttimes_S
    \id_Y)_!\simeq R(f_\et\ttimes_S \id_{Y'})_!(\id_{X'_\et}\ttimes_S g)^*.
\]
\item (projection formula) For $L\in D(X'_\et\ttimes_S Y,\Lambda)$, $M\in
    D(X_\et\ttimes_S Y,\Lambda)$, we have
\[R(f_\et\ttimes_S
    \id_Y)_!L \otimes^L M\simeq R(f_\et\ttimes_S
    \id_Y)_!(L \otimes^L(f_\et\ttimes_S
    \id_Y)^*M).\]
\end{enumerate}
For $f$ proper, these maps are given by adjunction. In this case the maps in
(1) and (2) are isomorphisms by Proposition \ref{p.Org} and Remark
\ref{r.Org0} (2). To show that the map in (3) is an isomorphism in this
case, we reduce by Proposition \ref{p.Org} and Remark \ref{r.Org0} (2) to
the classical case where $Y=S=\pt$. For $f$ an open immersion, the inverses
of these isomorphisms are standard \cite[Constructions 2.6, 2.7]{six}.
\end{construction}

\begin{construction}\label{c.dim}
The functor $R(f_\et\ttimes_S \id_Y)_!$ has cohomological dimension $\le
2d$, where $d$ is the maximum of the dimensions of the fibers of $f$. Thus
the functor admits a right adjoint
\begin{equation}\label{e.upshr}
R(f_\et\ttimes_S \id_Y)^!\colon D(X_\et\ttimes_S Y,\Lambda)\to
D(X'_\et\ttimes_S Y,\Lambda)
\end{equation}
by Lemma \ref{l.adjtriv} below applied to the proper case.

Assume $m\Lambda=0$ with $m$ invertible on $X$ and $f$ flat with fibers of
dimension $\le d$. Then the trace map $\Tr_{f_\et}(\Lambda)\colon Rf_{\et!}
\Lambda_{X'_\et} \to \Lambda_{X_\et}(-d)[-2d]$ for $f_\et$ induces a trace
map
\[\Tr_{f_\et\ttimes_S \id_Y}(\Lambda)\colon R(f_{\et}\ttimes_S \id_Y)_!
\Lambda \simeq p_1^*Rf_{\et!}\Lambda \to \Lambda(-d)[-2d]
\]
for $f_\et\ttimes_S \id_Y$, where $p_1\colon X_\et\ttimes_S Y\to X_\et$
denotes the projection. Here we used base change (2) on $Y$. By the
projection formula (3), this induces a natural transformation
\[
\Tr_{f_\et\ttimes_S\id_Y}\colon R(f_{\et}\ttimes_S \id_Y)_! (f_{\et}\ttimes_S \id_Y)^*(d)[2d]\to
\id,
\]
which induces, by adjunction, a natural transformation
\begin{equation}\label{e.tr}
\tr_{f_\et\ttimes_S \id_Y}\colon (f_{\et}\ttimes_S \id_Y)^*(d)[2d]\to R(f_{\et}\ttimes_S \id_Y)^!.
\end{equation}
\end{construction}

\begin{lemma}\label{l.adjtriv}
Let $h\colon Z'\to Z$ be a coherent morphism of locally coherent topoi such
that there exists a covering family of objects $Y_\alpha$ of $Z$ with
$h^*Y_\alpha$ algebraic \cite[VI D\'efinition 2.3]{SGA4}. Then for all $q$,
$R^q h_*$ commutes with filtered colimits. If, moreover, $Rh_*\colon
D(Z',\Lambda)\to D(Z,\Lambda)$ has finite cohomological dimension, then
$Rh_*$ admits a right adjoint.
\end{lemma}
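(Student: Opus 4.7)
For part (1), my plan is to reduce to the classical commutation of sections with filtered colimits on a coherent topos \cite[VI Corollaire~5.2]{SGA4}. Since $Z$ is locally coherent and $h$ is coherent, I would first refine $\{Y_\alpha\}$ to a covering family of coherent objects of $Z$; the coherence of $h$ keeps each $h^*Y_\alpha$ coherent in $Z'$. The basic identification
\[\Gamma(Y_\alpha, R^q h_* \cF) \simeq H^q(h^*Y_\alpha, \cF|_{h^*Y_\alpha}),\]
coming from computing $R^q h_*$ via the localized morphism $Z'/h^*Y_\alpha \to Z/Y_\alpha$, reduces the commutation to two facts: restriction $\cF \mapsto \cF|_{h^*Y_\alpha}$ is a left adjoint and hence preserves colimits, and $H^q(h^*Y_\alpha,-)$ commutes with filtered colimits on the coherent topos $Z'/h^*Y_\alpha$ by \cite[VI~5.2]{SGA4}. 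Since a covering family is generating in the topos $Z$, this suffices to conclude the commutation at the level of sheaves.

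For part (2), the plan is to appeal to Brown representability in the form of the adjoint functor theorem for well-generated triangulated categories: since $D(Z,\Lambda)$ and $D(Z',\Lambda)$ are well-generated (as unbounded derived categories of Grothendieck abelian categories), a triangulated functor between them admits a right adjoint iff it preserves arbitrary coproducts. So I need to show that $Rh_*$ commutes with $\bigoplus_i$. The finite cohomological dimension $d$ supplies the truncation identity
\[\tau^{\leq N}\circ Rh_* \simeq \tau^{\leq N}\circ Rh_* \circ \tau^{\leq N+d},\]
valid because $Rh_*$ sends $D^{>N+d}$ into $D^{>N}$. Since $\tau^{\leq N+d}$ commutes with direct sums in $\Shv(Z',\Lambda)$ (Grothendieck abelian) and a complex is determined by its truncations $\tau^{\leq N}$ as $N$ varies, this reduces me to checking $Rh_*\bigoplus_i \cG_i \simeq \bigoplus_i Rh_*\cG_i$ for families with a common upper bound on amplitude.

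On such bounded-above families I would run the hypercohomology spectral sequence
\[E_2^{p,q} = R^p h_* \mathcal{H}^q(\cG) \Rightarrow R^{p+q} h_* \cG,\]
which converges strongly because $p\in[0,d]$ and $q$ is bounded above, so only $d+1$ nonzero terms contribute at any total degree. Commutation with $\bigoplus_i$ on the $E_2$-page combines exactness of direct sums in $\Shv(Z',\Lambda)$ (handling $\mathcal{H}^q$) with the case of part~(1) that $R^p h_*$ commutes with coproducts, since these are filtered colimits of finite subsums. The finite length of the induced filtration at each total degree then lifts the $E_\infty$-isomorphism to the abutment. The main technical obstacle is precisely this convergence-and-abutment bookkeeping in the unbounded setting, which the cohomological dimension bound renders tractable.
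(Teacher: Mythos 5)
Your proof is correct and follows essentially the same route as the paper's: part (1) by localizing to the coherent situation and invoking \cite[VI Th\'eor\`eme 5.1]{SGA4}, and part (2) by Brown representability for the well-generated category $D(Z,\Lambda)$, reduced to showing that $Rh_*$ preserves small coproducts using part (1) together with the finite cohomological dimension. The only difference is one of packaging: the paper delegates the coproduct-preservation step to \cite[Corollary 14.3.7]{KS} applied to the class of $h_*$-acyclic sheaves (stable under small direct sums by the first assertion), whereas you carry out the equivalent truncation and hypercohomology spectral sequence bookkeeping by hand.
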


\begin{proof}
For the first assertion we may assume $Z$ coherent and the assertion becomes
\cite[VI Th\'eor\`eme 5.1]{SGA4}. The second assertion follows by the Brown
representability theorem. More precisely, one applies \cite[Corollary
14.3.7]{KS} to the collection of $h_*$-acyclic sheaves, which is stable
under small direct sums by the first assertion.
\end{proof}

\begin{construction}\label{c.adj}
Consider functors
\[\xymatrix{\cD'\ar[d]_G\ar@{}[rd]|{\Leftarrow} & \ar[l]_{L'}\cC'\ar[d]^F &\ar@{}[rd]|{\Leftarrow} \cD'\ar[d]_G\ar[r]^{R'} & \cC'\ar[d]^F\\\cD & \cC\ar[l]_L & \cD\ar[r]^{R} & \cC}\]
and adjunctions $L\dashv R$ and $L'\dashv R'$. Then natural transformations
$\alpha\colon LF\to GL'$ correspond by adjunction to natural transformations
$\beta\colon FR'\to RG$. For $\alpha$ given, $\beta$ is the composition
\[FR'\to RLFR'\xrightarrow{\alpha} RGL'R'\to RG.\]
The same holds for adjunctions in $2$-categories \cite[Proposition
1.1.9]{Ayoub}.
\end{construction}

\begin{lemma}\label{l.trivshr}
Let $Y'$ be an object of $Y$ and let $g\colon Y'\to Y$. The map
\begin{equation}\label{e.trivshr}
(\id_{X'_\et}\ttimes_S g)^*R(f_\et \ttimes_S \id_{Y})^!\to R(f_\et\ttimes_S
\id_{Y'})^!(\id_{X_\et}\ttimes_S g)^*
\end{equation}
induced by Constructions \ref{con.!} (2) and \ref{c.adj} is a natural
isomorphism.
\end{lemma}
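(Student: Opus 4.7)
The plan is to exploit the fact that $g$ is a localization of topoi (since $Y'$ is an object of $Y$) to reformulate \eqref{e.trivshr} as a base-change statement for $R(-)_!$ functors, and then verify that via Deligne's gluing. Since $g\colon Y'\to Y$ is a localization, both $(\id_{X_\et}\ttimes_S g)^*$ and $(\id_{X'_\et}\ttimes_S g)^*$ admit left adjoints $(\id_{X_\et}\ttimes_S g)_!$ and $(\id_{X'_\et}\ttimes_S g)_!$. Applying $\Hom(N,-)$ to both sides of \eqref{e.trivshr} and combining these adjunctions with $R(f_\et\ttimes_S\id)_!\dashv R(f_\et\ttimes_S\id)^!$ shows that \eqref{e.trivshr} is an isomorphism if and only if the natural base-change map
\[(\id_{X_\et}\ttimes_S g)_! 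R(f_\et\ttimes_S\id_{Y'})_! N\to R(f_\et\ttimes_S\id_Y)_!(\id_{X'_\et}\ttimes_S g)_! N\]
is an isomorphism for all $N\in D(X'_\et\ttimes_S Y',\Lambda)$. We plan to verify this reformulation.

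Via Construction \ref{con.!} (Deligne's gluing formalism), we reduce to the two cases $f$ proper and $f$ an open immersion. If $f$ is an open immersion, then $(f_\et\ttimes_S\id)^*$ also admits the left adjoint $R(f_\et\ttimes_S\id)_!$, and both sides of the displayed map become left adjoints of compositions of pullbacks around the Cartesian square of topoi, which manifestly agree; thus the map is an isomorphism.

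The hard case is $f$ proper, where $R(f_\et\ttimes_S\id)_! = R(f_\et\ttimes_S\id)_*$. Here we would verify the isomorphism stalkwise (locally coherent topoi have enough points by Deligne). At a point $(x, y_0, \phi)\in X_\et\ttimes_S Y$, Proposition \ref{p.Org} computes each pushforward stalk as $R\Gamma(X_s, c^*(-))$ along a canonical $c$. Combined with the formula for the stalk of the localization extension $(\id\ttimes_S g)_!$ as a direct sum over the set $(Y')_{y_0}$ of lifts of $y_0$ to $Y'$, both stalks of the base-change map identify naturally as $\bigoplus_{y''\in (Y')_{y_0}} R\Gamma(X_s, c_{y''}^* N)$, for maps $c_{y''}\colon X_s\to X'_\et\ttimes_S Y'$ induced by the lifts; moving the direct sum past $R\Gamma(X_s,-)$ is permitted by exactness of the stalk functor.

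The main obstacle will be the stalk computation in the proper case, which requires combining Proposition \ref{p.Org} with the explicit description of $(\id\ttimes_S g)_!$ as extension by zero along the localization $g$, and carefully interchanging the possibly infinite direct sum over lifts with $R\Gamma(X_s,-)$.
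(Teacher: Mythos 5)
Your proof is correct and follows essentially the same route as the paper's: pass to the left-adjoint (mate) base-change map, reduce to $f$ proper via the gluing formalism, identify $(\id\ttimes_S g)_!$ with a direct sum over lifts of a point, and conclude from the commutation of the proper pushforward with small direct sums (finite cohomological dimension together with Lemma \ref{l.adjtriv}). The paper phrases the final step by reducing to $Y=\pt$ rather than by an explicit stalk computation via Proposition \ref{p.Org}, but the content is the same.
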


\begin{proof}
We may assume $f$ proper. It suffices to show that the map
$(\id_{X}\ttimes_S g)_!R(f_\et\ttimes_S \id_{Y'})_*\to R(f_\et \ttimes_S
\id_{Y})_*(\id_{X'}\ttimes_S g)_!$, left adjoint of \eqref{e.trivshr}, is an
isomorphism. For this, we may assume that $Y$ is a point. In this case, $Y'$
is a small set $I$ and $(\id \ttimes_S g)_!$ can be identified with
$\bigoplus_I$. It then suffices to note that $R(f_\et \ttimes_S \id_{Y})_*$
commutes with small direct sums.
\end{proof}

By adjunction, the projection formula induces the following isomorphisms
(see for example \cite[Proposition 2.24]{six}).

\begin{cor}\label{c.pj}
We have isomorphisms, natural in $L,M\in D(X_\et\ttimes_S Y,\Lambda)$ and
$L'\in D(X'_\et\ttimes_S Y,\Lambda)$,
\begin{gather}
\label{e.1.13.1}R(f_\et\ttimes_S \id_Y)^!R\cHom(L,M)\simeq R\cHom((f_\et\ttimes_S \id_Y)^*L,R(f_\et\ttimes_S \id_Y)^!M),\\
\label{e.1.13.2}R\cHom(R(f_\et\ttimes_S \id_Y)_!L',M)\simeq R(f_\et\ttimes_S \id_Y)_*R\cHom(L,R(f_\et\ttimes_S \id_Y)^!M),
\end{gather}
\end{cor}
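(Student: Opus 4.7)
The plan is to deduce both isomorphisms formally by Yoneda from three ingredients already constructed in the excerpt: the adjunction $R(f_\et\ttimes_S \id_Y)_!\dashv R(f_\et\ttimes_S \id_Y)^!$ of Construction \ref{c.dim}, the projection formula of Construction \ref{con.!}(3), and the internal tensor--hom adjunction on $D(X_\et\ttimes_S Y,\Lambda)$ and $D(X'_\et\ttimes_S Y,\Lambda)$. Abbreviate $f_Y\colonequals f_\et\ttimes_S \id_Y$; for each statement I would test against an arbitrary $N\in D(X'_\et\ttimes_S Y,\Lambda)$ (respectively $N\in D(X_\et\ttimes_S Y,\Lambda)$), produce a natural isomorphism of $\Hom(N,-)$ groups between the two sides, and then conclude by Yoneda. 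The standard device of tensoring with an extra auxiliary object upgrades the identity of outer $\Hom$'s to the claimed identity of internal $R\cHom$'s, exactly as in the six-functor reference cited.

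For \eqref{e.1.13.1}, I would chain
\[
\Hom(N, Rf_Y^! R\cHom(L,M)) \simeq \Hom(Rf_{Y!}N, R\cHom(L,M)) \simeq \Hom(Rf_{Y!}N \otimes^L L, M)
\]
using the $(Rf_{Y!},Rf_Y^!)$ adjunction and internal tensor--hom. The projection formula rewrites $Rf_{Y!}N\otimes^L L\simeq Rf_{Y!}(N\otimes^L f_Y^* L)$; applying the $(Rf_{Y!},Rf_Y^!)$ adjunction once more, followed by tensor--hom on $D(X'_\et\ttimes_S Y,\Lambda)$, gives
\[
\Hom(N, R\cHom(f_Y^* L, Rf_Y^! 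M)),
\]
which is the desired identification.

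For \eqref{e.1.13.2}, taking $N\in D(X_\et\ttimes_S Y,\Lambda)$, the analogous chain reads
\[
\Hom(N, R\cHom(Rf_{Y!}L', M)) \simeq \Hom(N\otimes^L Rf_{Y!}L', M) \simeq \Hom(Rf_{Y!}(f_Y^* N\otimes^L L'), M),
\]
where the second isomorphism is the projection formula with the roles of the two tensor factors interchanged. A further application of the $(Rf_{Y!},Rf_Y^!)$ adjunction, then tensor--hom, then the $(f_Y^*,Rf_{Y*})$ adjunction collapses this to $\Hom(N, Rf_{Y*} R\cHom(L', Rf_Y^! M))$, as required.

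There is no genuine obstacle here; the statement is a purely formal consequence of the adjunctions and the projection formula. The one book-keeping point to check is that the two directions in which the projection formula is invoked refer to the same natural isomorphism, which is clear since the isomorphism of Construction \ref{con.!}(3) is constructed by adjunction from the canonical identity $f_Y^*(M\otimes^L -)\simeq f_Y^* M\otimes^L f_Y^*(-)$ and is therefore symmetric in its two tensor factors.
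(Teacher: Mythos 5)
Your argument is exactly the formal adjunction/Yoneda argument the paper intends: the paper's proof of this corollary is a one-line reference to the six-functor formalism (``By adjunction, the projection formula induces the following isomorphisms''), and you have simply spelled out the two chains of adjunctions in full, correctly invoking the $(Rf_{Y!},Rf_Y^!)$ and $(f_Y^*,Rf_{Y*})$ adjunctions, tensor--hom, and Construction \ref{con.!}(3). (You also quietly corrected the typo in \eqref{e.1.13.2}, where the $L$ inside the rightmost $R\cHom$ should read $L'$.)
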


For $L\in D(X,\Lambda)$, $M\in D(Y,\Lambda)$, we write $L\boxtimes^L M$ for
$p_1^*L\otimes^L p_2^*M$, where $p_1\colon X\ttimes_S Y \to X$ and
$p_2\colon X\ttimes_S Y \to Y$ are the projections.  The base change
isomorphisms and projection formula induce the following K\"unneth formula.

\begin{cor}\label{c.Kunn!}
Let $f\colon X'\to X$ and $g\colon Y'\to Y$ be morphisms separated of finite
type of coherent schemes, and let $X_\et\to S$ and $Y_\et\to S$ be locally
coherent morphisms of locally coherent topoi. Let $L\in D(X'_\et,\Lambda)$
and $M\in D(Y'_\et,\Lambda)$. Then we have an isomorphism
\[Rf_!L\boxtimes^L Rg_! M\simto R(f\ttimes_S g)_!(L\boxtimes^L M).\]
Here $R(f\ttimes_S g)_!$ denotes the functor $R(f\ttimes_S
\id_X)_!R(\id_{Y'}\ttimes_S g)_!\simeq R(\id_{X}\ttimes_S g)_!R(f\ttimes_S
\id_{Y'})_!$.
\end{cor}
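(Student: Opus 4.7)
The plan is to run the standard argument for the classical K\"unneth formula $Rf_!L\boxtimes^L Rg_!M\simto R(f\times g)_!(L\boxtimes^L M)$ over a point, but in the fiber-product-of-topoi setting, using the three fundamental isomorphisms (projection formula, base change on $X$, base change on $Y$) recorded in Construction~\ref{con.!}. I would factor $f\ttimes_S g$ as the composition
\[X'_\et\ttimes_S Y'_\et \xrightarrow{f\ttimes_S \id_{Y'}} X_\et\ttimes_S Y'_\et \xrightarrow{\id_X\ttimes_S g} X_\et\ttimes_S Y_\et\]
and push $L\boxtimes^L M$ forward one step at a time, applying the projection formula followed by a base change isomorphism at each step.

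Concretely, write $p'_1,p'_2$ for the projections from $X'_\et\ttimes_S Y'_\et$ and $q_1,q_2$ for those from $X_\et\ttimes_S Y'_\et$. The key identity for the first step is $p'_2 = q_2\circ(f\ttimes_S\id_{Y'})$, so that $p'_2{}^*M\simeq (f\ttimes_S\id_{Y'})^*q_2^*M$. The projection formula (Construction~\ref{con.!}(3)) then gives
\[R(f\ttimes_S\id_{Y'})_!(p'_1{}^*L\otimes^L p'_2{}^*M)\simeq R(f\ttimes_S\id_{Y'})_!\,p'_1{}^*L\otimes^L q_2^*M,\]
and base change~(2) of Construction~\ref{con.!} applied with $Y=S$ (using $X_\et\ttimes_S S\simeq X_\et$, so that $q_1$ is identified with $\id_X\ttimes_S(Y'_\et\to S)$) identifies $R(f\ttimes_S\id_{Y'})_!\,p'_1{}^*L\simeq q_1^*Rf_!L$. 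Together this gives $(Rf_!L)\boxtimes^L M$ on $X_\et\ttimes_S Y'_\et$. Pushing forward once more along $\id_X\ttimes_S g$ and repeating the same two moves (projection formula, then base change, now in the $Y$ direction) produces $Rf_!L\boxtimes^L Rg_!M$.

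There is no real obstacle: the proof is entirely formal once Construction~\ref{con.!} is in place. The only points requiring care are the bookkeeping of natural identifications (notably $X_\et\ttimes_S S\simeq X_\et$ and the consistent identifications of the various projections) and the verification that the two factorizations of $R(f\ttimes_S g)_!$ stated in the corollary produce the same isomorphism. The latter reduces to a straightforward compatibility between base changes (1) and (2) applied to the Cartesian square of Construction~\ref{con.!}(1), and ultimately reflects that the whole construction is a diagrammatic consequence of the adjunction, base change, and projection formula axioms supplied by Construction~\ref{con.!}.
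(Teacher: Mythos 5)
Your proof is correct and is exactly the argument the paper has in mind: the paper offers no explicit proof but introduces the corollary with the sentence "The base change isomorphisms and projection formula induce the following K\"unneth formula," which is precisely your two-step factorization through $X_\et\ttimes_S Y'_\et$, pushing forward one variable at a time via Construction~\ref{con.!}~(2) (with the absolute case $X_\et\ttimes_S S\simeq X_\et$) and Construction~\ref{con.!}~(3). Your remark about verifying agreement of the two factorizations of $R(f\ttimes_S g)_!$ is the right thing to flag, and as you say it is handled by base change~(1) of Construction~\ref{con.!} applied to the commuting square of pushforwards.
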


Here we have omitted the subscript ``$\et$'' when no confusion arises.

\subsection{Over a classifying topos}
In this subsection, we study operations on fiber products of topoi over the
classifying topos of a profinite group, which include slices of vanishing
topoi. Among other things we prove K\"unneth formulas and biduality, which
will be used in Subsection \ref{ss.DLCo} and in many places in Sections
\ref{s.3} and \ref{s.3+}.

Let $s=BG$ be the classifying topos of a profinite group $G$, consisting of
discrete sets equipped with a continuous action of $G$. In our applications
$s$ will be the \'etale topos of a spectrum of a field. Let $X$ and $Y$ be
topoi over $s$. For $G=\{1\}$, $X\ttimes_s Y$ is the product topos $X\ttimes
Y$.

\begin{remark}\label{r.o}
For any topos $Z$, the category of morphisms of topoi $Z\to s$ is a
groupoid. In other words, if $p_1$ and $p_2$ are morphisms $Z\to s$, then
any natural transformation $\alpha\colon p_1^*\to p_2^*$ is an isomorphism.
Indeed, since $p_i^*$ carries the final object to the final object and
preserves coproducts, $\alpha$ is an isomorphism on constant sheaves. It
follows that $\alpha$ is an isomorphism on locally constant sheaves. It then
suffices to note that every sheaf on $s$ is a coproduct of locally constant
sheaves.

It follows that $X\ttimes_s Y$ is also the oriented product $X\atimes_s Y$.
\end{remark}

\begin{remark}\label{r.gen}
Given a generating family $\{U\}$ of $X$ and a generating family $\{V\}$ of
$Y$, $\{\Lambda_{U\ttimes_s V,X\ttimes_s Y}\}$ is a generating family of
$\Shv(X\ttimes_s Y,\Lambda)$. Here, $\Lambda_{U\ttimes_s V,X\ttimes_s Y}$ is
the free sheaf of $\Lambda$-modules on $X\ttimes_s Y$ generated by
$U\ttimes_s V$ \cite[IV Proposition 11.3.3]{SGA4}. Our notation $U\ttimes_s
V$ is consistent with \cite[XI 1.10 (b)]{ILO} (cf.\ \cite[Proposition
VI.3.15]{AG}), using the same notation for objects of a topos and the
corresponding localized topoi.

Indeed, by the construction of $X\ttimes_s Y$, objects of the form
$U\times_{a,BH,b} V$, $H$ running through open normal subgroups of $G$, form
a generating family of $X\ttimes_s Y$. Here $a\colon U\to BH$ and $b\colon
V\to BH$ are morphisms of topoi. We have
\begin{equation}\label{e.BH}
BH\ttimes_s BH \simeq \coprod_{g\in G/H} BH.
\end{equation}
The two projections on the component indexed by $gH$, $g\in G$ are given
respectively by the identity and by the morphism $c_g\colon BH\to BH$
induced by conjugation by $g$. The isomorphism class of $c_g$ depends only
on $gH$. It follows that $U\ttimes_s V\simeq \coprod_{g\in G/H}
U\ttimes_{a,BH,c_gb} V$.
\end{remark}

We restrict our attention to coherent topoi equipped with morphisms to $s$,
which we call coherent topoi over $s$. Note that $s$ is a coherent topos and
any morphism $X\to s$ with $X$ coherent is coherent. Indeed, $BH$, $H$
running through open normal subgroups of $G$, form a generating family of
coherent objects of $s$, and by \cite[VI Proposition 3.2]{SGA4}, it suffices
to show that $BH\ttimes_s X$ is coherent. By \eqref{e.BH} the pullback of
the projection $p\colon BH\ttimes_s X\to X$ by itself is coherent, so that
$p$ is coherent by \cite[VI Proposition 1.10 (ii)]{SGA4}.

Let $\pt=\bar s\to s$ be the point of $s$ with inverse image given by the
functor forgetting the action of $G$. By Remark \ref{r.o}, all points of $s$
are isomorphic.

Corollary \ref{c.obc} takes the following form.

\begin{prop}\label{p.prod}
Let $f\colon X'\to X$ and $g\colon Y'\to Y$ be morphisms of coherent topoi
over $s$. Assume that $f$ is coherent. Then the base change map
\[\BC_g\colon (\id_{X}\ttimes_s g)^*R(f\ttimes_s \id_{Y})_*\to R(f\ttimes_s
\id_{Y'})_*(\id_{X'}\ttimes_s g)^*
\]
associated to the (Cartesian) square of topoi
\[\xymatrix{X'\ttimes_s Y'\ar[r]^{\id_{X'}\ttimes_s g}\ar[d]_{f\ttimes_s
\id_{Y'}}\ar@{}[rd]|{\Leftrightarrow}
 & X'\ttimes_s Y\ar[d]^{f\ttimes_s \id_Y}\\
X\ttimes_s Y'\ar[r]^{\id_X\ttimes_s g} & X\ttimes_s Y}
\]
is a natural isomorphism on $D^+(X'\ttimes_s Y,\Lambda)$.
\end{prop}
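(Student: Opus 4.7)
The plan is to deduce the proposition from Corollary \ref{c.obc} by exploiting the symmetry of the fiber product over $s = BG$ provided by Remark \ref{r.o}. Remark \ref{r.o} asserts that the oriented product $X\atimes_s Y$ coincides with the fiber product $X\ttimes_s Y$, and since fiber products are manifestly symmetric, we obtain canonical equivalences
\[X\ttimes_s Y\simeq X\atimes_s Y\simeq Y\ttimes_s X\simeq Y\atimes_s X,\]
and similarly for the other three corners of the square. Under these equivalences $f\ttimes_s \id_Y$ is identified with $\id_Y\atimes_s f$ and $\id_X\ttimes_s g$ is identified with $g\atimes_s \id_X$. Thus the Cartesian square in the statement is carried to the Cartesian square
\[\xymatrix{Y'\atimes_s X'\ar[d]_{g\atimes_s \id_{X'}}\ar[r]^{\id_{Y'}\atimes_s f} & Y'\atimes_s X\ar[d]^{g\atimes_s \id_X}\\
Y\atimes_s X'\ar[r]^{\id_Y\atimes_s f} & Y\atimes_s X}\]
associated to the ``swapped'' sequence $Y'\xrightarrow{g} Y \to s \leftarrow X \xleftarrow{f} X'$, and the map $\BC_g$ is transported to the base change map of Corollary \ref{c.obc} for this sequence.

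Next I would verify the hypotheses of Corollary \ref{c.obc}. The morphism $f$ is coherent by assumption. For the structural morphisms $X\to s$, $Y\to s$, $Y'\to s$, the paragraph preceding the proposition shows that any morphism from a coherent topos to $s$ is automatically coherent, so these are coherent as well. Hence Corollary \ref{c.obc} applies and yields that the base change map on the swapped square is a natural isomorphism on $D^+$. Transporting back via the canonical equivalences produces the desired isomorphism for $\BC_g$.

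The main thing to verify carefully is that the canonical equivalences $X\atimes_s Y\simeq Y\atimes_s X$ (and the analogous ones at the other corners) are compatible with the base change 2-cells, i.e.\ that they turn the Beck--Chevalley map attached to one square into the Beck--Chevalley map attached to the other. This is a purely 2-categorical bookkeeping matter, and should follow from the fact that the equivalences in Remark \ref{r.o} are canonical at the level of inverse image functors: both adjunctions $((\id_X\ttimes_s g)^*, R(\id_X\ttimes_s g)_*)$ and $((g\atimes_s \id_X)^*, R(g\atimes_s \id_X)_*)$ are identified under the equivalence, and likewise for the vertical arrows, so the corresponding units and counits (hence the base change 2-cells built from them) match automatically. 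Granted this compatibility, the proposition reduces cleanly to Corollary \ref{c.obc}.
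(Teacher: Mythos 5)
Your proof is correct and is exactly what the paper intends: the paper introduces Proposition \ref{p.prod} with the phrase ``Corollary \ref{c.obc} takes the following form,'' i.e.\ it relies implicitly on the same reduction you spell out — over $s=BG$ the oriented and fiber products agree (Remark \ref{r.o}), the fiber product is symmetric, and swapping the two factors turns $\BC_g$ into the base change map of Corollary \ref{c.obc} for the sequence $Y'\xrightarrow{g}Y\to s\leftarrow X\xleftarrow{f}X'$, whose coherence hypotheses hold since any morphism from a coherent topos to $s$ is coherent and $f$ is coherent by assumption. Your explicit check that the symmetry equivalences intertwine the two Beck--Chevalley cells is the only content the paper leaves unsaid, and your justification of it is adequate.
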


\begin{remark}\label{r.cd}
Note that $Rf_{*}$ and $Rf_{\bar s*}$ have the same cohomological dimension.
Moreover, $R(f\ttimes_s \id)_*$ has the same cohomological dimension by the
proposition applied to $Y'=\pt$. If these cohomological dimensions are
finite, then the proposition holds more generally on the unbounded derived
category $D(X'\ttimes_s Y,\Lambda)$.
\end{remark}

\begin{remark}
The analogue of Proposition \ref{p.prod} does not hold for fiber products
    of schemes. For example, let $s=\Spec(k)$, where $k$ is a field,
    $X=Y=\PP^1_k$, and let $f\colon X'=\A^1_k\to X$ and $g\colon
    Y'=\{\infty\}\to Y$ be the immersions. Let $\Gamma\colon X'\to
    X'\times_s Y$ be the graph of $f$. Then $(\id_{X'}\times_s
    g)^*(\Gamma_*\Lambda)=0$ while $(\id_X\times_s g)^*(f\times_s
    \id_Y)_*(\Gamma_*\Lambda)$ can be identified with $g_*\Lambda$.
\end{remark}

\begin{cor}\label{c.pf}
Let $f$ and $Y$ be as above, $L\in D(X'\ttimes_s Y,\Lambda)$, $M\in
D(Y,\Lambda)$. Assume that one of the following holds:
\begin{enumerate}[(a)]
\item $Rf_{*}$ has finite cohomological dimension; or
\item $L,M\in D^+$, and $\Lambda$ has finite weak dimension; or
\item $L\in D^+$ and $M$ has finite tor-amplitude; or
\item $f=(f_0)_\et$, where $f_0\colon X_0'\to X_0$ is a morphism of finite
    type of Noetherian schemes, $m\Lambda=0$ for an integer $m$ invertible
    on $X_0$, and $L\in D^+$, $L\otimes^L p'^*_2M\in D^+$.
\end{enumerate}
Then the projection formula map
\[R(f\ttimes_s
\id_{Y})_*L \otimes^L p_2^*M \to R(f\ttimes_s \id_{Y})_*(L\otimes^L p'^*_2M)
\]
is an isomorphism. Here $p_2\colon X\ttimes_s Y\to X$ and $p'_2\colon
X'\ttimes_s Y\to X'$ denote the projections.
\end{cor}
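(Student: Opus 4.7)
The projection formula map is the natural transformation adjoint to
\[(f\ttimes_s \id_Y)^*\bigl(R(f\ttimes_s \id_Y)_*L\otimes^L p_2^*M\bigr)\simeq (f\ttimes_s \id_Y)^*R(f\ttimes_s \id_Y)_*L\otimes^L p'^*_2M\xrightarrow{\epsilon\otimes\id}L\otimes^L p'^*_2M,\]
where $\epsilon$ is the counit of $(f\ttimes_s\id_Y)^*\dashv R(f\ttimes_s\id_Y)_*$; it is natural in $L,M$ and sends distinguished triangles in $M$ to morphisms of triangles. The plan is to verify it is an isomorphism by reducing, via Proposition \ref{p.prod}, to $M$ in a generating family.

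First I would treat case (a). By Remark \ref{r.cd}, $R(f\ttimes_s\id_Y)_*$ has finite cohomological dimension, and then the finite-cohomological-dimension version of Lemma \ref{l.adjtriv} shows that it commutes with small direct sums; both sides of the projection formula are therefore triangulated and sum-preserving functors of $M$. By Remark \ref{r.gen} applied with $X=s$, the sheaves $j_!\Lambda$ with $j\colon V\to Y$ ranging over localizations at coherent objects of $Y$ generate $D(Y,\Lambda)$, so it suffices to verify the isomorphism for $M=j_!\Lambda$. For such $M$, topos-theoretic base change along the open immersion $j$ gives $p_2^*M\simeq(\id_X\ttimes_s j)_!\Lambda$ and $p'^*_2M\simeq(\id_{X'}\ttimes_s j)_!\Lambda$, and tensoring with these equals $(\id\ttimes_s j)_!(\id\ttimes_s j)^*$. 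The projection formula map then becomes the commutation $R(f\ttimes_s\id_Y)_*(\id_{X'}\ttimes_s j)_!\simeq(\id_X\ttimes_s j)_!R(f\ttimes_s\id_V)_*$, which follows from the recollement of $X\ttimes_s Y$ along the open sub-topos $\id_X\ttimes_s j$ and its closed complement, combined with Proposition \ref{p.prod}: after applying $(\id_X\ttimes_s j)^*$ the statement is Proposition \ref{p.prod}, and after applying the closed-complement restriction both sides vanish.

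For cases (b), (c), (d), I would reduce to case (a) by truncation. Each hypothesis provides amplitude control on $\otimes^L p'^*_2M$: finite weak dimension of $\Lambda$ in (b), finite tor-amplitude of $M$ in (c), and the classical cohomological finiteness of $Rf_{0*}$ for finite-type morphisms of Noetherian schemes with torsion killed by an invertible integer in (d). Together with the bounded-below hypothesis on $L$, this allows one to replace $M$ by truncations $\tau^{\le n}M$ and pass to the limit, reducing successively to $M$ bounded, $M$ concentrated in a single degree, and finally to the generators treated in case (a). The principal obstacle is the absence of uniform finite cohomological dimension in cases (b)--(d), so the truncation and colimit passage must be aligned carefully with the amplitude estimates afforded by the hypotheses; case (d) in particular requires the classical constructibility/cohomological dimension machinery for étale morphisms of Noetherian schemes to ensure that the reduction stays inside $D^+$. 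Once this bookkeeping is handled, Proposition \ref{p.prod} is the only nontrivial input.
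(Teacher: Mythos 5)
Your case (a) is correct but takes a genuinely different route from the paper's. The paper first reduces to $s=\pt$ and then restricts along $\id_X\ttimes_s y$ for points $y$ of $Y$, using Proposition \ref{p.prod} (extended to unbounded complexes via Remark \ref{r.cd}) to reduce at once to $Y=s=\pt$, where the map is the classical projection formula for $Rf_{\bar s*}$. You instead keep $Y$ and d\'evisse $M$ down to the generators $j_!\Lambda$, converting the statement into the commutation of $R(f\ttimes_s\id)_*$ with extension by zero --- essentially the over-$s$ analogue of Lemma \ref{l.trivshr}. This is legitimate under (a), since both sides are then exact, sum-preserving functors of $M$ and $D(Y,\Lambda)$ is the localizing subcategory generated by the $j_!\Lambda$; what it buys is avoiding stalkwise arguments, at the cost of re-proving a support/recollement lemma.

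The reduction of (b)--(d) ``to case (a) by truncation'' is where the proposal breaks down. First, (a) is a hypothesis on $f$, not on $M$: no manipulation of $M$ produces finite cohomological dimension for $R(f\ttimes_s\id_Y)_*$, and without it the last step of your d\'evissage --- passing from a single sheaf $M$ to the generators $j_!\Lambda$ --- fails, because the resolution of $M$ by direct sums of generators is unbounded below and $R(f\ttimes_s\id_Y)_*(L\otimes^L p_2'^*(-))$ need not commute with the resulting homotopy colimits (their terms are not uniformly in $D^{\ge N}$, so Lemma \ref{l.adjtriv} gives no control). Second, in case (c) truncating $M$ into single cohomological degrees destroys the hypothesis: the individual $H^i(M)$ need not have finite tor-amplitude (for $\Lambda=\Z/4$ and $M=[\Lambda\xrightarrow{2}\Lambda]$, the cohomology is $\Z/2$, of infinite tor-amplitude). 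The paper instead first reduces to $Y=s$ by Proposition \ref{p.prod} (valid on $D^+$ under the amplitude hypotheses), then represents $M$ by a bounded complex of \emph{flat} $\Lambda$-modules, d\'evisses via the stupid filtration to a single flat module, and uses Lazard's theorem together with commutation with filtered colimits to reach the trivial finite free case; (b) reduces to bounded $M$ and hence to (c) because finite weak dimension bounds the tor-amplitude of every module. Third, in case (d) the cohomological finiteness of $Rf_{0*}$ you invoke holds only for $X_0$ finite-dimensional (Gabber), which a Noetherian scheme need not be; the paper obtains this by localizing $X_0$ at a point before appealing to (a). So the workable architecture is: reduce to $Y=s$ via Proposition \ref{p.prod}, then run case-specific d\'evissages of $M$ as a complex of $\Lambda$-modules, rather than a uniform truncation argument feeding into case (a).
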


Recall that the weak dimension of $\Lambda$ is the supremum of integers $n$
such that $\Tor_n^\Lambda(A,B)\neq 0$ for some $\Lambda$-modules $A$ and
$B$.

\begin{proof}
We may assume that $s$ is a point. By Proposition \ref{p.prod}, we may
further assume that $Y=s$. This case is standard. For (a), see \cite[Lemma
A.8]{IZ}. For (b), we reduce to the case $M\in D^b$, which is a special case
of (c). For (c), we reduce to the case where $M$ is a flat $\Lambda$-module,
and then to the trivial case where $M$ is a finite free $\Lambda$-module.
For (d), we reduce by localization at a point to the case where $X_0$ is
finite-dimensional, which is a special case of (a) by Gabber's theorem on
the finiteness of cohomological dimension \cite[XVIII$_{\text{A}}$ Corollary
1.4]{ILO}.
\end{proof}

\begin{remark}\label{r.tor}
In case (d), the projection formula implies that $R(f\ttimes_s \id_{Y})_*$
preserves complexes of tor-amplitude $\ge n$.
\end{remark}

Combining the corollary with the proposition, we obtain the following
K\"unneth formula for $R(f\ttimes_s g)_*$. The analogue for fiber products
of schemes over a field is \cite[III (1.6.4)]{SGA5}.

\begin{cor}\label{c.Kunn*}
Let $f\colon X'\to X$ and $g\colon Y'\to Y$ be coherent morphisms of
coherent topoi over~$s$. Let $L\in D(X',\Lambda)$, $M\in D(Y',\Lambda)$.
Assume that either one of the following holds:
\begin{enumerate}[(a)]
\item $Rf_{*}$ and $Rg_*$ have finite cohomological dimensions;
\item $L,M\in D^+$, and $\Lambda$ has finite weak dimension.
\end{enumerate}
Then the
K\"unneth formula map
\[Rf_*L\boxtimes^L Rg_* M\to R(f\ttimes_s
g)_*(L\boxtimes^L M)
\]
is an isomorphism.
\end{cor}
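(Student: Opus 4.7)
The plan is to decompose $f\ttimes_s g$ as $(f\ttimes_s \id_Y)\circ(\id_{X'}\ttimes_s g)$, push forward in two stages, and at each stage alternate the projection formula (Corollary \ref{c.pf}) with a base change (Proposition \ref{p.prod}). Write $p_1', p_2'$, $p_1'', p_2''$, and $p_1, p_2$ for the projections out of $X'\ttimes_s Y'$, $X'\ttimes_s Y$, and $X\ttimes_s Y$, respectively.

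For the first stage, using $p_1' = p_1''\circ(\id_{X'}\ttimes_s g)$, the projection formula yields
\[
R(\id_{X'}\ttimes_s g)_*(L\boxtimes^L M)\simeq (p_1'')^* L \otimes^L R(\id_{X'}\ttimes_s g)_* (p_2')^* M,
\]
and the base change $R(\id_{X'}\ttimes_s g)_* (p_2')^* M \simeq (p_2'')^* Rg_* M$ follows from Proposition \ref{p.prod} after exchanging the two $\ttimes_s$-factors (so that its ``$f$'' plays the role of our $g$ and its ``$g$'' plays the role of the structure map $X'\to s$). Combining, the first stage produces $L\boxtimes^L Rg_* M$ on $X'\ttimes_s Y$. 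For the second stage, push forward by $R(f\ttimes_s \id_Y)_*$. Using $p_2''=p_2\circ(f\ttimes_s \id_Y)$, the projection formula splits off the $Rg_* M$ factor, and Proposition \ref{p.prod} in its stated form identifies $R(f\ttimes_s \id_Y)_* (p_1'')^* L \simeq p_1^* Rf_* L$. Composing yields $R(f\ttimes_s g)_*(L\boxtimes^L M)\simeq Rf_* L\boxtimes^L Rg_* M$, and unwinding the construction shows that this isomorphism is the canonical K\"unneth map.

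It remains to check the hypotheses at each step. Under (a), Remark \ref{r.cd} gives that $R(\id_{X'}\ttimes_s g)_*$ and $R(f\ttimes_s \id_Y)_*$ have finite cohomological dimension, so Corollary \ref{c.pf}(a) and the unbounded version of Proposition \ref{p.prod} apply. Under (b), $Rg_*$ and $Rf_*$ preserve $D^+$, so the intermediate complexes lie in $D^+$, and Corollary \ref{c.pf}(b) together with Proposition \ref{p.prod} on $D^+$ suffice.

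The main obstacle is organizational: one must correctly identify the Cartesian square of the first stage with the ``transposed'' form of Proposition \ref{p.prod} obtained from the symmetry of $\ttimes_s$, and verify that the intermediate complexes retain the boundedness or finite-dimensional properties needed for the projection formula. Beyond this bookkeeping, no new ingredient should be required.
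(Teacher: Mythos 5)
Your proof is correct and is exactly the argument the paper intends: the corollary is stated with the one-line justification ``combining the corollary [the projection formula, Corollary \ref{c.pf}] with the proposition [the base change, Proposition \ref{p.prod}]'', and your two-stage decomposition with alternating projection formula and base change, together with the verification of hypotheses via Remark \ref{r.cd} in case (a) and preservation of $D^+$ in case (b), is the standard way to carry this out.
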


By a scheme over $s$ we mean a scheme $Z$ equipped with a morphism of topoi
$Z_\et\to s$.  Let $X$ be a coherent scheme over $s$. For any open subgroup
$H<G$, we have $X_\et\ttimes_s t \simeq (X_t)_\et$, where $t=BH$ and $X_t\to
X$ is a finite \'etale cover. Taking limit, we get $X_\et\ttimes_s \bar
s\simeq (X_{\bar s})_\et$, where $X_{\bar s}=\lim X_t$.

Let $Y$ be a coherent topos over $s$. Assume $\Lambda$ of torsion. By
Constructions \ref{con.!} and \ref{c.dim}, for $f\colon X'\to X$ a morphism
of schemes separated of finite type, the functors $(f_\et\ttimes_s \id_Y)_!$
and $(f_\et\ttimes_s \id_Y)^!$ are defined. In fact, in this case, the
proper base change used in the construction of $(f_\et\ttimes_s \id_Y)_!$
does not require Proposition \ref{p.Org} and can be reduced by Proposition
\ref{p.prod} to classical proper base change.

Assume $m\Lambda=0$ for some $m$ invertible on $X$.

\begin{prop}\label{l.trace}
For $f$ smooth of dimension $d$, the trace map \eqref{e.tr}
\[\tr_{{f_\et\ttimes_s \id_Y}}\colon
(f_\et\ttimes_s\id_Y)^*(d)[2d]\to R(f_\et\ttimes_s\id_Y)^!
\]
is an isomorphism.
\end{prop}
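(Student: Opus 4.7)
The plan is to reduce the assertion to classical Poincar\'e duality for smooth morphisms of schemes.

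By Construction~\ref{c.dim}, the trace map $\tr_{f_\et\ttimes_s\id_Y}$ is adjoint, via the adjunction $R(f_\et\ttimes_s\id_Y)_!\dashv R(f_\et\ttimes_s\id_Y)^!$, to the natural transformation
\[\Tr_{f_\et\ttimes_s\id_Y}\colon R(f_\et\ttimes_s\id_Y)_!(f_\et\ttimes_s\id_Y)^*(d)[2d]\to\id,\]
and by the projection formula (Construction~\ref{con.!}(3)), this $\Tr$ is entirely determined by its value on $\Lambda$. By base change on $Y$ (Construction~\ref{con.!}(2)), the value $\Tr_{f_\et\ttimes_s\id_Y}(\Lambda)$ is canonically identified with $p_1^*\Tr_{f_\et}(\Lambda)$, where $\Tr_{f_\et}(\Lambda)$ is the classical trace. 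Since $f$ is smooth of dimension $d$, the classical trace is an isomorphism by Poincar\'e duality for schemes, so $\Tr_{f_\et\ttimes_s\id_Y}$ is an isomorphism of natural transformations.

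To deduce that $\tr$ itself is an isomorphism, I plan to argue by reduction to the case $Y=\bar s$ via base change in the $Y$-variable. By Remark~\ref{r.gen}, objects of the form $U\ttimes_s V$ with $V$ in a generating family of $Y$ form a generating family of $X'_\et\ttimes_s Y$, so it suffices to check that $\tr_M$ is an isomorphism after pulling back along each localization $\id_{X'_\et}\ttimes_s g\colon X'_\et\ttimes_s V\to X'_\et\ttimes_s Y$. By Lemma~\ref{l.trivshr} and the naturality of the trace construction, this pullback is identified with $\tr_{f_\et\ttimes_s\id_V}$. Using that the coherent topos $Y$ has enough points by Deligne's theorem, a continuity argument then reduces to the case $Y=\bar s$, where $X_\et\ttimes_s\bar s\simeq (X_{\bar s})_\et$, the functor $R(f_\et\ttimes_s\id_{\bar s})^!$ identifies with the classical $Rf_{\bar s}^!$, and $\tr$ becomes the classical trace $f_{\bar s}^*(d)[2d]\to Rf_{\bar s}^!$, which is an isomorphism by classical Poincar\'e duality.

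The principal difficulty lies in this last reduction from objects of $Y$ to its points: Lemma~\ref{l.trivshr} directly handles pullback along an object of $Y$, but a point is only a cofiltered limit of its neighborhoods. A continuity argument is therefore needed, using that the shriek operations have bounded cohomological dimension (Construction~\ref{c.dim}) and commute with the relevant filtered limits of topoi.
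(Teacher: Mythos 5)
Your first paragraph has a genuine error. You write that since $f$ is smooth, ``the classical trace is an isomorphism by Poincar\'e duality for schemes, so $\Tr_{f_\et\ttimes_s\id_Y}$ is an isomorphism of natural transformations.'' The map $\Tr_{f_\et}(\Lambda)\colon Rf_{\et!}\Lambda\to\Lambda(-d)[-2d]$ is \emph{not} an isomorphism for $f$ smooth and non-proper; for instance for $f\colon\Gm\to\Spec k$ the complex $Rf_!\Lambda$ is concentrated in degrees $1$ and $2$ and $\Tr$ is merely the projection onto $H^2$. Poincar\'e duality for schemes asserts that the \emph{adjoint} transformation $\tr_f\colon f^*(d)[2d]\to Rf^!$ is an isomorphism, and this does not entail that the counit $Rf_!f^*(d)[2d]\to\id$ is an isomorphism (that would require $f$ proper). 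So the claim that $\Tr_{f_\et\ttimes_s\id_Y}$ is an isomorphism is false, and in any case an isomorphism there would not automatically give an isomorphism of $\tr$.

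Your second paragraph correctly reduces, via Remark~\ref{r.gen} and Lemma~\ref{l.trivshr}, to $\tr_{f_\et\ttimes_s\id_V}$ for $V$ an object of $Y$, but then the proposed ``continuity argument'' passing from coherent objects $V$ to points $\bar s$ of $Y$ is precisely the hard step, and it is not provided. What you need is the base change isomorphism $(\id\ttimes_s g)^*R(f\ttimes_s\id_Y)^!\simeq R(f\ttimes_s\id_{Y'})^!(\id\ttimes_s g)^*$ for $g\colon\bar s\to Y$ the inclusion of a point, which is the content of Proposition~\ref{p.bcupper!} --- but that proposition is proved in the paper by invoking Proposition~\ref{l.trace} for the smooth case, so using it here is circular. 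The $!$-pullback $R(f\ttimes_s\id_V)^!$ is a right adjoint and does not commute with filtered limits in the required sense without further argument, so the gap is real. Finally, your approach would at best establish the result on $D^+$, whereas the statement holds on the unbounded derived category; the extension requires a separate argument (the paper builds an explicit, bounded-amplitude model of $R(f\ttimes_s\id)^!$ via modified Godement resolutions).

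The paper's actual proof avoids reduction to points of $Y$ entirely: it reduces to showing $Rp'_{1*}\tr(L)$ is an isomorphism, where $p'_1\colon X'_\et\ttimes_s Y\to X'_\et$ is the projection, and obtains this from a commutative diagram which identifies $Rp'_{1*}\tr(L)$ with the classical trace isomorphism $f_\et^*Rp_{1*}L(d)[2d]\to Rf_\et^!Rp_{1*}L$ over the scheme $X$, via the base change of Proposition~\ref{p.prod} on the one side and the construction of $R^!$ on the other. That is, it pushes forward to the scheme side rather than restricting to the $Y$ side, which sidesteps the need for any base change for $R^!$ along points of $Y$.
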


\begin{proof}
Let us first show that $\tr(L)$ is an isomorphism for $L\in
D^+(X_\et\ttimes_s Y,\Lambda)$. For this, it suffices to check that
$R\Gamma(U'_\et\ttimes_s V,\tr(L)|_{U'_\et\ttimes_s V})$ is an isomorphism
for $u\colon U'\to X'$ an \'etale morphism of coherent schemes and $V$ a
coherent object of $Y$. By Lemma \ref{l.trivshr}, $\tr(L)|_{X'_\et\ttimes_s
V}\simeq \tr(L|_{X_\et\ttimes_s V})$. Changing notation, it suffices to
check that
\[R\Gamma(U'_\et\ttimes_s Y,(u\ttimes_s
\id_Y)^*\tr(L))\simeq R\Gamma(U'_\et,Rp''_{1*}(u\ttimes_s
\id_Y)^*\tr(L))\simeq R\Gamma(U'_\et,u^*Rp'_{1*}\tr(L))
\]
is an isomorphism, where $p'_1\colon X'_\et\ttimes_s Y\to X'_\et$ and
$p''_1\colon U'_\et\ttimes_s Y\to U'_\et$ are the projections. We have a
commutative diagram
\[\xymatrix{f_{\et}^* Rp_{1*}L(d)[2d]\ar[d]_{\simeq}\ar[r]^{\tr(Rp_{1*}L)}_\sim  &
Rf_{\et}^!\ar[d]^{\simeq}Rp_{1*}L\\
Rp'_{1*}(f_\et\ttimes_s\id_Y)^*L(d)[2d]\ar[r]^-{Rp'_{1*}\tr(L)}& Rp'_{1*}R(f_\et\ttimes_s\id_Y)^!L.}
\]
The left vertical isomorphism is given by Proposition \ref{p.prod} applied
to the pair $(g,f_\et)$, where $g\colon Y\to s$. The right vertical
isomorphism is given by adjunction from Construction \ref{con.!} (2) applied
to the pair $(f,g)$. The upper horizontal isomorphism is \cite[XVIII
Th\'eor\`eme 3.2.5]{SGA4}. It follows that $Rp'_{1*}\tr(L)$ is an
isomorphism.

As in the proof of \cite[XVIII Th\'eor\`eme 3.2.5]{SGA4}, the above result
extends to unbounded $L$ by a more explicit construction of
$R(f_\et\ttimes_s \id_Y)^!$. For any coherent topos $E$, $\Shv(E,\Lambda)$
is equivalent to the Ind-category of the category of finitely presented
sheaves of $\Lambda$-modules on $E$ by \cite[VI Corollaire 1.24.2, I
Corollaire 8.7.7]{SGA4}. In fact, for each finitely presented object $U$ of
$E$, $u_!\Lambda_U$ is a finitely presented object of $\Shv(E,\Lambda)$,
where $u\colon U\to E$. Imitating \cite[XVIII Section 3.1]{SGA4}, we define
the modified Godement resolution for a sheaf $\cF=\colim_i \cF_i$ of
$\Lambda$-modules on $E$ by $M^\bullet(\cF)=\colim_i G^\bullet(\cF_i)$,
where $G^\bullet$ denotes the Godement resolution associated to a
conservative family of points of $E$ (which exists by Deligne's theorem
\cite[VI Th\'eor\`eme 9.0]{SGA4}), and $\cF_i$ are finitely presented. For a
compactification $f=\bar fj$, we define $(f_\et\ttimes_s \id)_!^\bullet
\cF\colonequals (\bar f_\et\ttimes_s\id)_*\tau^{\le 2d} M^\bullet
(j_\et\ttimes_s \id)_!\cF$, where $d$ is the maximum of the dimensions of
the fibers of $f$. Then $R(f_\et\ttimes_s \id)^!$ is the derived functor of
the complex of functors $(f_\et\ttimes_s \id)^!_\bullet$, right adjoint to
$(f_\et\ttimes_s \id)_!^\bullet$. By \cite[XVII Proposition 1.2.10]{SGA4}
(variant for a bounded complex of functors), $R(f_\et\ttimes_s \id)^!$ has
finite cohomological amplitude on the unbounded derived category.
\end{proof}

\begin{prop}\label{p.bcupper!}
Assume $f$ separated of finite presentation. The map
\[(\id_{X'_\et}\ttimes_s g)^*R(f_\et \ttimes_s \id_{Y})^!\to
R(f_\et\ttimes_s \id_{Y'})^!(\id_{X_\et}\ttimes_s g)^*
\]
induced by
Constructions \ref{con.!} (2) and \ref{c.adj} is a natural isomorphism on
$D^+$.
\end{prop}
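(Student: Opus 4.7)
The plan is to reduce the assertion to elementary cases via standard compactification and factorization arguments. First, applying Nagata's theorem, write $f = \bar f \circ j$ with $j\colon X' \hookrightarrow \bar X'$ an open immersion and $\bar f\colon \bar X' \to X$ proper. By Construction \ref{con.!}, $R(f_\et\ttimes_s\id_Y)_! \simeq R(\bar f_\et\ttimes_s\id_Y)_* \circ (j_\et\ttimes_s\id_Y)_!$, so by adjunction $R(f_\et\ttimes_s\id_Y)^! \simeq (j_\et\ttimes_s\id_Y)^* \circ R(\bar f_\et\ttimes_s\id_Y)^!$. Since pullbacks trivially commute with $(\id_{X_\et}\ttimes_s g)^*$, we reduce to the case $f$ proper. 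Since the claim is local on the target $X$, via Chow's lemma (combined with suitable cohomological descent) we further factor $f$ as $p\circ i$, with $i\colon X'\hookrightarrow\PP^n_X$ a closed immersion and $p\colon\PP^n_X\to X$ the smooth projection, reducing to the two resulting cases.

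In the smooth proper case, Proposition \ref{l.trace} provides the trace isomorphism $(p_\et\ttimes_s\id_Y)^*(n)[2n]\simto R(p_\et\ttimes_s\id_Y)^!$, reducing base change along $(\id_{X_\et}\ttimes_s g)^*$ to the trivial commutation of two pullbacks.

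For the closed immersion $i$, with open complement $j_0\colon V\hookrightarrow\PP^n_X$, we consider the localization triangle in $D^+(\PP^n_{X,\et}\ttimes_s Y,\Lambda)$
\[R(i_\et\ttimes_s\id_Y)_*R(i_\et\ttimes_s\id_Y)^!M\to M\to R(j_{0,\et}\ttimes_s\id_Y)_*(j_{0,\et}\ttimes_s\id_Y)^*M\to\]
and apply $(\id_{\PP^n_{X,\et}}\ttimes_s g)^*$. We exploit: (i) its compatibility with $R(j_{0,\et}\ttimes_s\id_Y)_*$ on $D^+$ via Proposition \ref{p.prod}; (ii) its compatibility with $R(i_\et\ttimes_s\id_Y)_*\simeq R(i_\et\ttimes_s\id_Y)_!$ via Construction \ref{con.!} (2); and (iii) the full faithfulness of $R(i_\et\ttimes_s\id_Y)_*$. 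Combining, the base change map for $R(i_\et\ttimes_s\id_Y)^!$ is an isomorphism. The main obstacle lies both in the Chow's lemma reduction from proper to projective, and in establishing the localization triangle in the fiber product setting; the latter should follow from standard topos-theoretic gluing along the open/closed decomposition $(V_\et\ttimes_s Y, X'_\et\ttimes_s Y)$ of $\PP^n_{X,\et}\ttimes_s Y$, proved via base change arguments analogous to those in Construction \ref{con.!}.
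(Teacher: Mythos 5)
Your two base cases coincide with the paper's proof: the smooth case via the trace isomorphism of Proposition \ref{l.trace}, and the closed-immersion case via the complementary open immersion, Proposition \ref{p.prod} applied to $R(j_{0,\et}\ttimes_s\id_Y)_*$, and the conservativity (full faithfulness) of $R(i_\et\ttimes_s\id_Y)_*$. The Nagata step is also correct as stated (from $R(f_\et\ttimes_s\id_Y)_!\simeq R(\bar f_\et\ttimes_s\id_Y)_!(j_\et\ttimes_s\id_Y)_!$ one gets $R(f_\et\ttimes_s\id_Y)^!\simeq (j_\et\ttimes_s\id_Y)^*R(\bar f_\et\ttimes_s\id_Y)^!$ by adjunction), though it turns out to be unnecessary.

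The genuine gap is the passage from proper to projective. Chow's lemma produces a projective modification $\pi\colon X''\to \bar X'$ of the \emph{source}, and you propose to descend the assertion for $\bar f\pi$ to one for $\bar f$ by ``suitable cohomological descent''. Cohomological descent is a statement about $*$-pullback along proper hypercoverings ($M\simto R\epsilon_*\epsilon^*M$); it gives no handle on $R(\bar f_\et\ttimes_s\id_Y)^!$, which is defined purely by adjunction and does not interact with a hypercovering of the source in any standard way. You flag this as ``the main obstacle'', but it is not a routine verification --- it is a missing idea, and I do not see how to carry it out. The fix is to drop the proper/projective detour altogether: the target of the base change map lives in $D(X'_\et\ttimes_s Y',\Lambda)$, and whether it is an isomorphism can be checked after restriction to $U'_\et\ttimes_s Y'$ for $U'$ running over a Zariski open cover of $X'$ (such objects generate the topos); moreover $(u_\et\ttimes_s\id)^*R(f_\et\ttimes_s\id)^!\simeq R((fu)_\et\ttimes_s\id)^!$ because $(u_\et\ttimes_s\id)^*$ is right adjoint to $(u_\et\ttimes_s\id)_!$ for $u$ an open immersion, and the base change maps are compatible with composition. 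Taking $U'$ affine with image in an affine open $U\subseteq X$, the composite $U'\to X$ factors as a closed immersion into $\A^n_U$ followed by the smooth morphism $\A^n_U\to U\to X$, so one lands directly in your two base cases. This locality-on-the-source reduction is what the paper's one-line reduction to cases (a) and (b) intends.
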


\begin{proof}
We reduce to the following cases: (a) $f$ is smooth; (b) $f$ is a closed
immersion. For (a), we apply the preceding proposition. For (b), we apply
Proposition \ref{p.prod} to $Rj_*$, where $j$ is the complementary open
immersion.
\end{proof}

Similarly, we have the following projection formula.

\begin{prop}\label{p.proj}
Assume $X$ Noetherian finite-dimensional. Then the map
\[R(f_\et\ttimes_s \id_Y)^!L \otimes^L p_2'^*M\to R(f_\et\ttimes_s \id_Y)^! (L\otimes^L p_2^*M),\]
adjoint to the composition
\[R(f_\et\ttimes_s \id_Y)_!(R(f_\et\ttimes_s \id_Y)^!L \otimes^L p_2'^*M)\simeq R(f_\et\ttimes_s \id_Y)_!R(f_\et\ttimes_s \id_Y)^!L\otimes^L p_2^* M\to L\otimes^L p_2^*M\]
of Construction \ref{con.!} (3) and the adjunction map, is an isomorphism
for $L\in D(X_\et\ttimes_s Y,\Lambda)$ and $M\in D(Y,\Lambda)$.
\end{prop}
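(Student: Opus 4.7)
The plan is to follow the strategy of Proposition \ref{p.bcupper!}: reduce to the two cases (a) $f$ is smooth and (b) $f$ is a closed immersion, and treat each separately. For the reduction, a routine diagram chase (using the adjunction defining $R(f_\et \ttimes_s \id_Y)^!$ and the projection formula of Construction \ref{con.!} (3)) shows that the projection formula map of the statement is compatible with composition in $f$. Combined with Nagata's compactification theorem, Chow's lemma, and Noetherian induction, any separated morphism of finite type between Noetherian schemes decomposes, for the purposes of this reduction, into smooth morphisms (open immersions and projections from projective spaces) and closed immersions.

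In case (a), Proposition \ref{l.trace} identifies $R(f_\et \ttimes_s \id_Y)^!$ with $(f_\et \ttimes_s \id_Y)^*(d)[2d]$, where $d$ is the relative dimension. Under this identification the projection formula becomes the tautological compatibility of pullback with tensor product.

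In case (b), writing $i = f$ and letting $j\colon U \to X$ be the complementary open immersion, we apply the localization distinguished triangle
\[
(i_\et \ttimes_s \id_Y)_* R(i_\et \ttimes_s \id_Y)^! N \to N \to R(j_\et \ttimes_s \id_Y)_* (j_\et \ttimes_s \id_Y)^* N
\]
on $X_\et \ttimes_s Y$ to $N = L$ (tensored afterward by $p_2^*M$) and to $N = L \otimes^L p_2^*M$. By the projection formula of Construction \ref{con.!} (3) for the closed immersion $i_\et \ttimes_s \id_Y$ (for which $(i_\et \ttimes_s \id_Y)_! = (i_\et \ttimes_s \id_Y)_*$), the leftmost terms agree after tensoring. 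By the projection formula for $R(j_\et \ttimes_s \id_Y)_*$ (Corollary \ref{c.pf}(a), whose hypothesis that $Rj_*$ has finite cohomological dimension is provided by Gabber's theorem \cite[XVIII$_{\text{A}}$ Corollary 1.4]{ILO} under the Noetherian finite-dimensional assumption), the rightmost terms agree. Since the middle terms agree trivially, the long exact sequences force the pushforwards under $(i_\et \ttimes_s \id_Y)_*$ of the two sides of the desired map to be isomorphic, and full faithfulness of $(i_\et \ttimes_s \id_Y)_*$ yields the conclusion.

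The main obstacle is the descent from proper to projective morphisms within the reduction step, which uses Chow's lemma together with Noetherian induction on the exceptional loci of the chosen projective modifications; tracking the projection formula map faithfully through this d\'evissage is the delicate bookkeeping. A secondary technicality is verifying that the localization triangle on $X_\et$ transports to $X_\et \ttimes_s Y$ compatibly with the tensor and pushforward operations involved, which is straightforward but worth recording.
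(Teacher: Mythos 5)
Your proposal is correct and follows essentially the same route as the paper: reduce to the cases of a smooth morphism and a closed immersion, handle the smooth case via the trace isomorphism of Proposition \ref{l.trace}, and handle the closed immersion case by comparing localization triangles using the projection formula for $Rj_*$ (Corollary \ref{c.pf} (a)), with Gabber's finiteness theorem supplying the cohomological dimension hypothesis. The paper's proof is exactly this, stated in one line.
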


\begin{proof}
We proceed as in the proof of Proposition \ref{p.bcupper!}. For (a) we apply
Proposition \ref{l.trace}. For (b) we apply Corollary \ref{c.pf} (a) to
$Rj_*$. That $Rj_*$ has finite cohomological dimension is a theorem of
Gabber \cite[XVIII$_{\text{A}}$ Corollary 1.4]{ILO}.
\end{proof}

Let $X$ and $Y$ be coherent schemes over $s$. Again we omit the index
``$\et$'' when no confusion arises. For separated morphisms of finite type
of schemes $f\colon X'\to X$ and $g\colon Y'\to Y$, and $\Lambda$ of
torsion, the functors $R(f\ttimes_s g)_!$ and $R(f\ttimes_s g)^!$ are
defined. The base change isomorphism and projection formula above induce the
following K\"unneth formula. The analogue for fiber products of schemes over
a field is \cite[III (1.7.3)]{SGA5}.

\begin{cor}\label{c.Kunnup}
Assume $X$ and $Y$ are Noetherian finite-dimensional and $m\Lambda=0$ for
some $m$ invertible on $X$ and $Y$. Then for $L\in D(X,\Lambda)$, $M\in
D(Y,\Lambda)$, we have
\[Rf^!L\boxtimes^L Rg^! M\simto R(f\ttimes_s g)^!(L\boxtimes^L M). \]
\end{cor}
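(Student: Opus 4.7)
The plan is to factor $f\ttimes_s g$ as
\[X'\ttimes_s Y'\xrightarrow{\id_{X'}\ttimes_s g} X'\ttimes_s Y\xrightarrow{f\ttimes_s \id_Y} X\ttimes_s Y,\]
so that $R(f\ttimes_s g)^!=R(\id_{X'}\ttimes_s g)^!\circ R(f\ttimes_s \id_Y)^!$, and to evaluate $R(f\ttimes_s g)^!(L\boxtimes^L M)$ by applying these two factors in turn. At each step the combination of the projection formula (Proposition~\ref{p.proj}) and the base change isomorphism (Proposition~\ref{p.bcupper!}) should move one tensor factor through $R(-)^!$ and replace it by its $!$-pullback.

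For the first step I would write $L\boxtimes^L M=\pi_X^*L\otimes^L \pi_Y^* M$, where $\pi_X\colon X\ttimes_s Y\to X$ and $\pi_Y\colon X\ttimes_s Y\to Y$ are the projections. Proposition~\ref{p.proj} applied to $f\ttimes_s \id_Y$ (which uses that $X$ is Noetherian finite-dimensional) moves $\pi_Y^* M$ outside $R(f\ttimes_s \id_Y)^!$. Then, noting that $\pi_X$ coincides with $\id_X\ttimes_s h$ for $h\colon Y\to s$ the structural morphism and using $X\ttimes_s s\simeq X$, $X'\ttimes_s s\simeq X'$, $f\ttimes_s \id_s=f$, Proposition~\ref{p.bcupper!} applied with $h$ in the role of its $g$ identifies $R(f\ttimes_s \id_Y)^!(\pi_X^*L)$ with $\pi'_{X'}{}^* Rf^! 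L$, where $\pi'_{X'}\colon X'\ttimes_s Y\to X'$. Together this gives $R(f\ttimes_s \id_Y)^!(L\boxtimes^L M)\simeq Rf^! L\boxtimes^L M$.

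For the second step I would apply the analogues of Propositions~\ref{p.proj} and~\ref{p.bcupper!} obtained by interchanging the two factors of the fiber product; these are legitimate because the constructions of Section~\ref{s.fp} and the hypotheses of the corollary (in particular the finite-dimensionality of both $X$ and $Y$) are symmetric in $X$ and $Y$. Applied to $\id_{X'}\ttimes_s g$ acting on $Rf^! L\boxtimes^L M$, they deliver $Rf^! L\boxtimes^L Rg^! M$, and chaining with the first step yields the required isomorphism.

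The main point I would need to check carefully is that Proposition~\ref{p.bcupper!}, stated on $D^+$, extends to unbounded $L,M$ as required here. Since $R(-)^!$ on fiber products has finite cohomological amplitude by the explicit Godement-resolution construction at the end of the proof of Proposition~\ref{l.trace}, this extension is not a genuine obstacle, but the technical verification is where the real work lies; the two-step combination of base change and projection formula is otherwise purely formal.
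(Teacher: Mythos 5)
Your proof is correct and is essentially the paper's own argument: the paper derives this corollary precisely by combining the base change isomorphism (Proposition \ref{p.bcupper!}) with the projection formula (Proposition \ref{p.proj}) through the two-step factorization of $f\ttimes_s g$, exactly as you do. Your observation that the extension of Proposition \ref{p.bcupper!} to unbounded complexes follows from the finite cohomological amplitude of $R(-)^!$ is the right way to handle the one detail the paper leaves implicit.
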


Let $\Lambda$ be a Noetherian commutative ring. Imitating \cite[Section
9.1]{Org}, we say that a sheaf of $\Lambda$-modules $\cF$ on $X\ttimes_s Y$
is \emph{constructible} if every stalk is finitely generated and there exist
finite partitions $X=\bigcup_i X_i$, $Y=\bigcup_j Y_j$ into disjoint
constructible locally closed subsets such that the restriction $\cF$ to each
$X_i\ttimes_s Y_j$ is locally constant. We write $\Shv_c(-,\Lambda)$ for the
full subcategory of $\Shv(-,\Lambda)$ spanned by constructible sheaves. The
subcategory $\Shv_c(X\ttimes_s Y,\Lambda)\subseteq \Shv(X\ttimes_s
Y,\Lambda)$ is stable under kernels, cokernels, and extensions, but in
general not stable under subobjects or quotients. Even for $X$ and $Y$
Noetherian, constructible sheaves are in general very different from
Noetherian sheaves. For example, in the case where $X$ and $Y$ are spectra
of separably closed fields, $\pt\ttimes_s \pt$ can be identified with the
topos associated to the underlying topological space of $G$, on which a
sheaf is Noetherian if and only if the support is finite  and the stalks are
finitely generated. See however Lemma \ref{l.Noeth} below.

\begin{lemma}\label{l.gen}
For $\cF$ constructible, there exist \'etale morphisms $u\colon U\to X$ and
$v\colon V\to Y$ of schemes with $U$ and $V$ affine and an epimorphism
$(u\ttimes_s v)_!\Lambda_{U\ttimes_s V}\to \cF$.
\end{lemma}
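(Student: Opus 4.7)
The plan is to combine the generating family from Remark~\ref{r.gen} with constructibility to produce finitely many maps $(u_\beta\ttimes_s v_\beta)_!\Lambda\to\cF$ whose sum is an epimorphism, and then to collapse these into a single $(u\ttimes_s v)_!\Lambda_{U\ttimes_s V}\to\cF$ by taking finite disjoint unions of the $U_\beta$'s and $V_\beta$'s.

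The key steps are as follows. First, by refining the partitions $X=\bigcup X_i$ and $Y=\bigcup Y_j$ (each $X_i$, $Y_j$ is constructible locally closed and can be further partitioned into affine constructible locally closed pieces), assume each $X_i$ and $Y_j$ is affine. Next, on each stratum $X_i\ttimes_s Y_j$, the restriction $\cF|_{X_i\ttimes_s Y_j}$ is locally constant with finitely generated stalks on the coherent, hence quasi-compact, topos $X_i\ttimes_s Y_j$ (Lemma~\ref{l.1}), so I would apply Remark~\ref{r.gen} to trivialize it on a finite cover by objects of the form $\tilde U\ttimes_s\tilde V$ with $\tilde U\to X_i$ and $\tilde V\to Y_j$ affine étale, obtaining on each piece a finitely generated constant value $M$. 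I would then lift each $\tilde U$ to an affine étale $U\to X$ with $U\times_X X_i=\tilde U$ (and similarly $\tilde V$ to $V\to Y$), via the standard lifting of finitely presented étale algebras along the locally closed immersion $X_i\hookrightarrow X$ (present by generators and relations and lift them to a Zariski neighborhood of $X_i$). For each generator of $M$ I would produce a section of $\cF$ on $U\ttimes_s V$, yielding by adjunction a map $(u\ttimes_s v)_!\Lambda\to\cF$. Finally, I would assemble all these finitely many maps into a single one by taking $U=\bigsqcup U_\beta$ and $V=\bigsqcup V_\beta$ (each still affine), and choosing a section of $\cF$ on $U\ttimes_s V=\bigsqcup_{\beta,\beta'}U_\beta\ttimes_s V_{\beta'}$ equal to the chosen section on the diagonal pieces and zero elsewhere; the resulting map is then surjective at every stalk, since at a point of $X_i\ttimes_s Y_j$ the only nonzero contributions come from the diagonals indexed by triples matching the stratum $(i,j)$, and they collectively hit the chosen generators of the stalk value.

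The hard part will be realizing the generators of $M$ as honest sections of $\cF$ on $U\ttimes_s V$, since a priori the restriction map $\cF(U\ttimes_s V)\to M$ need not be surjective for the $U,V$ produced by the lifting step. To overcome this, I would shrink $U$ and $V$ through the cofiltered system of affine étale neighborhoods of $\tilde U,\tilde V$ inside the original $U,V$ (retaining $U\times_X X_i=\tilde U$ and $V\times_Y Y_j=\tilde V$). The colimit of $\cF(U\ttimes_s V)$ over this cofiltered system computes $(\iota^*\cF)(\tilde U\ttimes_s\tilde V)=M$, where $\iota\colon X_i\ttimes_s Y_j\hookrightarrow X\ttimes_s Y$ is the locally closed immersion; hence each generator of $M$ is attained by an honest section after sufficient shrinking. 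Since there are finitely many strata and finitely many generators per stratum, a uniform shrinking works throughout, which completes the construction.
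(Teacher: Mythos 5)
Your proposal takes a genuinely different route from the paper's. The paper works directly on $X^{\cons}\times Y^{\cons}$ (the product of the constructible topologies): for any map $\alpha\colon (u_\alpha\ttimes_s v_\alpha)_!\Lambda\to\cF$ the set $E_\alpha$ of pairs $(x,y)$ where $\alpha$ is an epimorphism on $x\ttimes_s y$ is shown to be open, Remark~\ref{r.gen} combined with finite generation of stalks shows the $E_\alpha$ cover, and quasi-compactness of $X^{\cons}\times Y^{\cons}$ gives a finite subcover; one then takes disjoint unions, exactly as in SGA~4, IX~2.7. You instead refine the stratification to affine pieces, trivialize the locally constant restriction on each coherent stratum $X_i\ttimes_s Y_j$ by a finite cover of the form $\tilde U\ttimes_s\tilde V$, lift the \'etale morphisms to $X$ and $Y$, produce sections on the lift by a cofiltered-colimit argument, and assemble. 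The paper's argument is the slicker one, as it needs no lifting of \'etale neighborhoods and no analysis of the pullback along the locally closed immersion $X_i\ttimes_s Y_j\to X\ttimes_s Y$; the constructible topology absorbs all the stratification bookkeeping in a single quasi-compactness step. What your approach buys is a more explicit construction of the epimorphism, at the cost of several extra technical steps.

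Two of those steps deserve more justification than your sketch gives. First, when you lift $\tilde U\to X_i$ to an affine \'etale $U\to X$ with $U\times_X X_i=\tilde U$: you should factor the constructible locally closed immersion $X_i\hookrightarrow X$ through a quasi-compact open $W$ and a closed immersion $X_i\hookrightarrow W$, lift over $W$ by generators and relations, and then shrink; exact equality $U\times_X X_i=\tilde U$ can be arranged because the complement of $\tilde U$ in the closed fiber $U\times_W X_i$ is closed in $U$, so it can be removed. Second, and more importantly, the identification
\[
\colim_{(U',V')}\cF(U'\ttimes_s V')\;\simeq\;(\iota^*\cF)(\tilde U\ttimes_s\tilde V)
\]
requires checking (a) that the subsystem of pairs $(U',V')$ with $U'\times_X X_i=\tilde U$ and $V'\times_Y Y_j=\tilde V$ is cofiltered and cofinal in the full system defining the pullback presheaf (the argument above for closed immersions gives cofinality, and cofilteredness follows by intersecting and removing extra copies of $\tilde U\ttimes_s\tilde V$, which appear as clopen pieces because the morphisms are \'etale and separated), and (b) that no sheafification is needed to compute sections of $\iota^*\cF$ on the coherent object $\tilde U\ttimes_s\tilde V$; (b) uses the coherence of the topoi involved (Lemma~\ref{l.1}) and the fact that the colimit is filtered. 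These points are all fillable, so there is no fundamental gap, but they are exactly what the constructible-topology argument sidesteps.
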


\begin{proof}
The proof is similar to that of \cite[IX Proposition 2.7]{SGA4}. Given
$\alpha\colon (u_\alpha \ttimes_s v_\alpha)_!\Lambda_{U_\alpha\ttimes_s
V_\alpha}\to \cF$, the set $E_\alpha$ of points $(x,y)\in X^\cons\times
Y^\cons$ such that $\alpha$ is an epimorphism on $x\ttimes_s y$ is open.
Here $X^\cons$ and $Y^\cons$ denote respectively $X$ and $Y$ equipped with
the constructible topology. By Remark \ref{r.gen}, $(E_\alpha)$ form an open
cover of $X^\cons\times Y^\cons$, and thus admits a finite subcover $(E_i)$
by the quasi-compactness of $X^\cons\times Y^\cons$. It suffices to take
$U=\coprod_i U_i$ and $V=\coprod_i V_i$.
\end{proof}

The following lemma is not needed in the rest of this paper.

\begin{lemma}\label{l.Noeth}
Assume that $X$ and $Y$ are Noetherian schemes. Assume that for each pair of
points $x\in X$, $y\in Y$, the double coset $G_x\backslash G/G_y$ is finite.
Here $G_x$ and $G_y$ denote respectively the images of $\Gal(\bar x/x)$ and
$\Gal(\bar y/y)$ in $G$, well-defined up to conjugation. Then $\cF\in
\Shv(X\ttimes_s Y,\Lambda)$ is constructible if and only if it is
Noetherian.
\end{lemma}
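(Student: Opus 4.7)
The plan is to prove the two implications separately, both by Noetherian induction on the pair $(X,Y)$, using the double coset finiteness to reduce questions about stalks at products $x\ttimes_s y$ to linear algebra over the Noetherian ring $\Lambda$.

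For \emph{constructible implies Noetherian}, the class of Noetherian sheaves is stable under subobjects, quotients, and extensions, so by Lemma~\ref{l.gen} it is enough to show that $(u\ttimes_s v)_!\Lambda_{U\ttimes_s V}$ is Noetherian for $u\colon U\to X$ and $v\colon V\to Y$ \'etale of finite presentation, and after replacing $(X,Y)$ by $(U,V)$ it suffices to prove that $\Lambda_{X\ttimes_s Y}$ is Noetherian. Given an ascending chain of subsheaves, I would inspect its generic stalk at the pair $(\eta_X,\eta_Y)$ of generic points; by \eqref{e.BH} together with the double coset finiteness applied to these points, this stalk decomposes into a finite direct sum of copies of $\Lambda$, hence is a Noetherian $\Lambda$-module, so the chain of stalks stabilizes. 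Spreading out over a product of dense opens and using Noetherian induction on the closed complements of these opens in $X$ and $Y$ then completes this direction.

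For \emph{Noetherian implies constructible}, let $\cF$ be Noetherian. Since $\{U\ttimes_s V\}$ generates $X\ttimes_s Y$ (Remark~\ref{r.gen}) and $\cF$ is finitely generated, there is a surjection $(u\ttimes_s v)_!\Lambda_{U\ttimes_s V}\twoheadrightarrow \cF$ with $u,v$ \'etale of finite presentation. The first direction ensures that $(u\ttimes_s v)_!\Lambda_{U\ttimes_s V}$ is Noetherian, so its kernel $\mathcal{R}$ is finitely generated, giving a second surjection $(u'\ttimes_s v')_!\Lambda_{U'\ttimes_s V'}\twoheadrightarrow \mathcal{R}$. Using generic flatness and the quasi-finite structure theorem, I would shrink to dense opens $X^\circ\subseteq X$ and $Y^\circ\subseteq Y$ over which each of $u,v,u',v'$ becomes finite \'etale. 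On $X^\circ\ttimes_s Y^\circ$ both $(u\ttimes_s v)_!\Lambda_{U\ttimes_s V}$ and $(u'\ttimes_s v')_!\Lambda_{U'\ttimes_s V'}$ become locally constant, and therefore so does $\cF|_{X^\circ\ttimes_s Y^\circ}$. Noetherian induction on the closed complements then assembles the required finite stratification.

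The main obstacle will be the generic constructibility step in the second direction, namely verifying that cokernels of maps between locally constant sheaves on $X^\circ\ttimes_s Y^\circ$ are again locally constant. This is exactly where the double coset finiteness hypothesis enters: by \eqref{e.BH} it bounds the number of connected components of each fiber $x\ttimes_s y$, which in turn forces the stalks of the generating sheaves $(u\ttimes_s v)_!\Lambda_{U\ttimes_s V}$ to be finitely generated $\Lambda$-modules, so that the locally constant sheaves of $\Lambda$-modules on $X^\circ\ttimes_s Y^\circ$ form a well-behaved Noetherian abelian subcategory stable under cokernels. The paper's own example of $\pt\ttimes_s \pt$ for $X,Y$ spectra of separably closed fields and $G$ infinite shows that without this hypothesis the equivalence fails, so any argument must invoke the double coset condition essentially at this step.
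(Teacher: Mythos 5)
Your overall structure — generic-point stabilization plus Noetherian induction on $(X,Y)$ — matches the paper's, but the write-up has two real gaps in the direction ``constructible $\Rightarrow$ Noetherian,'' which is where all the content is.

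\textbf{Gap 1: spreading out needs constructible subsheaves.} You propose to take an ascending chain of subsheaves, observe that its stalks at the (finitely many, by the double coset hypothesis) maximal points stabilize, and then ``spread out over a product of dense opens.'' But stabilization at maximal points does \emph{not} by itself propagate to a dense open for an arbitrary chain of subsheaves. The paper handles this by first reducing to the case where the filtered family consists of \emph{constructible} subsheaves: each subsheaf $\cF_\alpha$ of the constructible $\cF$ is a filtered union of constructible subsheaves (being a filtered colimit of constructible sheaves whose images in $\cF$ are constructible), so one may assume every $\cF_\alpha$ is constructible. Only then do you know that, after shrinking $X$ and $Y$, both $\cF_\alpha|_{U\ttimes_s V}$ and $\cF|_{U\ttimes_s V}$ are locally constant, so that agreement at a generic point forces $\cF_\alpha|_{U\ttimes_s V}=\cF_\beta|_{U\ttimes_s V}$ for $\beta\ge\alpha$. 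Your plan omits this reduction, and without it the spreading-out step does not go through.

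\textbf{Gap 2: the reduction to the constant sheaf is not justified.} You say ``after replacing $(X,Y)$ by $(U,V)$ it suffices to prove that $\Lambda_{X\ttimes_s Y}$ is Noetherian.'' This would require that $(u\ttimes_s v)_!$ carries Noetherian sheaves on $U\ttimes_s V$ to Noetherian sheaves on $X\ttimes_s Y$. That is not automatic: $(u\ttimes_s v)_!$ changes the ambient topos, and the subobject lattice of $(u\ttimes_s v)_!\Lambda$ in $\Shv(X\ttimes_s Y,\Lambda)$ is not in any simple bijection with that of $\Lambda_{U\ttimes_s V}$ in $\Shv(U\ttimes_s V,\Lambda)$ (already for a non-injective \'etale $u$ between schemes this is a nontrivial point). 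This is not fatal — the generic-point argument applies directly to $(u\ttimes_s v)_!\Lambda$ — but the reduction as stated is a gap. The paper simply argues with a general constructible $\cF$ and never makes this reduction.

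\textbf{A minor point on the converse direction.} Once ``constructible $\Rightarrow$ Noetherian'' is established, the converse is purely formal: a Noetherian $\cF$ is finitely generated, so receives a surjection from a constructible generator; its kernel is again finitely generated (by the first direction applied to the generator), so $\cF$ is a cokernel of a map of constructible sheaves, hence constructible, since the paper records that constructible sheaves are stable under cokernels. Your longer route via generic flatness and stratification is not needed, and your closing remark misidentifies where the double coset hypothesis enters: it is used only in the first direction (to ensure finitely many isomorphism classes of maximal points), not to guarantee that cokernels of maps of locally constant sheaves are locally constant (that is automatic for Noetherian $\Lambda$).
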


\begin{proof}
The proof is similar to those of \cite[IX Proposition 2.9]{SGA4} and
\cite[Lemme 9.3]{Org}. Since $\Shv(X\ttimes_s Y,\Lambda)$ admits a family of
constructible generators, it suffices to show that every constructible sheaf
$\cF$ is Noetherian. We show that every filtered family $(\cF_\alpha)$ of
subsheaves of $\cF$ admits a maximal element. Since each $\cF_\alpha$ is a
filtered colimit of constructible sheaves, and every morphism of
constructible sheaves has constructible image, each $\cF_\alpha$ is a
filtered union of constructible subsheaves. Thus we may assume that each
$\cF_\alpha$ is constructible. The maximal points of $X\ttimes_s Y$ are of
the form $p=(\bar x,\bar y,\phi)$, where $\bar x\to X$, $\bar y\to Y$ are
geometric generic points. For every point $q$ of $X\ttimes_s Y$, there
exists a specialization from a maximal point $p$ to $q$. The assumptions
imply that there are only finitely many isomorphism classes of maximal
points. Let $\alpha$ be such that $(\cF_\alpha)_p=(\cF_\beta)_p$ for every
maximal point $p$ and every $\beta\ge \alpha$. Let $U\subseteq X$ and
$V\subseteq Y$ be nonempty open subsets such that $\cF_\alpha|_{U\ttimes_s
V}$ and $\cF|_{U\ttimes_s V}$ are locally constant. Then
$\cF_\alpha|_{U\ttimes_s V}=\cF_\beta|_{U\ttimes_s V}$ for all $\beta\ge
\alpha$ and we conclude by Noetherian induction.
\end{proof}

\begin{lemma}\label{l.local}
Let $f\colon X_0\to X$ be a morphism of coherent schemes. Assume either (a)
$f$ weakly \'etale \cite[Definition 1.2]{BS} or (b) $(f,\Z/m\Z)$ universally
locally acyclic for some integer $m>0$ such that $m\Lambda=0$. For $L\in
D^-_c(X\ttimes_s Y,\Lambda)$ and $M\in D^+(X\ttimes_s Y,\Lambda)$, the
canonical map
\[(f\ttimes_s\id_Y)^*R\cHom(L,M)\to R\cHom((f\ttimes_s\id_Y)^*L,(f\ttimes_s\id_Y)^*M)\]
is an isomorphism.
\end{lemma}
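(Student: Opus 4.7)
The approach is a standard devissage: reduce $L$ to generators of the form $(u\ttimes_s v)_!\Lambda_{U\ttimes_s V}$ with $u\colon U\to X$ and $v\colon V\to Y$ étale and $U,V$ affine, then verify the claim for such generators by a direct base change computation.

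For the reduction, both sides of the canonical map are exact (contravariant) in $L$ and the map is natural, so the class of $L$ for which it is an isomorphism is a triangulated subcategory of $D^-_c$. Using the truncation triangles $\tau^{<-n}L\to L\to\tau^{\ge -n}L$ and the fact that $M\in D^+$ forces $R\cHom(\tau^{<-n}L,M)$ into increasingly positive degrees as $n\to\infty$, I reduce to $L\in D^b_c$, and then by further truncation to $L=\cF[0]$ for a constructible sheaf $\cF$. Applying Lemma \ref{l.gen} iteratively (the kernel of each epimorphism is again constructible, since $\Shv_c$ is stable under kernels), I obtain a left resolution of $\cF$ by sheaves of the form $(u_i\ttimes_s v_i)_!\Lambda$, and a final truncation argument using the same amplitude estimate reduces to $L=(u\ttimes_s v)_!\Lambda_{U\ttimes_s V}$.

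For such $L$, the morphism $u\ttimes_s v$ is the localization of topoi at the object $U\ttimes_s V\in X\ttimes_s Y$, yielding an adjoint triple $(u\ttimes_s v)_!\dashv (u\ttimes_s v)^*\dashv (u\ttimes_s v)_*$; in particular $R\cHom((u\ttimes_s v)_!\Lambda,M)\simeq R(u\ttimes_s v)_*(u\ttimes_s v)^*M$. By Construction \ref{con.!}(1) and the identification of pullbacks of free sheaves with free sheaves on pulled-back objects, $(f\ttimes_s\id_Y)^*(u\ttimes_s v)_!\Lambda\simeq (u_0\ttimes_s v)_!\Lambda$, where $u_0\colon U_0=U\times_X X_0\to X_0$ is the base change of $u$; the right-hand side of the canonical map then becomes $R(u_0\ttimes_s v)_*(u_0\ttimes_s v)^*(f\ttimes_s\id_Y)^*M$. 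Using $(u\ttimes_s v)(f_U\ttimes_s\id_V)=(f\ttimes_s\id_Y)(u_0\ttimes_s v)$, the canonical map is identified with the base change map
\[(f\ttimes_s\id_Y)^*R(u\ttimes_s v)_*N\to R(u_0\ttimes_s v)_*(f_U\ttimes_s\id_V)^*N\]
for $N=(u\ttimes_s v)^*M\in D^+$, where $f_U\colon U_0\to U$.

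Finally, decomposing $u\ttimes_s v=(\id_X\ttimes_s v)\circ(u\ttimes_s\id_V)$, the base change across $\id_X\ttimes_s v$ is provided by Proposition \ref{p.prod}, so what remains is the base change for $R(u\ttimes_s\id_V)_*$ along $(f\ttimes_s\id_V)^*$. By another application of Proposition \ref{p.prod} (with the roles of the two factors swapped, to ``freeze'' $V$), this reduces to the scheme-theoretic statement $f^*Ru_*\simeq Ru'_*f_U^*$ for $u$ étale; this is the crux of the argument. It holds in case (a) because weakly étale morphisms are pro-étale \cite[Theorem 1.3]{BS} and thus satisfy flat base change for étale $u$, and in case (b) by the defining base change property of universally locally acyclic morphisms with torsion coefficients. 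The main obstacle is carefully verifying that this base change lifts from the scheme level to the fiber-product topos $X_0\ttimes_s V$; this can be done stalk-wise at points of $X_0\ttimes_s V$ using the stalk descriptions provided by Proposition \ref{p.Org}.
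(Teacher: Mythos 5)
Your proof follows essentially the same strategy as the paper's: d\'evissage via Lemma \ref{l.gen} to $L=(u\ttimes_s v)_!\Lambda$, identification of the map with a base change map for $R(u\ttimes_s v)_*$, and conclusion via base change for ind-\'etale (case (a)) or universally locally acyclic (case (b)) morphisms. The paper's proof is much terser: in case (a) it first reduces to $X_0$, $X$ affine and $f$ ind-\'etale via \cite[Theorem 1.3 (3)]{BS}, then invokes ``limit arguments for $f$ ind-\'etale'' and, in case (b), \cite[XVI Th\'eor\`eme 1.1]{SGA4} directly. Your factorization $u\ttimes_s v = (\id_X\ttimes_s v)\circ(u\ttimes_s\id_V)$ and the two applications of Proposition \ref{p.prod} make explicit a reduction that the paper leaves implicit, and your truncation argument spells out the standard d\'evissage the paper takes for granted. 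So the route is the same; you have simply expanded it.

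Two small corrections. First, your final step cites Proposition \ref{p.Org} for the stalk description of $R(u\ttimes_s\id_V)_*$, but that proposition is specifically for $f$ \emph{proper}; since $u$ is \'etale and coherent (not proper), the relevant stalk formula is Proposition \ref{p.obc} (applicable here because $\ttimes_s=\atimes_s$ over a classifying topos, by Remark \ref{r.o}). Cleaner still, you can avoid stalks entirely: apply Proposition \ref{p.prod} once more with $Y'=\pt$ a geometric point $\bar v$ of $V$, so that the remaining base change restricted to $X_0\ttimes_s\bar v\simeq (X_{0,\bar s})_\et$ is exactly the scheme-level statement $f_{\bar s}^*Ru_{\bar s*}\simeq Ru_{0,\bar s*}f_{U,\bar s}^*$. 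Second, in case (a) your phrasing ``weakly \'etale morphisms are pro-\'etale'' overstates \cite[Theorem 1.3]{BS}; the correct move is the paper's: pass to an affine situation and replace $f$ by an ind-\'etale morphism via \cite[Theorem 1.3 (3)]{BS}, then conclude by limit arguments.
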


\begin{proof}
In case (a), we may assume that $X_0$ and $X$ are affine and that $f$ is
ind-\'etale, by \cite[Theorem 1.3 (3)]{BS}. By Lemma \ref{l.gen}, we may
assume that $L=(u\ttimes_s v)_!\Lambda$. In this case, the map can be
identified with the base change map $(f\ttimes_s\id_Y)^*R(u\ttimes_s v)_*
M'\to R(u_{X_0}\ttimes_s v)_*(f_U\ttimes_s\id_Y)^* M'$, which is an
isomorphism by limit arguments for $f$ ind-\'etale and by \cite[XVI
Th\'eor\`eme 1.1]{SGA4} in case (b). Here $M'=(u\ttimes_s v)^* M$.
\end{proof}

The following lemma will be used later in Subsection \ref{s.4.4}.

\begin{lemma}\label{l.localshrbc}
Let $f\colon X_0\to X$ be a weakly \'etale morphism of coherent schemes. Let
$g\colon X'\to X$ be a finite morphism of finite presentation. Form a
Cartesian square
\[\xymatrix{X'_0\ar[r]^{f'}\ar[d]_{g_{0}}& X'\ar[d]^g \\
X_{0}\ar[r]^f & X.}
\]
Then the map $f'^*Rg^!\to Rg_{0}^!f^*$ adjoint to the inverse of the finite
base change isomorphism $f^*g_*\simto g_{0*}f'^*$ via Construction
\ref{c.adj} is an isomorphism on $D^+(X,\Lambda)$.
\end{lemma}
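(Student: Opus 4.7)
The plan is to reduce the statement to the $R\cHom$ base change theorem (Lemma \ref{l.local}), exploiting the identification $g_*Rg^!\simeq R\cHom(g_*\Lambda_{X'},-)$ that is available because $g$ is finite. The key is that both sides of the desired isomorphism become, after applying $g_{0*}$, instances of pullback acting on an internal $\Hom$ against a constructible first argument.

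First, since $g$ is finite, so is $g_0$, and the functor $g_{0*}$ is $t$-exact and conservative on $D^+$. It therefore suffices to prove that $g_{0*}$ applied to the base change map $\alpha\colon f'^*Rg^!\to Rg_0^!f^*$ is an isomorphism.

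Second, I would identify both sides after applying $g_{0*}$. On the left, finite (integral) base change (cf.\ Remark \ref{r.Org0}(4)) gives an isomorphism $g_{0*}f'^*\simeq f^*g_*$. Taking $L'=\Lambda_{X'}$ in \eqref{e.1.13.2}, and using $Rg_!=g_*=Rg_*$ for finite $g$, one obtains a natural isomorphism $g_*Rg^!M\simeq R\cHom(g_*\Lambda_{X'},M)$. Applied to $g_0$ in place of $g$, the same formula yields $g_{0*}Rg_0^!(f^*M)\simeq R\cHom(g_{0*}\Lambda_{X'_0},f^*M)$; and since $\Lambda_{X'_0}=f'^*\Lambda_{X'}$, finite base change further identifies $g_{0*}\Lambda_{X'_0}$ with $f^*g_*\Lambda_{X'}$. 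Under these identifications, $g_{0*}\alpha$ becomes the canonical map
\[
f^*R\cHom(g_*\Lambda_{X'},M)\longrightarrow R\cHom(f^*g_*\Lambda_{X'},f^*M).
\]

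Third, since $g$ is finite of finite presentation, $g_*\Lambda_{X'}$ is a constructible sheaf on $X$ concentrated in degree $0$, hence lies in $D^-_c(X,\Lambda)$. The case $Y=\pt$ of Lemma \ref{l.local}(a), applied to the weakly \'etale morphism $f$ with $L=g_*\Lambda_{X'}$ and $M\in D^+(X,\Lambda)$, then shows that the displayed map is an isomorphism, completing the proof.

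The hard part will be the verification in the second step that $g_{0*}\alpha$ really does coincide with the $R\cHom$ base change map under the chain of adjunction identifications. This requires a diagram chase through Construction \ref{c.adj} (with $L=g_{0*}$, $L'=g_*$, $F=f'^*$, $G=f^*$), combined with the naturality of \eqref{e.1.13.2} in both variables and the triangle identities for the adjunctions $(g_*,Rg^!)$ and $(g_{0*},Rg_0^!)$. Once the compatibility is established, the conclusion follows immediately from Lemma \ref{l.local}.
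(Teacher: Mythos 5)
Your proposal is correct and follows essentially the same route as the paper: apply $g_{0*}$, identify both sides with internal Homs against $g_*\Lambda$ via \eqref{e.1.13.2} and finite base change, invoke Lemma \ref{l.local} (case $Y=s$) for the constructible sheaf $g_*\Lambda$, and conclude by conservativity of $g_{0*}$. The compatibility you flag as the remaining verification is exactly the commutative diagram the paper writes down (and likewise does not chase in detail).
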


\begin{proof}
For $M\in D(X,\Lambda)$, we have a commutative diagram
\[\xymatrix{f^*R\cHom(g_*\Lambda,M)\ar@{-}[r]^\sim \ar[d] &
f^*g_*Rg^!M\ar[r]^\sim & g_{0*}f'^*Rg^!M\ar[d]\\
R\cHom(f^*g_*\Lambda,f^*M) & R\cHom(g_{0*}\Lambda,f^*M)\ar@{-}[r]^\sim\ar[l]_\sim & g_{0*}Rg_{0}^! f^*M.}
\]
For $M\in D^+$, since $g_*\Lambda$ is constructible, the left vertical arrow
is an isomorphism by Lemma \ref{l.local} (case $Y=s$). It follows that the
same holds for the right vertical arrow. We conclude by the fact that
$g_{0*}$ is conservative.
\end{proof}

\begin{lemma}\label{l.RHomdim}
Let $X$ be a Noetherian scheme with $m\Lambda=0$ for some $m$ invertible on
$X$. Let $L\in D_\cft^{\le a}(X,\Lambda)$ and $M\in D^+(X,\Lambda)$. Assume
that $M$ has tor-amplitude $\ge b$. Then $R\cHom(L,M)$ has tor-amplitude
$\ge b-a$.
\end{lemma}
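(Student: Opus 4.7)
The plan is to identify $R\cHom(L,M)\otimes^L N$ with $R\cHom(L, M\otimes^L N)$ for every $\Lambda$-module $N$ via a projection formula, and then to bound the cohomological degrees of the latter using the bounds on $L$ and on $M\otimes^L N$.

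First, I would prove the auxiliary degree estimate: for any $L'\in D^{\le a}(X,\Lambda)$ and any $P\in D^{\ge b}(X,\Lambda)$, we have $R\cHom(L',P)\in D^{\ge b-a}$. Using the truncation triangle $\tau^{<a}L'\to L'\to H^a(L')[-a]$ and induction on the (finite, since $L'\in D^b_c$) amplitude of $L'$, this reduces to the single-degree case $L'=\cL[-a]$. There $R\cHom(\cL[-a],P)=R\cHom(\cL,P)[a]$, and the hyper-Ext spectral sequence $E_2^{p,q}=\cExt^p(\cL,H^q(P))$ is supported in $p\ge 0$ and $q\ge b$, forcing $R\cHom(\cL,P)\in D^{\ge b}$ and hence $R\cHom(L',P)\in D^{\ge b-a}$.

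Next, I would establish the projection formula
\[R\cHom(L,M)\otimes^L N\simto R\cHom(L,M\otimes^L N)\]
for $L\in D_\cft$, $M\in D^+$, and $N$ a $\Lambda$-module. Since $L$ is constructible of finite tor-amplitude on a Noetherian $X$ with $m\Lambda=0$, we may represent it by a bounded complex $L^\bullet$ of flat sheaves whose terms are direct sums of $(j_k)_!\Lambda_{U_k}$ with $j_k\colon U_k\to X$ \'etale of finite type (using Lemma \ref{l.gen} iteratively and truncating); boundedness is ensured by the finite tor-amplitude hypothesis. The projection formula map is natural in $L$, so by d\'evissage on $L^\bullet$ it reduces to the single-term case $L=j_!\Lambda_U$. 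In that case $R\cHom(j_!\Lambda_U,-)=Rj_*j^*$, and the map becomes the classical projection formula $Rj_*(j^*M)\otimes^L N\simto Rj_*(j^*M\otimes^L j^*N)$, which holds on unbounded complexes since $Rj_*$ has finite cohomological dimension by Gabber's theorem \cite[XVIII$_{\text{A}}$ Corollary 1.4]{ILO} (as in Corollary \ref{c.pf}(a)).

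Combining: the tor-amplitude $\ge b$ of $M$ yields $P:=M\otimes^L N\in D^{\ge b}$ for every $\Lambda$-module $N$, so the degree estimate gives $R\cHom(L,P)\in D^{\ge b-a}$, and the projection formula transfers this to $R\cHom(L,M)\otimes^L N\in D^{\ge b-a}$ for every $N$, which is the tor-amplitude bound $\ge b-a$ for $R\cHom(L,M)$. The main obstacle is the projection formula step: obtaining a bounded resolution of $L$ by nice flat sheaves (where finite tor-amplitude of $L$ plays a decisive role) and handling the possibly infinite direct sums of \'etale extensions-by-zero carefully in the unbounded setting; without finite tor-amplitude the resolution must be unbounded and the projection formula can fail.
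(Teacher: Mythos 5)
Your first step (the degree estimate $R\cHom(L',P)\in D^{\ge b-a}$ for $L'\in D^{b,\le a}$ and $P\in D^{\ge b}$) is correct, and the overall strategy of reducing tor-amplitude of $R\cHom(L,M)$ to a cohomological lower bound via a projection formula $R\cHom(L,M)\otimes^L N\simeq R\cHom(L,M\otimes^L N)$ is reasonable. The gap is in the d\'evissage used to prove that projection formula. You claim that $L\in D_\cft$ is represented by a \emph{bounded} complex whose terms are direct sums of $j_!\Lambda_U$ with $j$ \'etale, "using Lemma~\ref{l.gen} iteratively and truncating." But the iterative resolution by such sheaves is bounded only above; when you truncate at the bottom using finite tor-amplitude, the lowest term becomes a flat constructible sheaf that is in general \emph{not} a direct sum of $j_!\Lambda_U$. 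A concrete counterexample: take $\Lambda=\Z/\ell$ a field, $X=\A^1_k$, and $\cF=j_!\cL$ where $j\colon \Gm\hookrightarrow\A^1$ and $\cL$ is a nontrivial rank-$1$ local system on $\Gm$; then $\cF$ is flat constructible of tor-amplitude $[0,0]$ but has rank-$1$ twisted stalks, so it cannot be a direct sum of $j_!\Lambda_U$'s (each of which has untwisted stalks). So the reduction to the single-term case $L=j_!\Lambda_U$ does not go through.

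Nor can the gap be repaired by using an unbounded resolution and controlling the tail: writing the tail $\sigma^{<-n}L^\bullet$ and trying to show its contribution to the cone of the projection-formula map dies in a fixed degree $i_0$ runs into circularity, because a cohomological lower bound on $R\cHom(\sigma^{<-n}L^\bullet,M)\otimes^L N$ requires a tor-amplitude bound on $R\cHom(\sigma^{<-n}L^\bullet,M)$ — which is the very thing the lemma is supposed to provide. The paper avoids this entirely by a geometric, not homological, d\'evissage: by Noetherian induction one reduces to $L=u_!\cF$ with $u$ a locally closed immersion and $\cF\in D_\cft$ with \emph{locally constant} cohomology sheaves on its stratum; then $R\cHom(u_!\cF,M)\simeq Ru_*R\cHom(\cF,Ru^!M)$, the locally constant case is checked directly, and one shows that $Ru_*$ (via the projection formula for the complementary open immersion, Remark~\ref{r.tor}) and $Ru^!$ each preserve tor-amplitude $\ge n$. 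This replaces your flat-complex d\'evissage in $D_\cft$ by a stratification of $X$ and sidesteps the bounded-resolution issue. If you want to rescue your line of argument, you should adopt that stratification step to handle the stubborn bottom term.
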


\begin{proof}
The proof is similar to \cite[Th.\ finitude, Remarque 1.7]{SGA4d}. The case
where $L$ has locally constant cohomology sheaves is trivial. For the
general case, we may assume that $L=u_!\cF$ for an immersion $u\colon U\to
X$ and $\cF\in D_\cft$ with locally constant cohomology sheaves. Then
$R\cHom(L,M)\simeq Ru_*R\cHom(\cF,Ru^!M)$. It remains to show that $Ru_*$
and $Ru^!$ preserve objects of tor-amplitude $\ge n$. For $Ru_*$ this
follows from  Remark \ref{r.tor} (with $Y=s$). For $Ru^!$ we may assume that
$u$ is a closed immersion, and it suffices to apply the assertion for
$Rj_*$, where $j$ is the complementary open immersion.
\end{proof}

\begin{prop}\label{p.RHomKunn}
Assume $X$ and $Y$ are both Noetherian schemes, and $m\Lambda=0$ for some
$m$ invertible on $X$ and $Y$. Let $L\in D^-_c(X,\Lambda)$, $M\in
D_\cft(Y,\Lambda)$, $L'\in D^+(X,\Lambda)$, $M'\in D^+(Y,\Lambda)$. Assume
that either of the following holds:
\begin{enumerate}[(a)]
\item $L\in D_\cft(X,\Lambda)$ and $L'\boxtimes^L M'\in D^+$; or
\item $M'$ has tor-amplitude $\ge n$ for some integer $n$.
\end{enumerate}
Then the canonical map
\begin{multline*}
R\cHom(L,L')\boxtimes^L R\cHom(M,M') \to R\cHom(p_1^*L,p_1^* L')\otimes^L R\cHom(p_2^*M,p_2^*M')\\
\to
R\cHom(L\boxtimes^L M,L'\boxtimes^L M')
\end{multline*}
is an isomorphism in $D(X\ttimes_s Y,\Lambda)$.
\end{prop}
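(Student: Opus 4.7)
The map in question is natural in $L$ and $M$, and both sides are triangulated functors in each variable. The plan is to d\'evissage, reducing both $L$ and $M$ to simple objects of the form $u_!\Lambda_U$, $v_!\Lambda_V$ with $u\colon U\to X$ and $v\colon V\to Y$ \'etale and $U$, $V$ affine. First one reduces to $L$ and $M$ bounded: in case (a) this is automatic since $L,M \in D_\cft \subseteq D^b_c$; in case (b), Lemma \ref{l.RHomdim} applied to $R\cHom(M,M')$ together with the tor-amplitude hypothesis on $M'$ shows that both sides are bounded below, and the contribution of $\tau^{<c}L$ to either side in any fixed cohomological degree vanishes for $c$ sufficiently negative. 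Then standard d\'evissage via the truncation triangles reduces to $L$ and $M$ being single constructible sheaves, and the classical generation result (\cite[IX Proposition 2.7]{SGA4}, analogous to Lemma \ref{l.gen}) for a constructible sheaf on a single Noetherian scheme, iterated, further reduces to $L = u_!\Lambda_U$, $M = v_!\Lambda_V$.

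In this reduced case one computes both sides explicitly. Since $u$ and $v$ are \'etale of relative dimension zero, Proposition \ref{l.trace} gives $Ru^! \simeq u^*$ and $Rv^! \simeq v^*$, and applied to $u\ttimes_s \id_Y$ and $\id_X \ttimes_s v$ successively, also $R(u \ttimes_s v)^! \simeq (u \ttimes_s v)^*$. Combined with the K\"unneth formula for lower shriek (Corollary \ref{c.Kunn!}), which gives $L \boxtimes^L M \simeq (u \ttimes_s v)_!\Lambda$, one obtains
\[R\cHom(L,L') \simeq Ru_*u^*L', \quad R\cHom(M,M') \simeq Rv_*v^*M',\]
\[R\cHom(L \boxtimes^L M, L' \boxtimes^L M') \simeq R(u \ttimes_s v)_*(u^*L' \boxtimes^L v^*M').\]
A diagram chase involving Construction \ref{c.adj}, base change (Proposition \ref{p.prod}), and the projection formula (Corollary \ref{c.pf}) identifies the canonical map with the K\"unneth map of Corollary \ref{c.Kunn*}
\[Ru_*(u^*L') \boxtimes^L Rv_*(v^*M') \to R(u \ttimes_s v)_*(u^*L' \boxtimes^L v^*M').\]

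To conclude, one invokes Corollary \ref{c.Kunn*}: its hypothesis (a) requires finite cohomological dimension of $Ru_*$ and $Rv_*$, which can be verified stalkwise via Proposition \ref{p.prod}, reducing to strictly local Noetherian (hence finite-dimensional) base schemes and applying Gabber's theorem \cite[XVIII$_{\text{A}}$ Corollary 1.4]{ILO}. The main obstacle is the d\'evissage step in case (b), where $L$ may be unbounded below: one must track cohomological bounds and tor-amplitudes carefully (via Lemma \ref{l.RHomdim} and the hypothesis on $M'$) to ensure that truncating $L$ does not affect either side in the degree under consideration. The identification of the canonical map with the K\"unneth map in the reduced case, while conceptually routine, also requires a careful bookkeeping of compatibilities among adjunctions, base change, and projection formulas.
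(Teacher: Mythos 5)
Your proposal is correct and follows essentially the same route as the paper: localize to make $X$ and $Y$ finite-dimensional (the paper does this up front via Lemma \ref{l.local}; you defer it to the invocation of Corollary \ref{c.Kunn*}\,(a)), use Lemma \ref{l.RHomdim} to reduce case (b) to case (a), d\'evissage $L$ and $M$ to extensions by zero, and identify the map with the K\"unneth map for pushforward, concluding by Corollaries \ref{c.Kunn!}, \ref{c.Kunn*}, \ref{c.Kunnup} and Gabber's finiteness theorem. The only cosmetic difference is the d\'evissage endpoint: you reduce to $u_!\Lambda$ with $u$ \'etale, whereas the paper reduces to $u_!\cF$ with $u$ an immersion and $\cF$ of locally constant cohomology, finishing with a stalkwise check.
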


\begin{proof}
By Lemma \ref{l.local}, we may replace $X$ and $Y$ by strict localizations
at geometric points, which are finite-dimensional. In case (b), by Lemma
\ref{l.RHomdim}, $R\cHom(M,M')$ has tor-amplitude bounded from below, so
that we may assume $L=f_!\Lambda$ for $f\colon X'\to X$ \'etale with $X'$
affine, which is a special case of (a). Let us prove case (a). We may assume
$L=u_!\cF$, $M=v_!\cG$ for immersions $u\colon U\to X$, $v\colon V\to Y$ and
$\cF$, $\cG$ in $D_{\cft}$ of locally constant cohomology sheaves. In this
case, $L\boxtimes^L M\simeq (u\ttimes_s v)_! (\cF\boxtimes^L \cG)$ and the
map in question is the composition
\begin{align*}
&Ru_* R\cHom(\cF, Ru^! L')\boxtimes^L Rv_* R\cHom(\cG,Rv^! M')\\
\simeq &R(u\ttimes_s v)_* (R\cHom(\cF, Ru^! L')\boxtimes^L R\cHom(\cG,Rv^! M'))\\
\to &R(u\ttimes_s v)_* R\cHom(\cF\boxtimes^L \cG,Ru^! L' \boxtimes^L
Rv^! M')\\
\simeq &R(u\ttimes_s v)_* R\cHom(\cF\boxtimes^L \cG,R(u\ttimes_s
v)^!(L'\boxtimes^L M')).
\end{align*}
The first and last isomorphisms are Corollaries \ref{c.Kunn*} (a) and
\ref{c.Kunnup}. Here again we used Gabber's theorem that $Ru_*$ and $Rv_*$
have finite cohomological dimension \cite[XVIII$_{\text{A}}$ Corollary
1.4]{ILO}. Thus we are reduced to showing the proposition under the
additional assumption that the cohomology sheaves of $L$ and $M$ are locally
constant. This case is obvious by taking stalks.
\end{proof}

\begin{prop}
Assume $\Lambda$ of torsion. Let $f\colon X'\to X$ and $g\colon Y'\to Y$ be
separated morphisms of finite presentation. Then $R(f\ttimes_s g)_!$
preserves $D^b_c$ and $D_\ft$.
\end{prop}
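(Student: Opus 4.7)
The plan is to decompose $R(f\ttimes_s g)_!$ along its two factors, compactify each, and handle the proper direct image by a devissage built on Lemma \ref{l.gen} and classical Deligne finiteness. Using
\[R(f\ttimes_s g)_!\simeq R(f\ttimes_s\id_{Y'})_!\circ R(\id_{X'}\ttimes_s g)_!\]
from Construction \ref{con.!} and the evident symmetry, it suffices to show that $R(f\ttimes_s\id_Y)_!$ preserves $D^b_c$ and $D_\ft$ for any separated morphism of finite presentation $f$. Factor $f=\bar f\circ j$ with $j\colon X'\hookrightarrow \bar X$ an open immersion and $\bar f\colon \bar X\to X$ proper, via Nagata. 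The open piece $(j\ttimes_s\id_Y)_!$ is extension by zero, which preserves $D^b_c$ (enlarging any stratification of $X'$ by adjoining $\bar X\setminus X'$ as a closed stratum) and $D_\ft$ (exactness). The remaining task is to show that $R(\bar f\ttimes_s\id_Y)_*$ preserves $D^b_c$ and $D_\ft$ when $\bar f$ is proper.

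Boundedness and tor-amplitude are then immediate from the apparatus already in place. Remark \ref{r.cd} bounds the cohomological dimension of $R(\bar f\ttimes_s\id_Y)_*$ by $2d$, where $d$ is the maximum fibre dimension of $\bar f$, yielding preservation of $D^b$. Remark \ref{r.tor} delivers preservation of tor-amplitude $\ge n$; combined with the cohomological dimension bound, $D_\ft$ is preserved.

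For constructibility, I would use the standard $t$-structure together with the finite cohomological dimension to reduce to proving $R(\bar f\ttimes_s\id_Y)_*\cF\in D^b_c$ for a single constructible sheaf $\cF$. Iterating Lemma \ref{l.gen}, build a left resolution $\cdots\to\cG_1\to\cG_0\to\cF$ with each $\cG_n=(u_n\ttimes_s v_n)_!\Lambda_{U_n\ttimes_s V_n}$, the successive kernels remaining constructible by the stability of $\Shv_c$ under kernels. Corollary \ref{c.Kunn!} gives $\cG_n\simeq u_{n!}\Lambda\boxtimes^L v_{n!}\Lambda$, and combining base change (Proposition \ref{p.prod}) with the projection formula (Corollary \ref{c.pf}(a), applicable since $R\bar f_*$ has finite cohomological dimension) identifies
\[R(\bar f\ttimes_s\id_Y)_*\cG_n\simeq R\bar f_* u_{n!}\Lambda\boxtimes^L v_{n!}\Lambda.\]
Classical Deligne finiteness renders $R\bar f_* u_{n!}\Lambda$ constructible on $X$, and a box product of constructible complexes on $X$ and $Y$ is constructible on $X\ttimes_s Y$ directly from the definition (use the product of stratifications). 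The spectral sequence $E_1^{-p,q}=R^q(\bar f\ttimes_s\id_Y)_*\cG_p\Rightarrow R^{q-p}(\bar f\ttimes_s\id_Y)_*\cF$ has only finitely many nonzero terms in each abutment degree because $q\in[0,2d]$, and the stability of $\Shv_c$ under kernels, cokernels, and extensions propagates constructibility through every page and through the abutment filtration.

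The main obstacle I anticipate is this last devissage. The resolution is a priori infinite, so one must combine the spectral sequence with the finite cohomological dimension to ensure that each abutment degree involves only finitely many nonzero $E_1$ terms. More delicately, $\Shv_c$ is not stable under arbitrary subobjects or quotients, so care is needed to keep every subquotient appearing as a kernel, cokernel, or extension of maps \emph{between} already-constructible sheaves; an induction on the page shows each $E_r^{-p,q}$ stays constructible, and then $R^n(\bar f\ttimes_s\id_Y)_*\cF$ is assembled as an iterated extension of the finitely many constructible $E_\infty^{-p,q}$ with $q-p=n$.
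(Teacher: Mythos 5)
Your proof is correct, and it rests on the same two pillars as the paper's own argument: the d\'evissage via Lemma \ref{l.gen} to generators of the form $(u\ttimes_s v)_!\Lambda$ (carried out exactly as you describe, by an infinite resolution controlled by the finite cohomological dimension), followed by a K\"unneth identification reducing everything to classical one-variable finiteness. The paper's write-up is however three lines long, because it applies the K\"unneth formula for $!$-pushforward (Corollary \ref{c.Kunn!}) directly: after reducing to $\cF=\Lambda$ it writes $R(f\ttimes_s g)_!\Lambda\simeq Rf_!\Lambda\boxtimes^L Rg_!\Lambda$, the compactification being already absorbed into Construction \ref{con.!}, and deduces the tor-amplitude statement from the projection formula just as you do. Your detour through $R(\bar f\ttimes_s\id_Y)_*$ for $\bar f$ proper, using Proposition \ref{p.prod} and Corollary \ref{c.pf} (a) (that is, Corollary \ref{c.Kunn*} (a)), re-derives by hand what Corollary \ref{c.Kunn!} packages; it is equivalent but longer. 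Two points of care. First, the proposition imposes no Noetherian or excellence hypotheses on $X$ and $Y$, so the ``classical Deligne finiteness'' you invoke must be the finiteness theorem for $Rf_!$ \cite[XVII Th\'eor\`eme 5.3.6]{SGA4} (which is what the paper cites), not Th.\ finitude for $Rf_*$; your step is safe only because $R\bar f_*u_{n!}\Lambda\simeq R(\bar fu_n)_!\Lambda$ with $\bar fu_n$ separated of finite presentation. Second, your general claim that a box product of constructible complexes is constructible needs one factor of finite tor-amplitude to guarantee boundedness (over $\Lambda=\Z/4$ the complex $\Z/2\boxtimes^L\Z/2$ is unbounded below); in your application $v_{n!}\Lambda$ is flat, so no harm is done, but the blanket statement should be qualified.
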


\begin{proof}
It suffices to show $R(f\ttimes_s g)_!\cF\in D^b_c$ for $\cF$ a
constructible sheaf. By Lemma \ref{l.gen}, we may assume $\cF=\Lambda$. In
this case, we apply K\"unneth formula $R(f\ttimes_s g)_!\Lambda\simeq
Rf_!\Lambda\boxtimes^L Rg_!\Lambda$ (Corollary \ref{c.Kunn!}) and the
classical theorem that $Rf_!\Lambda$ and $Rg_!\Lambda$ are in $D^b_c$
\cite[XVII Th\'eor\`eme 5.3.6]{SGA4}. It follows from projection formula
that $R(f\ttimes_s g)_!$ preserves complexes of finite tor-dimension.
\end{proof}

\begin{lemma}\label{l.dev}
Assume $\ell\Lambda=0$ for a prime number $\ell$. Let
$\cF\in\Shv_c(X\ttimes_s Y,\Lambda)$ be a sheaf that becomes constant on
$U\ttimes_s V$ for finite \'etale covers $U\to X$ and $V\to Y$. Then there
exist finite \'etale covers $p\colon X'\to X$ and $q\colon Y'\to Y$ such
that $\cF$ is a direct summand of a sheaf $\cF'$ equipped with a finite
filtration with graded pieces of the form $(p\ttimes_s q)_* C_n$ with $C_n$
constant.
\end{lemma}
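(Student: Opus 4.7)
The plan is a Sylow reduction to the $\ell$-group case, followed by a standard unipotency filtration.

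First I would enlarge $U$ and $V$ so that $u\colon U\to X$ and $v\colon V\to Y$ become Galois with groups $H_1$ and $H_2$, choose $\ell$-Sylow subgroups $P_i\le H_i$, and set $p\colon X'\colonequals U/P_1\to X$ and $q\colon Y'\colonequals V/P_2\to Y$. These are finite \'etale of degrees $[H_1:P_1]$ and $[H_2:P_2]$, both coprime to $\ell$, hence invertible in $\Lambda$. Since $p\ttimes_s q$ factors as a composition of finite \'etale morphisms of topoi, the trace provides a section of the unit $\cF\to(p\ttimes_s q)_*(p\ttimes_s q)^*\cF$ after dividing by $\deg(p)\deg(q)$, exhibiting $\cF$ as a direct summand of $\cF'\colonequals(p\ttimes_s q)_*(p\ttimes_s q)^*\cF$.

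Next, I would produce a finite filtration of $\cG\colonequals(p\ttimes_s q)^*\cF$ by subsheaves on $X'\ttimes_s Y'$ with constant graded pieces and push it forward by the exact functor $(p\ttimes_s q)_*$ (exact because $p\ttimes_s q$ is finite) to obtain a filtration of $\cF'$ with graded pieces of the form $(p\ttimes_s q)_*C_n$. By construction, $\cG$ becomes constant after pullback by the Galois cover $U\ttimes_s V\to X'\ttimes_s Y'$ whose Galois group is the finite $\ell$-group $P\colonequals P_1\times P_2$, so by descent $\cG$ corresponds to a finitely generated $\Lambda$-module $M$ equipped with a compatible $\Lambda[P]$-action. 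A standard unipotency argument then produces a filtration of $M$ by $P$-stable submodules with trivial $P$-action on the successive quotients: it suffices to show $M^P\neq 0$ whenever $M\neq 0$, which follows by induction on $|P|$ by choosing a central element $g\in P$ of order $\ell$ and observing that
\[
(g-1)^\ell=g^\ell-1=0\quad\text{in }\Lambda[P],
\]
since $\ell\Lambda=0$ kills the binomial coefficients $\binom{\ell}{k}$ for $0<k<\ell$. Thus $g-1$ acts nilpotently on $M$, so $M^{\langle g\rangle}=\ker(g-1)$ is a nonzero $P$-stable submodule (by centrality of $g$), and induction applied to the $P/\langle g\rangle$-module $M^{\langle g\rangle}$ completes the argument.

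The main obstacle I anticipate is the descent step: one must check that the filtration of $M$ by the $P$-stable submodules constructed above descends to a genuine filtration of $\cG$ by subsheaves on $X'\ttimes_s Y'$ that remains compatible with the full equivariant structure coming from $s=BG$. This should work because the subquotients are defined canonically via kernels of central elements of $P$, so they are automatically stable under the ambient profinite group action and hence correspond to subsheaves of $\cG$; the graded pieces, having trivial $P$-action, then correspond to constant sheaves $C_n$ on $X'\ttimes_s Y'$ as required.
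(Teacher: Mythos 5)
Your argument follows the same main line as the paper's, which is modeled on \cite[IX Proposition 5.5]{SGA4}: enlarge $U$ and $V$ to Galois covers, pass to the quotients by $\ell$-Sylow subgroups, and use the trace map (whose composite with adjunction is multiplication by $[H_1:P_1][H_2:P_2]$, prime to $\ell$ hence invertible) to exhibit $\cF$ as a direct summand of $\cF'=(p\ttimes_s q)_*(p\ttimes_s q)^*\cF$. The only divergence is in the final filtration step: you construct an \emph{ascending} socle filtration on $M$, reducing the nonvanishing of $M^P$ to a central element $g$ of order $\ell$ with $(g-1)^\ell=0$ in $\Lambda[P]$; the paper instead takes the \emph{descending} filtration $(I^nM)_{n\ge 0}$ by powers of the augmentation ideal $I\subset\Lambda[L]$, $L=P_1\times P_2$, and notes that $I$ is nilpotent because each generator $g-1$ satisfies $(g-1)^{\#L}=0$. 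Both filtrations are canonical in the module and so automatically respect the extra descent data you correctly flag as the delicate point (for the socle filtration one should say explicitly that one is filtering by iterated full $P$-invariants, so that the submodules are characteristic, rather than by $\ker(g-1)$ for a chosen $g$, which is only $\langle g\rangle$-characteristic); the augmentation-ideal filtration sidesteps this entirely since $I^n M$ is manifestly a $\Lambda[L]$-submodule intrinsic to $M$. Either route is fine.
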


\begin{proof}
The proof is similar to that of \cite[IX Proposition 5.5]{SGA4}. We may
assume that $U\to X$ and $V\to Y$ are Galois of groups $G$ and $H$,
respectively. Choose $\ell$-Sylow subgroups $G'$ of $G$ and $H'$ of $H$. Let
$X'=U/G'$, $Y'=V/H'$. Take $\cF'=(p\ttimes_s q)_*(p\ttimes_s q)^*\cF$. The
composition $\cF\to \cF'\xrightarrow{\Tr} \cF$ of the adjunction and trace
maps equals $[G:G'][H:H']\id $, which is an isomorphism. Thus $\cF$ is a
direct summand of $\cF'$. Moreover, $(p\ttimes_s q)^*\cF$ is a
$\Lambda[L]$-module $M$, where $L=G'\times H'$.  Since $(g-1)^{\# L}=0$ in
$\Lambda[L]$ for each $g\in L$, the augmentation ideal $I$ of $\Lambda[L]$
satisfies $I^{(\# L)^2}=0$. Thus the filtration $(I^n M)_{n\ge 0}$ of $M$ is
finite and provides the desired filtration of $\cF'$.
\end{proof}

Assume $m\Lambda=0$ with $m$ invertible on $X$ and on $Y$.

\begin{prop}\label{p.prodcons}
Assume that $X$ is (a) quasi-excellent or (b) of finite type over a
Noetherian regular scheme of dimension $1$. Assume that $Y$ is (a) or (b).
Let $f\colon X'\to X$ and $g\colon Y'\to Y$ be morphisms of finite type.
Then $R(f\ttimes_s g)_*$ and $R(f\ttimes_s g)^!$ (the latter for $f$ and $g$
separated) preserve $D^b_c$ and $D_\cft$. Moreover, $R\cHom_{X\ttimes_s Y}$
induces $(D_c^-)^\op\times D_c^+\to D^+_c$.
\end{prop}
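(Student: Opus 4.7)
The heart of the proposition is the preservation of constructibility and tor-amplitude by $R(f\ttimes_s g)_*$; the assertions on $R(f\ttimes_s g)^!$ and on $R\cHom$ will follow by standard reductions. I would begin by factoring $f\ttimes_s g=(f\ttimes_s\id_Y)\circ(\id_{X'}\ttimes_s g)$ so as to reduce the first claim to the case where one of $f,g$ is the identity. Nagata's compactification of the remaining morphism then splits the problem into (a) the proper case, where $R(f\ttimes_s g)_*\simeq R(f\ttimes_s g)_!$ and the previous proposition applies, and (b) the case of an open immersion.

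For case (b), say $j\colon U\hookrightarrow X$ is open, one must show that $R(j\ttimes_s\id_Y)_*$ preserves $D_c^b$ and $D_\cft$. By standard devissage on $\Lambda$ (using that $\Lambda$ is Noetherian with $m\Lambda=0$), I would reduce to the case $\ell\Lambda=0$ for a prime $\ell$. Then Lemma \ref{l.dev} writes any constructible $\cF$ on $U\ttimes_s Y$ as a summand of a sheaf admitting a finite filtration with graded pieces of the form $(p\ttimes_s q)_*C$, $p,q$ finite \'etale and $C$ constant. Since $p\ttimes_s q$ is proper, $(p\ttimes_s q)_*\simeq R(p\ttimes_s q)_!$, and Proposition \ref{p.prod} allows one to commute it past $R(j\ttimes_s\id_Y)_*$; so one reduces to $\cF$ a constant sheaf on a finite \'etale cover of $U$. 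Slicing by a geometric point $y$ of $Y$ via Proposition \ref{p.prod}, and using $X_\et\ttimes_s\bar y\simeq (X_{\bar s})_\et$, the stalk computation becomes classical $Rj_*$ on $(X_{\bar s})_\et$, to which Gabber's constructibility theorem over quasi-excellent or arithmetic bases applies (cf.\ \cite[XIII]{ILO} and the classical \cite[Th.\ finitude]{SGA4d}). Uniformity in $y$ (the classical theorem provides a stratification of $X$ on which $Rj_*$ of the given constant sheaf is locally constant) then gives local constancy on products of strata in $X\ttimes_s Y$. Preservation of tor-amplitude follows from Remark \ref{r.tor} combined with the boundedness just obtained.

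For $R(f\ttimes_s g)^!$ with $f,g$ separated, a similar factor-by-factor reduction, followed by decomposing each factor into a smooth morphism and a closed immersion, suffices: the smooth case is immediate from Proposition \ref{l.trace}, while for a closed immersion $i$ with complementary open $j$, the localization triangle expresses $R(i\ttimes_s\id)^!$ as a shift of the fiber of $(i\ttimes_s\id)^*\to (i\ttimes_s\id)^*R(j\ttimes_s\id)_*(j\ttimes_s\id)^*$, reducing to the preservation already established for $R(j\ttimes_s\id)_*$. For the $R\cHom$ statement, Lemma \ref{l.gen} yields a termwise resolution of $L\in D_c^-$ by sheaves of the form $(u\ttimes_s v)_!\Lambda$ with $u,v$ \'etale; the adjunction \eqref{e.1.13.2} then identifies $R\cHom((u\ttimes_s v)_!\Lambda,L')$ with $R(u\ttimes_s v)_*R(u\ttimes_s v)^! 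L'$, and the two preceding parts conclude, boundedness below being ensured by Gabber's finite cohomological dimension \cite[XVIII$_{\text{A}}$ Corollary 1.4]{ILO}.

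The main obstacle is case (b): the constructibility of $R(j\ttimes_s\id_Y)_*\cF$ for an open immersion~$j$. One must reconcile classical excellent/arithmetic finiteness theorems for $Rj_*$ with uniform control across all slices of the fiber product topos $X\ttimes_s Y$, whose stratifications are genuinely finer than those of $X\times Y$; it is the devissage of Lemma \ref{l.dev} that makes this possible by reducing to sheaves directly accessible to classical results.
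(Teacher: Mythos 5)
The heart of the issue is your use of Lemma~\ref{l.dev}. You write ``Lemma \ref{l.dev} writes any constructible $\cF$ on $U\ttimes_s Y$ as a summand of a sheaf admitting a finite filtration with graded pieces $(p\ttimes_s q)_*C$.'' This is not what the lemma says. Lemma~\ref{l.dev} carries the hypothesis that $\cF$ already becomes \emph{constant} after pullback to $U\ttimes_s V$ for some finite \'etale covers $U\to X$, $V\to Y$; for a general constructible sheaf this fails, since such a sheaf is only locally constant on strata, not locally constant on all of $X\ttimes_s Y$, and even where locally constant it need not be trivialized by finite \'etale covers (the monodromy can factor through an infinite quotient of the fundamental group that is only a limit of finite \'etale data). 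So there is a missing devissage: before Lemma~\ref{l.dev} can be invoked, one must pass to nonempty open subschemes $U'\subseteq X'$, $V'\subseteq Y'$ and to a large enough finite quotient $t=BH$ of $s$, and argue by Noetherian induction on the complements. The paper's proof does exactly this (Noetherian induction on $X'$, $Y'$, plus a choice of $t$ making $\cF'_t$ locally constant and trivialized by finite \'etale covers on connected components); this step is not cosmetic, and without it the reduction to $(p\ttimes_s q)_*C$ does not go through.

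Two smaller points. First, after reducing to a constant sheaf the paper concludes by the K\"unneth formula (Corollary~\ref{c.Kunn*}~(b)) combined with the classical finiteness theorems of Deligne and Gabber applied to each factor. Your alternative ``slice by $y$, compute the stalk as a classical $Rj_*$ on $(X_{\bar s})_\et$, and propagate the stratification'' is not wrong in spirit, but it is considerably more delicate: $X_{\bar s}$ is an infinite pro-\'etale cover of $X$ and is not quasi-excellent in general, so the classical theorem must be applied before passing to the limit, and one must then explain why the resulting stratification of $X$ gives a partition of $X\ttimes_s Y$ on which $R(j\ttimes_s\id_Y)_*\cF$ is locally constant — this is precisely what K\"unneth delivers in one line, and I do not see how your argument establishes it without essentially reproving K\"unneth. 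Second, note that the paper only claims that $R\cHom$ lands in $D^+_c$ (not $D^b_c$), so the appeal to Gabber's finite cohomological dimension to control boundedness from below is unnecessary for that part; the reduction via Lemma~\ref{l.gen} and the preservation of $D^+_c$ by $R(u\ttimes_s v)_*$ suffice. Your treatment of $R(f\ttimes_s g)^!$ (smooth $+$ closed immersion, with the closed-immersion case reduced via the complementary open immersion) agrees with the paper.
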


\begin{proof}
Let us first show that $R(f\ttimes_s g)_*$ preserves $D^b_c$. The
preservation of $D_{\cft}$ follows from this and Remark \ref{r.tor}. We may
assume $\ell\Lambda=0$ for some prime $\ell\mid m$. By Noetherian induction
on $X'$ and $Y'$, it suffices to show that for $\cF\in \Shv_c(X'\ttimes_s
Y',\Lambda)$, there exist open immersions $u\colon U'\to X'$, $v\colon V'\to
Y'$ with $U'$ and $V'$ nonempty such that $R(u\ttimes_s v)_*\cF'\in D^+_c$
and $R(fu\ttimes_s gv)_*\cF'\in D^+_c$, where $\cF'=\cF|_{U'\ttimes_s V'}$.
For this it suffices to show that for some open subgroup $H<G$ and $t=BH$,
we have $R(u_t\ttimes_t v_t)_*\cF'_t\in D^+_c$ and $R(f_tu_t\ttimes_t
g_tv_t)_*\cF'_t\in D^+_c$, where $\cF'_t=\cF'|_{U'_t\ttimes_t V'_t}$. There
exist $U'$, $V'$, and $t$ such that $\cF'_t$ is locally constant and
trivialized by finite \'etale covers on each pair of connected components of
$U'_t$ and $V'_t$. Changing notation (replacing $\cF$ by $\cF'_t$), it
suffices to show that for $\cF$ locally constant and trivialized by finite
\'etale covers, we have $R(f\ttimes_s g)_*\cF\in D^b_c$. By Lemma
\ref{l.dev}, we may assume that $\cF=(p\ttimes_s q)_*C$ for finite \'etale
covers $p\colon X''\to X'$ and $q\colon Y''\to Y'$ and $C$ constant.
Changing notation again, we may assume $\cF$ constant. By projection
formula, we may assume $\Lambda=\Z/\ell\Z$ and $\cF=\Lambda$. This case
follows from the K\"unneth formula (Corollary \ref{c.Kunn*} (b)) and
finiteness theorems for $Rf_*\Lambda$ and $Rg_*\Lambda$ of Deligne
\cite[Th.\ finitude, Th\'eor\`eme 1.1]{SGA4d} and Gabber \cite[Introduction,
Th\'eor\`eme~1]{ILO}.

For $R(f\ttimes_s g)^!$, we reduce to the case of smooth morphisms, which
follows from Proposition \ref{l.trace}, and the case of closed immersions,
which follows from the first assertion applied to the complementary open
immersions.

To show $R\cHom(\cF,L)\in D^+_c$ for $\cF$ constructible and $L\in D^+_c$,
we may assume $\cF=(u\ttimes_s v)_!\Lambda$ by Lemma \ref{l.gen}. Then
$R\cHom(\cF,L)\simeq R(u\ttimes_s v)_*(u\ttimes_s v)^*L$.
\end{proof}

Assume that $X$ is (a) excellent admitting a dimension function or (b) of
finite type over a regular Noetherian scheme of dimension one. Then $X$ is
equipped with a dualizing complex $K_X$ for $D_{\cft}(X,\Lambda)$, unique up
to tensor product with invertible objects. For $f\colon X'\to X$ separated
of finite type, $Rf^!K_X$ is a dualizing complex for $D_{\cft}(X',\Lambda)$.
If $\Lambda$ is Gorenstein, then $K_X$ is also a dualizing complex for
$D^b_c(X,\Lambda)$ (\cite[Th.\ finitude, 4.7]{SGA4d}, \cite[XVII
Th\'eor\`emes 6.1.1, 7.1.2, 7.1.3]{ILO}). Let $\Lambda=\prod_i\Lambda_i$
with each $\Spec(\Lambda_i)$ connected. For each $i$, $K_X$ determines a
dimension function $\delta_{X,i}$: for each $x\in X$,
$R\Gamma_x(K)\otimes_\Lambda \Lambda_i$ is concentrated in degree
$-2\delta_{X,i}(x)$.

Assume $Y$ is (a) or (b). Then $X$ and $Y$ are equipped with dualizing
complexes $K_X$ and $K_Y$. Let $K_{X\ttimes_s Y}=K_X\boxtimes^L K_Y$ and
$D_{X\ttimes_s Y}=R\cHom(-,K_{X\ttimes_s Y})$. Proposition \ref{p.RHomKunn}
(b) takes the following form.

\begin{cor}\label{c.DKunn}
For $L\in D^-_c(X,\Lambda)$ and $M\in D_{\cft}(Y,\Lambda)$, the canonical
map
\[D_X L\boxtimes^L D_Y M\to D_{X\ttimes_s Y}(L\boxtimes^L M)\]
is an isomorphism.
\end{cor}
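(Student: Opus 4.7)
The plan is to deduce this corollary as a direct application of Proposition~\ref{p.RHomKunn} (b). Specifically, I would set $L'=K_X$ and $M'=K_Y$ in that proposition. Since $K_{X\ttimes_s Y}$ is by definition $K_X\boxtimes^L K_Y$, the canonical map furnished by the proposition,
\[R\cHom(L,K_X)\boxtimes^L R\cHom(M,K_Y)\to R\cHom(L\boxtimes^L M,K_X\boxtimes^L K_Y),\]
coincides with the map $D_XL\boxtimes^L D_YM\to D_{X\ttimes_s Y}(L\boxtimes^L M)$ that we want to show is an isomorphism.

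To invoke case (b) of Proposition~\ref{p.RHomKunn}, the conditions $L\in D^-_c(X,\Lambda)$ and $M\in D_{\cft}(Y,\Lambda)$ are part of our hypotheses, and $K_X,K_Y\in D^+$ holds because dualizing complexes lie in $D^b_c$ by construction. The remaining---and only nontrivial---hypothesis is that $M'=K_Y$ has tor-amplitude $\ge n$ for some integer~$n$.

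This last point is the main (though still modest) obstacle. The key observation is that $K_Y$ being a dualizing complex for $D_{\cft}(Y,\Lambda)$ means that the duality functor $D_Y=R\cHom(-,K_Y)$ preserves $D_{\cft}(Y,\Lambda)$. Applying $D_Y$ to the constant sheaf $\Lambda_Y$, which plainly lies in $D_{\cft}(Y,\Lambda)$, one obtains $K_Y\simeq D_Y\Lambda_Y\in D_{\cft}(Y,\Lambda)$. Hence $K_Y$ has finite tor-amplitude, in particular bounded below by some integer~$n$, and the hypothesis of case (b) is satisfied. Once these checks are in place, Proposition~\ref{p.RHomKunn} (b) immediately delivers the desired isomorphism, and no further work is needed.
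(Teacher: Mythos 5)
Your proof is correct and is exactly the paper's argument: the corollary is stated as the specialization of Proposition~\ref{p.RHomKunn}~(b) to $L'=K_X$, $M'=K_Y$, with $K_{X\ttimes_s Y}=K_X\boxtimes^L K_Y$ by definition. Your verification that $K_Y\simeq D_Y\Lambda_Y\in D_{\cft}(Y,\Lambda)$, hence has tor-amplitude bounded below, correctly supplies the one hypothesis the paper leaves implicit.
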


\begin{prop}\label{p.dual}\leavevmode
\begin{enumerate}
\item $D_{X\ttimes_s Y}$ preserves $D_{\cft}(X\ttimes_s Y,\Lambda)$. The
    evaluation map $\ev_L\colon L \to D_{X\ttimes_s Y}D_{X\ttimes_s Y} L$
    is an isomorphism for $L\in D_{\cft}(X\ttimes_s Y,\Lambda)$.
\item Assume $\Lambda$ Gorenstein. $D_{X\ttimes_s Y}$ preserves
    $D^b_c(X\ttimes_s Y,\Lambda)$. The evaluation map $\ev_L\colon L \to
    D_{X\ttimes_s Y}D_{X\ttimes_s Y} L$ is an isomorphism for $L\in
    D^b_c(X\ttimes_s Y,\Lambda)$. The quasi-injective dimension of
    $K_{X\ttimes_s Y}$ (namely, the cohomological dimension of
    $D_{X\ttimes_s Y}$ when restricted to $D^b_c(X\ttimes_s Y,\Lambda)$)
    is
    \[d=\sup_i\left(\dim
    \Lambda_i-2\inf\delta_{X,i}-2\inf\delta_{Y,i}\right).
    \]
\end{enumerate}
\end{prop}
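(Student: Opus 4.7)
The plan is to reduce both preservation and biduality to the classical results on $X$ and $Y$ separately via the K\"unneth formulas of Corollaries \ref{c.DKunn} and \ref{c.Kunnup}, using Lemma \ref{l.gen} to generate constructible sheaves.

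\emph{Basic case.} For $u\colon U\to X$ and $v\colon V\to Y$ \'etale, Corollary \ref{c.pj} \eqref{e.1.13.2} gives $D_{X\ttimes_s Y}(u\ttimes_s v)_!\Lambda \simeq R(u\ttimes_s v)_* R(u\ttimes_s v)^! K_{X\ttimes_s Y}$. Applying Corollary \ref{c.Kunnup} inside and Corollary \ref{c.Kunn*} outside (valid since $Ru_*$ and $Rv_*$ have finite cohomological dimension by \cite[XVIII$_{\text{A}}$ Corollary 1.4]{ILO}) yields
\[D_{X\ttimes_s Y}(u\ttimes_s v)_!\Lambda \simeq D_X(u_!\Lambda) \boxtimes^L D_Y(v_!\Lambda).\]
The right side lies in $D_{\cft}$ (and in $D^b_c$ under Gorenstein $\Lambda$) by the classical preservation on $X$ and $Y$. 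Iterating and applying classical biduality gives $\ev_L$ an isomorphism for $L$ of this form. A parallel stalkwise computation based on Lemma \ref{l.local} yields the quasi-injective dimension bound in (2): since $K_X$ (resp.\ $K_Y$) is locally concentrated in degree $-2\delta_{X,i}(x)$ (resp.\ $-2\delta_{Y,i}(y)$) on the $i$-th component, with Gorenstein $\Lambda_i$ contributing amplitude $[0,\dim\Lambda_i]$, the total amplitude yields the stated supremum. Preservation of $D^b_c$ in (2) follows from the last assertion of Proposition \ref{p.prodcons} combined with this amplitude bound; preservation of $D_{\cft}$ in (1) follows similarly from an analog of Lemma \ref{l.RHomdim} on $X\ttimes_s Y$, proved by the same d\'evissage.

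\emph{Extension.} To prove biduality for a general $L$, reduce by triangles on cohomology sheaves to the case of a single constructible sheaf $\cF$. Iterating Lemma \ref{l.gen}, using that kernels of epimorphisms in $\Shv_c$ remain constructible, we obtain a resolution $\cdots \to P_1 \to P_0 \to \cF \to 0$ with $P_i = (u_i\ttimes_s v_i)_!\Lambda$. For each $n$, the truncation $R_n = [P_n \to \cdots \to P_0]$ in degrees $[-n,0]$ fits in an exact triangle $R_n \to \cF \to Q_n \to R_n[1]$ with $Q_n \in D^{\le -n}$. Applying $D_{X\ttimes_s Y}$ twice and invoking its finite cohomological amplitude $d$ on the relevant subcategory shows that $D_{X\ttimes_s Y}D_{X\ttimes_s Y}Q_n \in D^{\le -n+d}$. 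Hence in any bounded cohomological range, $\ev_\cF$ coincides with $\ev_{R_n}$ for $n$ large, and $\ev_{R_n}$ is an isomorphism by the basic case via the five-lemma applied to the bounded complex $R_n$.

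The main obstacle is controlling the unbounded tail of the iterated resolution; this is handled by the finite quasi-injective (resp.\ tor-) dimension of $K_{X\ttimes_s Y}$ established in the basic case, which bounds the amplitude of $D_{X\ttimes_s Y}D_{X\ttimes_s Y}Q_n$ even when $Q_n$ is unbounded below.
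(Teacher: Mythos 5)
There is a genuine gap in your extension step. Your argument resolves a constructible sheaf $\cF$ by an \emph{infinite} resolution $\cdots\to P_1\to P_0\to\cF$ with $P_i=(u_i\ttimes_s v_i)_!\Lambda$ and then controls the tail $Q_n$ by invoking ``the finite quasi-injective (resp.\ tor-) dimension of $K_{X\ttimes_s Y}$''. But the proposition does not assume $X$, $Y$, or $\Lambda$ finite-dimensional, and the paper points out immediately after the statement that $K_{X\ttimes_s Y}$ has finite quasi-injective dimension \emph{if and only if} $\Lambda$, $X$ and $Y$ are finite-dimensional (quasi-excellent Noetherian schemes can have infinite dimension). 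Without a uniform bound on the cohomological amplitude of $D_{X\ttimes_s Y}$ (already $D_{X\ttimes_s Y}(u\ttimes_s v)_!\Lambda\simeq R(u\ttimes_s v)_*K_{U\ttimes_s V}$ has no uniform upper bound when $U$ varies over infinite-dimensional $X$), you cannot conclude that $D_{X\ttimes_s Y}D_{X\ttimes_s Y}Q_n$ lives in $D^{\le -n+d}$, and the comparison of $\ev_\cF$ with $\ev_{R_n}$ in a fixed degree breaks down. The paper avoids this entirely by using a \emph{finite} d\'evissage: stratify $X$ and $Y$ into regular locally closed pieces on which $\cF$ is lisse, reducing to $L=(u\ttimes_s v)_!\cF$ with $u,v$ immersions and $\cF$ of locally constant cohomology; then $D_{X\ttimes_s Y}L\simeq R(u\ttimes_s v)_*D_{U\ttimes_s V}\cF$, preservation follows from Proposition \ref{p.prodcons} (which holds without finite-dimensionality), and biduality follows from Proposition \ref{p.bidual}. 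Your ``basic case'' computation via the K\"unneth formulas is close in spirit to Proposition \ref{p.bidual}, but the compatibility of the evaluation maps with the K\"unneth isomorphisms that you assert is precisely the content of \eqref{e.DD} and needs the formal setup of Remark \ref{r.dual}; it is not automatic.

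A second, smaller gap: you only sketch the \emph{upper} bound for the quasi-injective dimension in (2). The statement is an equality, and the lower bound $c\ge d$ requires exhibiting test objects --- the paper takes the constant sheaf $\Lambda/\fm$ on $\overline{\{x\}}\ttimes_s\overline{\{y\}}$ for maximal ideals $\fm$ of $\Lambda_i$ and points $x,y$ approaching the infima of the dimension functions --- which your stalkwise amplitude count does not produce. Your argument is salvageable under the additional hypothesis that $\Lambda$, $X$ and $Y$ are finite-dimensional, but as written it does not prove the proposition in the stated generality.
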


In particular, in (2) $K_{X\ttimes_s Y}$ has finite quasi-injective
dimension if and only if $\Lambda$, $X$ and $Y$ are finite-dimensional.

\begin{proof}
Let us first show the preservation of $D_\cft$ and $D^b_c$ (the latter
assuming $\Lambda$ Gorenstein) and biduality. This is trivial if $X$ and $Y$
are regular and we restrict to complexes with locally constant cohomology
sheaves. We reduce the general case to the case of $L=(u\ttimes_s v)_! \cF$,
where $u\colon U\to X$ and $v\colon V\to Y$ are immersions with $U$ and $V$
regular and $\cF$ has locally constant cohomology sheaves. Then
$D_{X\ttimes_s Y}L\simeq R(u\ttimes_s v)_* D_{U\ttimes_s V}\cF$. Note that
$R(u\ttimes_s v)_*$ preserves $D^b_c$ and $D_\cft$ by Proposition
\ref{p.prodcons}. The map $\ev_L$ is the composition
\begin{multline*}
(u\ttimes_s v)_! \cF\xrightarrow[\sim]{\ev_\cF} (u\ttimes_s v)_! D_{U\ttimes_s V}D_{U\ttimes_s V}\cF \\
\xrightarrow[\sim]{\eqref{e.DD}} D_{X\ttimes_s Y}R(u\ttimes_s v)_* D_{U\ttimes_s V}\cF\simeq D_{X\ttimes_s Y}D_{X\ttimes_s Y}(u\ttimes_s v)_! \cF,
\end{multline*}
where we used Proposition \ref{p.bidual} below.

It remains to show that for $\Lambda$ Gorenstein, the quasi-injective
dimension $c$ of $K_{X\ttimes_s Y}$ is $d$. This is similar to the proof of
\cite[XVII Proposition 6.2.4.1]{ILO}. For each maximal ideal $\fm$ of
$\Lambda_i$, taking $L$ to be the constant sheaf $\Lambda/\fm$ on
$\overline{\{x\}}\ttimes_s\overline{\{y\}}$, extended to $X\ttimes_s Y$, we
get $c\ge \dim \Lambda_\fm-2\delta_{X,i}(x)-2\delta_{Y,i}(y)$. It follows
that $c\ge d$. To show $c\le d$, we reduce to the case $L=(u\ttimes_s v)_!
\cF$ as above with $\cF$ a sheaf. We may further assume $U$ and $V$ are
connected, with generic points $x$ and $y$ respectively. Then $D_{U\ttimes_s
V}\cF$ has cohomological degrees $\le
\sup_i(\dim\Lambda_i-\delta_{X,i}(x)-\delta_{Y,i}(y))$. Moreover, by
\cite[XVIII$_{\text{A}}$ Theorem 1.1]{ILO} and Remark \ref{r.cd},
$R(u\ttimes_s v)_*$ has cohomological dimension $\le 2\dim\overline U
+2\dim\overline V$. It follows that $c\le d$.
\end{proof}

Let $f\colon X'\to X$ and $g\colon Y'\to Y$ be separated morphisms of finite
type. We have a natural transformation
\begin{multline}\label{e.DD}
R(f\ttimes_s g)_!
D_{X'\ttimes_s Y'} \xrightarrow{\ev} D_{X\ttimes_s Y}D^\op_{X\ttimes_s Y}R(f\ttimes_s g)_!
D_{X'\ttimes_s Y'} \\
\mathrel{\tfrac{\eqref{e.1.13.2}}{\sim}} D_{X\ttimes_s Y}(R(f\ttimes_s g)_*)^\op D^\op_{X'\ttimes_s Y'}
D_{X'\ttimes_s Y'} \xrightarrow{\ev} D_{X\ttimes_s Y}(R(f\ttimes_s g)_*)^\op,
\end{multline}
where $\ev$ denotes the evaluation maps.

\begin{prop}\label{p.bidual}
Let $L\in D^-_c(X'\ttimes_s Y',\Lambda)$. Assume either $L\in D^b_c$ or $X$
and $Y$ are finite-dimensional. Then $R(f\ttimes_s g)_! D_{X'\ttimes_s Y'}L
\to D_{X\ttimes_s Y}R(f\ttimes_s g)_* L$ is an isomorphism.
\end{prop}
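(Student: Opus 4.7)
The plan is to combine the identity \eqref{e.1.13.2} with the biduality statements of Proposition \ref{p.dual}, using Nagata compactification to reduce to manageable cases. First, the natural map $\phi_L$ of \eqref{e.DD} is compatible with the factorization $f \ttimes_s g = (f \ttimes_s \id_Y) \circ (\id_{X'} \ttimes_s g)$, which reduces the assertion to the cases $g = \id_Y$ and, by symmetry, $f = \id_X$. Treating the case $g = \id_Y$, I would further factor $f = \bar f j$ with $\bar f$ proper and $j$ an open immersion (as in Construction \ref{con.!}), and handle each piece separately.

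For $f$ proper one has $R(f \ttimes_s \id_Y)_! = R(f \ttimes_s \id_Y)_*$, and $\phi_L$ reduces directly to the isomorphism \eqref{e.1.13.2} (applied with $M = K_{X \ttimes_s Y}$, using the identification $R(f \ttimes_s \id_Y)^! K_{X \ttimes_s Y} \simeq K_{X' \ttimes_s Y}$ supplied by Corollary \ref{c.Kunnup}). For $f = j$ an open immersion, set $h = j \ttimes_s \id_Y$ and let $\tilde i = i \ttimes_s \id_Y$, where $i\colon Z \hookrightarrow X$ is the closed complement of $j$. Applying the localization triangle $h_! h^* \to \id \to \tilde i_* \tilde i^*$ to $D h_* L$, and using Lemma \ref{l.local} (a) (since $h$ is weakly \'etale) to commute $h^*$ with $R\cHom$, we get $h^* D h_* L \simeq D h^* h_* L \simeq D L$, yielding a triangle $h_! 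D L \to D h_* L \to \tilde i_* \tilde i^* D h_* L$ whose first arrow one identifies with $\phi_L$. So $\phi_L$ is an isomorphism if and only if $\tilde i^* D h_* L = 0$.

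The remaining vanishing is obtained from biduality. By \eqref{e.1.13.2} one has a natural isomorphism $D R(f\ttimes_s g)_! D L \simeq R(f\ttimes_s g)_* D^2 L$; combined with a biduality isomorphism $L \simeq D^2 L$ supplied by Proposition \ref{p.dual}, this becomes $R(f \ttimes_s g)_* L$. Applying $D$ once more and invoking biduality for $R(f\ttimes_s g)_! D L$ then yields $R(f\ttimes_s g)_! D L \simeq D R(f\ttimes_s g)_* L$, which identifies with $\phi_L$ and forces $\tilde i^* D h_* L = 0$. Biduality is immediate by Proposition \ref{p.dual} (1) when $L \in D_\cft$, and in that case $R(f\ttimes_s g)_! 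D L$ also lies in $D_\cft$ (using that $D$ preserves $D_\cft$ and that $R(f\ttimes_s g)_!$ preserves $D_\cft$ by the proposition preceding \ref{p.dual}). For general $L \in D^b_c$ one d\'evissages by Lemma \ref{l.gen} to the case $L = (u \ttimes_s v)_! \Lambda$, which lies in $D_\cft$, and extends by triangles. The case $L \in D^-_c$ with $X, Y$ finite-dimensional is then obtained by a limit argument over truncations $\tau_{\ge -n} L$, exploiting the finite cohomological dimensions (Construction \ref{c.dim}, Proposition \ref{l.trace}, Remark \ref{r.cd}, Proposition \ref{p.dual} (2)). The main obstacle is the d\'evissage establishing biduality for $R(f\ttimes_s g)_! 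D L$ outside the $D_\cft$ case; the hypothesis that either $L \in D^b_c$ or $X, Y$ are finite-dimensional is exactly what makes this d\'evissage and the ensuing limit argument converge.
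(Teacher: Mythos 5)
Your proof is not the paper's: the paper reduces, via Lemma \ref{l.local}, to $X$ and $Y$ strictly local (hence finite-dimensional), then to $L\in\Shv_c$ (finite cohomological dimension of $R(f\ttimes_s g)_*$), then to $L=(u\ttimes_s v)_!\Lambda\simeq M\boxtimes^L N$ by Lemma \ref{l.gen}, and finally identifies \eqref{e.DD} with the external product of the classical scheme-level isomorphisms $Rf_!D M\simto D Rf_*M$ and $Rg_!D N\simto D Rg_*N$ via the three K\"unneth formulas (Corollaries \ref{c.Kunn!}, \ref{c.Kunn*}, \ref{c.DKunn}). Your route has a genuine gap: circularity. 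Once you strip away the compactification and localization-triangle scaffolding, the operative step of your argument is that \eqref{e.DD} is by definition the composite $\ev\circ\eqref{e.1.13.2}\circ\ev$, so it is an isomorphism as soon as $\ev_L$ and $\ev_{R(f\ttimes_s g)_!D L}$ are. You supply these bidualities by citing Proposition \ref{p.dual}; but the paper's proof of Proposition \ref{p.dual} invokes Proposition \ref{p.bidual} (precisely to identify $\ev_{(u\ttimes_s v)_!\cF}$ with $(u\ttimes_s v)_!\ev_\cF$ through $D R(u\ttimes_s v)_*D$). So you are assuming what you set out to prove. The circle can be broken — for $L=(u\ttimes_s v)_!\Lambda\simeq M\boxtimes^L N$ one can check $\ev_L$ directly by applying Corollary \ref{c.DKunn} twice and classical biduality on schemes, and likewise for $R(f\ttimes_s g)_!DL\simeq Rf_!DM\boxtimes^L Rg_!DN$ — but that repair is exactly the K\"unneth computation the paper performs, and you would then have to verify that the resulting abstract isomorphism agrees with the canonical map $\ev$, which is the delicate point.

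A secondary error: in the proper case you assert that $\phi_L$ ``reduces directly to the isomorphism \eqref{e.1.13.2}.'' This conflates \eqref{e.DD} with its transpose \eqref{e.DD2}. For $f$ proper, \eqref{e.1.13.2} gives $D R(f\ttimes_s\id)_!L'\simeq R(f\ttimes_s\id)_*DL'$ for \emph{all} $L'$, whereas \eqref{e.DD} is the map $R(f\ttimes_s\id)_!DL\to D R(f\ttimes_s\id)_*L$ obtained from it by Construction \ref{c.adj} relative to the adjunctions $D^\op\dashv D$; the transpose of an isomorphism is not an isomorphism unless the relevant evaluation maps are — so even the proper case silently requires the biduality you have not independently established. (The reduction to the factors $f\ttimes_s\id$ and $\id\ttimes_s g$, and the identification of the first arrow of the localization triangle with $\phi_L$ in the open-immersion case, are fine but end up carrying no weight in your argument.)
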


\begin{proof}
For $L\in D^b_c$, by Lemma \ref{l.local}, we may assume that $X$ and $Y$ are
strictly local. Thus we may assume $X$ and $Y$ finite-dimensional. Since
$R(f\ttimes_s g)_*$ has finite cohomological dimension, we may assume $L\in
\Shv_c$. By Lemma \ref{l.gen}, we may further assume $L=(u\ttimes_s
v)_!\Lambda\simeq M\boxtimes^L N$ for $u$ and $v$ affine and \'etale, where
$M=u_!\Lambda$ and $N\in v_!\Lambda$ in $D_\cft$. Via K\"unneth formula for
$R(f\ttimes_s g)_!$, $R(f\ttimes_s g)_*$, and $D$ (Corollaries
\ref{c.Kunn!}, \ref{c.Kunn*}, \ref{c.DKunn}), \eqref{e.DD} can be identified
with the $\boxtimes^L$ of the isomorphisms
\[Rf_!D_{X'}M\simto D_{X}Rf_* M,\quad Rg_!D_{Y'}N\simto D_{Y}Rg_* N.\]
\end{proof}

\begin{remark}\label{r.dual}
The natural transformation \eqref{e.DD} can be interpreted formally as
follows. We write $\alpha\colon F\dashv G$ for an adjunction
$\alpha_{X,Y}\colon \Hom(FX,Y)\simeq \Hom(X,GY)$. Then $\alpha^\op\colon
G^\op\dashv F^\op$, where $\alpha^\op_{Y,X}=\alpha_{X,Y}^{-1}$. By a
\emph{duality} on a category $\cC$ we mean a functor $D\colon \cC^\op\to
\cC$ equipped with an adjunction $\alpha\colon D^\op\dashv D$ such that
$\alpha^\op=\alpha$. Here $D^\op\colon (\cC^\op)^\op\to \cC^\op$ and we have
identified $(\cC^\op)^\op$ with $\cC$. Similarly $\alpha^\op\colon
D^\op\dashv (D^\op)^\op$ and we have identified $(D^\op)^\op$ with $D$.
Equivalently, a duality on $\cC$ is a functor $D\colon \cC^\op\to \cC$
equipped with a natural transformation $\ev\colon \id_\cC\to DD^\op$ such
that the composite $D\xrightarrow{\ev D} DD^\op D \xrightarrow{D \ev^\op} D$
is the identity. Note that we do not require $\ev$ to be a natural
isomorphism. If $(\cC,\otimes)$ is a closed symmetric category and $K$ is an
object of $\cC$, then $D_K=\cHom(-,K)$, where $\cHom$ is the internal Hom
functor, is a duality on $\cC$ \cite[Construction A.4.1]{SZ}. The adjunction
$D_K^\op\dashv D_K$ is given by
\[
\Hom(L,D_K M)\simeq \Hom(L\otimes M,K) \simeq \Hom(M\otimes L,K)\simeq \Hom(M,D_K L)
\simeq \Hom_{\cC^\op}(D_K^\op L,M).
\]
Here the second isomorphism is induced by the symmetric constraint $L\otimes
M\simeq M\otimes L$.

The natural transformation \eqref{e.DD} is obtained from Construction
\ref{c.adj} applied to the opposite of the canonical natural isomorphism
\begin{equation}\label{e.DD2}
R(u\ttimes_s v)_* D_{U\ttimes_s V}\simeq
D_{X\ttimes_s Y} R(u\ttimes_s v)_!^\op.
\end{equation}
In the terminology of \cite[Definition A.3.3]{SZ}, \eqref{e.DD} and
\eqref{e.DD2} are form transformations, transposes of each other.
\end{remark}

\begin{example}\label{ex.S}
Let $S$ be a Henselian local scheme and let $i\colon s\to S$ be the
inclusion of the closed point. The functor $i^{-1}\colon \Et(S)\to \Et(s)$
between \'etale sites admits a left adjoint $\pi^{-1}\colon \Et(s)\to
\Et(S)$, which commutes with finite limits and extends the equivalence
between finite \'etale sites. The functor $\pi^{-1}$ is continuous and
induces a morphism of topoi $\pi\colon S_\et\to s_\et$, left adjoint to
$i_\et$ (which means $\pi_*\simeq i^*$ here). For any scheme $Y$ over $s$,
we have $Y\ttimes_s S\simeq Y\atimes_s S \simeq Y\atimes_S S$ by Remark
\ref{r.o} and Lemma \ref{l.univ} below.

In the special case where $S$ is the spectrum of a Henselian discrete
valuation ring, we recover the topos $Y\ttimes_s S$ of \cite[XIII Section
1.2]{SGA7II}. A sheaf $\cF$ on $Y\ttimes_s S$ can be identified with a
triple $(\cF_s,\cF_\eta,\phi)$, where $\cF_s$ is a sheaf on $Y$, $Y_\eta$ is
a sheaf on $Y\ttimes_s \eta\simeq Y\atimes_s \eta \simeq Y\atimes_S \eta$,
and $\phi\colon p^*\cF_s\to \cF_\eta$ is a morphism of sheaves. Here
$p\colon Y\ttimes_s\eta \to Y$ denotes the projection. The functor
$R(f\ttimes_s \id_S)_!$ in this case reduces to \cite[XIII 2.1.6
c)]{SGA7II}.
\end{example}

\begin{lemma}\label{l.univ}
Consider morphisms $\xymatrix{X\ar[r]^a & s\ar@/^/[r]^{i}\ar@{}[r]|\top& S
\ar@/^/[l]^\pi &Y\ar[l]_{b}}$ in a $2$-category with $\pi$ left adjoint to
$i$. Then $X\atimes_{a,s,\pi b} Y\simeq X\atimes_{ia,S,b} Y$.
\end{lemma}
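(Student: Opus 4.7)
The plan is to invoke the universal property of oriented products together with the 2-categorical adjunction $\pi \dashv i$, and then appeal to Yoneda. Recall that for any test object $T$ of the ambient 2-category, giving a 1-morphism $T \to X \atimes_{f,Z,g} Y$ amounts, by the defining universal property of the oriented product in \cite[XI]{ILO}, to giving a triple $(u, v, \phi)$ where $u \colon T \to X$ and $v \colon T \to Y$ are 1-morphisms and $\phi$ is a 2-morphism between $fu$ and $gv$ (in the direction prescribed by the convention of $\atimes$). Applying this to the two sides, morphisms $T \to X \atimes_{a, s, \pi b} Y$ correspond to triples $(u, v, \phi)$ where $\phi$ is a 2-morphism between $au$ and $\pi(bv)$ as 1-morphisms $T \to s$, while morphisms $T \to X \atimes_{ia, S, b} Y$ correspond to triples $(u, v, \psi)$ where $\psi$ is a 2-morphism between $i(au)$ and $bv$ as 1-morphisms $T \to S$.

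The next step is to produce a bijection $\phi \leftrightarrow \psi$, natural in $(u,v)$, using the 2-categorical adjunction $\pi \dashv i$, with unit $\eta \colon \id_S \Rightarrow i\pi$ and counit $\epsilon \colon \pi i \Rightarrow \id_s$ satisfying the triangle identities. The transposition operation sends a 2-morphism $\phi$ involving $\pi(bv)$ to a 2-morphism $\psi$ involving $i(au)$ by applying $i$ to $\phi$ and whiskering with $\eta_{bv}$, and the inverse operation applies $\pi$ and whiskers with $\epsilon_{au}$; the triangle identities guarantee these are mutually inverse. Naturality in $(u,v)$ (and compatibility with any further data) follows from the naturality of $\eta$ and $\epsilon$ together with the interchange law. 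This exhibits the 2-functors represented by the two oriented products as naturally equivalent on all test objects $T$.

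The final step is the 2-categorical Yoneda lemma: a natural equivalence of representable 2-functors yields a canonical equivalence of their representing objects, compatibly with the projections to $X$ and $Y$. I expect no serious obstacle: no computation or geometric input is needed beyond the universal property and the adjunction. The only point requiring care is matching the convention for the direction of the 2-morphism in the definition of $\atimes$ with the direction of transposition under $\pi \dashv i$, but this is a purely formal bookkeeping issue.
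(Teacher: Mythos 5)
Your proposal is correct and is essentially the paper's own argument: the paper also identifies the two oriented products by noting they represent the same universal property, with the 2-cell $ax \Leftarrow \pi by$ transposed to $iax \Leftarrow by$ via the adjunction $\pi \dashv i$ (your unit/counit/triangle-identity computation just makes this transposition explicit). You also correctly flag and resolve the only delicate point, the direction of the 2-cell, which must go from $\pi(bv)$ to $au$ for the mate construction under $\pi \dashv i$ to work.
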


\begin{proof}
The two oriented products satisfy the same universal property. Indeed, for
$X\xleftarrow{x} T\xrightarrow{y} Y$, giving $ax \Leftarrow \pi by$ is
equivalent to giving $iax \Leftarrow \pi b$.
\end{proof}

\section{The functor $L\Co$}\label{s.2}

After preliminaries on the Iwasawa twist (Sections \ref{ss.Iwa} and
\ref{ss.Iwafiber}), we study adjunctions and duality for the functor $L\Co$.
The main results of this section are Theorems \ref{t.adj} and \ref{t.LPhi}.

\subsection{Iwasawa twist}\label{ss.Iwa}
Let $\Lambda$ be a commutative ring with $m\Lambda=0$ for some integer
$m>0$. Let $G$ be a profinite group. Recall that the completed group ring
\[R=\Lambda[[G]]=\lim_{H}\Lambda[G/H],\]
where $H$ runs through open normal subgroups of $G$, can be identified with
the ring of $\Lambda$-valued measures on $G$ with convolution product. For
any discrete $\Lambda$-$G$-module $M$ and any normal subgroup
$P\triangleleft G$, we write $M^P$ and $M_P$ for the $\Lambda$-$G$-modules
of $G$-invariants and $G$-coinvariants, respectively. Note that if the
(supernatural) order of $P$ is prime to $m$, then the canonical map $M^P\to
M_P$ is an isomorphism, with inverse carrying the class of $x\in M$ to $\int
px \,d\mu(p)$, where $\mu$ is the Haar measure on $P$ of mass $1$. In this
case we write ${}^P M=\Coker(M^P\to M)$ and we have a canonical isomorphism
\[M\simeq M^P\oplus {}^P M.
\]

Let $A$ be a set of prime numbers containing all prime divisors of $m$ and
let $\Z_A=\prod_{\ell\in A}\Z_\ell$. Let $G$ be a profinite group fitting
into a short exact sequence
\begin{equation}\label{e.Galois}
1\to Z_A(1)\to G\xrightarrow{\pi} G_s\to 1,
\end{equation}
where $Z_A(1)$ is isomorphic to $\Z_A$. The conjugation action of $G_s$ on
$Z_A(1)$ provides a character $\chi\colon G_s\to \Z_A^\times$.  Let $\sigma$
be a generator of $Z_A(1)$ and let $t=\sigma-1\in
\Lambda[[Z_A(1)]]\subseteq\Lambda[[G]]$. Recall that the classical Iwasawa
algebra $\Lambda[[Z_A(1)]]$ is simply $\Lambda[[t]]$.

\begin{lemma}\label{l.R}
$t$ is a non-zero-divisor in $R$ and $tR=Rt$ does not depend on the choice
of $\sigma$.
\end{lemma}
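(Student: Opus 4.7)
The plan is to reduce both assertions to computations inside $\Lambda[[t]]$. Since $P\colonequals Z_A(1)$ is procyclic with topological generator $\sigma$, the assignment $\sigma\mapsto 1+t$ identifies $\Lambda[[P]]$ with the power series ring $\Lambda[[t]]$, where $t$ is visibly a non-zero-divisor.

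For the first claim, I would show that $R$ is free as a right $\Lambda[[P]]$-module. Pick a set of coset representatives of $P$ in $G$; at each finite quotient, $\Lambda[G/H]$ decomposes as a free right $\Lambda[PH/H]$-module on the image of this set. Assembling these decompositions compatibly across the inverse system of open normal $H\triangleleft G$ exhibits $R$ as a product of copies of $\Lambda[[P]]$. Flatness then transfers the property ``$t$ is a non-zero-divisor'' from $\Lambda[[t]]$ up to $R$.

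For $tR=Rt$ the core calculation uses the conjugation character $\chi$ from \eqref{e.Galois}. For $g\in G$ and $c=\chi(\pi(g))\in\Z_A^\times$ one has $g\sigma g^{-1}=\sigma^c$, so
\[gt=(\sigma^c-1)g=\bigl((1+t)^c-1\bigr)g.\]
Expanding the binomial yields $(1+t)^c-1=t\cdot u_g(t)$ with $u_g(0)=c$. Since $A$ contains every prime divisor of $m$, the composite $\Z_A^\times\to(\Z/m\Z)^\times\to\Lambda^\times$ sends $c$ to a unit, so $u_g\in\Lambda[[t]]^\times$ and $gt=tu_g g\in tR$. By $\Lambda$-linearity and continuity, $Rt\subseteq tR$; the symmetric computation with $g^{-1}$ in place of $g$ (so $c$ becomes $c^{-1}$) gives the reverse inclusion. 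For independence of $\sigma$, any other generator has the form $\sigma'=\sigma^a$ with $a\in\Z_A^\times$, and the same binomial expansion produces $t'=tv$ with $v\in\Lambda[[t]]^\times$; hence $t'R=tR$ and $Rt'=Rt$.

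The main obstacle is the compatibility step in the freeness argument: coset representatives chosen in $G$ do not project bijectively onto each $G/H$, so some care is needed to thread the decompositions $\Lambda[G/H]=\bigoplus s\cdot\Lambda[PH/H]$ through the inverse system (or, equivalently, to work in the pseudo-compact setting from the outset). A more direct alternative is to compute the left annihilator of $\bar\sigma-1$ at each finite level---it is spanned by the norm element $N_H=1+\bar\sigma+\cdots+\bar\sigma^{n_H-1}$ over the cyclic group $PH/H$---and to verify that the transition maps of the inverse system collapse these annihilators, yielding $\varprojlim\ker(\cdot t_H)=0$.
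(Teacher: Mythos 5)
Your computation of $tR=Rt$ and of the independence of $\sigma$ is exactly the paper's argument: both rest on the identity \eqref{e.twist}, i.e.\ $gt=((1+t)^{\chi(g)}-1)g$ and the expansion of $(1+t)^a-1$ for $a\in\Z_A^\times$. For the non-zero-divisor claim, your \emph{fallback} is also precisely the paper's proof in different clothing: the paper regards an element $r$ with $tr=0$ as a $\sigma$-invariant measure and observes $r(H'g)=m\,r(Hg)=0$ for $H'=nZ_A(1)\cdot H$, which is exactly your statement that the annihilator at each finite level is the norm ideal and that the transition map $\Lambda[G/H']\to\Lambda[G/H]$ sends $N_{H'}$ to $(n_{H'}/n_H)N_H$, which vanishes once $m\mid n_{H'}/n_H$. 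That argument is complete (left exactness of $\lim$ identifies $\ker(\cdot t)$ with $\lim\ker(\cdot t_H)$, and the transition maps on these kernels are cofinally zero), so you should lead with it.

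Your primary route, however, has a genuine gap beyond the coset-compatibility issue you flagged. The identification $\Lambda[[P]]\cong\Lambda[[t]]$ via $\sigma\mapsto 1+t$ is \emph{false} whenever $A$ strictly contains the prime divisors of $m$, which the setup permits. For instance, with $\Lambda=\Z/2\Z$ and $3\in A$, the factor $\Lambda[[\Z_3]]=\lim_k\Z/2\Z[C_{3^k}]$ is an infinite product of finite fields ($\Z/2\Z[C_3]\cong\Z/2\Z\times\mathbb{F}_4$, and so on), not a power series ring; there $\sigma-1$ is a unit on the nontrivial isotypic components rather than topologically nilpotent, and the series $\sum_{n\ge1}\binom{c}{n}t^{n-1}$ you call $u_g$ does not converge on those components, so ``$u_g(0)=c$ is a unit, hence $u_g\in\Lambda[[t]]^\times$'' does not parse. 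The conclusion that $t$ is a non-zero-divisor in $\Lambda[[P]]$ is still true---on the prime-to-$m$ part $t$ acts invertibly, cf.\ Remark \ref{r.A}---but it is not ``visible'' from a power-series presentation, so the reduction as stated breaks down. Either first split off the prime-to-$m$ part of $Z_A(1)$ (reducing to $A=\{\ell:\ell\mid m\}$, where the Iwasawa isomorphism is correct), or simply use your annihilator computation, which requires neither the power-series identification nor the freeness of $R$ over $\Lambda[[P]]$ and coincides with the proof in the paper.
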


\begin{proof}
Let $r\in R$ such that $tr=0$. We regard $r$ as a measure on $G$ with values
in $\Lambda$. For any open and closed subset $X\subseteq G$, we have
$r(\sigma X)=r(X)$. Let $H$ be an open normal subgroup of $G$ such that
$H\cap  Z_A(1)=mn Z_A(1)$, and let $H'=n Z_A(1)\cdot H$. Then
$H'=\coprod_{i=1}^m \sigma^{in} H$, so that $r(H'g)=mr(Hg)=0$ for any $g\in
G$. Since any open subgroup of $G$ contains such an $H'$, we have $r=0$.
Similarly, $rt=0$ implies $r=0$.

That $tR=Rt$ follows from the identity
\begin{equation}\label{e.twist}
g t g^{-1}=(1+t)^{\chi(g)}-1=\sum_{n\ge
1}
\binom{\chi(g)}{n}t^n
\end{equation}
in $R$ for $g\in G$. Here we have denoted $\chi(\bar g)$ by $\chi(g)$, where
$\bar g$ is the image of $g$ in $G_s$.  That $tR$ does not depend on the
choice of $\sigma$ follows from the standard fact that $t\Lambda[[Z_A(1)]]$
does not depend on the choice of $\sigma$, which follows in turn from the
expansion of $(1+t)^s-1$ for $s\in \Z_A^\times$.
\end{proof}

\begin{remark}
Although we do not need this, let us show that the canonical homomorphism
$\Lambda[[\pi]]\colon \Lambda[[G]] \to\Lambda[[G_s]]$ induces an isomorphism
$R/tR\simto \Lambda[[G_s]]$. Clearly the image of $t$ in $\Lambda[[G_s]]$ is
zero. Moreover, any continuous section of $\pi$ (not necessarily a
homomorphism) induces a section of $\Lambda[[\pi]]$. Now let $s\in R$ be a
measure on $G$ such that $\Lambda[[\pi]](s)=0$. We need to find a measure
$r\in R$ such that $s=tr$. For $H$ and $H'$ as in the proof of Lemma
\ref{l.R} and $g\in G$, we take $r(H'g)=(\tau(n)s)(Hg)$, where
\[\tau(n)\colonequals \sum_{i=1}^m\sum_{j=0}^{in-1}\sigma^j=-\sum_{j=0}^{mn-1}\left\lfloor\frac{j}{n}\right\rfloor\sigma^j\in R.\]
Note that the definition depends only on $H'g$. Indeed, we have
$\sigma^n\tau(n)-\tau(n)=\sum_{j=0}^{mn-1}\sigma^j$, so that
$(\sigma^n\tau(n)s-\tau(n)s)(Hg)=s(Z_A(1)Hg)=0$ by the assumption on $s$.
Let $H_0<H$ be an open normal subgroup of $G$ with $Z_A(1)\cap H_0=amn
Z_A(1)$ and let $H'_0=an Z_A(1)\cdot H_0$. We have $H=\coprod_{h\in
S}\coprod_{k=0}^{a-1}\sigma^{-kmn}H_0 h$ for a subset $S\subseteq H$ and
$H'=\coprod_{h\in S}\coprod_{k=0}^{a-1}H'_0 \sigma^{-kn}h$. Applying the
identity $b=\sum_{k=0}^{a-1}\lfloor \frac{b+k}{a} \rfloor$ to $b=\lfloor
j/n\rfloor$, we get
\[\sum_{k=0}^{a-1}\sigma^{kn}\tau(an)=-\sum_{j=0}^{amn-1}\left\lfloor\frac{j}{n}\right\rfloor\sigma^j=\sum_{k=0}^{a-1}\sigma^{kmn}\tau(n),\]
so that $r(H'g)=\sum_{h\in S}\sum_{k=0}^{a-1}r(H'_0\sigma^{-kn}hg)$. It
follows that $r$ is a well-defined measure on $G$. Finally,
$t\tau(n)=\sum_{i=1}^m \sigma^{in}$ so that $(tr)(H'g)=s(H'g)$.
\end{remark}

The following definition is due to Beilinson, at least for $G=\Z_\ell(1)$.

\begin{definition}[Iwasawa twist]\label{d.twist}
Consider the $R$-bimodule $R(1)^\tau=tR$. For an $R$-module $M$, we consider
the $R$-modules
\[M(1)^\tau=R(1)^\tau\otimes_R M,\quad M(-1)^\tau=\Hom_R(R(1)^\tau,M).\]
\end{definition}

By Lemma \ref{l.R}, we have isomorphisms of $R$-modules $M\simto
M(1)^\tau(-1)^\tau$ and $M(-1)^\tau(1)^\tau\simto M$. The $R$-bimodule
$R(1)^\tau$ is invertible in the sense that we have an isomorphism of
$R$-bimodules $R(1)^\tau\otimes_R R(-1)^\tau\simeq R$.

The $R$-module $M(1)^\tau$ can be described more explicitly as follows. For
each $\sigma$, we define a ring isomorphism $\rho_\sigma\colon R\to R$ by
\[\rho_\sigma(g)=\sum_{n\ge 1}\binom{\chi(g)}{n}t^{n-1}g\]
for $g\in G$.  That $\rho_\sigma$ defines a ring homomorphism follows from
the identity $gt=t \rho_\sigma(g)$ \eqref{e.twist} and the fact that $t$ is
a non-zero-divisor in $R$. We have
\[\rho_\sigma^{-1}(g)=g\sum_{n\ge 1}\binom{\chi(g^{-1})}{n}t^{n-1}.\]
Consider the isomorphism of $\Lambda$-modules $u\colon M\to M(1)^\tau$
carrying $x$ to $t\otimes x$. For $g\in G$, we have $g(t\otimes x)=t\otimes
\rho_\sigma(g)x$, namely $u(\rho_\sigma(g)x)=gu(x)$. Thus if
$M=(M,\alpha\colon R\to\End_\Lambda(M))$, then $u\colon (M,\alpha\circ
\rho_\sigma)\simto M(1)^\tau$ is an isomorphism of $R$-modules.

\begin{remark}\label{r.crossed}
The constructions preserve discrete $\Lambda$-$G$-modules. For $M$ a
discrete $\Lambda$-$G$-module, $M(-1)^\tau$ can be described as follows.
Composition with the crossed homomorphism $Z_A(1)\to R(1)^\tau$ carrying
$\xi$ to $\xi-1$ gives an isomorphism of $\Lambda$-$G$-modules $M(-1)^\tau
\simto Z^1_\cont(Z_A(1),M)$, where $Z^1_\cont(Z_A(1),M)$ is the
$\Lambda$-module of continuous crossed homomorphisms $f\colon Z_A(1)\to M$,
with $G$-action given by $(gf)(\xi)=g(f(g^{-1}\xi g))$ for $g\in G$.
\end{remark}

\begin{remark}\label{r.t2}
If $t^n M=0$ for some integer $n$ such that all primes $\ell \le n$ are
invertible in $\Lambda$, then we have an isomorphism of $R$-modules $
M(1)\colonequals Z_A(1)\otimes_{\Z_A} M\simto M(1)^\tau$ carrying
$\sigma\otimes x$ to $t\otimes \sum_{i=1}^{n} (-1)^{i-1}t^{i-1}x/i$.
\end{remark}

\begin{remark}\label{r.A}
Let $A'$ be a subset of $A$ containing all prime factors of $m$ and let
$Z_{A'}(1)$ be the maximal pro-$A'$ quotient of $Z_A(1)$. Let
$K=\Ker(Z_A(1)\to Z_{A'}(1))$. For a discrete $G$-module $M$, we have
$M(1)^\tau\simeq M^{K}(1)^\tau\oplus {}^{K}M$. Indeed, for $M$ such that
$M^K=0$, since $M^{Z_A(1)}=M_{Z_A(1)}=0$, $t$ acts bijectively on $M$.
\end{remark}

\begin{remark}
In our applications \eqref{e.Galois} splits so that $G\simeq Z_A(1)\rtimes
G_s$. In this case an $R$-module gives rise to a $G_s$-$R_0$-module, where
$R_0=\Lambda[[Z_A(1)]]$ is a $G_s$-ring. One can define Iwasawa twist for
$G_s$-$R_0$-module by $tR_0\otimes_{R_0} -$, which is compatible with
Definition \ref{d.twist}.
\end{remark}

\subsection{Iwasawa twist on fiber products}\label{ss.Iwafiber}

Let $S$ be the spectrum of a Henselian discrete valuation ring of residue
characteristic $p\nmid m$. Let $s$ and $\eta$ be the closed point and the
generic point of $S$, respectively. Let $\bar \eta\to \eta$ be an algebraic
geometric point and let $\bar s$ be the closed point of the normalization of
$S$ in $\bar \eta$. Let $G_\eta=\Gal(\bar\eta/\eta)$. Assume that $p\not\in
A$. Let $I<G$ be the inertia group and $P<I$ be the inertia group prime to
$A$. Then $G=G_\eta/P$ fits into a split short exact sequence
\eqref{e.Galois}, with $Z_A(1)$ being the Tate twist $\Z_A(1)$ of $\Z_A$,
$G_s=\Gal(\bar s/s)$, and $\chi$ the cyclotomic character. For a sheaf $M$
of $\Lambda$-modules on~$\eta$, we define $M(\pm 1)^\tau=M^{P}(\pm
1)^\tau\oplus {}^P M$. By Remark \ref{r.A}, this does not depend on the
choice of $A$. In particular, we can take $A$ to be either maximal (the set
of all primes except $p$) or minimal (the set of prime divisors of $m$).

Now let $Y$ be a scheme over $s$. A sheaf on $Y\ttimes_s BG$ is a pair
$(M,\alpha)$ where $M$ is a sheaf on $Y_{\bar s}$ and $\alpha$ is a
continuous action of $G$ on $M$, compatible with the action of $G$ on
$Y_{\bar s}$ via $G_s$. The construction $(M,\alpha)\mapsto (M,\alpha\circ
\rho_\sigma)$ extends to the topos $Y\ttimes_s BG$ and does not depend on
the choice of $\sigma$ up to isomorphism. Indeed, for $s\in \Z_A^\times$, we
have $\alpha(r)\colon (M,\alpha\circ \rho_{\sigma^s})\simto (M,\alpha\circ
\rho_\sigma)$, where $r=\sum_{n\ge 1}\binom{s}{n}t^{n-1}\in R^\times$. We
define $M(1)^\tau=(M,\alpha\circ \rho_\sigma)$ and
$M(-1)^\tau=(M,\alpha\circ \rho_\sigma^{-1})$. The latter has a more
canonical description by Remark \ref{r.crossed}: For $U$ a quasi-compact
object of the \'etale site of $Y_{\bar s}$, we have
\begin{equation}\label{e.crossed}
M(-1)^\tau(U)=Z^1_\cont(\Z_A(1),M(U)).
\end{equation}
We have a canonical map
\[M\to M(-1)^\tau\]
carrying $x\in M(U)$ to the crossed homomorphism $\sigma\mapsto \sigma x-x$.

\begin{definition}\label{d.Iwasawa}
For a sheaf $M$ of $\Lambda$-modules on $Y\ttimes_s \eta$, we define
\begin{equation}
M(\pm 1)^\tau=M^{P}(\pm 1)^\tau\oplus {}^P M.
\end{equation}
We will sometimes write $\tau M$ for $M(1)^\tau$. We get an auto-equivalence
\[\tau\colon \Shv(Y\ttimes_s \eta,\Lambda)\to \Shv(Y\ttimes_s \eta,\Lambda).\]
We consider the map
\begin{equation}\label{e.var}
\iota\colon M\to M(-1)^\tau
\end{equation}
given by the canonical map on $M^P$ and the identity on ${}^P M$.
\end{definition}

The kernel of \eqref{e.var} is $M^I$ and the cokernel is $M_I(-1)$. Here we
used the fact that if the action of $I$ on $M$ is trivial, then $M(\pm
1)^\tau\simeq M(\pm 1)$ (Remark \ref{r.t2}).

Let $p\colon Y\ttimes_s \eta\to Y$ be the projection. The functor $p^*$ on
categories of sheaves is fully faithful. For a sheaf $M$ on $Y\ttimes_s
\eta$, $p_*M$ can be identified with $M^I$.

\begin{lemma}\label{l.Rp}
For $M\in D(Y\ttimes_s \eta,\Lambda)$, $p^*Rp_*M$ is computed by
$\Cone(M\xrightarrow{\iota} M(-1)^\tau)[-1]$, with the adjunction $p^*Rp_*
M\to M$ given by the totalization of the map of bicomplexes
\begin{equation}\label{e.Rp}
\xymatrix{\cdots \ar[r] & 0\ar[r]\ar[d] & M\ar[r]^\iota\ar@{=}[d]\ar@{}[]+<0ex,3ex>*{0}& M(-1)^\tau\ar[d]\ar[r]\ar@{}[]+<0ex,3ex>*{1} & 0\ar[d]\ar[r] & \cdots\\
\cdots\ar[r] & 0\ar[r] &M\ar[r] & 0\ar[r] & 0 \ar[r] & \cdots.}
\end{equation}
\end{lemma}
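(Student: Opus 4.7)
The plan is to reduce to the case where the wild inertia acts trivially on $M$, and then apply an explicit two-term projective resolution.

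First, I would decompose $M \simeq M^P \oplus {}^P M$ using that the wild inertia $P$ has supernatural order prime to $m$, so $P$-invariants is exact on $\Lambda$-modules. The summand ${}^P M$ satisfies $({}^P M)^P = 0$, so $Rp_*({}^P M) = 0$ because $p_* = (-)^I$ factors through the exact functor $(-)^P$. Correspondingly, by Definition \ref{d.Iwasawa}, the complex on the ${}^P M$ summand reads $[{}^P M \xrightarrow{\id} {}^P M]$, which is contractible, so the claim holds trivially on this summand. This reduces us to the case where $I$ acts on $M$ through the tame quotient $I/P \simeq Z_A(1)$.

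Second, in this tame case, I would use the short exact sequence of left $R$-modules ($R = \Lambda[[G]]$)
\[
0 \to R(1)^\tau \xrightarrow{\mathrm{incl}} R \to \Lambda[[G_s]] \to 0,
\]
which is a two-term projective resolution of $\Lambda[[G_s]]$; exactness and $R(1)^\tau = tR$ follow from Lemma \ref{l.R} and the normality of $Z_A(1)$ in $G$. Applying $R\Hom_R(-, M)$ yields the two-term complex $[M \xrightarrow{\iota} M(-1)^\tau]$: the terms are $\Hom_R(R, M) \simeq M$ and $\Hom_R(R(1)^\tau, M) = M(-1)^\tau$ by Definition \ref{d.twist}, and the differential is $\iota$ by tracing through the resolution (the image of $m \in M$ restricts along $tR \hookrightarrow R$ as $ts \mapsto sm$, i.e., the principal crossed homomorphism $\sigma \mapsto \sigma m - m$, which is identified with $\iota(m)$ via \eqref{e.crossed}). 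This complex inherits a natural $G$-action (governed by the formula $gt = t\rho_\sigma(g)$ from \eqref{e.twist}), hence lives in $D(Y \ttimes_s \eta, \Lambda)$ and represents $R\Gamma_{\cont}(Z_A(1), M) = p^*Rp_*M$.

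Third, the adjunction counit $p^*Rp_*M \to M$ corresponds, under this identification, to the map induced by restriction along $R \to \Lambda[[G_s]]$, which at the chain level is the projection $[M \xrightarrow{\iota} M(-1)^\tau] \to M$ onto the degree-$0$ component. This is exactly the totalization of the map of bicomplexes \eqref{e.Rp}.

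The main obstacle will be verifying that this $G$-equivariant resolution really computes $p^*Rp_*M$ in $D(Y \ttimes_s \eta, \Lambda)$ (not merely $Rp_*M$ in $D(Y, \Lambda)$): one must check that the Iwasawa-twisted $G$-action on $M(-1)^\tau$---as opposed to a trivially twisted action---is precisely what makes the chain-level complex $[M \to M(-1)^\tau]$ represent $p^*Rp_*M$ in the fiber-product topos, rather than merely agreeing with it on cohomology sheaves.
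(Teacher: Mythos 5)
There is a genuine gap in your second step, and it is not the obstacle you flag at the end. Your first reduction (splitting off ${}^P M$, on which $\iota$ is the identity and $Rp_*$ vanishes because $(-)^P$ is exact and preserves injectives) is sound, and your term-by-term identification of $[M\xrightarrow{\iota} M(-1)^\tau]$ with $\Hom_R(-,M)$ applied to $0\to tR\to R\to \Lambda[[G_s]]\to 0$ is correct. But that resolution lives in the category of abstract (pro-discrete) left $R$-modules, whereas $Rp_*$ is the derived functor of $(-)^I$ on $\Shv(Y\ttimes_s\eta,\Lambda)$, whose objects are \emph{discrete} $G$-modules with a compatible sheaf structure; this category has enough injectives but no projectives, and $R$, $tR$ are not objects of it. There is no balancing theorem identifying $\mathrm{Ext}^*_R(\Lambda[[G_s]],M)$, computed from your projective resolution, with $R^*p_*M$ --- this is exactly the classical discrepancy between abstract and continuous cohomology of a profinite group. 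What must actually be checked is that the exact functor $M\mapsto[M\to M(-1)^\tau]$ is an acyclic resolution of $(-)^I$, i.e.\ that for $M$ injective the sequence $0\to M^I\to M\xrightarrow{\iota}M(-1)^\tau\to 0$ is exact; the only nontrivial point is surjectivity of $\iota$, i.e.\ that every continuous crossed homomorphism $Z_A(1)\to M$ is principal when $M$ is injective. Your argument never establishes this, and it is the entire content of the proof.

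This is precisely how the paper argues: since $M\mapsto M(-1)^\tau$ is exact, it reduces to injective $M$, identifies $M^P\to M^P(-1)^\tau$ with $C^0_\cont(K,M^P)\to Z^1_\cont(K,M^P)$ for $K=I/P$, and deduces surjectivity from the subsequent lemma that $q^*Rq_*$ is computed by the continuous cochain complex (proved by passing to finite quotients and the bar resolution). If you want to keep the Koszul-type resolution as the organizing device, you still need this computation on injectives to upgrade ``correct $H^0$ and correct candidate complex'' to an actual quasi-isomorphism with $p^*Rp_*M$. By contrast, your closing worry about the Iwasawa twist is not the real issue: the twisted action on $M(-1)^\tau$ is built into Definition \ref{d.twist} via the bimodule structure on $tR$, and $\iota$ is a map of sheaves on $Y\ttimes_s\eta$ by construction.
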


\begin{proof}
Since the functor $M\mapsto M(-1)^\tau$ is exact, it suffices to show that
for $M$ an injective sheaf, the sequence $0\to M^I \to M\xrightarrow{\iota}
M(-1)^\tau\to 0$ is exact. We have already seen that $\Ker(\iota)=M^I$. It
remains to show that $M^P\to M^P(-1)^\tau$ is an epimorphism. This can be
identified with $C^0_\cont(K,M^P)\to Z^1_\cont(K,M^P)$, where $K=I/P\simeq
\Z_A(1)$, which is an epimorphism by the following lemma applied to the
injective sheaf $M^P$ on $Y\ttimes_s BG$.
\end{proof}

\begin{lemma}
Let $H\to G_s$ be an epimorphism of profinite groups of kernel $K$ and let
$q\colon Y\ttimes_s BH\to Y$ be the projection. Then for $M\in
\Shv(Y\ttimes_s BH,\Lambda)$, $q^*Rq_*M$ is computed by the complex
$C^*_\cont(K,M)$. Here $C^i_\cont(K,M)$ is the sheaf on $Y\ttimes_s BH$ such
that for any quasi-compact object $U$ of the \'etale site of $Y_{\bar s}$,
$C^i_\cont(K,M)(U)=C^i_\cont(K,M(U))$ is the $\Lambda$-module of continuous
$i$-cochains, with $H$-action given by
$(gf)(\xi_0,\dots,\xi_i)=g(f(g^{-1}\xi_0 g,\dots,g^{-1}\xi_i g))$ for $g\in
H$.
\end{lemma}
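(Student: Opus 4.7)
The plan is to exhibit $C^*_\cont(K,M)$ as a functorial acyclic resolution computing $q^*Rq_*M$. Observe first that $q^*q_*M$ is canonically identified with the subsheaf of $K$-invariants of $M = C^0_\cont(K,M)$, namely the kernel of the differential $d^0\colon m\mapsto (k\mapsto km - m)$. This produces a natural augmentation
\[0\to q^*q_*M\to C^0_\cont(K,M)\to C^1_\cont(K,M)\to\cdots,\]
functorial in $M$, which on passage to the derived category will give the comparison map $q^*Rq_*M\to C^*_\cont(K,M)$ once we know $C^*_\cont(K,-)$ is acyclic on injectives.

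The main content is to show the augmented complex above is exact whenever $M$ is injective in $\Shv(Y\ttimes_s BH,\Lambda)$. Exactness can be tested on sections over quasi-compact objects $U$ in the \'etale site of $Y_{\bar s}$, reducing to the vanishing $H^i_\cont(K,M(U))=0$ for $i>0$. I would prove this by writing $K$ as the filtered inverse limit $\varprojlim_{K_0} K/K_0$, where $K_0$ runs through open subgroups of $K$ that are normal in $H$, using the identification $C^*_\cont(K,M)\simeq \varinjlim_{K_0} C^*(K/K_0, M^{K_0})$, and reducing to the assertion for the finite quotients $K/K_0$. At each finite level this is the classical computation of $Rq_{0*}$ for the finite \'etale morphism of topoi $q_0\colon Y\ttimes_s B(H/K_0)\to Y$: the functor $q_{0*}$ is $K/K_0$-invariants, its derived functor is finite group cohomology of $K/K_0$, and injectivity of $M$ (via $K_0$-invariants) supplies the required acyclicity. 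Passage to the colimit over $K_0$ delivers the continuous statement.

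Granted this acyclicity, the isomorphism $q^*Rq_*M\simeq C^*_\cont(K,M)$ in $D(Y\ttimes_s BH,\Lambda)$ follows by a standard bicomplex argument. Choose an injective resolution $M\to I^\bullet$ in $\Shv(Y\ttimes_s BH,\Lambda)$ and form the bicomplex $C^p_\cont(K,I^q)$. Its totalization is quasi-isomorphic to $C^*_\cont(K,M)$ since $C^p_\cont(K,-)$ is exact in $M$, and is also quasi-isomorphic to $q^*q_*I^\bullet$, hence to $q^*Rq_*M$, by the acyclicity of the previous step; comparing the two quasi-isomorphisms gives the result. The principal obstacle lies in the acyclicity step: one needs to control precisely how injectivity of $M$ in the topos $\Shv(Y\ttimes_s BH,\Lambda)$ forces the sections $M(U)$ to be acyclic for continuous $K$-cohomology, and the reduction to finite quotients $K/K_0$ circumvents continuous-cohomology subtleties by appealing to the well-understood behaviour of higher direct images for finite \'etale morphisms of topoi.
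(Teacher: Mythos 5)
Your proposal is correct and follows essentially the same route as the paper's proof: reduce via the filtered colimit $C^*_{\mathrm{cont}}(K,M)=\colim_{K_0}C^*(K/K_0,M^{K_0})$ over open subgroups $K_0\triangleleft H$ to the case of a finite quotient, use that $M^{K_0}$ remains injective on $Y\ttimes_s B(H/K_0)$, and conclude by acyclicity at the finite level (the paper phrases this last step as $\cHom(L,M)$ for $L$ the standard resolution of $\Z$ by $\Z[K]$-modules, which is the same computation you invoke). The exactness of the functors $C^i_{\mathrm{cont}}(K,-)$ then yields the general case exactly as in your bicomplex argument.
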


\begin{proof}
We have $M=\colim_J M^J$, where $J$ runs through open subgroups of $K$,
normal in $H$, and $C^i_\cont(K,M)=\colim_J C^i_\cont(K/J,M^J)$.  Since the
functors $C^i_\cont(K,-)$ are exact, it suffices to show that
$H^iC^*_\cont(K,M)=0$ for $i>0$ and $M$ injective. In this case, $M^J$ is
injective as sheaf on $Y\ttimes_s B(H/J)$. Thus we may assume $K$ finite. In
this case $C^*_\cont(K,M)=\cHom(L,M)$, where $L$ is the standard resolution
of $\Z$ by $\Z[K]$-modules, regarded as sheaves on $X\ttimes_s BK$.
\end{proof}

\begin{remark}\label{r.Rp}
By Lemma \ref{l.Rp}, $Rp_*$ has cohomological dimension $\le 1$, and
$R^1p_*M\simeq M_I(-1)$ for $M\in\Shv(Y\ttimes_s \eta,\Lambda)$.
\end{remark}

\subsection{Some quasi-periodic adjunctions}
Let $j\colon Y\ttimes_s \eta\to Y\ttimes_s S$ be the inclusion of the open
subtopos. Between categories of abelian sheaves, we have a sequence of
adjoint functors
\begin{equation}\label{e.adj}
 \Co \dashv j_! \dashv j^* \dashv j_*,
\end{equation}
where $\Co \cF=\Coker(\phi\colon p^*\cF_s\to \cF_\eta)$ for
$\cF=(\cF_s,\cF_\eta,\phi)$ as in Example \ref{ex.S}. This sequence cannot
be extended, as neither $\Co$ nor $j_*$ is exact (unless $Y$ is empty). We
will see however that the derived adjunction sequence can be extended into a
loop up to twist.

Note that $\Co$ admits a left derived functor
\[L\Co\colon D(Y\ttimes_s S,\Lambda)\to D(Y\ttimes_s \eta,\Lambda).\]
For each $M\in D(Y\ttimes_s S,\Lambda)$, $L\Co M$ is computed by $\Co M'$,
where $M'\to M$ is a  quasi-isomorphism and $M'$ is a complex of sheaves of
the form $(\cF_s,\cF_\eta,\phi)$ for which $\phi$ is a monomorphism. In
fact, $L\Co$ is the functor $\Phi$ of \cite[XIII 1.4.2]{SGA7II}. The
adjunctions \eqref{e.adj} induce adjunctions of derived functors $L\Co
\dashv j_! \dashv j^* \dashv Rj_*$ (see for example \cite[Theorem
14.4.5]{KS}).

\begin{theorem}\label{t.adj}
Between the derived categories $D(Y\ttimes_s \eta,\Lambda)$ and
$D(Y\ttimes_s S,\Lambda)$, we have a canonical adjunction $Rj_*\dashv \tau
L\Co$.
\end{theorem}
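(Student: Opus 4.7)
The key input is Lemma~\ref{l.Rp}, which provides a natural distinguished triangle
\[
p^*Rp_*M\xrightarrow{\phi}M\xrightarrow{\iota}M(-1)^\tau\to p^*Rp_*M[1]
\]
in $D(Y\ttimes_s\eta,\Lambda)$, where $\phi$ is the counit of $p^*\dashv Rp_*$. The plan is to use this to exhibit the unit, then to establish a Tate-style duality for $p$, and finally to assemble everything along the recollement.

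First, I would compute the candidate unit $\eta_M\colon M\simto \tau L\Co Rj_*M$. Since $Rj_*M$ is represented by the triple $(Rp_*M,M,\phi)$ and $L\Co$ of such a triple is the cofiber of its structure map, we get $L\Co Rj_*M\simeq\Cone(\phi)\simeq M(-1)^\tau$ by the triangle above; applying the autoequivalence $\tau$ yields $\eta_M$. By construction, the same formula gives $L\Co j_!M'\simeq M'$ (structure map $0\to M'$) and $L\Co i_*A\simeq p^*A[1]$ (via the resolution $0\to(0,p^*A,\id)\to(A,p^*A,\id)\to(A,0,0)\to 0$), so by Remark~\ref{r.t2} we have $\tau L\Co i_*A\simeq p^*A(1)[1]$.

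Next I would establish the key auxiliary identity
\[
\Hom(Rp_*M,A)\simeq \Hom(M,p^*A(1)[1])\qquad(A\in D(Y,\Lambda),\ M\in D(Y\ttimes_s\eta,\Lambda)),
\]
which expresses that $p\colon Y\ttimes_s\eta\to Y$ has cohomological dimension~$1$ with relative dualizing complex $\Lambda(1)[1]$ (a Tate-style duality for the inertia). This follows from Lemma~\ref{l.Rp} applied to $p^*A$ together with the projection formula $Rp_*p^*A\simeq A\oplus A(-1)[-1]$ and the adjunction $p^*\dashv Rp_*$.

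Finally, to deduce $\Hom(Rj_*M,N)\simeq\Hom(M,\tau L\Co N)$ functorially in $M$ and $N$, I would compare the long exact sequences obtained from (a) applying $\Hom(Rj_*M,-)$ to the recollement triangle $j_!j^*N\to N\to i_*i^*N$, and (b) applying $\Hom(M,\tau\cdot)$ to the defining triangle $p^*i^*N\to j^*N\to L\Co N$. Using the adjunctions $L\Co\dashv j_!$, $i^*\dashv i_*$, $j^*\dashv Rj_*$ together with Step~1, both become triangles of the form
\[
\Hom(M,\tau j^*N)\to \cdot\to \Hom(Rp_*M,i^*N)\to\Hom(M,\tau j^*N)[1]
\]
with middle terms $\Hom(Rj_*M,N)$ and $\Hom(M,\tau L\Co N)$ respectively (the third term on the $\tau L\Co$ side is $\Hom(M,p^*i^*N(1)[1])$, identified with $\Hom(Rp_*M,i^*N)$ by the key identity). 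Compatibility of the boundary maps, which is natural in $N$, and the five-lemma then yield the adjunction, with unit $\eta_M$ from Step~1. The main obstacle will be the key identity of Step~2: it is the essential technical content of the theorem and encodes the geometric reason for the appearance of the Iwasawa twist $\tau$ as the periodicity closing the adjunction sequence $L\Co\dashv j_!\dashv j^*\dashv Rj_*\dashv\tau L\Co$ into a loop.
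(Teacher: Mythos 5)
Your plan follows the same architecture as the paper: compute $L\Co Rj_*M\simeq M(-1)^\tau$ from Lemma~\ref{l.Rp}, establish a Poincar\'e--Verdier duality for $p\colon Y\ttimes_s\eta\to Y$ (which is precisely Proposition~\ref{p.2.13} in the paper, namely the adjunction $Rp_*\dashv p^*(1)[1]$), and then glue along the recollement. Your Step~3 is an unraveled version of the paper's Lemma~\ref{l.adj}: the paper abstracts the five-lemma argument into a criterion stating that a candidate unit $\id\to GF$ is an adjunction as soon as its restrictions along $i_*$ and $Rj_*$ are, which is cleaner than comparing two long exact sequences by hand but amounts to the same thing.

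The genuine gap is Step~2. You correctly identify the identity $\Hom(Rp_*M,A)\simeq\Hom(M,p^*A(1)[1])$ as the essential technical content, but the claimed derivation does not establish it. Knowing the projection formula and that $Rp_*p^*A\simeq A\oplus A(-1)[-1]$ gives you, at best, a candidate counit $\Tr\colon Rp_*p^*(1)[1]\to\id$ (projection onto the twisted-shifted summand); it does not produce a unit, and it does not verify the triangle identities, which is where the actual content lies. An object-level direct sum decomposition of $Rp_*p^*$ is far from a natural adjunction: for instance, it does not by itself pin down a natural transformation $\id\to p^*Rp_*(1)[1]$, nor guarantee that any such choice composes with $\Tr$ to the identity on $Rp_*$ and on $p^*(1)[1]$. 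The paper proves Proposition~\ref{p.2.13} by explicitly constructing the counit $\Tr_L$ from the trace $R^1p_*\Lambda\simeq\Lambda(-1)$ and the projection formula, constructing the unit $\epsilon_M$ as a specific morphism of double complexes built from $\iota\colon M\to M(-1)^\tau$, and then checking both triangle identities (the second by reduction to injective sheaves). None of this is formal from the direct sum decomposition, so that work still needs to be done. A secondary omission: your Step~1 produces an isomorphism of objects $L\Co Rj_*M\simeq M(-1)^\tau$, but to feed this into the recollement argument you need a \emph{natural} transformation $\id\to\tau L\Co Rj_*$ of functors on the derived category, and the paper addresses this by factoring $L\Co$ through the auxiliary topos $Y\ttimes_s\eta^{[1]}$ to build the comparison at the chain level.
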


Therefore, we have a quasi-periodic sequence of adjoint
functors
\begin{equation}\label{e.adjDj} L\Co \dashv j_! \dashv j^* \dashv
Rj_*\dashv \tau L\Co.
\end{equation}

We start by constructing an adjunction for the projection $p\colon
Y\ttimes_s \eta \to Y$.

\begin{prop}\label{p.2.13}
Between derived categories $D(Y\ttimes_s \eta,\Lambda)$ and $D(Y,\Lambda)$,
we have a canonical adjunction $Rp_*\dashv p^*(1)[1]$.
\end{prop}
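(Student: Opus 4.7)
My plan is to exhibit the adjunction by constructing unit and counit directly from the distinguished triangle of Lemma \ref{l.Rp} and verifying the two triangle identities. Since $Rp_*$ has cohomological dimension $\le 1$ by Remark \ref{r.Rp}, it preserves arbitrary direct sums and admits some right adjoint by a Brown representability argument as in Lemma \ref{l.adjtriv}; the content of the proposition is the identification of this right adjoint with the concrete functor $p^*(1)[1]$.

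The key naturality input is that for every $N\in D(Y,\Lambda)$ there is a canonical isomorphism $(p^*N)(1)^\tau\simeq p^*(N(1))$. Indeed, $p^*N$ has trivial action of the full inertia $I$ (since the $G_\eta$-action factors through $G_s$), so ${}^P(p^*N)=0$ and $(p^*N)^P=p^*N$; moreover $t$ acts as $0$ on $p^*N$, so Remark \ref{r.t2} identifies $(p^*N)(1)^\tau$ with the ordinary Tate twist $(p^*N)(1)$, and the latter equals $p^*(N(1))$ because the cyclotomic character factors through $G_s$ (by Hensel's lemma applied to roots of unity of order prime to the residue characteristic, inertia acts trivially on $\Lambda(1)$). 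Applying the auto-equivalence $\tau=(1)^\tau$ to the triangle
\[p^*Rp_*M\to M\xrightarrow{\iota_M} M(-1)^\tau\to p^*Rp_*M[1]\]
of Lemma \ref{l.Rp} and rotating yields a natural distinguished triangle
\[\tau M\to M\xrightarrow{\eta_M} p^*Rp_*M(1)[1]\to \tau M[1],\]
whose middle arrow I take to be the unit $\eta_M$. For the counit, I specialize Lemma \ref{l.Rp} to $M=p^*N(1)$; the map $\iota_{p^*N(1)}$ vanishes by the explicit description $\iota(x)=(\sigma\mapsto\sigma x-x)$ from Remark \ref{r.crossed} and \eqref{e.var}, so the resulting triangle splits canonically to give an isomorphism $Rp_*p^*N(1)[1]\simeq N(1)[1]\oplus N$, and I take $\epsilon_N$ to be the projection onto $N$.

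The two triangle identities are then verified by diagram chase. For $\epsilon_{Rp_*M}\circ Rp_*(\eta_M)=\id_{Rp_*M}$, I apply $Rp_*$ to the unit triangle of $M$ and examine the result under the splitting $Rp_*p^*Rp_*M(1)[1]\simeq Rp_*M(1)[1]\oplus Rp_*M$; unwinding the construction, the $Rp_*M$-component of $Rp_*(\eta_M)$ coincides with the inverse of the canonical splitting arrow built into the counit $\epsilon_{Rp_*M}$, so composing with $\epsilon_{Rp_*M}$ yields the identity. The dual identity $p^*(\epsilon_N)(1)[1]\circ \eta_{p^*N(1)[1]}=\id$ is handled in parallel using the naturality of the splitting. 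The principal obstacle is the detailed bookkeeping of Iwasawa twists and pullbacks: one must check that the canonical isomorphism $(p^*N)(1)^\tau\simeq p^*(N(1))$ is compatible with the distinguished triangles and canonical splittings entering the construction, so that rotating the Lemma \ref{l.Rp} triangle really produces the desired unit. Once these naturality statements are secured, the triangle identities follow formally from the cone structure and the vanishing of $\iota$ on pullbacks.
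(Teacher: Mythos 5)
Your argument follows essentially the same route as the paper's: both rest on Lemma~\ref{l.Rp} (equivalently, on $R^1p_*\Lambda\simeq\Lambda(-1)$ from Remark~\ref{r.Rp}), build the unit and counit explicitly, and then check the triangle identities, with your construction of the counit via the vanishing of $\iota$ on pullbacks being an equivalent reformulation of the paper's use of the trace map $\Tr_\Lambda$ and the projection formula. One small caveat: the identity $\epsilon_{Rp_*M}\circ Rp_*(\eta_M)=\id_{Rp_*M}$ is not as ``formal'' as you suggest --- after applying $Rp_*$ one can no longer read the map off the cone description of Lemma~\ref{l.Rp} inside the derived category, and the paper handles this by reducing to the case where $M$ is a (complex of) injective sheaf(ves); your ``unwinding the construction'' would need to make that reduction explicit.
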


Thus we have a quasi-periodic sequence of adjoint
functors
\begin{equation}\label{e.ploop}
p^*\dashv Rp_*\dashv p^*(1)[1].
\end{equation}

\begin{proof}
The isomorphism $R^1p_*\Lambda\simeq \Lambda(-1)$ in Remark \ref{r.Rp}
induces $\Tr_\Lambda\colon Rp_*p^*\Lambda(1)[1]\to \Lambda$. We define the
co-unit of the adjunction to be the trace map
\[\Tr_L\colon Rp_* p^*L(1)[1]\xleftarrow{\sim} Rp_*\Lambda(1)[1]\otimes^L L\xrightarrow{L\otimes^L \Tr_\Lambda} L\]
for $L\in D(Y,\Lambda)$, where the first arrow is the projection formula
map, which is an isomorphism by Corollary \ref{c.pf} (a) or by Lemma
\ref{l.Rp}. By construction, $p^*\Tr_L$ is given by the totalization of the
morphism of double complexes
\[\xymatrix{\cdots\ar[r] & 0 \ar[r]\ar[d] &p^*L(1)\ar[r]^0\ar[d]\ar@{}[]+<0ex,3ex>*{-1} & p^*L\ar@{=}[d]\ar[r]\ar@{}[]+<0ex,3ex>*{0} & 0\ar[d]\ar[r] & \cdots\\
\cdots\ar[r] &0 \ar[r] & 0\ar[r]& p^*L\ar[r] & 0\ar[r] & \cdots.}
\]
Indeed, this is clear in the case $L=\Lambda$ and the general case follows.
We define the unit $\epsilon_M\colon M\to p^*Rp_* M(1)[1]$ for $M\in
D(Y\ttimes_s \eta,\Lambda)$ to be the totalization of the morphism of double
complexes
\[\xymatrix{\cdots\ar[r] & 0 \ar[r]\ar[d] &0\ar[r]\ar[d]\ar@{}[]+<0ex,3ex>*{0} & M\ar@{=}[d]\ar[r]\ar@{}[]+<0ex,3ex>*{1}& 0\ar[r] & \cdots\\
\cdots\ar[r] &0 \ar[r] &M(1)^\tau \ar[r] & M\ar[r]& 0\ar[r] & \cdots.}
\]
It follows that $p^*\Tr_L\circ \epsilon_{p^*L}=\id$ and
$\Tr_{Rp_*M(1)[1]}\circ Rp_*\epsilon_{M}(1)[1] =\id$. For the latter we
reduce to the easy case where $M$ is an injective sheaf.
\end{proof}

\begin{remark}
The proposition is a form of the Poincar\'e-Verdier duality for the inertia
group $I$, and can be compared with other Poincar\'e-Verdier dualities. For
$f$ a proper topological submersion of locally compact spaces (resp.\ proper
smooth morphism of schemes) of relative dimension $d$, we have adjunctions
\[f^*\dashv Rf_*\dashv Rf^!,\]
where $Rf^!\simeq (f^*-\otimes o_f)[d]$ (resp.\ $Rf^!\simeq f^*(d)[2d]$).
Here $o_f$ is the orientation sheaf.
\end{remark}

To construct the adjunction $Rj_*\dashv \tau L\Co$, we need (part (1) of)
the following.

\begin{lemma}\label{l.adj}
Let $j\colon U\to X$ be an open subtopos and let $i\colon V\to X$ be the
complementary closed subtopos. Let $\cC$ be a category. Let $F\colon \cC\to
D(X,\Lambda)$ and $G\colon D(X,\Lambda)\to \cC$ be functors.
\begin{enumerate}
\item A natural transformation $\id \to GF$ is an adjunction if and only
    if the compositions $\id\to GF\to (Gi_*)(i^* F)$ and $\id\to GF\to
    (GRj_*)(j^* F)$ are adjunctions.

\item A natural transformation $GF\to \id$ is an adjunction if and only if
    the compositions $(Gi_*)(Ri^! F)\to GF\to \id$ and $(G j_!)(j^*F)\to
    GF\to \id$ are adjunctions.
\end{enumerate}
\end{lemma}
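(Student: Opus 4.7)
\emph{Plan.} I will sketch the proof of (1); part (2) is obtained by the formally dual argument.

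The ``only if'' direction is the standard composition of adjunctions: if $\eta$ is the unit of $F\dashv G$, then the composite $\id\to GF\to Gi_*i^*F$ (resp.\ $\id\to GF\to GRj_*j^*F$) is the unit of the composed adjunction $i^*F\dashv Gi_*$ (resp.\ $j^*F\dashv GRj_*$), by construction of units of composed adjunctions together with functoriality of $i^*\dashv i_*$ (resp.\ $j^*\dashv Rj_*$).

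For the ``if'' direction, I reformulate the conclusion: $\eta$ is a unit for $F\dashv G$ precisely when the tautological map
\[
\psi_{X,Y}\colon \Hom_{D(X,\Lambda)}(FX,Y)\longrightarrow \Hom_\cC(X,GY),\qquad f\longmapsto Gf\circ\eta_X,
\]
is a bijection for every $X\in\cC$ and $Y\in D(X,\Lambda)$. Unwinding the two hypotheses via the adjunctions $i^*\dashv i_*$ and $j^*\dashv Rj_*$ translates them precisely into: $\psi_{X,Y}$ is bijective whenever $Y$ is of the form $i_*Z$ or $Y=Rj_*W$.

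The key input is the recollement distinguished triangle
\[
i_*Ri^!Y\longrightarrow Y\longrightarrow Rj_*j^*Y\longrightarrow i_*Ri^!Y[1],
\]
whose outer terms and all their shifts are of the form $i_*(-)$ or $Rj_*(-)$. Applying the cohomological functor $\Hom_{D(X,\Lambda)}(FX,-)$ produces a long exact sequence; applying $G$ to the triangle (which preserves it in our triangulated setting) and then $\Hom_\cC(X,-)$ produces another; and $\psi_{X,-}$ maps the former to the latter. By the hypotheses, $\psi$ is bijective at every $i_*$- and $Rj_*$-position (and all shifts), so the five lemma yields bijectivity at $Y$, completing (1).

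For (2), the counit $\epsilon\colon GF\to \id$ is a counit of $G\dashv F$ exactly when
\[
\psi'_{X',Y}\colon \Hom_{D(X,\Lambda)}(X',FY)\longrightarrow \Hom_\cC(GX',Y),\qquad g\longmapsto \epsilon_Y\circ Gg,
\]
is bijective for all $X',Y$. The analogous unwinding via $j_!\dashv j^*$ and $i_*\dashv Ri^!$ shows that the two hypotheses mean bijectivity for $X'=j_!W$ or $X'=i_*Z$. The dual recollement triangle
\[
j_!j^*X'\longrightarrow X'\longrightarrow i_*i^*X'\longrightarrow j_!j^*X'[1]
\]
combined with the same five-lemma strategy (now applied to the contravariant cohomological functor $\Hom_{D(X,\Lambda)}(-,FY)$ and, on the $\cC$-side, $\Hom_\cC(G-,Y)$) concludes.

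\emph{Main obstacle.} The delicate point is ensuring that the $\cC$-side sequence is actually long exact; this requires $G$ in (1) (resp.\ $F$ in (2)) to respect distinguished triangles. In the triangulated derived-category setting where this lemma is invoked this is automatic, but it is a genuine ambient hypothesis rather than a purely formal consequence of the recollement.
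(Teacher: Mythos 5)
Your argument is correct and is essentially the paper's: the paper likewise reduces to objects in the essential images of $i_*$ and $Rj_*$ (resp.\ of $i_*$ and $j_!$) and concludes for general objects via the recollement triangles, leaving the five-lemma step implicit. One small correction to your closing remark: in part (2) the object being decomposed still lives in $D(X,\Lambda)$ and is carried to $\cC$ by $G$ (the $\cC$-side functor is $\Hom_{\cC}(G(-),Y)$), so it is again $G$, not $F$, whose compatibility with distinguished triangles is needed.
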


That $\epsilon\colon \id \to GF$ is the unit of an adjunction (or, in short,
is an adjunction) means that the composite
\begin{equation}\label{e.comp1}
\Hom(FA,B)\to \Hom(GFA,GB)\xrightarrow{\epsilon_A} \Hom(A,GB)
\end{equation}
is a bijection for all $A\in \cC$ and $B\in D(X,\Lambda)$. That $\eta\colon
GF\to \id$ is the co-unit of an adjunction (or, in short, is an adjunction)
means that the composite
\begin{equation}\label{e.comp2}
\Hom(B,FA)\to \Hom(GB,GFA)\xrightarrow{\eta_A} \Hom(GB,A)
\end{equation}
is a bijection for all $A\in \cC$ and $B\in D(X,\Lambda)$.

\begin{proof}
The ``only if'' parts are trivial. For the ``if'' part of (1), the
assumption implies that \eqref{e.comp1} is an isomorphism for $B=Rj_*M$  and
for $B=i_*L$, which implies that the same holds for all $B$. For the ``if''
part of (2), the assumption implies that \eqref{e.comp2} is an isomorphism
for $B=j_!M$ and for $B=i_*L$, which implies that the same holds for all
$B$.
\end{proof}

Let $i\colon Y\simeq Y\ttimes_s s\to Y\ttimes_s S$ be the inclusion. Then
$i^*j_*\simeq p_*$.

\begin{proof}[Proof of Theorem \ref{t.adj}]
Consider the topos $Y\ttimes_s\eta^{[1]}$ of morphisms of $Y\ttimes_s\eta$
and the morphism of topoi $\lambda\colon Y\ttimes_s\eta^{[1]}\to Y\ttimes_s
S$ with $\lambda^*$ carrying $(\cF_s,\cF_\eta,\phi)$ to
$(p^*\cF_s,\cF_\eta,\phi)$. We have $L\Co\simeq L\Coker\circ \lambda^*$.  By
Lemma \ref{l.Rp}, for $M\in D(Y\ttimes_s \eta,\Lambda)$, $\lambda^*Rj_*M$ is
computed by the totalization of the diagram \eqref{e.Rp} considered as a
double complex in $\Shv(Y\ttimes_s\eta^{[1]},\Lambda)$, with the vertical
arrows representing $\phi$. Thus $L\Co Rj_*M$ is computed by the diagram
\eqref{e.Rp} considered as a triple complex in
$\Shv(Y\ttimes_s\eta,\Lambda)$, with the rows of the diagram numbered $-1$
and $0$. This gives an isomorphism $\id\simto \tau L\Co Rj_*$. It is easy to
check that the composition
\[
\id\simto \tau L\Co Rj_*\to (\tau L\Co \, i_*)(i^* Rj_*)\simeq (p^*(1)[1])Rp_*
\]
is the adjunction constructed in Proposition \ref{p.2.13} and
\[\id\simto \tau L\Co Rj_*\to (\tau L\Co Rj_*)(j^* Rj_*)\simeq
\id\circ \id
\]
is the trivial adjunction. Thus, by the above lemma, we get $Rj_*\dashv \tau
L\Co$.
\end{proof}

\begin{remark}
Between $\Shv(Y,\Lambda)$ and $\Shv(Y\ttimes_s S,\Lambda)$, we have a
sequence of adjoint functors
\[
\pi_?\dashv \pi^*\dashv \pi_*=i^* \dashv i_* \dashv i^!.
\]
Here $\pi\colon Y\ttimes_s S\to Y$ is the projection and
$\pi_?=R^2i^!(1)=R^1p_*j^*(1)$. Between $D(Y,\Lambda)$ and $D(Y\ttimes_s
S,\Lambda)$, we have adjunctions
\begin{equation}\label{e.adji}
Ri^!(1)[2]\dashv \pi^*\dashv \pi_*=i^* \dashv
i_* \dashv Ri^!.
\end{equation}
Since $p=\pi j$, compositions of the corresponding functors of the sequences
\eqref{e.adji} and \eqref{e.adjDj} collapse into \eqref{e.ploop}.

To construct the adjunction $Ri^!(1)[2]\dashv \pi^*$, we apply the last
assertion of Lemma \ref{l.adj}. The co-unit $(Ri^!(1)[2])\pi^*\simto \id$ is
given by the cycle class map. The compositions
\begin{gather*}
(\id(1)[2])(\id(-1)[-2])\simeq ((Ri^!(1)[2])i_*)(Ri^! \pi^*)\simto (Ri^!(1)[2])\pi^*\simto
\id,\\
(Rp_*(1)[1])(p^*)\simeq ((Ri^!(1)[2])j_!)(j^*\pi^*)\to (Ri^!(1)[2])\pi^*\simto
\id
\end{gather*}
are adjunctions.
\end{remark}

\begin{remark}
Let $M\in D(Y\ttimes_s S,\Lambda)$. We have distinguished triangles
\begin{gather}
\label{e.tr1} \pi^*\pi_* M\to M\to j_!L\Co M\to \pi^*\pi_* M[1],\\
\label{e.tr2} j_!j^* M\to M\to i_*i^* M\to j_!j^* M[1],\\
\label{e.tr3} i_*Ri^! M\to M \to Rj_*j^*M\to i^*Ri^! M[1],\\
\label{e.tr4} Rj_*\tau L\Co M \to M \to \pi^* Ri^! M(1)[2]\to Rj_*\tau L\Co M[1].
\end{gather}
Each triangle above is right adjoint of the preceding one (the first
triangle being right adjoint to the last one). Applying $j^*$ to
\eqref{e.tr1} and \eqref{e.tr4}, we obtain distinguished triangles
\begin{gather}
\label{e.treta1} p^*M_s \to M_\eta \xrightarrow{\can} L\Co M \to p^*M_s[1],\\
\label{e.treta2} L\Co M \xrightarrow{\var} M_\eta(-1)^\tau \to p^*Ri^!M[2]\to L\Co M[1].
\end{gather}
Here $\var$ is the \emph{variation} map. One can check that $\var\circ \can$
is the canonical map $\iota\colon M_\eta\to M_\eta(-1)^\tau$.

For $M$ tame (namely, $M^P\simeq M$), the morphism $\var$ corresponds via
\eqref{e.crossed} to morphisms $\var(\xi)\colon (L\Co M)_{\bar \eta} \to
M_{\bar \eta}$ for $\xi\in I$, which are the classical variation maps
\cite[XIII (1.4.3.1)]{SGA7II}.
\end{remark}

\subsection{Duality and $L\Co$}\label{ss.DLCo}

\begin{construction}\label{c.d}
Let $\cC$ and $\cD$ be categories equipped with dualities $D_\cC\colon
\cC^\op\to \cC$ and $D_\cD\colon \cD^\op\to \cD$ (Remark \ref{r.dual}). Let
$F,G\colon \cC\to \cD$ and $F',G'\colon \cD\to \cC$ be functors equipped
with adjunctions $F'\dashv F$ and $G\dashv G'$. A natural transformation
$FD_\cC\to D_{\cD} G^\op$ corresponds by adjunction to $G'^\op D_{\cD}^\op
\to D_\cC^\op F'$, which corresponds by taking opposites to a natural
transformation $D_\cC F'^\op\to G' D_\cD$.
\end{construction}

Let $f\colon Y\to s$ be a separated morphism of finite type. We let
$K_{Y\ttimes_s \eta}=p^*Rf^!\Lambda_s\simeq
R(f\ttimes_s\id_\eta)^!\Lambda_\eta$ and $K_{Y\ttimes_s
S}=\pi^*Rf^!\Lambda_s\simeq R(f\ttimes_s\id_S)^!\Lambda_S$ (by Corollary
\ref{c.Kunnup}).

\begin{construction}
We construct a natural transformation
\begin{equation}\label{e.nt}
D_{Y\ttimes_s \eta}(L\Co)^\op\to
\tau L\Co D_{Y\ttimes_s S}
\end{equation}
of functors $D(Y\ttimes_s S,\Lambda)^\op\to D(Y\ttimes_s \eta,\Lambda)$, by
applying Construction \ref{c.d} and Theorem \ref{t.adj} to the natural
transformation \eqref{e.DD}
\begin{equation}\label{e.gamma}
\gamma\colon j_! D_{Y\ttimes_s \eta} \to
D_{Y\ttimes_s S} (Rj_*)^\op
\end{equation}
adjoint to the canonical isomorphism $D_{Y\ttimes_s \eta}\simeq
j^*D_{Y\ttimes_s S} (Rj_*)^\op$.
\end{construction}

By construction \eqref{e.nt} is given by
\begin{multline*}
\Hom(L,D(L\Co) M)\simeq \Hom((L\Co) M,DL)\simeq \Hom(M,j_!DL)\\
\xrightarrow{\gamma_L}
\Hom(M,DRj_*L)\simeq \Hom(Rj_*L,DM)\simeq \Hom(L,\tau (L\Co) DM).
\end{multline*}

\begin{theorem}[= 0.1 (2)]\label{t.LPhi}
The natural transformations \eqref{e.nt} and \eqref{e.gamma} are
isomorphisms.
\end{theorem}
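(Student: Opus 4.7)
The plan is first to reduce \eqref{e.nt} and \eqref{e.gamma} to a single claim, then to prove that claim. For the reduction, Construction~\ref{c.d} applied to the adjunctions $L\Co\dashv j_!$ and $Rj_*\dashv \tau L\Co$ (Theorem~\ref{t.adj}), combined with the self-adjointness of $D_{Y\ttimes_s\eta}$ and $D_{Y\ttimes_s S}$ (Remark~\ref{r.dual}), identifies \eqref{e.nt} as the mate of \eqref{e.gamma}: for $L\in D(Y\ttimes_s\eta,\Lambda)$ and $M\in D(Y\ttimes_s S,\Lambda)$, the chain of natural bijections
\begin{multline*}
\Hom(L, D_{Y\ttimes_s\eta}L\Co M) \simeq \Hom(L\Co M, D_{Y\ttimes_s\eta}L) \simeq \Hom(M, j_!D_{Y\ttimes_s\eta}L) \\
\xrightarrow{\gamma_L} \Hom(M, D_{Y\ttimes_s S}Rj_*L) \simeq \Hom(Rj_*L, D_{Y\ttimes_s S}M) \simeq \Hom(L, \tau L\Co D_{Y\ttimes_s S}M)
\end{multline*}
equals composition with \eqref{e.nt} evaluated at $M$. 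By Yoneda, \eqref{e.gamma} is an isomorphism for all $L$ iff \eqref{e.nt} is for all $M$, so it suffices to prove \eqref{e.gamma}.

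To prove \eqref{e.gamma}, I would check $\gamma_L$ on the open and closed parts of the recollement $(j,i)$. By construction, $j^*\gamma_L$ is the canonical isomorphism $D_{Y\ttimes_s\eta}L\simto j^*D_{Y\ttimes_s S}Rj_*L$, using $j^*=Rj^!$ and \eqref{e.1.13.1}. Since $i^*j_!=0$, it remains to show $i^*D_{Y\ttimes_s S}Rj_*L=0$. For this, apply $D_{Y\ttimes_s S}$ to the localization triangle $j_!L\to Rj_*L\to i_*Rp_*L\to$. Using \eqref{e.1.13.2} and the identification $Ri^!K_{Y\ttimes_s S}\simeq K_Y(-1)[-2]$---obtained from the triangle $Ri^!\pi^*K_Y\to K_Y\to Rp_*p^*K_Y\to$ together with Proposition~\ref{p.2.13}---this yields
\[
i_*D_YRp_*L(-1)[-2] \to D_{Y\ttimes_s S}Rj_*L \to Rj_*D_{Y\ttimes_s\eta}L \xrightarrow{\partial} i_*D_YRp_*L(-1)[-1].
\]
Applying $i^*$ reduces the vanishing of $i^*D_{Y\ttimes_s S}Rj_*L$ to showing the connecting map
\[
i^*\partial\colon Rp_*D_{Y\ttimes_s\eta}L \to D_YRp_*L(-1)[-1]
\]
is an isomorphism.

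The final step is to identify $i^*\partial$ with a Poincar\'e-Verdier duality for $p$. The adjunction $Rp_*\dashv p^*(1)[1]$ from Proposition~\ref{p.2.13}, combined with the projection formula, yields a canonical isomorphism $R\cHom(Rp_*A,B)\simeq Rp_*R\cHom(A,p^*B(1)[1])$; specializing to $A=L$ and $B=K_Y$ gives a canonical iso $Rp_*D_{Y\ttimes_s\eta}L\simeq D_Y(Rp_*L)(-1)[-1]$. A naturality argument, tracing $\partial$ back as $D_{Y\ttimes_s S}$ applied to the canonical connecting map $i_*Rp_*L\to j_!L[1]$ of the original triangle, should show that $i^*\partial$ agrees (up to sign) with this Poincar\'e-Verdier isomorphism.

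The principal difficulty lies in this last identification: matching the abstract connecting map $i^*\partial$ produced by the triangle with the concrete Poincar\'e-Verdier isomorphism coming from Proposition~\ref{p.2.13}. Although the Iwasawa twist $\tau$ does not explicitly appear in \eqref{e.gamma}, it enters crucially through the adjunction $Rj_*\dashv \tau L\Co$ used in Step 1, reflecting the quasi-periodicity of the adjunction sequence \eqref{e.adjDj}.
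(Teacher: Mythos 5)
Your reduction of \eqref{e.nt} to \eqref{e.gamma} is exactly the paper's own (the displayed chain of bijections following the construction of \eqref{e.nt} is literally how that map is defined), but from there the two arguments diverge. The paper proves that $\gamma_L$ is an isomorphism by d\'evissage: source and target carry coproducts to products, $Rj_*$ and the two dualities have bounded amplitude, so one reduces to $L$ a coproduct of sheaves $u_!\Lambda$ with $u$ \'etale affine, and that case is an instance of Proposition \ref{p.bidual}, whose proof runs through the K\"unneth formulas (Corollaries \ref{c.Kunn!}, \ref{c.Kunn*}, \ref{c.DKunn}). You instead check $\gamma_L$ on the recollement: the open part is the defining isomorphism, and the closed part reduces to the vanishing $i^*D_{Y\ttimes_s S}Rj_*L=0$, which you convert --- correctly, via the dualized localization triangle and the computation $Ri^!K_{Y\ttimes_s S}\simeq Rf^!\Lambda_s(-1)[-2]$ (the paper's $K_Y$ of Remark \ref{r.trcomp}) --- into the assertion that the boundary map $i^*\partial\colon Rp_*D_{Y\ttimes_s\eta}L\to D_Y(Rp_*L)(-1)[-1]$ is an isomorphism. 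This is a genuinely different and more structural route: it locates the content of the theorem in the Poincar\'e--Verdier duality for the inertia (Proposition \ref{p.2.13}) rather than in K\"unneth, and it treats unbounded $L$ without d\'evissage.

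The step you have not supplied is, however, the crux. The Poincar\'e--Verdier isomorphism $Rp_*R\cHom(L,p^*K)\simeq R\cHom(Rp_*L,K)(-1)[-1]$ does exist (Proposition \ref{p.2.13} plus the projection formula, Corollary \ref{c.pf}(a)), but knowing that the source and target of $i^*\partial$ are abstractly isomorphic does not make the connecting map an isomorphism; you must actually identify $i^*\partial$ with that pairing. The identification is true but not formal: one has to unwind the explicit model of $Rj_*L$ from Lemma \ref{l.Rp} (the bicomplex \eqref{e.Rp}) to see that the boundary of $j_!L\to Rj_*L\to i_*Rp_*L$, after dualizing, is the composite of the adjunction map with the trace $Rp_*p^*(1)[1]\to\id$. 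Note that you cannot shortcut this by citing the compatibility of \eqref{e.tr2} and \eqref{e.tr3} under duality, i.e.\ \eqref{e.trcomp}: that remark is proved ``similarly to the above'' and on $j_!B$ it is equivalent to the very statement you need, so the citation would be circular. Either carry out the bicomplex computation, or close the gap the paper's way: both $i^*\partial$ and the Poincar\'e--Verdier map are natural, carry coproducts to products, and have bounded amplitude, so it suffices to compare (or separately verify) them on $L=u_!\Lambda$, where everything is computable.
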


\begin{proof}
It suffices to show that $\gamma_L$ is an isomorphism for every $L\in
D(Y\ttimes_s \eta,\Lambda)$. Since the source and target of $\gamma$ both
carry coproducts to products, we may assume $L\in D^-$. Since $Rj_*$ has
cohomological amplitude contained in $[0,1]$ and both $D_{Y\ttimes_s S}$ and
$D_{Y\ttimes_s \eta}$ have cohomological amplitude $\ge -2\dim(Y)$, we may
assume $L$ is a sheaf. We may further assume that $L$ is a coproduct of
sheaves of the form $u_! \Lambda$ with $u$ \'etale and affine. The case of
$u_!\Lambda$ is a special case of Proposition \ref{p.bidual}.
\end{proof}

\begin{remark}\label{r.trcomp}
We let $K_Y=Rf^!\Lambda_s(-1)[-2]$. Similarly to the above, we have natural
isomorphisms
\begin{gather}\label{e.trcomp}
i^*D_{Y\ttimes_s S}\simto D_{Y}(Ri^!)^\op,\\
\label{e.Da} D_{Y\ttimes_s S}(\pi^*)^\op \simto \pi^*(1)[2]D_Y
\end{gather}
on the unbounded derived categories, adjoint to each other. Moreover,
\eqref{e.Da} is the inverse of the K\"unneth formula map. Via the
isomorphisms \eqref{e.gamma} and \eqref{e.trcomp}, the distinguished
triangles \eqref{e.tr2} and \eqref{e.tr3} are compatible. Similarly, via the
isomorphisms \eqref{e.nt} through \eqref{e.Da}, the distinguished triangles
\eqref{e.tr1} and \eqref{e.tr4} are compatible.
\end{remark}

\section{Nearby and vanishing cycles over Henselian discrete valuation
rings}\label{s.2+}

In this section we assume as in Section \ref{ss.Iwafiber} that $S$ is the
spectrum of a Henselian discrete valuation ring, of generic point $\eta$ and
closed point $s$. Let $\Lambda$ be a Noetherian ring satisfying $m\Lambda=0$
for some integer $m$ invertible on $S$.

Let $X\to S$ be a morphism of schemes. Consider the morphisms of topoi
\[X\xrightarrow{\Psi} X\atimes_S S \xleftarrow{\overleftarrow{i}} X_s\ttimes_s S.\]
We call $\Psi^s=\overleftarrow{i}^*\Psi_*$ the \emph{sliced} nearby cycle
functor. As explained in \cite[(1.2.9)]{IZ}, this is the functor $\Psi$ of
\cite[XIII 1.3.3]{SGA7II}. Between derived categories equipped with the
symmetric monoidal structures given by derived tensor products, the functor
$\overleftarrow{i}^*$ is a symmetric monoidal functor, and $R\Psi_*$ is a
right-lax symmetric monoidal functor (see for example \cite[Construction
3.7]{IZQ}). The composite $R\Psi^s$ is a right-lax symmetric monoidal
functor. By adjunction (see for example \cite[XVII D\'efinition
12.2.3]{ILO}), we get a morphism natural in $A,K\in D(X,\Lambda)$
\begin{equation}\label{e.PsiHom}
R\Psi^s R\cHom(A,K)\to R\cHom(R\Psi^s A,R\Psi^s K).
\end{equation}

Let $f\colon X\to Y$ be a morphism of schemes over $S$, $f$ separated of
finite type. Via Construction \ref{c.adj}, the natural transformation
\cite[XIII (2.1.7.3)]{SGA7II}
\begin{equation}
R(f_s\ttimes_s \id_S)_!R\Psi^s_{X} \to R \Psi^s_{Y} Rf_!
\end{equation}
corresponds to a natural transformation
\begin{equation}\label{e.Psi!0}
R\Psi^s_{X} Rf^! \to R(f_s\ttimes_s \id_S)^! R \Psi^s_{Y}.
\end{equation}
See \eqref{e.Psis3} and \eqref{e.Psis4} for generalizations.

We fix $K_S\in D^b_c(S,\Lambda)$, not necessarily dualizing. For $a\colon
X\to S$ separated of finite type, we define $K_X\colonequals Ra^!K_S$ and
$K_{X_s\ttimes_s S} \colonequals R(f_s\ttimes_s \id_S)^!K_S$. Applying
\eqref{e.Psi!0} to $K_S$, we get $R\Psi^s K_X\to K_{X_s\ttimes_s S}$.
Composing with \eqref{e.PsiHom}, we get a natural transformation
\begin{equation}\label{e.PsiD}
R\Psi^s D_X \to D_{X_s\ttimes_s S} R
\Psi^s.
\end{equation}

\begin{theorem}\label{t.PsiS}
The canonical map $R\Psi^s D_X L\to  D_{X_s\ttimes_s S} R \Psi^s L$
\eqref{e.PsiD} is an isomorphism for $L\in D^-_c(X,\Lambda)$.
\end{theorem}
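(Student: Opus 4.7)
The plan is to factor the natural transformation \eqref{e.PsiD} as the composition
\[
R\Psi^s R\cHom(L,K_X)\xrightarrow{\eqref{e.PsiHom}} R\cHom(R\Psi^s L,R\Psi^s K_X)\to R\cHom(R\Psi^s L,K_{X_s\ttimes_s S}),
\]
where the second arrow is induced by the map $\alpha\colon R\Psi^s K_X\to K_{X_s\ttimes_s S}$ coming from \eqref{e.Psi!0} applied to $K_S$, and then to show each factor is an isomorphism.

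\emph{Step 1 (the map $\alpha$).} I would compactify $a\colon X\to S$ as $\bar a\circ u$ with $u\colon X\hookrightarrow \bar X$ an open immersion and $\bar a\colon\bar X\to S$ proper. Since $u$ is \'etale, both $R\Psi^s$ and $R(-)^!$ commute with $u^*$ by base change, reducing the verification to the proper case $\bar a$. For proper $\bar a$, Chow's lemma and d\'evissage reduce further to the composition of a closed immersion into projective space and the smooth projection; the smooth case follows from Proposition~\ref{l.trace} together with the classical commutation of $R\Psi^s$ with smooth pullback, and the closed-immersion case is handled via the excision triangle, reducing to the complementary (smooth) open immersion.

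\emph{Step 2 (the lax monoidality).} To check that \eqref{e.PsiHom} evaluated at $(L,K_X)$ is an isomorphism for $L\in D^-_c$, I would proceed by truncation and d\'evissage, reducing to $L=u_!\Lambda_U$ for $u\colon U\to X$ \'etale with $U$ affine. For such $L$, the left-hand side becomes $R\Psi^s Ru_* K_U$ (applying $R\cHom(u_!\Lambda_U,K_X)\simeq Ru_*K_U$ and then $R\Psi^s$), while the right-hand side becomes $R(u_s\ttimes_s\id_S)_* R(u_s\ttimes_s\id_S)^! R\Psi^s K_X$ by \eqref{e.1.13.2}. Using Step 1 applied to $u\colon U\to X$ we rewrite the second as $R(u_s\ttimes_s\id_S)_* R\Psi^s K_U$, and using Gabber's K\"unneth formula for nearby cycles (after compactifying $u$) we identify the first with the same expression, so the two agree. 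Combining Steps 1 and 2 yields that \eqref{e.PsiD} is an isomorphism.

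\emph{Main obstacle.} The chief difficulty lies in Step 1: deducing the commutation of $R\Psi^s$ with $!$-pullback for the proper $\bar a$ is not a formal consequence of Gabber's K\"unneth isomorphism $R\Psi^s R\bar a_*\simeq R(\bar a_s\ttimes_s\id_S)_*R\Psi^s$, because taking right adjoints only produces a one-sided map rather than an inverse, so one must argue geometrically by d\'evissage on $\bar a$ via Chow's lemma. A secondary technical issue is the extension from $D^b_c$ (or $D^b_{\cft}$) to $D^-_c$ in Step 2, which requires the finite-cohomological-amplitude bounds for the functors on the fiber-product topos developed in Section~\ref{s.fp} (especially the K\"unneth, $R\cHom$, and base change results used to control $R(u_s\ttimes_s\id_S)_*$ and $R(u_s\ttimes_s\id_S)^!$ on unbounded complexes).
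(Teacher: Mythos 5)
Your proof takes a genuinely different route from the paper, but it has a gap that the paper's dévissage is designed to sidestep.

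The paper's proof is short once the preparatory Lemma~\ref{l.three} is available: it dévissages $L$ along the excision triangle $j_{0!}j_0^*L\to L\to i_{0*}i_0^*L$ for the open immersion $j_0\colon X_\eta\to X$ and closed immersion $i_0\colon X_s\to X$, uses \eqref{e.3.4.1} in the closed case, and reduces the open case to Gabber's duality for the classical nearby cycle functor $R\Psi^s_\eta$ (Corollary~\ref{c.Gabber}) via \eqref{e.3.4.2}, \eqref{e.3.4.3}, and the adjunction \eqref{e.1.13.2}. Your approach instead factors \eqref{e.PsiD} into the lax-monoidality map composed with the map on dualizing complexes, and then dévissages the morphism $a$ (via Chow's lemma) and the complex $L$ (to $u_!\Lambda_U$).

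The gap is at the step where you must commute $R\Psi^s$ past a pushforward along a \emph{non-proper} morphism of $X$-schemes. Concretely: in Step~1, the closed-immersion factor $Z\hookrightarrow\PP^n_S$ via the excision triangle forces you to control $R\Psi^s\circ Rj_*$ for the complementary open immersion $j$; in Step~2, compactifying $u$ (or using Zariski's main theorem) leaves an open immersion factor whose $Rj_*$ you must again commute past $R\Psi^s$. The isomorphy of \eqref{e.Psis2} is established in the paper only for $f$ integral or proper (Remark~\ref{r.Psis0}~(2)); it is not asserted, and indeed is not a formal consequence, for an arbitrary open immersion $j\colon U\to X$ over $S$. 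Writing $R\Psi^s_X Rj_*$ as $(\overleftarrow{i}i)^*R(j\atimes_S\id_S)_*R\Psi_U$, one is asking for a base change of $R(j\atimes_S\id_S)_*$ along the closed embedding of the slice $X_s\ttimes_s S\to X\atimes_S S$, and for non-proper $j$ this is precisely the kind of base change that fails in general. The paper evades the problem by dévissaging $L$ only along the \emph{specific} partition $X=X_s\sqcup X_\eta$, for which the needed commutation \eqref{e.3.4.3} is proved in Lemma~\ref{l.three} using the particular relation between $j_0\colon X_\eta\to X$ and the vanishing topos (namely that $X_\eta\atimes_S\eta$ is the open subtopos $\overleftarrow{j}\Psi_\eta$ of $X\atimes_S S$, and $\overleftarrow{j}_*$ has cohomological dimension~$\le 1$). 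Your "Main obstacle'' paragraph correctly flags that taking right adjoints of Gabber's Künneth isomorphism yields only a one-sided map, but it does not identify this open-immersion pushforward issue, which is where the argument actually stalls. To repair your Step~1 and Step~2 you would either have to prove the open-immersion commutation directly over a DVR (which essentially amounts to redoing Lemma~\ref{l.three} in greater generality), or restructure the dévissage to run along $X_s$ and $X_\eta$ as the paper does.
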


This confirms a prediction of Deligne \cite{Deligne}. Since $j^*
R\Psi^s\simeq R\Psi^s_\eta j_0^*$, where $j_0\colon X_\eta\to X$ and
$j\colon X_s\ttimes_s \eta\to X_s\ttimes_s S$ are the inclusions, the
theorem extends Gabber's theorem on duality for $R\Psi_\eta^s$.

\begin{cor}[Gabber]\label{c.Gabber}
The canonical map $R\Psi^s_\eta D_{X_\eta} L\to D_{X_s\ttimes_s \eta}
R\Psi^s_\eta L$ is an isomorphism for $L\in D^-_c(X_\eta,\Lambda)$. Here
$K_{X_s\ttimes_s \eta}\colonequals R(f_s\ttimes_s \id_\eta)^!K_\eta$.
\end{cor}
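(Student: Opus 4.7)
The plan is to deduce this corollary from Theorem \ref{t.PsiS} by restricting to the generic fiber via the open immersion $j\colon X_s\ttimes_s \eta \to X_s\ttimes_s S$, using the compatibility $j^*R\Psi^s \simeq R\Psi^s_\eta j_0^*$ noted right before the corollary.

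More precisely, given $L\in D^-_c(X_\eta,\Lambda)$, first I would extend it to an object on all of $X$: since $j_0\colon X_\eta\to X$ is an open immersion, set $\tilde L\colonequals j_{0!}L \in D^-_c(X,\Lambda)$, so that $j_0^*\tilde L\simeq L$. Then $R\Psi^s_\eta L \simeq R\Psi^s_\eta j_0^*\tilde L \simeq j^*R\Psi^s \tilde L$. On the duality side, since $j_0$ is an open immersion, $Rj_0^!\simeq j_0^*$, and the canonical map $j_0^* D_X \tilde L \to D_{X_\eta}j_0^*\tilde L$ is an isomorphism (a special case of Corollary \ref{c.pj} \eqref{e.1.13.1}, or equivalently the standard fact that étale pullback commutes with $R\cHom$). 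Similarly, since $j$ is an open immersion of topoi, $j^* D_{X_s\ttimes_s S} \simto D_{X_s\ttimes_s \eta} j^*$, using $K_{X_s\ttimes_s \eta} \simeq Rj^! K_{X_s\ttimes_s S} \simeq j^*K_{X_s\ttimes_s S}$.

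With these identifications, applying $j^*$ to the isomorphism of Theorem \ref{t.PsiS} for $\tilde L$, namely $R\Psi^s D_X \tilde L \simto D_{X_s\ttimes_s S} R\Psi^s \tilde L$, and then chasing through the commutations above yields an isomorphism $R\Psi^s_\eta D_{X_\eta} L \simto D_{X_s\ttimes_s \eta} R\Psi^s_\eta L$. The main point requiring care, rather than the existence of some isomorphism, is that this agrees with the canonical map in the statement: one must trace through the construction of \eqref{e.PsiD} applied on $X$ versus on $X_\eta$ and check that the natural transformation \eqref{e.Psi!0} is compatible with $j_0^*$/$j^*$. This compatibility follows by unpacking Construction \ref{c.adj}: both sides are obtained by adjunction from the map $R(f_s\ttimes_s \id_\eta)_! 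R\Psi^s_\eta \to R\Psi^s_\eta Rf_!$, which is the restriction of the corresponding map on $S$ via the open immersions $j_0$ and $j$.

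I expect the main (mild) obstacle to be precisely this bookkeeping of naturality: namely, checking that the isomorphism produced by restriction agrees with the canonically defined map, rather than merely asserting some abstract isomorphism. No new input beyond Theorem \ref{t.PsiS} and the compatibility of $!$-pullback with $*$-pullback along open immersions is needed; in particular, the restriction to $D^-_c$ is preserved because $j_{0!}$ sends $D^-_c(X_\eta,\Lambda)$ into $D^-_c(X,\Lambda)$.
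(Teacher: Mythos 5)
Your deduction of the corollary from Theorem \ref{t.PsiS} is correct as an implication, and indeed it is essentially the observation the paper makes right before the corollary (``the theorem extends Gabber's theorem on duality for $R\Psi^s_\eta$''). However, taken as a proof it is circular relative to the paper's logical structure: the paper's proof of Theorem \ref{t.PsiS} is given later in the same section and \emph{invokes Corollary \ref{c.Gabber}} as its key ingredient, precisely in the case $L = j_{0!}N$, where the step labelled ``$a$'' is exactly the corollary. Your reduction $\tilde L = j_{0!}L$ followed by $j^*$ undoes that step, so what you have shown is that Theorem \ref{t.PsiS} specialized to objects of the form $j_{0!}L$ is equivalent to Corollary \ref{c.Gabber} — which is true but does not constitute an independent proof of the corollary.

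The paper actually proves Corollary \ref{c.Gabber} by a different route, independent of Theorem \ref{t.PsiS}. When $S$ is excellent, the corollary follows from Theorem \ref{t.Psis} (duality for sliced nearby cycles over a general excellent base) together with Example \ref{e.Psigood}(2), which guarantees $\Psi$-goodness unconditionally over a one-dimensional Noetherian base; the extension from $D^b_c$ to $D^-_c$ uses the finite cohomological dimension of $R\Psi$ in this setting. For a general Henselian discrete valuation ring $S$, one reduces to the excellent case by passing to the completion $\hat S$: since $\hat\eta\to\eta$ is universally locally acyclic, $R\Psi^s_\eta$ commutes with the base change $\hat S\to S$, and Lemma \ref{l.local}(b) then descends the duality isomorphism back to $S$. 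Only after the corollary is in place does the paper prove Theorem \ref{t.PsiS}, by gluing along the three pieces of $X_s\ttimes_s S$ via Lemma \ref{l.three}.
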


\begin{remark}
In \cite[Th\'eor\`eme 4.2]{IllusieAutour}, Corollary \ref{c.Gabber} is
proved for $K_\eta=\Lambda_\eta$ and $L\in D_{\cft}(X_\eta,\Lambda)$. The
case $K_\eta=\Lambda_\eta$ and $L\in D^-_c(X_\eta,\Lambda)$ follows. Indeed,
since $R\Psi^s_\eta$ has cohomological amplitude contained in
$[0,\dim(X_s)]$, $D_{X_\eta}$ has cohomological amplitude $\ge
-2\dim(X_\eta)$, and $D_{X_s\ttimes_s \eta}$ has cohomological amplitude
$\ge -2\dim(X_s)$, we may assume $L\in \Shv_c$. We then reduce to the case
$L=j_!\Lambda$ for $j$ \'etale of finite type. In this case $L\in
D_{\cft}(X_\eta,\Lambda)$.
\end{remark}

Combining Theorem \ref{t.PsiS} with Theorem \ref{t.LPhi}, we obtain duality
for $\Phi^s=L\Co\circ R\Psi^s$ (Corollary \ref{c.Beilinson}), which is (at
least for $S$ strictly local) a theorem of Beilinson \cite[2.3]{Beilinson}.
Theorem \ref{t.PsiS} thus encodes duality for both nearby cycles and
vanishing cycles and provides a new proof of Beilinson's theorem.

In the rest of this section, we show that Theorem \ref{t.PsiS} follows from
our results over general bases. The case where $S$ is excellent of Theorem
\ref{t.PsiS} (and Corollary \ref{c.Gabber}) follows from Theorem
\ref{t.Psis} by Example \ref{e.Psigood} (2), which extends to the unbounded
derived category since $R\Psi$ has finite cohomological dimension in our
case. The general case of Corollary \ref{c.Gabber} follows from the
excellent case by Lemma \ref{l.local} (b) and the universal local acyclicity
of $\hat \eta\to \eta$, because $R\Psi^s_\eta$ commutes with the base change
$\hat S\to S$. Here $\hat S$ denotes the completion of $S$ at $s$ and $\hat
\eta$ denotes the generic point of $\hat S$. We will deduce Theorem
\ref{t.PsiS} from Corollary \ref{c.Gabber}, after some preliminaries on the
vanishing topos $X\atimes_S S$.

Let $X$ be a scheme over $S$. The topos $X\atimes_S S$ is glued from the
pieces $X_s$, $X_s\ttimes_s \eta$, and $X_\eta$. Consider the diagram of
topoi
\begin{equation}\label{e.Psixy}
\xymatrix{X_\eta\ar[r]^{\Psi_\eta}\ar[d]_{j_0}\ar@{}[rd]|{\Leftrightarrow} & X\atimes_S \eta\ar[d]_{\overleftarrow{j}}\ar@{}[rd]|{\Leftrightarrow} & X_s\ttimes_s \eta\ar[d]^j\ar[l]_{\overleftarrow{i}_\eta}\\
X\ar[r]^\Psi & X\atimes_S S\ar@{}[d]|{\Leftrightarrow} & X_s\ttimes_s S\ar[l]_{\overleftarrow{i}} \\
& X_s\ar[ul]_{i_0} \ar[ur]_i.}
\end{equation}
Here $\Psi_\eta$, $\overleftarrow{j}$, $j_0$, $j$ are open embeddings, and
$i$, $\overleftarrow{i}$, $\overleftarrow{i}_\eta$, $i_0$ are closed
embeddings. We have $\Psi^s=\overleftarrow{i}^*\Psi_* $ and
$\Psi^s_\eta=\overleftarrow{i}_\eta^*\Psi_{\eta*}$ (denoted respectively by
$\Psi$ and $\Psi_\eta$ in \cite[XIII 1.3]{SGA7II}).

\begin{lemma}\label{l.three}
We have
\begin{gather}
\label{e.3.4.1} R\Psi^s i_{0*}\simto i_*,\\
\label{e.3.4.2} R\Psi^s j_{0!}\xleftarrow{\sim}
j_!R\Psi^s_\eta,\\
\label{e.3.4.3} R\Psi^s Rj_{0*}\simto Rj_*R\Psi^s_\eta.
\end{gather}
Here the functors are between unbounded derived categories.
\end{lemma}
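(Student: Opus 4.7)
My plan is to combine the commutative squares in diagram~\eqref{e.Psixy} with topos-theoretic base change for open and closed immersions. For \eqref{e.3.4.1}, the bottom-left commutativity $\Psi\circ i_0\simeq\overleftarrow{i}\circ i$ gives $R\Psi_* i_{0*}\simeq\overleftarrow{i}_* i_*$ (neither pushforward needs deriving since $i_0$ and $i$ are closed immersions). Applying $\overleftarrow{i}^*$ and using $\overleftarrow{i}^*\overleftarrow{i}_*\simeq\id$ for the closed immersion $\overleftarrow{i}$ yields $R\Psi^s i_{0*}\simeq i_*$.

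For \eqref{e.3.4.3}, the top-left commutativity $\Psi\circ j_0\simeq\overleftarrow{j}\circ\Psi_\eta$ gives $R\Psi_* Rj_{0*}\simeq R\overleftarrow{j}_* R\Psi_{\eta*}$. Applying $\overleftarrow{i}^*$ and the topos-theoretic open--closed base change $\overleftarrow{i}^*R\overleftarrow{j}_*\simeq Rj_*\overleftarrow{i}_\eta^*$ for the Cartesian right square of \eqref{e.Psixy} (a standard fact for complementary open and closed subtoposes) completes the argument.

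For \eqref{e.3.4.2}, I would first establish the $*$-pullback analogue $j^*R\Psi^s\simeq R\Psi^s_\eta j_0^*$ by the same base change computation as for \eqref{e.3.4.3}, now using $\overleftarrow{j}^*R\Psi_*\simeq R\Psi_{\eta*}j_0^*$ (open base change) and the canonical $j^*\overleftarrow{i}^*\simeq\overleftarrow{i}_\eta^*\overleftarrow{j}^*$. By $(j_!\dashv j^*)$ and $(j_{0!}\dashv j_0^*)$ adjunctions, the identity $R\Psi^s_\eta\simto R\Psi^s_\eta j_0^*j_{0!}\simto j^*R\Psi^s j_{0!}$ produces a natural transformation $j_!R\Psi^s_\eta\to R\Psi^s j_{0!}$. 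To show it is an isomorphism, I apply the gluing distinguished triangle $j_!j^*\to\id\to i_*i^*$ on $X_s\ttimes_s S$ to $R\Psi^s j_{0!}$: the map becomes the identity on the open part (via $j^*$) and the $i^*$-part of the source vanishes trivially, so it remains to show that $i^*R\Psi^s j_{0!}=0$.

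The hard step is this last vanishing, which I expect to obtain from the stronger natural isomorphism $i^*R\Psi^s\simeq i_0^*$ (whence $i^*R\Psi^s j_{0!}\simeq i_0^* j_{0!}=0$ directly). Using $\overleftarrow{i}\circ i\simeq\Psi\circ i_0$ one has a canonical identification $i^*R\Psi^s\simeq i_0^*\Psi^*R\Psi_*$, so what is needed is that the unit $M\to\Psi^*R\Psi_* M$ becomes an isomorphism after restriction via $i_0^*$. Over a Henselian DVR this is a consequence of the stalk description of the nearby cycle morphism $\Psi$ from \cite[XI]{ILO}: for any geometric point $\bar x$ of $X_s$, localizing $X\atimes_S S$ at the image of $\bar x$ under $\overleftarrow{i}\circ i$ (equivalently, under $\Psi\circ i_0$) collapses the second factor of the oriented product to $s$ because $S$ is already Henselian at $s$, so the stalk of $R\Psi_* M$ there is simply $M_{\bar x}$.
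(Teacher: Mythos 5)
Your handling of \eqref{e.3.4.1} and \eqref{e.3.4.2} is essentially the paper's: both arguments reduce \eqref{e.3.4.2} to checking the map after $j^*$ (where it is the adjunction isomorphism) and after $i^*$, where the target vanishes because $i^*R\Psi^s=(\overleftarrow{i}i)^*R\Psi_*=i_0^*\Psi^*R\Psi_*\simeq i_0^*$ by $\Psi^*R\Psi_*\simeq\id$ (Remark \ref{r.Psiu}). Two small points there: the map $\Psi^*R\Psi_*\to\id$ you use is the counit, not the unit, and it is an isomorphism on all of $X$ (computation by shreds, $\Psi^s_s=(i^s)^*$), not only after $i_0^*$.

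The genuine gap is in \eqref{e.3.4.3}: the base change $\overleftarrow{i}^*R\overleftarrow{j}_*\simeq Rj_*\overleftarrow{i}_\eta^*$ is not ``a standard fact for complementary open and closed subtoposes.'' The subtoposes are not complementary: the closed complement of $\overleftarrow{j}$ is the stratum $X_s$ (embedded by $\Psi i_0$), the open complement of $\overleftarrow{i}$ is $X_\eta$ (embedded by $\overleftarrow{j}\Psi_\eta$), and the two overlap in $X_s\ttimes_s\eta$. For a truly complementary pair, $i^*Rj_*$ is precisely the nontrivial gluing functor; for a non-complementary pair such a base change fails in general (take $X=\A^2$, $U$ the complement of the origin, $Z$ a line through the origin: at the origin one side computes the cohomology of the punctured two-dimensional tube, the other that of the punctured line). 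Formal gluing only gives that the two sides of the base change map agree after further restriction to $X_s\ttimes_s\eta$; the actual content sits on the closed stratum $X_s$ and amounts to the vanishing $\overleftarrow{i}^*R\overleftarrow{j}_*\Psi_{\eta!}=0$, i.e.\ to the fact that the stalk of $R\overleftarrow{j}_*L$ at a geometric point $y$ of $X_s$ is computed by the restriction of $L$ to the local section $y\atimes_{S'}\eta'\simeq\eta'$ rather than to the whole punctured tube $X_{(y)}\atimes_{S'}\eta'$. This is the non-formal heart of the lemma; in the paper it is supplied by Corollary \ref{c.obc} (via the stalk computation of Proposition \ref{p.obc}), or by the direct argument using \cite[XI Corollaire 2.3.1]{ILO}, which moreover gives that $\overleftarrow{j}_*$ has cohomological dimension $\le 1$ --- the input needed to pass from $D^+$ to the unbounded derived categories, a point your sketch also leaves unaddressed.
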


\begin{proof}
This follows from
\begin{gather}\label{e.Psi1}
R \Psi i_{0*}\simeq (\overleftarrow{i} i)_*,\quad R
\Psi j_{0!}\xleftarrow{\sim} \overleftarrow{j}_!R\Psi_\eta,\quad R\Psi Rj_{0*}\simeq R\overleftarrow{j}_*R
\Psi_\eta,\\
\overleftarrow{i}^*\overleftarrow{i}_*\simto \id,\quad \overleftarrow{i}^*\overleftarrow{j}_!\xleftarrow{\sim} j_!\overleftarrow{i}_\eta^*,\quad
\overleftarrow{i}^*R\overleftarrow{j}_*\simto Rj_*\overleftarrow{i}_\eta^*.\label{e.Psi2}
\end{gather}
To show that the middle arrow in \eqref{e.Psi1} is an isomorphism, we apply
the functors $\overleftarrow{j}^*$ and $(\overleftarrow{i}i)^*$ and use the
fact that $\Psi^*R\Psi_*\simto \id$ (Remark \ref{r.Psiu} below). All the
other isomorphisms except the last one of \eqref{e.Psi2} are trivial. The
last isomorphism of \eqref{e.Psi2} follows from Corollary \ref{c.obc}, but
we give a direct proof here. The last arrow of \eqref{e.Psi2} being
trivially an isomorphism on $\overleftarrow{i}_{\eta*}M$, it remains to show
\begin{equation}\label{e.Psinontriv}
\overleftarrow{i}^* R\overleftarrow{j}_*\Psi_{\eta!}=0.
\end{equation}
Let $y\to X_s$ be a geometric point. Note that
\begin{equation}\label{e.Psistalk}
(R\overleftarrow{j}_* L)_{y}\simeq
R\Gamma(X_{(y)}\atimes_{S'} \eta',L_{(y)})\simeq R\Gamma(\eta',L_{y})
\end{equation}
for $L\in D^+(X\atimes_{S} \eta,\Lambda)$, where $S'=S_{(s)}$ is the strict
Henselization of $S$, $\eta'$ is the generic point of~$S'$, and $L_{(y)}$
and $L_y$ are the restrictions of $L$ to $X_{(y)}\atimes_{S'} \eta'$ and
$y\atimes_{S'} \eta'\simeq \eta'$, respectively. Indeed, if $p\colon
X_{(y)}\atimes_{S'} \eta'\to \eta'$ denotes the projection, then $p_*$ is
isomorphic to the restriction by \cite[XI Corollaire 2.3.1]{ILO}. It follows
that $\overleftarrow{j}_*$ has cohomological dimension $\le 1$ and
consequently \eqref{e.Psistalk} holds for $L$ unbounded.
\eqref{e.Psinontriv} follows.
\end{proof}

\begin{proof}[Proof of Theorem \ref{t.PsiS}]
For $L=i_{0*}M$, the map can be identified with the isomorphism
\[R\Psi^s
D_X i_{0*} M\simeq R\Psi^s i_{0*}D_{X_s} M\xrightarrow[\sim]{\eqref{e.3.4.1}} i_*D_{X_s} M\simeq
D_{X_s\ttimes_s S}i_{*} M\xleftarrow[\sim]{\eqref{e.3.4.1}}D_{X_s\ttimes_s S}R\Psi^s i_{0*}M.
\]
For $L=j_{0!}N$, the map can be identified with
\begin{multline*}
R\Psi^s D_X j_{0!}N\simeq R\Psi^s Rj_{0*}D_{X_\eta} N\xrightarrow[\sim]{\eqref{e.3.4.3}}
Rj_*R\Psi^s_\eta D_{X_\eta} N\\\xrightarrow[\sim]{a} Rj_* D_{X_s\ttimes_s \eta} R\Psi^s_\eta
N\xrightarrow[\sim]{\eqref{e.1.13.2}} D_{X_s\ttimes_s S} j_!R\Psi^s_\eta N\xrightarrow[\sim]{\eqref{e.3.4.2}} D_{X_s\ttimes_s S} R\Psi^s j_{0!}N,
\end{multline*}
where $a$ is an isomorphism by Corollary \ref{c.Gabber}.
\end{proof}

\begin{remark}
Let $X$ be a scheme over $S$. The map $Ri_0^!\to Ri^!R\Psi^s$ induced from
\eqref{e.3.4.1} is an isomorphism. Indeed, by the lemma, this holds on
$i_{0*}M$ and $Rj_{0*} N$. Thus, for $L\in D(X,\Lambda)$, the distinguished
triangles \eqref{e.treta1} and \eqref{e.treta2} applied to $R\Psi^s L$ are
\begin{gather}
\label{e.final1} p^*L_s \to R\Psi^s(L_\eta)\xrightarrow{\can} \Phi^s(L)\to p^*L_s[1]\\
\label{e.final2} \Phi^s(L) \xrightarrow{\var} R\Psi^s(L_\eta)(-1)^\tau \to p^*Ri_0^! L[2]\to \Phi^s(L)[1].
\end{gather}
A special case of \eqref{e.final2} was given in \cite[Th\'eor\`eme
3.3]{IllPerv}.

By Theorem \ref{t.PsiS} and Remark \ref{r.trcomp}, when $X$ is separated of
finite type over $S$, \eqref{e.final1} and \eqref{e.final2} are compatible
with each other via duality.
\end{remark}

\begin{remark}\label{r.nodual}
There is no good duality on $X\atimes_S S$ unless $X=X_\eta\coprod X_s$ as
topological spaces or $\Lambda=0$. Indeed, by \eqref{e.Psinontriv}, we have
$(\overleftarrow{j}\Psi_\eta)_!\simto R\overleftarrow{j}_*\Psi_{\eta!}$. Now
if there are dualities that swap $J_!$ and $RJ_*$ for the open immersions
$J=\overleftarrow{j}$ and $J=\Psi_\eta$, then
$R(\overleftarrow{j}\Psi_\eta)_*\simeq \overleftarrow{j}_!R\Psi_{\eta*}$,
which implies $i_0^*Rj_{0*}\simeq i_0^*\Psi^*R\Psi_*Rj_{0*}\simeq (\Psi
i_0)^*R(\overleftarrow{j}\Psi_\eta)_*=0$.
\end{remark}

\section{Sliced nearby cycles over general bases}\label{s.3}
In this section, we study the sliced nearby cycle functor over general bases
and base change. The main result is Theorem \ref{t.good}, the analogue of
Orgogozo's theorem for $!$-pullback. The proof of Theorem \ref{t.Psisc} will
be completed in Section \ref{s.3+}.

\subsection{Definition}
In this subsection we define the sliced nearby cycle functor and discuss
some basic properties.

Let $f\colon X\to S$ be a morphism of schemes. For a point $s\to S$ with
values in a field, we consider the \emph{slice} $X_s\atimes_S S$ of the
vanishing topos, where $X_s=X\times_S s$, and the morphisms of topoi
\[X\xrightarrow{\Psi_f} X\atimes_S S \xleftarrow{i\atimes_S \id_S} X_s\atimes_S S.\]
Let $\Lambda$ be a commutative ring. We define the \emph{sliced} nearby
cycle functor
\[R\Psi^s=R\Psi^s_f\colon D(X,\Lambda)\to D(X_s\atimes_S S,\Lambda)\]
to be $(i\atimes_S \id_S)^*R\Psi_f$. The most essential case is when $s$ is
a point of $S$. However, we will also need the case where $s$ is a geometric
point.

Note that $R\Psi_f^s$ is the right derived functor of
$\Psi_f^s=R^0\Psi_f^s$, with $R\Psi_f^s L$ computed by $\Psi_f^s L'$, where
$L'\to L$ is a quasi-isomorphism with $L'$ homotopically injective. The
functor $(i\atimes_S \id_S)^*$ is a symmetric monoidal functor, and
$R\Psi_f$ is a right-lax symmetric monoidal functor (see for example
\cite[Construction 3.7]{IZQ}). The composite $R\Psi^s_f$ is a right-lax
symmetric monoidal functor. By adjunction (see for example \cite[XVII
D\'efinition 12.2.3]{ILO}), we get a morphism natural in $L,K\in
D(X,\Lambda)$
\begin{equation}\label{e.PsiHomS}
R\Psi^s_f R\cHom(L,K)\to R\cHom(R\Psi^s_f L,R\Psi^s_f K).
\end{equation}

We show that the slice is in fact a fiber product of topoi. Let $S_{(s)}$ be
the Henselization of $S$ at $s$ and let $i\colon s_0\to S_{(s)}$ be the
inclusion of the closed point (so that $s_0$ is the spectrum of the
separable closure in $s$ of the residue field of $S$ at the image of $s$).
The morphism of topoi $i_\et$ admits a left adjoint $\pi\colon
(S_{(s)})_\et\to (s_0)_\et$ (Example \ref{ex.S}).

\begin{lemma}\label{l.id}
We have equivalences of topoi $X_s\ttimes_{s_0} S_{(s)} \simeq
X_s\atimes_{S_{(s)}}S_{(s)}\simto X_s\atimes_S S$.
\end{lemma}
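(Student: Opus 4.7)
}

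The lemma packages two equivalences, which I would handle separately. The first, $X_s\ttimes_{s_0} S_{(s)} \simeq X_s\atimes_{S_{(s)}}S_{(s)}$, is a direct specialization of Example \ref{ex.S} to the Henselian local scheme $S_{(s)}$ (which has closed point $s_0$) and to the scheme $X_s$, viewed as over $s_0$ via the canonical factorization $X_s\to s\to s_0$ (where $s\to s_0$ comes from the inclusion of the separable closure of the residue field at the image of $s$ into the field over which $s$ is the spectrum). The two equivalences produced by Example \ref{ex.S} are $X_s\ttimes_{s_0} S_{(s)}\simeq X_s\atimes_{s_0} S_{(s)}$ (Remark \ref{r.o}, since $s_0$ carries a classifying topos structure) and $X_s\atimes_{s_0} S_{(s)}\simeq X_s\atimes_{S_{(s)}} S_{(s)}$ (Lemma \ref{l.univ} applied to the adjunction $\pi\dashv i$ between $s_0$ and $S_{(s)}$).

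For the second equivalence $X_s\atimes_{S_{(s)}}S_{(s)}\simto X_s\atimes_S S$, the plan is to factor the natural comparison morphism through the intermediate $X_s\atimes_S S_{(s)}$. Corollary \ref{c.obc} furnishes a Cartesian square
\[
\xymatrix{X_s\atimes_S S_{(s)}\ar[r]\ar[d] & X_s\atimes_S S\ar[d]\\ S_{(s)}\ar[r]^w & S,}
\]
where $w\colon S_{(s)}\to S$ is the Henselization morphism. The key step is to argue that the second projection $X_s\atimes_S S\to S$ factors canonically through $w$: at the level of points, a triple $(p,q,\phi)$ of $X_s\atimes_S S$ consists of a point $p$ of $X_s$, a point $q$ of $S$, and a 2-cell (specialization) $\phi$ from the image of $p$ in $S$---namely the point $s$---to $q$. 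This forces $q$ to be a generization of $s$, hence to lie in $S_{(s)}$. Once this factorization is established as a morphism of topoi, it makes the top arrow of the Cartesian square an equivalence, producing $X_s\atimes_S S\simeq X_s\atimes_S S_{(s)}$. Finally, $X_s\atimes_S S_{(s)}\simeq X_s\atimes_{S_{(s)}} S_{(s)}$ by a second application of Lemma \ref{l.univ}, using that a 2-cell in $S$ between two morphisms both factoring through $w$ lifts uniquely to a 2-cell in $S_{(s)}$ via the universal property of the Henselization.

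The main obstacle is the factorization claim in the second step. Pointwise it is transparent, but upgrading it to a factorization of morphisms of topoi requires some care. The cleanest approach is to package it with Lemma \ref{l.univ}: the universal property of $X_s\atimes_S S$ delivers, for any test topos $T$, data $(\alpha,\beta,\gamma)$ where $\gamma$ is a 2-cell in $S$ whose source $\alpha\circ(X_s\to S)$ already factors through $w$; the adjunction $\pi\dashv i$ then produces canonically a lift of $\beta$ through $w$ together with a lift of $\gamma$ to a 2-cell in $S_{(s)}$, giving the inverse comparison directly and simultaneously establishing both halves of the second equivalence.
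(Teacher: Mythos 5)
Your handling of the first equivalence matches the paper exactly: apply Example \ref{ex.S} to the Henselian local scheme $S_{(s)}$ with closed point $s_0$, which packages Remark \ref{r.o} and Lemma \ref{l.univ}.

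For the second equivalence, the paper simply cites \cite[XI Proposition 1.11]{ILO}, whereas you attempt to rederive it. The overall strategy (factor the second projection of $X_s\atimes_S S$ through $w\colon S_{(s)}\to S$ and pass to the oriented product over $S_{(s)}$) is the right idea, but two steps are not supported by what you cite.

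First, Corollary \ref{c.obc} does not furnish the Cartesian square you write. That corollary concerns a square with all four corners of the form $X\atimes_S Y$ and is about a base-change map; the square you need has $S_{(s)}\to S$ along the bottom and second projections along the sides. Such a square is indeed Cartesian, but that is a different fact, essentially \cite[XI Proposition 4.2]{ILO} applied with $Y'=S_{(s)}\to Y=S$ and $X=X_s$ constant, not Corollary \ref{c.obc}.

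Second, and more seriously, the appeal to the adjunction $\pi\dashv i$ to produce the factorization of $\beta$ through $w$ and the lift of $\gamma$ is misdirected. That adjunction relates $s_0$ and $S_{(s)}$ (it expresses that $S_{(s)}$ is a Henselian local topos), and it is the content of Lemma \ref{l.univ}, which you already used for the first equivalence. It says nothing about the pro-\'etale localization $w\colon S_{(s)}\to S$, which is the morphism whose properties are actually needed. What you need instead is a property of $w$: (i) given the 2-cell $\gamma\colon\beta\to a\alpha$ with $a\alpha$ landing in $S_{(s)}$, one transports sections of the \'etale neighborhoods $U_i$ defining $S_{(s)}$ along $\gamma^*\colon (a\alpha)^*\to\beta^*$ to conclude that $\beta$ factors through each $U_i$, hence through the limit $S_{(s)}$; and (ii) $w$ is 2-fully-faithful (since $w^*$ is a localization functor, every sheaf on $S_{(s)}$ is a filtered colimit of sheaves pulled back from $S$), which is what lets $\gamma$ lift uniquely to a 2-cell in $S_{(s)}$. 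Without these, the pointwise observation about $q$ being a generization of $s$ stays heuristic and does not upgrade to a morphism of topoi. The cited \cite[XI Proposition 1.11]{ILO} is precisely the packaging of these localization arguments.
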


\begin{proof}
The first equivalence follows from Example \ref{ex.S}. The second
equivalence follows from \cite[XI Proposition 1.11]{ILO}.
\end{proof}

In the sequel we will often identify the equivalent topoi in the lemma. For
$L\in D^+(X,\Lambda)$, $R\Psi_f^s L$ can be identified with
$R\Psi_{f_{(s)}}^s (L|_{X_{(s)}})$, where $f_{(s)}\colon X_{(s)}\to S_{(s)}$
is the base change of $f$.

\begin{remark}\label{r.local}
Assume $s$ is a geometric point and $L\in D^+(X,\Lambda)$. One can further
restrict the sliced nearby cycles $R\Psi^s L$ on $X_s\ttimes S_{(s)}$ in two
ways. (a) For any geometric point $t\to S_{(s)}$, the restriction  of
$R\Psi^s L$ to the \emph{shred} $X_s\ttimes t$ (which is called ``slice'' in
\cite[Section 6]{Org} and \cite[Section 1.3]{IZ}) is computed by Orgogozo's
shredded nearby cycle functor
\[R\Psi^s_t=(i^s)^*R(j^s_t)_*\colon D^+(X_{(t)},\Lambda)\to D^+(X_s,\Lambda).\]
Here $i^s\colon X_s\to X_{(s)}$, $j^s_t\colon X_{(t)}\to X_{(s)}$. (b) For
any geometric point $x\to X_s$, the restriction of $R\Psi^s L$ to the
\emph{local section} $x\ttimes S_{(s)}$ is computed by the localized
pushforward $Rf_{(x)*}(L|_{X_{(x)}})$ \cite[(1.12.6)]{IZ}, where
$f_{(x)}\colon X_{(x)}\to S_{(s)}$ is the strict localization of $f$ at~$x$.
\end{remark}

\begin{remark}\label{r.Psiu}
The morphism of topoi $\Psi_f\colon X\to X\atimes_S S$ is an embedding. In
other words the adjunction map $\Psi_f^* \Psi_{f*}\to \id$ is an
isomorphism. This follows from the identification of $\Psi_f^*$ with
$p_{1*}$ \cite[XI Proposition 4.4]{ILO} or the computation by shreds:
$\Psi^s_s=(i^s)^*$.

It follows that the adjunction map $\Psi_f^* R\Psi_f\to \id$ is an
isomorphism. For $L,K\in D(X,\Lambda)$, we have an isomorphism
\[A_f\colon R\Psi_f R\cHom(L,K)\simto R\cHom(R\Psi_f L,R\Psi_f K)\]
given by
\[\Hom(\Psi_f^* M\otimes^L L,K)\simto \Hom(\Psi_f^* (M\otimes^L R\Psi_f L), K)\simeq \Hom(M\otimes^L R\Psi_f L,R\Psi_f K)\]
for $M\in D(X\atimes_S S,\Lambda)$. The map \eqref{e.PsiHomS} is the
composite
\[R\Psi_f^sR\cHom(L,K)\xrightarrow[\sim]{A_f} (i\atimes_S \id_S)^*R\cHom(R\Psi_f L,R\Psi_f K)\to R\cHom(R\Psi_f^s L,R\Psi_f^s K).\]
\end{remark}

\begin{lemma}\label{l.Psi1}
Let $X\xrightarrow{f} T\xrightarrow{g} S$ be morphisms of schemes. For any
geometric point $t\to T$, we have a natural isomorphism
\begin{equation}\label{e.Psi0}
R(\Psi_{gf}^t L)|_{X_t\ttimes S_{(t)}}\simto R(\id \ttimes
    g_{(t)})_*R\Psi_{f}^t L
\end{equation}
in $L\in D^+(X,\Lambda)$, where $X_t=X\times_T t$, $g_{(t)}\colon T_{(t)}\to
S_{(t)}$ is the strict localization of $g$ at $t$, and $t$ in $\Psi_{gf}^t$
denotes the composition $t\to T\xrightarrow{g} S$.
\end{lemma}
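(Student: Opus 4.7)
The plan is to express $R\Psi_{gf}$ as the composition of $R\Psi_f$ followed by pushforward along a morphism $\nu\colon X\atimes_T T\to X\atimes_S S$, then slice along $i\colon X_t\to X$ and apply base change.

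First I would check that the canonical morphism $\Psi_{gf}\colon X\to X\atimes_S S$ factors as
\[X\xrightarrow{\Psi_f} X\atimes_T T\xrightarrow{\nu} X\atimes_S S,\]
where $\nu$ is induced by composing the structural $2$-morphism with $g$. This factorization is immediate from the universal property of the oriented product: on points, $\Psi_f$ sends $x$ to $(x,f(x),\id)$, and $\nu$ sends this to $(x,gf(x),\id)$, which matches $\Psi_{gf}(x)$. Consequently $R\Psi_{gf*}\simeq R\nu_*\circ R\Psi_{f*}$ on $D^+$.

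Next I would consider the commutative square of topoi
\[\xymatrix{X_t\atimes_T T\ar[r]^{i\atimes\id_T}\ar[d]_{\nu_t} & X\atimes_T T\ar[d]^{\nu}\\ X_t\atimes_S S\ar[r]^{i\atimes\id_S} & X\atimes_S S,}\]
where $\nu_t$ is the morphism on the slice analogous to $\nu$. Applying $(i\atimes\id_S)^*$ to $R\Psi_{gf}L\simeq R\nu_*R\Psi_fL$ and using the base change isomorphism $(i\atimes\id_S)^* R\nu_*\simto R\nu_{t*}(i\atimes\id_T)^*$ on $D^+$ would give
\[R\Psi_{gf}^t L\simeq R\nu_{t*}\,R\Psi_f^t L.\]
The key technical step is this base change. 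I would factor $\nu$ as $X\atimes_T T\to X\atimes_S T\to X\atimes_S S$: the second arrow is of the form $\id_X\atimes_S g$ with $g\colon T\to S$, so Corollary~\ref{c.obc} applies directly. For the first arrow, which changes the base from $T$ to $S$, I would reduce to the strictly Henselized setting (where the oriented and fiber products collapse by Example~\ref{ex.S}, Remark~\ref{r.o}, and Lemma~\ref{l.univ}) and compute stalks using the formula of Proposition~\ref{p.obc}.

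Finally, I would identify $\nu_t$ explicitly under the equivalences $X_t\atimes_T T\simeq X_t\ttimes T_{(t)}$ and $X_t\atimes_S S\simeq X_t\ttimes S_{(t)}$ from Lemma~\ref{l.id}. Unwinding the definitions and using the compatibility of Henselization with the oriented/fiber product shows that $\nu_t$ corresponds to $\id_{X_t}\ttimes g_{(t)}$, where $g_{(t)}\colon T_{(t)}\to S_{(t)}$ is the strict localization of $g$ at $t$. Combining this identification with the previous chain of isomorphisms produces the claimed natural isomorphism. The main obstacle will be the base change step, since $\nu$ is not literally of the form covered by Corollary~\ref{c.obc}; the factorization above isolates the genuinely new ingredient (the map changing base from $T$ to $S$), which I expect to require a direct stalk computation rather than a purely formal reduction.
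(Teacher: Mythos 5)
Your skeleton is exactly the paper's proof: factor $\Psi_{gf}\simeq(\id_X\atimes g)\circ\Psi_f$ through $X\atimes_T T$, apply the slice restriction and commute it past $R(\id_X\atimes g)_*$ by base change in the square whose horizontal arrows are $i\atimes\id_T$ and $i\atimes\id_S$, then identify $\id_{X_t}\atimes g$ with $\id_{X_t}\ttimes g_{(t)}$ via Lemma~\ref{l.id}. The only difference is how the base change is justified: the paper settles it in one stroke by quoting the trivial base change lemma of Illusie--Zheng for that square, whereas you propose to factor $\nu$ through the intermediate topos $X\atimes_S T$.

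That is where the gap sits. The second factor $\nu_2=\id_X\atimes_S g$ is indeed covered by Corollary~\ref{c.obc} on $D^+$ (up to harmless coherence hypotheses, which one removes by localizing). But for $\nu_1\colon X\atimes_T T\to X\atimes_S T$ the tool you cite does not apply: Proposition~\ref{p.obc} computes stalks of pushforwards along a morphism $Y'\to Y$ of the \emph{second} factors over a \emph{fixed} base, while $\nu_1$ fixes both factors and changes the base of the oriented product; its fibers parametrize liftings of a specialization in $S_{\mathrm{et}}$ to one in $T_{\mathrm{et}}$ and are not of the shape treated there. Worse, the intermediate slice $X_t\atimes_S T$ does not collapse to a fiber product the way the two ends do (Lemma~\ref{l.id} needs the second factor to equal the base), so the decomposition creates work rather than removing it. Two ways to close the gap: invoke the trivial base change lemma for the outer square directly, as the paper does; or drop the decomposition and verify the asserted map after restriction to each local section $\bar x\ttimes S_{(t)}$, $\bar x$ a geometric point of $X_t$ --- these exhaust the points of $X_t\ttimes S_{(t)}$, and by Remark~\ref{r.local}~(b) together with Proposition~\ref{p.prod} (applied to the point $\bar x\to X_t$ of the first factor) both sides restrict to $Rg_{(t)*}Rf_{(\bar x)*}(L|_{X_{(\bar x)}})\simeq R(gf)_{(\bar x)*}(L|_{X_{(\bar x)}})$. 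The remaining steps of your argument (the factorization of $\Psi_{gf}$ and the identification of $\nu_t$ with $\id_{X_t}\ttimes g_{(t)}$) are correct and coincide with the paper's.
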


Consider the diagram of topoi
\[\xymatrix{X\ar[r]^{\Psi_{f}} \ar[rd]_{\Psi_{gf}}^\Leftrightarrow & X\atimes_T T\ar[d]^{\id_X\atimes g}\ar@{}[rd]|{\Leftrightarrow} & \ar[l]_{i\atimes \id_T}X_t\atimes_T T\ar[d]^{\id_{X_t}\atimes g}\\
& X\atimes_S S & X_t\atimes_S S.\ar[l]_{i\atimes \id_S}}
\]
Via Lemma \ref{l.id}, \eqref{e.Psi0} is
\[(i\atimes \id_S)^* R\Psi_{gf}\simeq (i\atimes \id_S)^*R(\id_X \atimes g)_*R\Psi_f \simto  R(\id_{X_t}\atimes g)_*(i\atimes \id_T)^*R\Psi_f,\]
where the second isomorphism is trivial base change \cite[Lemma A.9]{IZ}.

\subsection{Review of local acyclicity}
\begin{definition}\label{d.slc}
Let $f\colon X\to S$ be a morphism of schemes and let $L\in D(X,\Lambda)$.
\begin{enumerate}
\item Following \cite[Th.\ finitude, D\'efinition 2.12]{SGA4d}, we say
    that $(f,L)$ is \emph{locally acyclic} if the canonical map
    $\alpha_L\colon L_x\to R\Gamma(X_{(x)t},L)$ is an isomorphism for
    every geometric point $x\to X$ and every algebraic geometric point
    $t\to S_{(x)}$. Here $X_{(x)t}\colonequals X_{(x)}\times_{S_{(x)}} t$
    denotes the Milnor fiber.

\item Following \cite[Th.\ finitude, App., 2.9]{SGA4d}, we say that
    $(f,L)$ is \emph{strongly locally acyclic} if $L\otimes^L M$ is
    locally acyclic for every $\Lambda$-module $M$.
\end{enumerate}
For $(*)$ denoting one of the above properties, we say that $(f,L)$ is
universally $(*)$ if $(*)$ holds after every base change $g\colon T\to S$.
\end{definition}

\begin{remark}\label{r.lc}
Assume $L\in D^+$. Then $(f,L)$ is locally acyclic if and only if $\Phi_f
L=0$ (namely, the canonical map $p_1^*L\to R\Psi_f L$ is an isomorphism) and
$R\Psi_f L$ commutes with locally quasi-finite base change
(\cite[Proposition 2.7]{Saito}, \cite[Example 1.7 (b)]{IZ}, see also
\cite[XV Corollaire 1.17]{SGA4}).
\end{remark}

The following extends \cite[Proposition 7.6.2, Theorem 7.6.9
(i)$\Leftrightarrow$(iii)]{Fu}.

\begin{lemma}\label{l.slc}
Assume $(f,L)$ locally acyclic and that either of the following conditions
holds:
\begin{enumerate}
\item The Milnor fibers $X_{(x)t}$ have finite cohomological dimension.
\item $L$ has tor-amplitude $\ge n$ for some integer $n$ and every
    $\Lambda$-module of finite presentation can be embedded into a free
    $\Lambda$-module.
\end{enumerate}
Then $(f,L)$ is strongly locally acyclic. Moreover, under condition (1), for
any $N\in D(X,\Lambda)$ of locally constant cohomology sheaves,
$(f,L\otimes^L N)$ is locally acyclic.
\end{lemma}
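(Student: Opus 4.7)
My plan is to reduce local acyclicity of $(f, L \otimes^L M)$ at a geometric point $x\to X$ and an algebraic geometric point $t \to S_{(x)}$ to a projection formula over the Milnor fiber $X_{(x)t}$. Writing $p_t\colon X_{(x)t}\to \pt$ for the structural map, one has $(L \otimes^L M)_x \simeq L_x \otimes^L M$ and $(L \otimes^L M)|_{X_{(x)t}} \simeq L|_{X_{(x)t}} \otimes^L p_t^* M$; so the canonical map $\alpha_{L\otimes^L M}$ factors as
\[
L_x \otimes^L M \xrightarrow[\sim]{\alpha_L\otimes^L M} R\Gamma(X_{(x)t}, L) \otimes^L M \to R\Gamma(X_{(x)t}, L \otimes^L p_t^* M),
\]
the first arrow being an isomorphism by local acyclicity of $L$ and the second being the projection-formula map. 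Everything therefore reduces to showing that the projection formula holds for the $M$ at hand.

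Under hypothesis (1), the projection formula is formal: $R\Gamma(X_{(x)t},-)$ has finite cohomological dimension, so the desired isomorphism is a special case of Corollary \ref{c.pf} (a) with $Y$ a point. The ``moreover'' statement falls out of the same reasoning: the restriction of $N$ to the strictly local scheme $X_{(x)}$ has locally constant cohomology sheaves, hence is the pullback $p^*N_x$ of its stalk $N_x \in D(\Lambda)$ along $p\colon X_{(x)}\to \pt$; then $L\otimes^L N$ restricted to $X_{(x)t}$ equals $L|_{X_{(x)t}}\otimes^L p_t^* N_x$ and the above argument applies verbatim to $M=N_x$.

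Under hypothesis (2), the plan is to replace $M$ by a right coresolution $F^\bullet$ in which each $F^k$ is a (possibly infinite) free $\Lambda$-module placed in non-negative cohomological degree. The hypothesis that every finitely presented $\Lambda$-module embeds into a free $\Lambda$-module is what makes such a coresolution exist: write $M$ as a filtered colimit of finitely presented modules, embed each into a free one, take cokernels and iterate. The tor-amplitude bound on $L$ then guarantees that $L\otimes^L F^\bullet$ has bounded cohomological amplitude in each total degree, so $R\Gamma(X_{(x)t}, L\otimes^L F^\bullet)$ is computed by a convergent spectral sequence; and $R\Gamma(X_{(x)t},-)$ commutes with the direct sums appearing in each $F^k$, by expressing the Milnor fiber as a cofiltered limit of coherent \'etale neighborhoods (\cite[VI Th\'eor\`eme 5.1]{SGA4}). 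This reduces the projection formula to the trivial case $M=\Lambda^{\oplus I}$.

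The main obstacle is case~(2): building a well-behaved free coresolution of an arbitrary $\Lambda$-module from the single embedding hypothesis, and ensuring that the tor-amplitude bound on $L$ is enough to control the convergence of $L\otimes^L F^\bullet$ against the potentially infinite cohomological dimension of $X_{(x)t}$. Once this technical point is handled, the rest of the argument is formal and parallel to case~(1).
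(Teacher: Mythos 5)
Case (1) and the ``moreover'' claim match the paper's argument exactly: factor $\alpha_{L\otimes^L M}$ through the projection formula map, invoke finite cohomological dimension for the latter, and for the ``moreover'' note that on the strictly local $X_{(x)}$ a complex with locally constant cohomology is the pullback of its stalk at $x$. No issues there.

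For case (2) your plan and the paper's proof diverge, and your version has a genuine gap that you yourself flag. You want a right coresolution $M\to F^\bullet$ by free modules (which the embedding hypothesis does let you build degree by degree) and then a spectral sequence comparing $R\Gamma(X_{(x)t},L)\otimes^L F^\bullet$ with $R\Gamma(X_{(x)t},L\otimes^L F^\bullet)$. But the coresolution is unbounded to the right, and in hypothesis (2) you no longer have finite cohomological dimension for $X_{(x)t}$; moreover $L$ is only assumed to have tor-amplitude $\ge n$, not to lie in $D^b$, so the columns $R\Gamma(X_{(x)t},L\otimes F^j)$ need not be bounded above either. With two unbounded directions the spectral sequence you invoke has no clear convergence, and you do not say how the tor-amplitude bound controls it. This is exactly the ``main obstacle'' you name, and it is not handled.

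The paper sidesteps all of this with an induction that never mentions resolutions or spectral sequences. Set $C_M=\Cone(\alpha_{L\otimes^L M})$. Since $L$ has tor-amplitude $\ge n$, both $L_x\otimes^L M$ and $R\Gamma(X_{(x)t},L\otimes^L M)$ lie in $D^{\ge n}$, so $C_M\in D^{\ge n-1}$ for all $M$: this is the base case. Assume $C_{M'}\in D^{\ge m}$ for all $\Lambda$-modules $M'$. Given $M$, reduce to $M$ finitely presented (both functors commute with filtered colimits on $D^+$ over the coherent topos $X_{(x)t}$), embed $M$ into a free module $F$ with cokernel $M'$; since $\alpha_{L\otimes^L F}$ is an isomorphism, the triangle gives $C_M\simeq C_{M'}[-1]\in D^{\ge m+1}$. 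Iterating, $C_M\in D^{\ge m}$ for every $m$, hence $C_M=0$. This one-step shift, repeated, is doing the work your infinite coresolution was meant to do, but without any convergence issue: you only ever invoke the trivial free case and a two-term short exact sequence. You should replace your case (2) argument with this induction.
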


Condition (1) is satisfied if $\Lambda$ is torsion and $f$ is locally of
finite type, by limit arguments \cite[Corollary 7.5.7]{Fu}.

\begin{proof}
Note that $\alpha_{L\otimes^L M}$ is the composition
\[L_x\otimes^L M \xrightarrow{\alpha_L\otimes^L M} R\Gamma(X_{(x)t}, L)\otimes^L M \xrightarrow{\beta} R\Gamma(X_{(x)t},L\otimes^L M).\]
In case (1), the projection formula map $\beta$ is an isomorphism
\cite[Lemma A.8]{IZ}. The same formula implies the last statement of the
lemma, as $N|_{X_{(x)}}\simeq \pi^*N_x$, where $\pi\colon (X_{(x)})_\et\to
\pt$. In case (2), we show by induction on $m$ that
$\Cone(\alpha_{L\otimes^L M})\in D^{\ge m}$. This is trivial for $m=n-1$.
For the general case, we may assume $M$ of finite presentation. Choose a
short exact sequence $0\to M\to F\to M'\to 0$ with $F$ free. Since
$\alpha_{L\otimes^L F}$ is an isomorphism, $\Cone(\alpha_{L\otimes^L
M})\simeq \Cone(\alpha_{L\otimes^L M'})[-1]$ and it suffices to apply the
induction hypothesis to $M'$.
\end{proof}

\subsection{Functoriality in $X$}\label{s.4.3}
In the sequel we will often omit the notation $R$ for right derived
functors.

In this subsection, we study the functoriality of the sliced nearby cycle
functor in $X$ by giving analogues of the natural transformations \cite[XIII
(2.1.7.1)--(2.1.7.4)]{SGA7II} over general bases. These will be used many
times in the sequel, including in the definition of the map \eqref{e.A} in
Theorem \ref{t.Psisc}.

Let $X\xrightarrow{f} Y\xrightarrow{b} S$ be morphisms of schemes.  Consider
the diagram of topoi
\begin{equation}\label{e.4.3}
\xymatrix{X\ar[r]^{\Psi_{bf}} \ar[d]_f\ar@{}[rd]|{\Leftrightarrow} & X\atimes_S S\ar[d]^{f\atimes_S \id_S}\ar@{}[rd]|{\Leftrightarrow} & \ar[l]_{i_X}X_s\atimes_S S\ar[d]^{f_s\atimes_S \id_S}\\
Y\ar[r]^{\Psi_b} & Y\atimes_S S & Y_s\atimes_S S,\ar[l]_{i_Y}}
\end{equation}
where the square on the left is Cartesian. The base change maps
\begin{gather}
\label{e.Psis01}(f\atimes\id_S)^*\Psi_b \to \Psi_{bf}f^*,\\
\label{e.Psis02}i_Y^*(f\atimes\id_S)_*\to
(f_s\atimes \id_S)_* i_X^*
\end{gather}
induce
\begin{gather}
\label{e.Psis1} (f_s\ttimes_{s_0}\id_{S_{(s)}})^* \Psi_{b}^s \to \Psi_{bf}^s f^*,\\
\label{e.Psis2} \Psi_b^sf_*\to (f_s\ttimes_{s_0} \id_{S_{(s)}})_*\Psi_{bf}^s.
\end{gather}

\begin{remark}\leavevmode\label{r.Psis0}
\begin{enumerate}
\item If $f$ is \'etale, or if $\Lambda$ is torsion and $f$ is locally
    acyclic locally of finite type, then \eqref{e.Psis01} and
    \eqref{e.Psis1} are isomorphisms, by \cite[Remark 1.20]{IZ} (which
    extends to the unbounded derived category by the finiteness of the
    cohomological dimension of $\Psi$ \cite[Proposition 3.1]{Org}).

\item If $f$ is integral, or if $\Lambda$ is torsion and $f$ is proper,
    then \eqref{e.Psis02} and \eqref{e.Psis2} are isomorphisms by
    Proposition \ref{p.Org} and Remark \ref{r.Org0}.
\end{enumerate}
\end{remark}

Assume $\Lambda$ of torsion. Assume $X$ and $Y$ coherent and $f$ separated
and of finite type. We will define maps
\begin{gather}
\label{e.Psis3} (f_s\ttimes_{s_0} \id_{S_{(s)}})_!\Psi_{bf}^s\to \Psi_b^sf_!,\\
\label{e.Psis4} \Psi_{bf}^s f^!\to (f_s\ttimes_{s_0}\id_{S_{(s)}})^! \Psi_{b}^s,
\end{gather}
adjoint to each other via Construction \ref{c.adj}. The precise definitions
will be given in \eqref{e.Psis3d} and \eqref{e.Psis4d}. For $f$ proper,
\eqref{e.Psis3} is the inverse of \eqref{e.Psis2}. For $f$ an open
immersion, \eqref{e.Psis4} is the inverse of \eqref{e.Psis1}. For general
$f$, we take a compactification of $f$. The task of checking that the
construction does not depend on the choice of the compactification can be
conveniently divided into two with the help of oriented $!$-pushforward and
$!$-pullback functors, that we will now discuss.

\begin{construction}\label{c.o!}
We construct, for morphisms of schemes $X\xrightarrow{f}Y\xrightarrow{b} S$
with $X$ and $Y$ coherent and $f$ separated of finite type, a functor
\[R(f\atimes_S \id_S)_!\colon D(X\atimes_S S,\Lambda)\to D(Y\atimes_S S,\Lambda),\]
isomorphic to $R(f\atimes_S \id_S)_*$ for $f$ proper and left adjoint to
$(f\atimes_S \id_S)^*$ for $f$ an open immersion, and compatible with
composition. For this, we apply Construction \ref{con.!} to $f_\et$ and to
the diagram $Y_\et\xrightarrow{b_\et} S_\et\xleftarrow{p_1}S\atimes_S S$. By
\cite[XI Lemme 2.5]{ILO}, $p_1$ is a locally coherent morphism of locally
coherent topoi. Moreover, the source and target are of the desired form by
\cite[XI Proposition 4.2]{ILO} (cf.\ Remark \ref{r.Org0} (3)).

Consider a Cartesian square
\[
\xymatrix{X'\ar[r]^{h'}\ar[d]_{f'} & X\ar[d]^f\\
Y'
\ar[r]^h & Y}
\]
of coherent schemes over $S$ with $f$ separated of finite type. We apply
Constructions \ref{con.!} (1), (2) (to $g=\Psi\colon S_\et\to S\atimes_S
S$), (3), and obtain isomorphisms
\begin{gather}
\label{e.oshr1}(h\atimes_S \id_S)^*R(f\atimes_S \id_S)_!\simeq R(f'\atimes_S
    \id_S)_!(h'\atimes_S \id_S)^*,\\
\label{e.oshr2}\Psi_b^* R(f\atimes_S \id_S)_!\simeq Rf_! \Psi_{bf}^*,\\
R(f\atimes_S \id_S)_! L\otimes^L M\simeq R (f\atimes_S \id_S)_!(L\otimes^L (f\atimes_S \id_S)^*M)
\end{gather}
for $L\in D(X\atimes_S S,\Lambda)$ and $M\in D(Y\atimes_S S,\Lambda)$.

Next we apply Construction \ref{c.dim}. The functor $R(f\atimes_S \id_S)_!$
has cohomological dimension $\le 2d$, where $d=\max_{y\in Y} \dim(X_y)$, and
admits a right adjoint
\[R(f\atimes_S \id_S)^!\colon D(Y\atimes_S S,\Lambda)\to
D(X\atimes_S S,\Lambda).
\]

For $m\Lambda=0$ with $m$ invertible on $Y$ and $f$ flat with fibers of
dimension $\le d$, we have a trace map \eqref{e.tr}
\[
\tr_{f\atimes_S \id_S}\colon (f\atimes_S \id_S)^*(d)[2d]\to R(f\atimes_S \id_S)^!.
\]
\end{construction}

Consider
\begin{gather}
\label{e.alpha}\alpha'\colon (f\atimes_S \id_S)_!\Psi_{bf}\to \Psi_b f_!,\quad \alpha\colon \Psi_{bf} f^!\simto (f\atimes_S \id_S)^!\Psi_{b},\\
\label{e.beta}\beta'\colon (f_s\atimes_S\id_S)_!i_X^*\simto i_Y^*(f\atimes_S \id_S)_!,\quad \beta\colon i_X^*(f\atimes_S
\id_S)^!\to (f_s\atimes_S \id_S)^!i_Y^*,
\end{gather}
where the two maps in each line are adjoint to each other via Construction
\ref{c.adj}, $\alpha$ is right adjoint to \eqref{e.oshr2} and $\beta'$ is
the inverse of \eqref{e.oshr1}. The map \eqref{e.Psis3} is the composite
\begin{equation}\label{e.Psis3d}
(f_s\atimes_S \id_S)_!i_X^*\Psi_{bf} \xrightarrow[\sim]{\beta' \Psi_{bf}} i_Y^*(f\atimes_S \id_S)_!\Psi_{bf} \xrightarrow{i_Y^*\alpha'} i_Y^*\Psi_{b}f_!,
\end{equation}
and \eqref{e.Psis4} is the composite
\begin{equation}\label{e.Psis4d}
i_X^*\Psi_{bf} f^!\xrightarrow[\sim]{i_X^*\alpha} i_X^*(f\atimes_S \id_S)^!\Psi_{b} \xrightarrow{\beta\Psi_b} (f_s\atimes_S \id_S)^!i_Y^*\Psi_{b}.
\end{equation}
For $f$ proper, $\alpha'$ is the obvious isomorphism.

\begin{lemma}\label{l.trace2}
Assume $m\Lambda=0$ with $m$ invertible on $Y$ and $f$ flat with fibers of
dimension $\le d$. Then the following square commutes
\begin{equation}\label{e.trace}
\xymatrix{(f_s\ttimes_{s_0}\id)^*\Psi_b^s (d)[2d]\ar[d]_{\tr_{f_s\ttimes_{s_0} \id}}\ar[r]^-{\eqref{e.Psis1}} &\Psi_{bf}^s
f^*(d)[2d]\ar[d]^{\tr_f}
\\
(f_s\ttimes_{s_0}\id)^!\Psi_b^s& \Psi_{bf}^s f^!.\ar[l]_{\eqref{e.Psis4}}}
\end{equation}
If, moreover, $f$ is smooth of dimension $d$, then \eqref{e.Psis4} is an
isomorphism.
\end{lemma}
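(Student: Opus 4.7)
The plan is to verify the commutativity of \eqref{e.trace} by reducing it, via adjunction, to a compatibility among the underlying $\Tr$ maps, and then deduce the isomorphism claim from classical facts.

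First, I would pass to adjoints under $(f_s\ttimes_{s_0}\id_{S_{(s)}})_! \dashv (f_s\ttimes_{s_0}\id_{S_{(s)}})^!$ (combined with the adjunction between \eqref{e.Psis3} and \eqref{e.Psis4}). This converts the square \eqref{e.trace} into a square whose left column is the counit $\Tr_{f_s\ttimes_{s_0}\id}$ and whose right column, after applying the projection formula (Construction \ref{con.!}~(3)), reduces to $\Tr_f(\Lambda)\otimes \id$ restricted along the slice. By Construction \ref{c.dim}, $\Tr_{f_s\ttimes_{s_0}\id}(\Lambda)$ is just $p_1^*\Tr_{f_s}(\Lambda)$ under the base change isomorphism of Construction \ref{con.!}~(2). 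The desired commutativity therefore reduces to the classical base change compatibility $i_Y^*\Tr_f(\Lambda)\simeq \Tr_{f_s}(\Lambda)$ for proper-supported trace maps.

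Next, I would unfold how this trace compatibility propagates through the base change maps $\alpha,\alpha',\beta,\beta'$ of \eqref{e.alpha}--\eqref{e.beta} that assemble \eqref{e.Psis1} and \eqref{e.Psis4}. Each of these auxiliary transformations is built purely from base change isomorphisms for the oriented $!$-operations of Construction \ref{c.o!}, so after restricting to the slice via $i_X^*$ and $i_Y^*$ (and invoking Remark \ref{r.Psis0}~(2) to identify $i_Y^*(f\atimes_S\id_S)_*$ with $(f_s\atimes_S\id_S)_* i_X^*$ for $f$ proper, hence for the proper piece of a compactification), the intermediate diagrams commute formally.

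Finally, for the second assertion, suppose $f$ is smooth of dimension $d$. Then the top arrow \eqref{e.Psis1} is an isomorphism by Remark \ref{r.Psis0}~(1) (smooth morphisms are locally acyclic); the right vertical $\tr_f$ is the classical Poincar\'e duality isomorphism \cite[XVIII Th\'eor\`eme 3.2.5]{SGA4}; and the left vertical $\tr_{f_s\ttimes_{s_0}\id}$ is an isomorphism by Proposition \ref{l.trace}. Commutativity of \eqref{e.trace} then forces \eqref{e.Psis4} to be an isomorphism as well. The main obstacle is the bookkeeping: the maps \eqref{e.Psis1} and \eqref{e.Psis4} are each assembled from several natural transformations through a Nagata compactification of $f$, and one must ensure the trace compatibility survives each step of the assembly. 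Once the adjunctions of Construction \ref{c.adj} and the base change and projection formulas of Construction \ref{con.!} are written out, however, the required intermediate identifications are automatic.
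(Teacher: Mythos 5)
Your overall strategy — reduce the commutativity to a compatibility of trace maps by unfolding the definitions of \eqref{e.Psis1} and \eqref{e.Psis4} — is in the right spirit, and your handling of the second assertion (three of the four sides of \eqref{e.trace} are isomorphisms by Proposition \ref{l.trace}, Remark \ref{r.Psis0}~(1), and classical Poincar\'e duality) coincides with the paper's. However, your route for the commutativity claim is genuinely different from, and more convoluted than, the paper's, and it leaves the crucial step unjustified. The paper does not pass to adjoints at all; instead it decomposes \eqref{e.trace} directly into a $2\times 3$ grid by interposing a middle column carrying the trace map $\tr_{f\atimes_S\id_S}$ of Construction \ref{c.o!} on the vanishing topos $X\atimes_S S$. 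The left square of that grid compares $\tr_{f_s\atimes_S\id}$ against $\tr_{f\atimes_S\id}$ through the base change isomorphism of Construction \ref{con.!}~(2), and the right square compares $\tr_{f\atimes_S\id}$ against $\tr_f$ through \eqref{e.Psis01} on top and $\alpha$ on the bottom; each cell then commutes directly by how the trace on the oriented product is constructed (from $\Tr_f(\Lambda)$ via projection formula and base change). This intermediate trace on the vanishing topos is precisely what makes the argument local and immediate, and it is missing from your proposal.

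Concretely, two points in your sketch do not hold up. First, your claim that both \eqref{e.Psis1} and \eqref{e.Psis4} are ``assembled from $\alpha,\alpha',\beta,\beta'$'' is inaccurate: \eqref{e.Psis1} is just $(i_X^*, i_Y^*)$ applied to the plain base change map \eqref{e.Psis01}, and only \eqref{e.Psis4} is built (via \eqref{e.Psis4d}) from $\alpha$ and $\beta$. Second, the ``pass to adjoints'' step is not clearly producing a simpler square: the square \eqref{e.trace} is a commutativity assertion about two composites $(f_s\ttimes\id)^*\Psi_b^s(d)[2d]\to(f_s\ttimes\id)^!\Psi_b^s$, and taking transposes under $(f_s\ttimes\id)_!\dashv(f_s\ttimes\id)^!$ turns the $f$-side trace $\tr_f$ into something that still must be compared to $\Tr_{f_s\ttimes\id}$ through the very base change maps you are trying to avoid unfolding — you have not really eliminated the bookkeeping, just relocated it, while giving up the structural anchor ($\tr_{f\atimes_S\id}$) that organizes the comparison. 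In short: plausible idea, but the central mechanism of the proof (the factorization through the oriented product trace) is absent, and the assertion that ``the required intermediate identifications are automatic'' is where the actual content of the lemma lives.
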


\begin{proof}
The square \eqref{e.trace} decomposes into
\[\xymatrix{(f_s\atimes_S \id)^*i_Y^*\Psi_b(d)[2d]\ar[d]_{\tr_{f_s\atimes_S \id}}\ar@{-}[r]^\sim & i_X^* (f\atimes_S \id)^*\Psi_b(d)[2d] \ar[d]^{\tr_{f\atimes_S \id}}\ar[r]^{\eqref{e.Psis01}}
& i_X^*\Psi_{bf}f^*(d)[2d]\ar[d]^{\tr_f}\\
(f_s\atimes_S \id)^! i_Y^*\Psi_b & i_X^*(f\atimes_S \id)^!\Psi_{b}\ar[l]_{\beta\Psi_b} & i_X^*\Psi_{bf} f^!\ar[l]_{i_X^*\alpha}^\sim.}
\]
The inner cells commute by construction. For $f$ smooth of dimension $d$,
\eqref{e.Psis4} is an isomorphism, because the other three sides of the
square \eqref{e.trace} are isomorphisms by Proposition \ref{l.trace} and
Remark \ref{r.Psis0} (1).
\end{proof}

\begin{remark}
One can show that if $m\Lambda=0$ with $m$ invertible on $Y$, $f$ is smooth
of dimension $d$, $S$ is Noetherian and either $S$ is excellent or $b$ is of
finite type, then $\tr_{f\atimes_S \id_S}\colon (f\atimes_S
\id_S)^*(d)[2d]\to R(f\atimes_S \id_S)^!$ is an isomorphism. In this case,
$\beta$ is an isomorphism.
\end{remark}

\subsection{Base change and sliced $!$-base change}
Theorem \ref{t.Psisc} states that the sliced nearby cycle functor commutes
with duality after pullback by a modification of the base, and remains so
after further pullback. As duality swaps pullback and $!$-pullback, in order
to prove Theorem \ref{t.Psisc} we need to consider the commutation of the
sliced nearby cycle functor with both pullback and $!$-pullback. In this
subsection, we study such commutation.

Consider a Cartesian square
\begin{equation}\label{e.CartT}
\xymatrix{X_T\ar[r]^{g_X}\ar[d]_{f_T} & X\ar[d]^f\\ T\ar[r]^g & S.}
\end{equation}
of schemes. Let $t\to T$ be a geometric point and let $X_t=X\times_S t$.
Consider the diagram of topoi
\begin{equation}\label{e.topoi}
\xymatrix{X_T\ar[r]^{\Psi_{f_T}}\ar[d]_{g_X}\ar@{}[rd]|{\Leftrightarrow} & X_T\atimes_T T\ar[d]^{g_X\atimes_g g}\ar@{}[rd]|{\Leftrightarrow} &X_t\atimes_T T\ar[d]^{\id_{X_t}\atimes_g g}\ar[l]_{i_{T,t}}\\
X\ar[r]^{\Psi_f}& X\atimes_S S& X_t\atimes_S S\ar[l]_{i}.}
\end{equation}
Let $\Lambda$ be a commutative ring. The base change maps
\begin{gather}\label{e.Psibc}
\BC_{f,g}\colon (g_X\atimes_g g)^* \Psi_{f} \to \Psi_{f_T} g_X^*,\\
i^*(g_X\atimes_g g)_*\to (\id_{X_t}\atimes_g g)_*i_{T,t}^*
\end{gather}
induce
\begin{gather}\label{e.Psibcs}
\BC_{f,g}^t\colon (\id\ttimes g_{(t)})^*\Psi_{f}^t \to  \Psi_{f_T}^tg_X^*,\\
\label{e.Psibcs2}\Psi_{f}^t g_{X*} \to (\id\ttimes g_{(t)})_*\Psi_{f_T}^t
\end{gather}
Here $t$ in $\Psi_f^t$ denotes the composition $t\to T\xrightarrow{g}S$.

For $L\in D(X,\Lambda)$, we say that $\Psi_f L$ commutes with base change by
$g$ if $\BC_{f,g}(L)$ is an isomorphism. Note that this holds if and only if
$\BC_{f,g}^t(L)$ is an isomorphism for all geometric points $t$ of $T$. For
$g$ \'etale, $\BC_{f,g}(L)$ is an isomorphism for all $f$ and $L$. For cases
where $\BC_{f,g}(L)$ is an isomorphism for all $g$, see Section \ref{s.4.4}.

\begin{lemma}\label{l.Psibc}
Consider a diagram of schemes with Cartesian squares
\[\xymatrix{X'\ar[r]^{f'}\ar[d]_{r_X} & Y'\ar[r]^{b'}\ar[d]^{r_Y} & S'\ar[d]^{r}\\
X\ar[r]^f & Y\ar[r]^b & S.}
\]
Let $\alpha\colon (r_X\atimes_r r)^* (\id_X\atimes_b b)_*\to
(\id_{X'}\atimes_{b'}b')_* (r_X\atimes_{r_Y} r_Y)^*$ be the base change map.
Then the diagrams of base change maps
\begin{equation}\label{e.bccomp2}
\xymatrix{ (r_X\atimes_r r)^* (\id_X\atimes_b b)_* \Psi_f\ar[r]^{\alpha\Psi_f}\ar@{-}[d]_{\simeq}
& (\id_{X'}\atimes_{b'}b')_* (r_X\atimes_{r_Y} r_Y)^*\Psi_f \ar[r]_{(\id_{X'}\atimes_{b'}b')_*\BC_{f,r_Y}}& (\id_{X'}\atimes_{b'}b')_*\Psi_{f'}r_X^*\ar@{-}[d]^{\simeq}\\
(r_X\atimes_r r)^*\Psi_{bf}\ar[rr]^{\BC_{bf,r}}&& \Psi_{b'f'} r_X^*}
\end{equation}
\begin{equation}\label{e.bccomp3}
\xymatrix{(fr_X\atimes_r r)^* \Psi_b \ar[rr]^-{(f'\atimes_{S'}\id_{S'})^*\BC_{b,r}}\ar[d]_{\eqref{e.Psis01}} &&(f'\atimes_{S'}\id_{S'})^* \Psi_{b'}r_Y^*\ar[d]^{\eqref{e.Psis01}}\\
(r_X\atimes_r r)^*\Psi_{bf}f^*\ar[rr]^{\BC_{bf,r}f^*} &&\Psi_{b'f'}(fr_X)^*}
\end{equation}
commute. For every geometric point $s'\to S'$, the diagram
\begin{equation}\label{e.bccomp1}
\xymatrix{(\id\ttimes r_{(s')})^*\Psi^{s'}_{b}f_* \ar[r]^{\eqref{e.Psis2}}\ar[d]_{\BC^{s'}_{b,r} f_*} & (\id\ttimes r_{(s')})^*(f_{s'}\ttimes\id)_*\Psi^{s'}_{bf}
\ar[r] & (f_{s'}\ttimes\id)_*(\id\ttimes r_{(s')})^*\Psi^{s'}_{bf}\ar[d]_{(f_{s'}\ttimes \id)_*\BC^{s'}_{bf,r}}\\
\Psi^{s'}_{b'} r_X^*f_* \ar[r] & \Psi_{b'}^{s'} f'_* r_Y^* \ar[r]^{\eqref{e.Psis2}} & (f_{s'}\ttimes \id)_* \Psi_{b'f'}^{s'} r_Y^* }
\end{equation}
commutes.  Moreover,
\begin{enumerate}
\item Assume $b$ finite. Then the map $\alpha$ is an isomorphism. In
    particular, $\Psi_{bf}L$ commutes with base change by $r$ if and only
    if $\Psi_f L$ commutes with base change by $r_Y$.
\item Assume $f$ and $f'$ are locally acyclic locally of finite type and
    $\Lambda$ of torsion. Then the vertical arrows of \eqref{e.bccomp3}
    are isomorphisms. In particular, if $\Psi_b M$ commutes with base
    change by $r$, then $\Psi_{bf}(f^*M)$ commutes with base change by
    $r$.
\item Assume either $f$ integral or $\Lambda$ of torsion and $f$ proper.
    Then the horizontal arrows of \eqref{e.bccomp1} are isomorphisms. In
    particular, if $\Psi_{bf}L$ commutes with base change by $r$, then
    $\Psi_{b}(f_*L)$ commutes with base change by $r$.
\item Assume $f$ finite. Then $\Psi_{bf}L$ commutes with base change by
    $r$ if and only if $\Psi_{b}(f_*L)$ commutes with base change by $r$.
\end{enumerate}
\end{lemma}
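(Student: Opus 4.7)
The plan is to first establish the three commutativity statements, all of which encode standard compatibilities among base change morphisms, and then to deduce (1)--(4) from these diagrams combined with the base change and proper-pushforward results of Remarks \ref{r.Psis0}, \ref{r.Org0}, and Proposition \ref{p.Org}.

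For the commutativity of \eqref{e.bccomp2}, \eqref{e.bccomp3}, and \eqref{e.bccomp1}, I would unwind each base change morphism as a composite of unit/counit maps of the relevant adjunctions and verify that the two composites agree by invoking the standard fact that base change morphisms compose horizontally and vertically as expected. Concretely, \eqref{e.bccomp2} reflects the identification $\Psi_{bf}\simeq (\id_X\atimes_b b)_*\Psi_f$ (and its primed version) together with the transitivity of base change across the stacked squares obtained from the given data; \eqref{e.bccomp3} expresses the compatibility of \eqref{e.Psis01} with the further base change $r$; and \eqref{e.bccomp1} expresses the compatibility of \eqref{e.Psis2} with base change to $s'$.

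For (1), the map $\alpha$ is the base change morphism attached to the Cartesian square of oriented topos products whose vertical arrows are $\id\atimes_b b$ and $\id_{X'}\atimes_{b'}b'$, induced by $b$ and $b'$. For $b$ finite and hence integral, Proposition \ref{p.Org} together with Remark \ref{r.Org0} (3), (4) and Corollary \ref{c.obc} yield that $\alpha$ is an isomorphism. The ``in particular'' follows by chasing \eqref{e.bccomp2}: via the left vertical identification and $\alpha\Psi_f$, the bottom morphism $\BC_{bf,r}$ is identified with $(\id_{X'}\atimes_{b'}b')_*\BC_{f,r_Y}$, so one is an iso on $L$ iff the other is. For (2), Remark \ref{r.Psis0} (1) gives that the vertical arrows of \eqref{e.bccomp3} are isomorphisms under the hypotheses on $f$, whence the conclusion by commutativity. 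For (3), Remark \ref{r.Psis0} (2) gives that \eqref{e.Psis2} is an iso, while the middle bottom arrow of \eqref{e.bccomp1} is an iso by finite/proper base change $r_X^*f_*\simto f'_*r_Y^*$; hence all horizontal arrows of \eqref{e.bccomp1} are isomorphisms, and checking at each geometric point $s'$ of $S'$ shows that iso-ness of the right vertical implies iso-ness of the left vertical.

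For (4), the direction from $\Psi_{bf}L$ to $\Psi_b(f_*L)$ is a consequence of (3). For the converse, the additional input is that for $f$ finite, $(f_{s'}\ttimes\id)_*$ is $t$-exact and conservative: finite pushforward is exact and conservative on abelian sheaves, and the stalk computation via Proposition \ref{p.Org} identifies the stalks of $(f_{s'}\ttimes\id)_*M$ with sums of stalks of $M$ over the fibers of the finite morphism $f_{s'}$. Then in \eqref{e.bccomp1}, all horizontal arrows being isomorphisms forces $(f_{s'}\ttimes\id)_*\BC^{s'}_{bf,r}(L)$ to be an isomorphism whenever the left vertical is, and conservativity of $(f_{s'}\ttimes\id)_*$ upgrades this to $\BC^{s'}_{bf,r}(L)$ being an isomorphism for each $s'$. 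The main technical obstacle I expect is the careful verification of the commutativity of \eqref{e.bccomp1}, which chains three different base change operations (for $f_*$, for $\Psi_{(-)}$ along $r$, and the restriction to the slice at $s'$) and requires tracking how \eqref{e.Psis2} interacts with passage to $s'$; the other two diagrams are simpler variants of the same pattern.
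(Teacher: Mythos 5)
Your overall strategy matches the paper's: establish the commutativity of the three diagrams and then extract (1)--(4) using the base change isomorphisms for $\Psi$. Your treatments of (2), (3), and (4) agree with the paper's, and you correctly identified that (4) requires the conservativity of $(f_{s'}\ttimes\id)_*$ beyond what (3) already gives.

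Your argument for (1), however, has two gaps. First, the tools you invoke, Proposition~\ref{p.Org}, Remark~\ref{r.Org0}~(3),~(4) and Corollary~\ref{c.obc}, concern pushforward along the projection $p_2\colon X_\et\ttimes_{S_\et}Y\to Y$ (equivalently $f\atimes_B B'$) and base change in the $g$-direction of an oriented product; they do not obviously apply to $\alpha$, whose pushforward leg $(\id_X\atimes_b b)_*$ is of a different type (it changes the oriented product's middle topos from $Y_\et$ to $S_\et$ while keeping the source fixed). You also assert that the relevant square of oriented topoi is Cartesian, which is not established and not used by the paper. What the paper actually does is cite \cite[Proposition~1.13]{IZ} to identify the restriction of $\alpha(L)$ to each local section $x'\atimes_{S'} S'$ with the base change map $r_{(s')}^*b_{(y)*}\to b'_{(y')*}(r_Y)_{(y')}^*$ for the square of strict localizations of $b$, $b'$; for $b$ finite this square is Cartesian and the pushforwards exact, which yields the isomorphism even on the unbounded derived category. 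Second, the ``in particular'' of (1) requires the conservativity of $(\id_{X'}\atimes_{b'}b')_*$ to pass from $(\id_{X'}\atimes_{b'}b')_*\BC_{f,r_Y}(L)$ being an isomorphism to $\BC_{f,r_Y}(L)$ being one; you note the analogous conservativity in (4) but omit it here. The paper derives it from the conservativity of the functors $b'_{(y')*}$ (again using finiteness of $b'$), which is needed for the ``only if'' direction of (1).
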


\begin{proof}
The commutativity of the squares follows from the definition. Then (2)
follows from Remark \ref{r.Psis0} (1). Moreover, (3) follows from Remark
\ref{r.Psis0} (2), integral base change, and proper base change, and (4)
follows from (3) and the fact that $(f_{s'}\ttimes\id)_*$ is conservative.
For (1), note that by \cite[Proposition 1.13]{IZ}, for $L\in D^+(X\atimes_Y
Y,\Lambda)$ and every geometric point $x'\to X'$, the restriction of $\alpha
(L)$ to $x'\atimes_{S'} {S'}$ is the base change map
$r_{(s')}^*b_{(y)*}(L|_{x\atimes_Y Y})\to b'_{(y')*}(r_{Y})_{(y')}^*
(L|_{x\atimes_Y Y})$ associated to the commutative square of strict
localizations
\[\xymatrix{Y'_{(y')}\ar[r]^{b'_{(y')}}\ar[d]_{(r_{Y})_{(y')}} &
S'_{(s')}\ar[d]^{r_{(s')}}\\
Y_{(y)}\ar[r]^{b_{(y)}} & S_{(s)},}
\]
where the geometric points are images of $x'$. For $b$ finite, the square is
Cartesian and the functors are exact, so that $\alpha$ is an isomorphism for
$L\in D(X\atimes_Y Y,\Lambda)$. Moreover, in this case, the functors
$b'_{(y')*}$ are conservative, so that $(\id_{X'}\atimes_{b'}b')_*$ is
conservative.
\end{proof}

We will construct an analogue of $\BC_{f,g}^t$ for $!$-pullback. We start by
the finite case.

\begin{lemma}\label{l.Psi!}
Assume $g$ finite. Let $s\to S$ be a geometric point. Then \eqref{e.Psibcs2}
induces an isomorphism
\begin{equation}\label{e.Psi*}
\Psi_{f}^{s}g_{X*}\simto\prod_t (\id\ttimes
g_{(t)})_*\Psi_{f_T}^{t},
\end{equation}
where $t$ runs through points of $T\times_S s$. Moreover, for each $t$, the
map
\begin{equation}\label{e.Psi!}
\Psi_{f_T}^{t}g_X^!\to (\id\ttimes g_{(t)})^!\Psi_{f}^{s}
\end{equation}
adjoint via Construction \ref{c.adj} to the map $(\id\ttimes
g_{(t)})_*\Psi_{f_T}^{t}\to \Psi_{f}^{s}g_{X*}$ induced by the inverse of
\eqref{e.Psi*} is an isomorphism on $D^+$ if $g$ is of finite presentation.
\end{lemma}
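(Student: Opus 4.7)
\emph{Plan.} The proof rests on the observation that when $g$ is finite, the ``Henselization along $s$'' of $T$, namely $T\times_S S_{(s)}$, splits as a finite disjoint union $\coprod_t T_{(t)}$ of its local Henselian pieces, indexed by the points $t\in T\times_S s$. This decomposition propagates to a decomposition of the portion of $X_T\atimes_T T$ sitting over the slice $X_s\atimes_S S$ via $g_X\atimes_g g$ into components $X_t\atimes_T T_{(t)}\simeq X_t\ttimes_t T_{(t)}$, each included via the morphism $\id\ttimes g_{(t)}$.

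For the first assertion, I would invoke the integral case of Proposition \ref{p.Org} (Remark \ref{r.Org0}(4)), which applies to the finite morphisms $g$ and $g_X$ without a torsion hypothesis on $\Lambda$. A first application supplies the base change isomorphism $\Psi_f g_{X*}\simto (g_X\atimes_g g)_*\Psi_{f_T}$ (analogue of \eqref{e.Psibc} for $g$ integral on $D^+$). Applying $i^*$ and invoking integral base change once more to commute $i^*$ past $(g_X\atimes_g g)_*$, the disjoint-union decomposition above converts the result into the desired product $\prod_t (\id\ttimes g_{(t)})_* i_{T,t}^*\Psi_{f_T}=\prod_t(\id\ttimes g_{(t)})_*\Psi_{f_T}^t$. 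A diagram-chase using the definition of \eqref{e.Psibcs2} then identifies the composite with the product of the maps in the statement, yielding \eqref{e.Psi*}.

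For the second assertion, since $g$ is finite of finite presentation, so is $g_X$, and thus $g_{X!}\simeq g_{X*}$ admits a right adjoint $g_X^!$ (Construction \ref{c.dim}). Similarly, each $\id\ttimes g_{(t)}$ is, by the decomposition, the inclusion of a clopen summand in $X_s\ttimes_{s_0}S_{(s)}$, so its pushforward admits a right adjoint $(\id\ttimes g_{(t)})^!$ given by restriction to the $t$-th component. Applying Construction \ref{c.adj} to the inverse of the isomorphism from the first part (composed with the projection onto the $t$-th factor) yields \eqref{e.Psi!} as the adjoint of an isomorphism; since Construction \ref{c.adj} preserves isomorphisms when both adjunction pairs are genuine (which holds here on $D^+$), \eqref{e.Psi!} is itself an isomorphism on $D^+$.

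The main obstacle will be the bookkeeping in the first assertion: carefully verifying that the two applications of integral base change interact cleanly with the disjoint-union decomposition and with the identifications $X_s\atimes_S S\simeq X_s\ttimes_{s_0}S_{(s)}$ of Lemma \ref{l.id}, and that the resulting composite isomorphism is precisely the one induced by the maps \eqref{e.Psibcs2}. Once the first part is secure, the second follows as essentially formal adjunction calculus.
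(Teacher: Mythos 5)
Your treatment of the first assertion is essentially the paper's: one splits $T\times_S S_{(s)}$ as $\coprod_t T_{(t)}$ and invokes finite/integral base change on the relevant fiber products of topoi (the paper routes this through the identification $X_T\atimes_T T\simeq X\atimes_S T$ of \cite[Lemme 9.1]{Org} and then Proposition \ref{p.obc} and Corollary \ref{c.obc}, rather than Proposition \ref{p.Org}, but the content is the same and your bookkeeping concerns are manageable).

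The second assertion is where your argument has a genuine gap, in two places. First, $\id\ttimes g_{(t)}\colon X_t\ttimes_t T_{(t)}\to X_s\ttimes_{s_0}S_{(s)}$ is \emph{not} the inclusion of a clopen summand: the decomposition exhibits $X_t\ttimes_t T_{(t)}$ as a clopen piece of $X_s\ttimes_{s_0}(T\times_S S_{(s)})$, but the map down to $X_s\ttimes_{s_0}S_{(s)}$ still involves the finite morphism $g_{(t)}\colon T_{(t)}\to S_{(s)}$ of strictly local schemes, which is typically not an isomorphism (a ramified cover, or a closed subscheme). So $(\id\ttimes g_{(t)})^!$ is a genuine right adjoint of pushforward along a finite morphism, not a restriction. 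Second, and more seriously, the principle you invoke --- that the mate under Construction \ref{c.adj} of an isomorphism is again an isomorphism --- is false: the mate $\beta\colon FR'\to RG$ of $\alpha\colon LF\to GL'$ is a composite of $\alpha$ with a unit and a counit, and its invertibility is an extra Beck--Chevalley-type condition. This very paper contains many pairs of mates where one is an isomorphism and the other requires separate proof (\eqref{e.Psis3} versus \eqref{e.Psis4}; Orgogozo's theorem versus Theorem \ref{t.good}). The real content of the second assertion is supplied in the paper by restricting to the local sections $x\ttimes S_{(s)}$ (Remark \ref{r.local}), where \eqref{e.Psi!} becomes the map $(f_T)_{(x,t)*}(g_X)_{(x,t)}^!\to g_{(t)}^!f_{(x)*}$ for the Cartesian square of strict localizations, and this is handled by Lemma \ref{l.localshrbc}, i.e.\ the commutation of $Rg^!$ for $g$ finite of finite presentation with the weakly \'etale base change $X_{(x)}\to X$ --- a non-formal statement resting on the constructibility of $g_*\Lambda$. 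Some such input is unavoidable; adjunction calculus alone will not produce it.
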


\begin{proof}
By \cite[Lemme 9.1]{Org}, the morphisms $g_X\atimes_g \id_T\colon
X_T\atimes_T T\to X \atimes_S T$ and $\coprod_t X_t\atimes_T T\to
X_s\atimes_S T$ are equivalences. Thus \eqref{e.topoi} is deduced from the
diagram
\[
\xymatrix{X_T\ar[r]^{\Psi_{f_T}}\ar[d]_{g_X}\ar@{}[rd]|{\Leftrightarrow} & X \atimes_S T\ar[d]^{\id_X\atimes_S g}\ar@{}[rd]|{\Leftrightarrow} &X_s\atimes_S T\ar[d]^{\id_{X_s}\atimes_S g}\ar[l]_{i_{T}}\\
X\ar[r]^{\Psi_f}& X\atimes_S S& X_s\atimes_S S\ar[l]_{i}.}
\]
The map \eqref{e.Psi*} is the composite
\[i^*\Psi_f g_{X*}\xrightarrow[\sim]{i^*\alpha} i^*(\id_X\atimes_S g)_* \Psi_{f_T}\xrightarrow[\sim]{\beta \Psi_{f_T}} (\id_{X_s}\atimes_S g)_*i_T^*\Psi_{f_T},\]
where $\alpha\colon \Psi_f g_{X*}\simto (\id_X\atimes_S g)_* \Psi_{f_T}$,
and $\beta\colon i^*(\id_X\atimes_S g)_*\to (\id_{X_s}\atimes_S g)_*i_T^*$
is the base change map. By Proposition \ref{p.obc} and Corollary
\ref{c.obc}, the functors in $\beta$ are exact and $\beta$ is an
isomorphism.

Note that $S_{(s)}\times_S T$ is the disjoint union of $T_{(t)}$, and for
each $t$ and each geometric point $x\to X_s$, the square
\[\xymatrix{(X_T)_{(x,t)}\ar[r]^{(g_X)_{(x,t)}}\ar[d]_{(f_T)_{(x,t)}} & X_{(x)}\ar[d]^{f_{(x)}}\\
T_{(t)}\ar[r]^{g_{(t)}} & S_{(s)}}
\]
is Cartesian. By Remark \ref{r.local}, on $D^+$, the restriction of
\eqref{e.Psi*} to $x\ttimes S_{(s)}$ can be identified with $\prod_t$ of
$f_{(x)*}(g_X)_{(x,t)*}\simeq g_{(t)*}(f_T)_{(x,t)*}$. Moreover, for $g$ of
finite presentation, on $D^+$, the restriction of \eqref{e.Psi!} to
$x\ttimes S_{(s)}$ can be identified with the isomorphism
$(f_T)_{(x,t)*}(g_X)_{(x,t)}^!\simto g_{(t)}^!f_{(x)*}$ of Lemma
\ref{l.localshrbc}, adjoint to finite base change.
\end{proof}

\begin{remark}\label{r.bc!}
Assume $g$ finite. By Lemma \ref{l.adjtriv}, $(\id_X\atimes_S g)_*$ admits a
right adjoint $R(\id_X\atimes_S g)^!$. The map \eqref{e.Psi!} is the
restriction of the composite
\[i_T^*\Psi_{f_T} g_X^!\xrightarrow[\sim]{i_T^*\alpha'} i_T^*(\id_X\atimes_S g)^! \Psi_f \xrightarrow{\beta'\Psi_f}(\id_{X_s}\atimes_S g)^!i^*\Psi_f,\]
where $\alpha'\colon \Psi_{f_T} g_X^!\simto (\id_X\atimes_S g)^! \Psi_f$ and
$\beta'\colon i_T^*(\id_X\atimes_S g)^!\to (\id_{X_s}\atimes_S g)^!i^*$ are
adjoint to $\alpha^{-1}$ and $\beta^{-1}$, respectively, via Construction
\ref{c.adj}. Note that $\alpha'$ is right adjoint to the base change map
$\Psi_f^*(\id_X\atimes_S g)_*\to g_{X*}\Psi_{f_T}^*$, which is an
isomorphism by Proposition \ref{p.obc}. If $g$ is a closed immersion of
finite presentation, then $\beta'$ is an isomorphism on $D^+$ by Corollary
\ref{c.obc} applied to the complement of $g$.
\end{remark}

Next we construct $(\id\times g_{(t)})^!$ in the case where $g$ is not
necessarily finite. Assume $m\Lambda=0$ for $m$ invertible on $S$.

\begin{construction}\label{c.localized}
Let $X$ be a coherent topos. Consider the following subcategory $\cC$ of the
category of schemes. The objects are strictly local schemes (namely, spectra
of strictly Henselian rings). A morphism $f\colon T\to S$ belongs to $\cC$
if it is the strict localization of a morphism of schemes $f_0\colon T_0\to
S_0$ of finite presentation at a geometric point of $T_0$. We define for
such $f$ a functor
\[(\id\ttimes f)^!\colon D^+(X\ttimes S,\Lambda)\to D^+(X\ttimes T,\Lambda),\]
isomorphic to $(\id\ttimes f)^*(d)[2d]$ for $f$ a strict localization of a
smooth morphism of dimension $d$, and right adjoint to $(\id\times f)_*$ for
$f$ finite, and compatible with composition. Note that each $f$ in $\cC$
admits a decomposition $f=f_2 f_1$ in $\cC$, with $f_1$ a closed immersion
and $f_2$ a strict localization of a smooth morphism.

We apply Ayoub's gluing formalism \cite[Th\'eor\`eme 1.3.1]{Ayoub} (see
\cite[Theorem 1.1]{glue} for a common generalization of this and Deligne's
gluing formalism used earlier) to the category $\cC_{F/}$ of objects of
$\cC$ under a fixed object $F$. This category has the advantage of admitting
fiber products, the fiber product of $T$ and $S'$ over $S$ being the strict
localization of $T\times_S S'$ at the image of the closed point of $F$. The
construction of $(\id\ttimes f)^!$ does not depend on the choice of $F\to
T$. Alternatively we may apply \cite[Theorem 5.9]{glue} directly to $\cC$.

Given a commutative square $hg=fi$ in $\cC$ with $h$, $i$ finite, $f$
(resp.\ $g$) a strict localization of a smooth morphism of dimension $d$
(resp.\ $e$), we need to construct an isomorphism
\begin{equation}\label{e.loc!1}
(\id\ttimes g)^!(\id\ttimes h)^!\simeq (\id\ttimes i)^!(\id\ttimes f)^!
\end{equation}
We decompose the square into a commutative diagram in $\cC$ with Cartesian
inner square
\[\xymatrix{R\ar[dr]_{g}\ar@/^1pc/[rr]^{i}\ar@{-->}[r]_{k}
&T'\ar@{-->}[r]_{h'}\ar@{-->}[d]^{f'} & T\ar[d]^f\\
&S'\ar[r]^h & S.}
\]
For the inner square, the map
\begin{equation}\label{e.loc!2}
(\id\ttimes f')^!(\id\ttimes h)^!\to (\id\ttimes h')^!(\id\ttimes f)^!
\end{equation}
adjoint to the inverse of the base change isomorphism $(\id\ttimes
f)^*(\id\ttimes h)_*\simeq (\id\ttimes h')_*(\id\ttimes f')^*$ via
Construction \ref{c.adj} is an isomorphism. Indeed, if $f=pf_0$ with $f_0$
smooth and $p$ a strict localization, the analogous assertion for $f_0$ is
clear and the one for $p$ follows from Proposition \ref{p.bcupper!} and
Lemma \ref{l.localshrbc}. Note that $k$ is a local complete intersection of
virtual relative dimension $e-d$. The cycle class map $\Lambda\to
k^!\Lambda(d-e)[2(d-e)]$ of $k$ \cite[XVI D\'efinition 2.5.11]{ILO} induces
\begin{equation}\label{e.loc!3}
(\id\ttimes g)^!\to (\id\ttimes k)^!(\id\ttimes f')^!,
\end{equation}
which is an isomorphism. Indeed, $k$ is the strict localization of a finite
morphism $R_0\to T'_0$, with $R_0$ and $T'_0$ affine and smooth over $S'$ of
relative dimension $e$ and $d$, respectively. Combining \eqref{e.loc!2} and
\eqref{e.loc!3}, we get \eqref{e.loc!1}.
\end{construction}

\begin{remark}\label{r.localtbc}
For $f\colon T\to S$ as above and $g\colon X\to Y$ a morphism of coherent
topos, we have an isomorphism
\[(g\ttimes \id)^* (\id\ttimes f)^!\simeq (\id\ttimes f)^! (g\ttimes \id)^*\]
by Proposition \ref{p.bcupper!}.
\end{remark}

\begin{remark}\label{r.localdesc}
Let $g\colon T\to S$ be a separated morphism of finite presentation of
coherent schemes. Let $t\to T$ be a geometric point. Consider the
commutative square
\[\xymatrix{T_{(t)}\ar[r]^{g_{(t)}}\ar[d]_{j} & S_{(t)}\ar[d]^{k}\\
T\ar[r]^g&S.}
\]
Note that $g_{(t)}$ is a morphism of $\cC$. We have an isomorphism
\[(\id\ttimes
j)^*(\id\ttimes g)^! \simto (\id\ttimes g_{(t)})^!(\id \ttimes k)^*.
\]
Indeed, we may assume that $g$ admits a factorization
$T\xrightarrow{i}R\xrightarrow{p} S$ with $p$ smooth and $i$ a closed
immersion. The assertion for $p$ is trivial and the assertion for $i$
follows from Proposition \ref{p.bcupper!} and Lemma \ref{l.localshrbc} (or
from limit arguments applied to the complement of $i$).
\end{remark}

\begin{construction}
We construct, for each morphism $f\colon T\to S$ of $\cC$ and $L\in
D^-(X\ttimes S,\Lambda)$, $M\in D^+(X\ttimes S,\Lambda)$, a map
\begin{equation}\label{e.localshr}
(\id\ttimes f)^!R\cHom(L,M)\to R\cHom((\id\ttimes f)^*L,(\id\ttimes f)^!M).
\end{equation}
For $f$ a strict localization of a smooth morphism, we take the restriction
map. For $f$ finite, we take \eqref{e.1.13.1}. In general we take a
decomposition. One checks that the resulting map does not depend on choices
and is compatible with composition.
\end{construction}

\begin{lemma}\label{l.localshr}
Let $X$ be a coherent scheme and let $f\colon T\to S$ be a morphism of
$\cC$. For $L\in D^-_c(X\ttimes S,\Lambda)$, $M\in D^+(X\ttimes S,\Lambda)$,
\eqref{e.localshr} is an isomorphism.
\end{lemma}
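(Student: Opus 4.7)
The plan is to split the proof according to the two elementary cases baked into Construction~\ref{c.localized}: the decomposition $f=f_2f_1$ where $f_1$ is a closed immersion of finite presentation and $f_2$ is a strict localization of a smooth morphism of finite presentation of relative dimension $d$. Since the map \eqref{e.localshr} is defined to be compatible with composition, it suffices to verify it is an isomorphism in each of these two cases separately.

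For the closed-immersion case, $(\id_X\ttimes f)^!$ is the right adjoint of $(\id_X\ttimes f)_*$ produced by Lemma~\ref{l.adjtriv}, and \eqref{e.localshr} is by construction the isomorphism \eqref{e.1.13.1} of Corollary~\ref{c.pj}, read with the roles of the two factors of the fiber product exchanged: the finite morphism $f$ acts on the scheme factor $S$, while our coherent topos $X$ plays the role of the inert topos factor over the base point $\pt$. For the strict-localization-of-smooth case, by Construction~\ref{c.localized} one has $(\id_X\ttimes f)^!\simeq(\id_X\ttimes f)^*(d)[2d]$, and since $R\cHom(L,-)$ commutes with the invertible twist and shift, it suffices to prove that the canonical base-change map
\[
(\id_X\ttimes f)^*R\cHom(L,M)\to R\cHom((\id_X\ttimes f)^*L,(\id_X\ttimes f)^*M)
\]
is an isomorphism. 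For this I would factor $f$ as $T\xrightarrow{p}T_0\times_{S_0}S\xrightarrow{f_2}S$, where $f_0\colon T_0\to S_0$ is a smooth morphism of finite presentation whose strict localization at a geometric point $t_0\in T_0$ equals $f$, the morphism $f_2$ is the base change of $f_0$ along $S\to S_0$, and $p$ is the induced strict localization at $t_0$ (well-defined because strict Henselization commutes with étale base change). Then $p$ is ind-\'etale, hence weakly \'etale, and $f_2$ is smooth of finite presentation, hence $(f_2,\Z/m\Z)$ is universally locally acyclic. Lemma~\ref{l.local} case~(a) applies to $p$ and case~(b) applies to $f_2$, each giving the desired commutation of $*$-pullback with $R\cHom$; composing the two isomorphisms gives the statement for $f$.

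The main obstacle is purely bookkeeping: aligning the orientation of our fiber product $X\ttimes S$ (topos on one side, scheme on the other, no base) with the conventions of Lemma~\ref{l.local} (topos factor $Y$ over $s$, scheme factor $X$ over $s$, with $f$ on the scheme side), and checking that the intermediate scheme $T_0\times_{S_0}S$ is coherent—which holds because it is the base change of a finite-presentation morphism by the affine strictly local scheme $S$. No new cohomological input beyond the universal local acyclicity of smooth morphisms and the ind-\'etale description of strict localizations is required.
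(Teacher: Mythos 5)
Your argument is correct, and it uses the same two essential ingredients as the paper -- Lemma \ref{l.local} (a) for the pro-\'etale (strict localization) part and the formal adjoint \eqref{e.1.13.1} of the projection formula for the finite-presentation part -- but it packages them differently. The paper's proof is a one-step affair: writing $f=gj$ with $j\colon T\to T_0\times_{S_0}S$ the strict localization and $g\colon T_0\times_{S_0}S\to S$ separated of finite presentation, Remark \ref{r.localdesc} gives $(\id\ttimes f)^!\simeq(\id\ttimes j)^*(\id\ttimes g)^!$, and one concludes by applying \eqref{e.1.13.1} to $g$ (no smoothness or closed-immersion dichotomy needed, since \eqref{e.1.13.1} holds for any separated finite-presentation morphism and any $L,M$) and Lemma \ref{l.local} (a) to $j$. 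You instead first invoke the closed-immersion/smooth decomposition $f=f_2f_1$ underlying Construction \ref{c.localized} together with the compatibility of \eqref{e.localshr} with composition; the closed-immersion piece is then handled exactly as in the paper, while for the smooth piece you re-factor through the strict localization and appeal to Lemma \ref{l.local} (b), i.e.\ to the universal local acyclicity of smooth morphisms. This works -- the intermediate objects stay in $D^-_c$ and $D^+$ as required, and your coherence and orientation checks are the right ones -- but it is slightly longer and imports a nontrivial geometric input (local acyclicity of smooth morphisms, hence smooth base change) where the paper's route through \eqref{e.1.13.1} needs only the formal projection-formula adjunction. The paper's single decomposition $f=gj$ is the more economical way to organize the same proof.
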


\begin{proof}
We decompose $f$ as $gj$, with $g$ separated of finite presentation and $j$
a strict localization. By Remark \ref{r.localdesc}, we have $(\id\ttimes
f)^!\simeq (\id\ttimes j)^*(\id\ttimes g)^!$. The lemma follows from Lemma
\ref{l.local} applied to $j$ and \eqref{e.1.13.1} applied to $g$.
\end{proof}

\begin{construction}\label{c.bc!}
Consider a Cartesian square \eqref{e.CartT} of coherent schemes with $g$
separated of finite presentation. Let $t$ be a geometric point of $T$. We
construct the sliced $!$-base change map
\begin{equation}\label{e.Psibc!}
\BC^{!t}_{f,g}\colon (\id\ttimes g_{(t)})^!\Psi_{f}^t \to \Psi_{f_T}^t g_X^!
\end{equation}
as follows. For $g$ smooth, we take the map induced by $\BC^{t}_{f,g}$
\eqref{e.Psibcs}. For $g$ a finite morphism, we take the inverse of
\eqref{e.Psi!}, which is an isomorphism. In general, the restriction of $g$
to an open subscheme $U\subseteq T$ containing $t$ can be decomposed into
$U\xrightarrow{g'}S'\xrightarrow{g''}S$ with $g'$ finite (one may even take
$g'$ to be a closed immersion) and $g''$ smooth. We take the composition
\[(\id\ttimes g'_{(t)})^!(\id\ttimes g''_{(t)})^! \Psi_f^t \xrightarrow{\BC^{!t}_{f,g''}} (\id\ttimes f'_{(t)})^! \Psi_{f_S}^tg''^!_X
\xrightarrow{\BC^{!t}_{f_{S'},g}} \Psi_{f_U}^tg'^!_{X}g''^!_X.
\]
One checks by restricting to local sections that the resulting base change
map does not depend on choices and is compatible with composition.
\eqref{e.Psibc!} is an isomorphism for $g$ quasi-finite by Zariski's main
theorem.
\end{construction}

\begin{remark}\label{r.ggg}
Assume that $g$ is smooth or a closed immersion. With the notation of
\eqref{e.topoi}, $\BC^{!t}_{f,g}$ is the composite
\[(\id_{X_t} \ttimes g_{(t)})^! i^*\Psi_f \xrightarrow[\sim]{\gamma \Psi_f} i_{T,t}^*(g_X\atimes_g
g)^!\Psi_f\xrightarrow{i_{T,t}^*\BC^!_{f,g}} i_{T,t}^* \Psi_{f_T} g_X^!,
\]
where $\gamma\colon (\id_{X_t} \ttimes g_{(t)})^! i^*\simto
i_{T,t}^*(g_X\atimes_g g)^!$ and $\BC^!_{f,g}\colon (g_X\atimes_g
g)^!\Psi_f\to \Psi_{f_T} g_X^!$. For $g$ a closed immersion, these maps are
given by $\beta'^{-1}$ and $\alpha'^{-1}$, where $\alpha'$ and $\beta'$ are
as in Remark \ref{r.bc!}. For $g$ smooth of dimension~$d$, we define
$(g_X\atimes_g g)^!\colonequals (g_X\atimes_g g)^*(d)[2d]$, $\gamma$ is the
obvious isomorphism, and $\BC^!_{f,g}$ is induced by $\BC_{f,g}$.

It would be nice to extend this interpretation of $\BC^{!t}_{f,g}$ to more
general $g$.
\end{remark}

In the setting of Construction \ref{c.bc!}, for $L\in D^+(X,\Lambda)$, we
say that $\Psi_f L$ commutes with sliced $!$-base change by $g$ if
$\BC_{f,g}^{!t}$ is an isomorphism for all geometric points $t$ of $T$. We
say that $\Psi_f L$ commutes with sliced $!$-base change if it commutes with
sliced $!$-base change by $g$ for all $g$ separated and of finite
presentation. The following lemma is immediate from the construction of
$\BC^{!t}_{f,g}$.

\begin{lemma}\label{l.Psibc!}
$\Psi_f L$ commutes with smooth base change if and only if $\Psi_f
    L$ commutes with sliced $!$-base change.
\end{lemma}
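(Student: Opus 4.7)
The plan is to unpack the definition of the sliced $!$-base change given in Construction \ref{c.bc!}, so that the lemma reduces to the combination of two facts already established: the smooth case is tautological, and the finite case is automatic by Lemma \ref{l.Psi!}.

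First, I would observe that for a smooth morphism $g\colon T\to S$ of relative dimension $d$, one has $g_X^!\simeq g_X^*(d)[2d]$ and, similarly, $(\id\ttimes g_{(t)})^!\simeq (\id\ttimes g_{(t)})^*(d)[2d]$ (since $g_{(t)}$ is a strict localization of a smooth morphism, to which Proposition \ref{l.trace} applies). Under these identifications, the map $\BC^{!t}_{f,g}$ defined in Construction \ref{c.bc!} becomes literally $\BC^{t}_{f,g}(d)[2d]$, as recorded in Remark \ref{r.ggg}. Consequently, commutation of $\Psi_f L$ with smooth base change by $g$ is equivalent to commutation with sliced $!$-base change by $g$. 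This already yields the ``only if'' direction: assuming sliced $!$-base change holds for every separated $g$ of finite presentation, it in particular holds for all smooth $g$, which is smooth base change.

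For the ``if'' direction, suppose $\Psi_f L$ commutes with smooth base change. Let $g\colon T\to S$ be separated of finite presentation and let $t\to T$ be a geometric point. By Construction \ref{c.bc!}, there is an open neighborhood $U\subseteq T$ of $t$ on which $g$ factors as $U\xrightarrow{g'} S'\xrightarrow{g''} S$ with $g'$ a closed immersion (in particular finite of finite presentation) and $g''$ smooth, and $\BC^{!t}_{f,g}$ is constructed as the composite
\[
(\id\ttimes g'_{(t)})^!(\id\ttimes g''_{(t)})^!\,\Psi_f^t
\xrightarrow{(\id\ttimes g'_{(t)})^!\,\BC^{!t}_{f,g''}}
(\id\ttimes g'_{(t)})^!\,\Psi_{f_{S'}}^t\,g''^!_X
\xrightarrow{\BC^{!t}_{f_{S'},g'}}
\Psi_{f_U}^t\,g'^!_X g''^!_X.
\]
The first arrow is an isomorphism: by the first paragraph, smooth base change for $\Psi_f L$ with respect to $g''$ is the same as the sliced $!$-base change isomorphism $\BC^{!t}_{f,g''}(L)$, and applying the functor $(\id\ttimes g'_{(t)})^!$ preserves this isomorphism. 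The second arrow is the inverse of the natural isomorphism \eqref{e.Psi!} of Lemma \ref{l.Psi!}, which holds unconditionally for any object, hence is an isomorphism with no further hypothesis on $L$. Composing gives that $\BC^{!t}_{f,g}(L)$ is an isomorphism.

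There is essentially no obstacle here; the lemma is a direct unpacking of Construction \ref{c.bc!}, its entire content being that the nontrivial case (smooth) is the hypothesis, while the remaining case (finite) is handled automatically by Lemma \ref{l.Psi!}. The only mild care required is to note that the localization $g_{(t)}$ depends only on $g$ near $t$, so passing to the open neighborhood $U$ where the factorization exists does not change $\BC^{!t}_{f,g}$.
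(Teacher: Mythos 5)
Your proof is correct and takes the approach the paper intends — the paper states the lemma is ``immediate from the construction of $\BC^{!t}_{f,g}$,'' and your argument is exactly that unpacking: the smooth case is the hypothesis by Construction \ref{c.bc!} / Remark \ref{r.ggg}, the finite case is unconditional by Lemma \ref{l.Psi!}, and the general map is the composite via a local factorization. (Only a cosmetic note: you have the ``if'' and ``only if'' labels swapped relative to the usual parsing of ``$A$ if and only if $B$,'' but both implications are proved.)
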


\begin{lemma}\label{l.Psibc!2}\leavevmode
\begin{enumerate}
\item If $\Psi_f L$ commutes with smooth base change, then for every
    $g\colon T\to S$ separated of finite presentation, $\Psi_{f_T} g_X^!L$
    commutes with smooth base change.
\item If for some $g$ finite surjective of finite presentation,
    $\Psi_{f_T} g_X^!L$ commutes with smooth base change, then $\Psi_f L$
    commutes with smooth base change.
\end{enumerate}
\end{lemma}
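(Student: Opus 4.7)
The plan is to use Lemma \ref{l.Psibc!} to translate both parts into statements about sliced $!$-base change, and then exploit the compatibility of $\BC^{!\cdot}_{\cdot}$ with composition (established in Construction \ref{c.bc!}) together with the fact that sliced $!$-base change is automatically an isomorphism for quasi-finite morphisms.

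For (1), let $h\colon T'\to T$ be separated of finite presentation and let $t'$ be a geometric point of $T'$, with images $t\in T$ and $s\in S$. Compatibility with composition applied to $T'\xrightarrow{h}T\xrightarrow{g}S$ yields a factorization
\[
\BC^{!t'}_{f,gh}(L) \;=\; \BC^{!t'}_{f_T,h}(g_X^!L)\circ(\id\ttimes h_{(t')})^!\BC^{!t}_{f,g}(L).
\]
The hypothesis on $\Psi_f L$ makes both $\BC^{!t}_{f,g}(L)$ and $\BC^{!t'}_{f,gh}(L)$ into isomorphisms (via Lemma \ref{l.Psibc!}, as $g$ and $gh$ are both separated of finite presentation), so the middle factor $\BC^{!t'}_{f_T,h}(g_X^!L)$ is an isomorphism as required.

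For (2), let $h\colon S'\to S$ be separated of finite presentation. Form the Cartesian square yielding $g'\colon T'\colonequals T\times_S S'\to S'$ (finite surjective of finite presentation, as the base change of $g$) and $h'\colon T'\to T$ (separated of finite presentation, as the base change of $h$). For a geometric point $s'\to S'$, the surjectivity of $g'$ supplies a lift $t'\to T'$; let $t,s$ be its images. Since $gh'=hg'$, applying composition-compatibility along both routes $T'\xrightarrow{h'}T\xrightarrow{g}S$ and $T'\xrightarrow{g'}S'\xrightarrow{h}S$ produces two factorizations of the same morphism, giving the equality
\[
\BC^{!t'}_{f_T,h'}(g_X^!L)\circ(\id\ttimes h'_{(t')})^!\BC^{!t}_{f,g}(L)
\;=\;
\BC^{!t'}_{f_{S'},g'}(h_X^!L)\circ(\id\ttimes g'_{(t')})^!\BC^{!s'}_{f,h}(L).
\]
By the quasi-finiteness of $g$ and $g'$, the maps $\BC^{!t}_{f,g}(L)$ and $\BC^{!t'}_{f_{S'},g'}(h_X^!L)$ are isomorphisms (Construction \ref{c.bc!}); and $\BC^{!t'}_{f_T,h'}(g_X^!L)$ is an isomorphism by the hypothesis together with Lemma \ref{l.Psibc!} applied to the separated of finite presentation morphism $h'$. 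Hence $(\id\ttimes g'_{(t')})^!\BC^{!s'}_{f,h}(L)$ is an isomorphism for every geometric point $t'$ of $T'$ above $s'$.

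The main remaining step, which I expect to be the subtle part, is the descent: showing that $\BC^{!s'}_{f,h}(L)$ itself is an isomorphism. This is a conservativity statement for the family $\{(\id\ttimes g'_{(t')})^!\}_{t'\mapsto s'}$, indexed by the (finite, nonempty) set of geometric points of $T'\times_{S'}S'_{(s')}=\coprod_{t'}T'_{(t')}$, acting on the slice topos $X_{s'}\ttimes_{s'_0}S'_{(s')}$. It is the slice-topos analogue of the classical fact that $!$-pullback along a finite surjective morphism of schemes detects isomorphisms in $D^+$, and can be established by combining the decomposition isomorphism \eqref{e.Psi*} of Lemma \ref{l.Psi!} (which expresses $\Psi_f^{s'}g'_{X*}$ as $\prod_{t'}(\id\ttimes g'_{(t')})_*\Psi_{f_{T'}}^{t'}$) with the adjunction $(\id\ttimes g'_{(t')})_*\dashv(\id\ttimes g'_{(t')})^!$ and the conservativity of pushforward along the finite surjective morphism $g'_X$.
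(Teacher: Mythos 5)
Your overall architecture coincides with the paper's. Part (1) is exactly the paper's argument (translate to sliced $!$-base change via Lemma \ref{l.Psibc!}, then a two-out-of-three argument from compatibility with composition), and for part (2) you form the same Cartesian square and derive the same equality of the two composite $!$-base change maps $\BC^{!t'}_{f,gh'}=\BC^{!t'}_{f,hg'}$, correctly identifying which factors are isomorphisms (the quasi-finite ones by Construction \ref{c.bc!}, the remaining one by the hypothesis). You also correctly isolate the one nontrivial remaining ingredient: the conservativity of the family $\{(\id\ttimes g'_{(t')})^!\}_{t'\mapsto s'}$.

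It is precisely at that last step that your proposal has a genuine gap. The paper proves this conservativity as a separate statement (Lemma \ref{l.cd}), and the proof is not the one you sketch. Conservativity of the pushforward $(\id\ttimes g')_*$ along a finite surjective morphism tells you nothing about the right adjoint $(\id\ttimes g')^!$: from $(\id\ttimes g')^!M=0$ the adjunction only gives $\Hom((\id\ttimes g')_*N,M)=0$ for all $N$, and the objects $(\id\ttimes g')_*N$ do not generate the target category. Likewise the decomposition \eqref{e.Psi*} concerns objects in the image of the nearby cycle functor and cannot be applied to the arbitrary cone of $\BC^{!s'}_{f,h}(L)$. The actual argument uses $(\id_X\ttimes g')_*(\id_X\ttimes g')^!M\simeq R\cHom(p_2^*g'_*\Lambda,M)$ together with the fact that the stalks of $g'_*\Lambda$ are nonzero finite free $\Lambda$-modules; since $g'_*\Lambda$ is only \emph{generically} locally constant, one must produce a generic quasi-section of $g'$ over the bad locus \cite[Proposition 17.16.4]{EGAIV} and run a Noetherian induction on the support of the offending truncation. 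Passing from conservativity of the single functor $(\id\ttimes g')^!$ to conservativity of the localized family $\{(\id\ttimes g'_{(t')})^!\}$ requires a further argument: one writes $(\id\ttimes g'_{(t')})^!=(\id\ttimes j_{t'})^*(\id\ttimes g')^!$ and checks that the strict localizations $j_{t'}$ are jointly surjective, using that $g'$ is a closed map and that strict localizations are generizing. None of this is supplied by your cited ingredients, so the final step of part (2) remains unproved as written.
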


Part (2) is a dual version of \cite[Lemme 3.3]{Org}.

\begin{proof}
By Lemma \ref{l.Psibc!}, we may replace smooth base change by sliced
$!$-base change. Then (1) follows from the compatibility of the sliced
$!$-base change map with composition. For (2), let $h\colon S'\to S$ be a
separated morphism of finite presentation and form the Cartesian square
\[\xymatrix{T'\ar[r]^{g'}\ar[d]_{h_T} & S'\ar[d]^h\\
T\ar[r]^g & S.}
\]
By the compatibility of the sliced $!$-base change map with composition, for
every geometric point $t'\to T'$, we have
\[(\BC^{!t'}_{f_{S'},g'}h_X^!)((\id\ttimes g'_{(t')})^!\BC^{!s'}_{f,h})=(\BC^{!t'}_{f_T,h_T}g_X^!) ((\id\ttimes (h_T)_{(t')})^!\BC^{!t}_{f,g}),\]
where $s'\to S'$ and $t\to T$ denote the images of $t'\to T'$. Since
$\BC^{!t'}_{f_{S'},g'}$, $\BC^{!t}_{f,g}$, $\BC^{!t'}_{f_T,h_T}(g_X^!L)$ are
isomorphisms, and the family of functors $(\id\ttimes g'_{(t')})^!$, $t'$
above a fixed $s'$, is conservative by Lemma \ref{l.cd} (1) below,
$\BC^{!s'}_{f,h}L$ is an isomorphism.
\end{proof}

\begin{lemma}\label{l.cd}
Let $X$ be a locally coherent topos and $g\colon T\to S$ a separated
surjective morphism of finite presentation of schemes. Let $\Lambda$ be a
commutative ring. Assume either $g$ quasi-finite or $\Lambda$ of torsion.
\begin{enumerate}
\item For any $L\in D(X\ttimes S,\Lambda)$, $(\id_X\ttimes g)^!L\in D^{\ge
    a}$ implies $L\in D^{\ge a}$. In particular, $(\id_X\ttimes g)^!\colon
    D(X\ttimes S,\Lambda)\to D(X\ttimes T,\Lambda)$ is conservative.
\item Assume moreover that $g$ is proper and $m\Lambda=0$ for $m$
    invertible on $S$. Then for every geometric point $s\to S$, the family
    of functors $(\id_X\ttimes g_{(t)})^!\colon D(X\ttimes
    S_{(s)},\Lambda)\to D(X\ttimes T_{(t)},\Lambda)$ is conservative. Here
    $t$ runs through geometric points of $T$ above $s\to S$.
\end{enumerate}
\end{lemma}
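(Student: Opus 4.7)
The plan is to prove (1) first, with the finite surjective case as the heart of the argument, and then deduce (2) by a local-to-global stalkwise reduction.

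For (1), I would reduce to the case where $g$ is finite surjective. In the quasi-finite case, Zariski's main theorem factors $g=\bar g\circ j$ with $j$ an open immersion and $\bar g$ finite; surjectivity of $g$ forces $\bar g$ surjective, and a localization-triangle argument on $X\ttimes \bar T$ (comparing $j^*$ and the complementary closed immersion) reduces the reflection statement for $g$ to that for $\bar g$. For $\Lambda$ of torsion and general $g$, Nagata compactification $g=p\circ j$ with $p$ proper surjective of finite presentation handles the nonquasi-finite tail, which one then treats analogously to the finite case using proper base change (Proposition \ref{p.Org}).

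In the finite surjective case, the key observation is that $F_!\colonequals(\id_X\ttimes g)_!=(\id_X\ttimes g)_*$ is exact by Remark \ref{r.Org0}(4) (integral base change) and conservative by surjectivity of $g$: stalks of $F_!\cF$ at a geometric point $(x,s,\phi)$ of $X\ttimes S$ factor through the nonempty geometric fibre $T_s$, producing contributions from $\cF_{(x,t,\phi)}$ for every $t$ above $s$. Since $G\colonequals(\id_X\ttimes g)^!$ is right adjoint to the t-exact $F_!$, $G$ is t-exact by adjunction. To prove the reflection $GL\in D^{\ge a}\Rightarrow L\in D^{\ge a}$, I would work stalkwise: for a geometric point $(x,s,\phi)$ with $(\tau^{<a}L)_{(x,s,\phi)}\neq 0$, surjectivity of $g$ provides a geometric point $t$ of $T$ above $s$, the stalk $(GL)_{(x,t,\phi)}$ is computed (via the extension of Lemma \ref{l.localshrbc} to topos products supplied by Remark \ref{r.localdesc}) by applying the localized $!$-pullback $(\id_X\ttimes g_{(t)})^!$ to the stalk of $L$ at $(x,s,\phi)$, and the explicit formula $g_{(t)}^!(-)=R\cHom((g_{(t)})_*\Lambda,-)$ for the finite morphism $g_{(t)}\colon T_{(t)}\to S_{(s)}$ between strict local schemes then yields the required nonvanishing in degrees $<a$, contradicting $GL\in D^{\ge a}$.

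For (2), let $L$ be nonzero in $D(X\ttimes S_{(s)},\Lambda)$ and pick a geometric point $(x,s',\phi)$ of $X\ttimes S_{(s)}$ with $L_{(x,s',\phi)}\neq 0$. Since $g\times_S S_{(s)}\to S_{(s)}$ is proper and surjective, applying the valuative criterion of properness along a specialization $s'\rightsquigarrow s$ in $S_{(s)}$ produces a geometric point $t$ of $T$ above $s$ together with a geometric point $t'$ of $T_{(t)}$ above $s'$. Part (1) applied to the localized $g_{(t)}$ (or directly to the finite-type restriction of $g$ near $t$) shows that $(\id_X\ttimes g_{(t)})^!(L|_{X\ttimes T_{(t)}})$ has nonzero stalk at $(x,t',\phi)$, establishing conservativity of the family. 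The hard step will be the stalkwise analysis in (1): one must verify that the weakly-étale base-change formula of Lemma \ref{l.localshrbc} extends correctly to topos products, and then that for $g_0$ finite between strict local rings the functor $R\cHom((g_0)_*\Lambda,-)$ preserves nonvanishing at the closed point — which rests on $(g_0)_*\Lambda$ being a finitely presented, nonzero sheaf supported on the closed fibre of $S_0$.
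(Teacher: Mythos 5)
Your overall strategy (reduce to a finite surjective map and exploit $(\id_X\ttimes g)_*(\id_X\ttimes g)^!L\simeq R\cHom(p_2^*g_*\Lambda,L)$ together with the fact that $g_*\Lambda$ has nonzero finite free stalks) is the right one, and you correctly flag the stalkwise analysis as the hard step — but that step is exactly where the argument breaks. For a finite surjective $g$ the sheaf $g_*\Lambda$ is constructible but in general \emph{not} locally constant, so $R\cHom(p_2^*g_*\Lambda,-)$ does not commute with passage to stalks or strict localizations in the naive way; moreover the individual localized maps $g_{(t)}\colon T_{(t)}\to S_{(s)}$ need not be surjective (only the union of their images covers $S_{(s)}$), so $R\cHom(g_{(t)*}\Lambda,N)$ can vanish for $N\neq 0$. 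Your claim that $(\id_X\ttimes g)^!$ is t-exact because it is right adjoint to a t-exact functor is also false: a right adjoint of a t-exact functor is only left t-exact, and indeed $Rg^!$ for finite surjective $g$ is not t-exact (take $S=\A^1$, $T=\A^1\sqcup\{0\}$). The paper's proof handles all of this by a Noetherian induction: it takes the smallest closed $S'\subseteq S$ supporting $\tau^{\le a-1}L$, uses \cite[Proposition 17.16.4]{EGAIV} to produce over a dense open $U\subseteq S'$ a finite surjective quasi-section $g'\colon T'\to U$ factoring through $T$, and then \emph{shrinks $U$ until $g'_*\Lambda$ is locally constant} with nonzero finite free stalks, at which point $R\cHom(p_2^*g'_*\Lambda,L')$ is locally a nonzero finite direct sum of copies of $L'$ and the reflection is immediate. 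Your alternative reductions to a globally finite $g$ also do not go through: after Zariski's main theorem or Nagata, $(\id_X\ttimes g)^!L$ only controls $(\id_X\ttimes\bar g)^!L$ on the open piece $X\ttimes T\subseteq X\ttimes\bar T$, and nothing constrains it on the complement, so the finite case cannot simply be applied to $\bar g$.

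For (2) there is a related gap: you invoke "Part (1) applied to the localized $g_{(t)}$", but part (1) requires surjectivity and $g_{(t)}\colon T_{(t)}\to S_{(s)}$ is generally not surjective; also the input of $(\id_X\ttimes g_{(t)})^!$ lives on $X\ttimes S_{(s)}$, not on $X\ttimes T_{(t)}$ as written. The paper instead factors $(\id\ttimes g_{(t)})^!=(\id\ttimes j_t)^*(\id\ttimes g)^!$ via Remark \ref{r.localdesc}, applies part (1) to $g$ itself, and reduces to showing that the strict localizations $j_t\colon T_{(t)}\to T$, for $t$ running over the closed fiber, are \emph{jointly surjective}: every point of $T$ specializes into $g^{-1}(s)$ because the proper $g$ is closed, and each $j_t$ is flat, hence generizing. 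Your valuative-criterion step produces the same specialization but is then fed into the flawed localized form of part (1). To repair the proposal you would need to replace the stalkwise computation by the Noetherian induction with generic quasi-sections and local constancy of $g'_*\Lambda$, and restructure (2) around the joint surjectivity of the family $(j_t)$ rather than surjectivity of each $g_{(t)}$.
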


\begin{proof}
For (1), let $S'\subseteq S$ be the smallest closed subset such that
$\tau^{\le a-1}L$ is supported on $X\ttimes S'$. Assume $S'$ nonempty. By
\cite[Proposition 17.16.4]{EGAIV}, the base change of $g$ to $S'$ admits
generically a quasi-section: there exist a nonempty open subscheme
$U\subseteq S'$ and a commutative diagram
\begin{equation}\label{e.cddiag}
\xymatrix{T'\ar[r]^{h}\ar[d]_{g'} &T\times_S S'\ar[r]^{i_T}\ar[d]& T\ar[d]^g\\
U\ar@{^{(}->}[r]^j & S'\ar@{^{(}->}[r]^i & S,}
\end{equation}
where $g'$ is finite surjective of finite presentation. Let $L'=L|_{X\ttimes
U}$. Since $i_T h$ is quasi-finite, we have $(\id_X\ttimes g')^! L'\simeq
(\id_X\ttimes i_T h)^!(\id_X\ttimes g)^!L\in D^{\ge a}$, so that
\[R\cHom((\id_X\ttimes g')_*\Lambda,L')\simeq (\id_X\ttimes g')_*(\id_X\ttimes g')^! L'\in D^{\ge
a}.
\]
Moreover, $(\id_X\ttimes g')_*\Lambda\simeq p_2^*g'_*\Lambda$, where
$p_2\colon X\ttimes S'\to S'$ is the projection. The stalks of $g'_*\Lambda$
are nonzero finite free $\Lambda$-modules. Up to shrinking $U$, we may
assume that $g'_*\Lambda$ is locally constant. It follows that $L|_{X\ttimes
U}\in D^{\ge a}$, contradicting the minimality of $S'$. Therefore, $S'$ is
empty and $L\in D^{\ge a}$.

For (2), we may assume that $S$ is strictly local of closed point $s$. Then,
by Remark \ref{r.localdesc}, $(\id\ttimes g_{(t)})^!=(\id\ttimes
j_t)^*(\id\ttimes g)^!$, where $j_t\colon T_{(t)}\to T$ is the strict
localization. It suffices to show that the family $(j_t)$ is surjective. Let
$y\in T$ be a point. Since $g$ is a closed map, $y$ specializes to a point
$z\in g^{-1}(s)$. Let $t\to T$ be a geometric point above $z$. Since $j_t$
is flat, hence generizing by \cite[Proposition 2.3.4]{EGAIV}, $y$ belongs to
the image of $j_t$.
\end{proof}

\begin{remark}
If $S$ is Noetherian finite-dimensional, Constructions \ref{c.localized} and
\ref{c.bc!} extend to the unbounded derived categories, as the functors have
finite cohomological dimensions \cite[XVIII$_{\text{A}}$ Corollary
1.4]{ILO}.
\end{remark}

\subsection{$\Psi$-goodness and weak $\Psi$-goodness}\label{s.4.4}
Let $f\colon X\to S$ be a morphism of schemes and let $L\in D(X,\Lambda)$.
Following \cite[Appendix]{IZ} we say that $(f,L)$ is \emph{$\Psi$-good} if
$\Psi_f L$ commutes with base change. Examples (3) through (5) below are not
needed in this paper.

\begin{example}\label{e.Psigood}\leavevmode
\begin{enumerate}
\item For $L\in D^+(X,\Lambda)$, $(f,L)$ is universally locally acyclic if
    and only if $(f,L)$ is $\Psi$-good and $R\Phi_f L=0$ (Remark
    \ref{r.lc}).
\item Assume $m\Lambda=0$ for some $m$ invertible on $S$. Assume that $S$
    has only finitely many irreducible components and for every
    modification $r\colon S'\to S$, there exists a finite surjective
    morphism $T\to S$ that factors through $r$.  Then all pairs $(f,L)$
    with $L\in D^+(X,\Lambda)$ are $\Psi$-good. In particular, if $(f,L)$
    is locally acyclic, then $(f,L)$ is universally locally acyclic
    (compare with Corollary \ref{c.la}).

    The condition on $S$ holds if (a) $S$ is the spectrum of a valuation
    ring, or (b) $S$ is Noetherian of dimension $\le 1$. In case (a), $r$
    admits a section by valuative criterion. In case (b), $r$ is
    quasi-finite by dimension formula \cite[(5.6.5.1)]{EGAIV}, hence
    finite.

    To show the $\Psi$-goodness, we reduce by standard limit arguments to
    the case $f$ of finite presentation, $\Lambda=\Z/m\Z$, and $L\in
    \Shv_c$. We conclude by Orgogozo's theorem (Theorem \ref{t.Org} (1)
    below), the assumption on $S$, and \cite[Lemme 3.3]{Org}.

\item Assume $f$ of finite type and $\Lambda$ of torsion. Assume that
    there exists an open immersion $j\colon U\to X$ with complement
    $Y=X-U$ quasi-finite over $S$ such that $(fj,L|_U)$ is $\Psi$-good.
    Then $(f,L)$ is $\Psi$-good. This extends \cite[Proposition 6.1]{Org}
    and the proof is similar. Let us show more generally that
    $\BC^t_{f,g}(L)|_{y\ttimes T_{(t)}}$ is an isomorphism for every
    isolated point $y$ of $Y_t$ (without assuming $Y$ quasi-finite over
    $S$). For this, we may assume $f$ proper. By Lemma \ref{l.Psibc} (3),
    $R(f_t\ttimes \id)_*\BC^t_{f,g}(L)$ is an isomorphism. It then
    suffices to note that the cone of $\BC^t_{f,g}(L)$ is supported on
    $Y_{t}\ttimes T_{(t)}$ by assumption.
\item Assume $\Lambda$ of torsion. Let $f_i\colon X_i\to S$, $i=1,2$ be
    morphisms locally of finite type. Then for $(f_i,L_i)$ $\Psi$-good
    with $L_i\in D^-(X_i,\Lambda)$, $(f_1\times_S f_2,L_1\boxtimes^L L_2)$
    is $\Psi$-good by the K\"unneth formula for $R\Psi$ \cite[Theorem
    A.3]{IZ}.
\item Assume $\Lambda$ of torsion and $f$ locally of finite type. Then for
    $(f,L)$ $\Psi$-good with $L\in D^-(X,\Lambda)$, $(f,L\otimes^L f^*M)$
    is $\Psi$-good for all $M\in D(S,\Lambda)$. This follows from the
    projection formula for $R\Psi$ \cite[Proposition A.6]{IZ}.
\end{enumerate}
\end{example}

Let us recall the following form of Orgogozo's theorem \cite{Org}.

\begin{theorem}[Orgogozo]\label{t.Org} Assume $S$ has only finitely many irreducible components.
Assume $f$ of finite presentation, and $\Lambda$ Noetherian such that
$m\Lambda=0$ for some $m$ invertible on $S$.
\begin{enumerate}
\item For $L\in D^b_c(X,\Lambda)$, there exists a modification $g\colon
    T\to S$ such that $(f_{T},g^*_X L)$ is $\Psi$-good.
\item For $L\in D_c(X,\Lambda)$, if $(f,L)$ is $\Psi$-good, then $R\Psi_f
    L\in D_c(X\atimes_S S,\Lambda)$.
\end{enumerate}
\end{theorem}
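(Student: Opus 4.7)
The plan is to reduce to $S$ excellent Noetherian by standard limit arguments (all the data is of finite presentation and $m$ is invertible on $S$), and then to attack both parts by Noetherian induction on $\dim S$, using de~Jong's alterations to reach a tame situation and Lemma~\ref{l.Psibc} to transfer $\Psi$-goodness along proper and finite morphisms.

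For (1), after the limit reduction, I would induct on $\dim S$. A dévissage of $L$ via the distinguished triangles $j_! j^* L \to L \to i_* i^* L$ (for a closed immersion $i$ with complementary open immersion $j$) together with Lemma~\ref{l.Psibc} (2) and (3) reduces to the case $L = u_! \Lambda_U$ with $u\colon U \hookrightarrow X$ an immersion from a regular scheme. Then apply de~Jong's alteration to produce a proper surjective generically étale morphism $h\colon X' \to X$ such that, after a suitable modification $r\colon S' \to S$, the base change $X'_{S'} \to S'$ is smooth over a dense open $V \subseteq S'$ and admits a compactification with strict normal crossings relative to $V$. Over $V$ the sliced nearby cycle functor is controlled by Kummer/tame data, and $(f h)_V$ is $\Psi$-good; pushing forward by $h$ and invoking Lemma~\ref{l.Psibc} (3) gives $\Psi$-goodness of $(f_V, Rh_* h^* L|_V)$, and splitting $L|_V$ off as a direct summand of $Rh_* h^* L|_V$ (handled prime by prime, using $m\Lambda = 0$) yields $\Psi$-goodness of $(f_V, L|_V)$. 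The bad locus is then contained in a proper closed subset $Z \subset S'$ of strictly smaller dimension; applying the inductive hypothesis to $(f_Z, L|_{X_Z})$ and composing with a blow-up of $S'$ along $Z$ produces the required global modification.

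For (2), once $(f, L)$ is $\Psi$-good, Remark~\ref{r.local} identifies the stalks of $R\Psi_f L$ on shreds and local sections with Milnor fibre cohomologies $R\Gamma(X_{(x) t}, L|_{X_{(x) t}})$, and $\Psi$-goodness ensures that these descriptions are stable under base change. By part~(1) applied to strata, one stratifies $S$ into locally closed subsets over which $(f, L)$ remains $\Psi$-good and the Milnor fibre cohomologies behave locally constantly; the finiteness of the stalks and constructibility of the resulting sheaves on each piece $X_i \atimes_{S_j} S_j$ follows from the standard finiteness theorems of Deligne and Gabber for étale cohomology with constructible coefficients over excellent Noetherian bases.

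The main obstacle is organising the simultaneous modification of the base $S'$ and alteration of $X$ so that the operations commute compatibly and the Noetherian induction closes: one must ensure that the alteration chosen over $V$ extends, after further base modification, to a situation in which the exceptional locus strictly reduces dimension, and that previously obtained $\Psi$-goodness is preserved under the subsequent blow-ups. Lemma~\ref{l.Psibc} supplies the descent and ascent statements needed at each step, but the bookkeeping through the iterated modifications is the delicate part.
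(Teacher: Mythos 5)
Your proposal attempts a full re-derivation of Orgogozo's theorem, whereas the paper does not reprove it at all: the paper's proof is a reduction to the published results of \cite{Org}. Specifically, the paper observes that the proofs in \cite{Org} (which treat only $\Lambda = \Z/m\Z$) adapt to general Noetherian $\Lambda$ with $m\Lambda=0$, citing the remark in \cite[Theorem 1.6.1]{IZ}: one replaces ``$\cF=\Lambda$'' by ``$\cF$ constant constructible'' at the end of the relevant arguments. Part (1) is then \cite[Th\'eor\`eme 2.1]{Org}. Part (2) is deduced from \cite[Lemme 10.5]{Org} (which gives a criterion for constructibility via proper surjective base changes), \cite[Proposition 3.1]{Org} (to replace $L\in D_c$ by a bounded truncation $L'$ computing a given $R^i\Psi$), and \cite[Th\'eor\`eme 8.1]{Org}. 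This is a very different, and much shorter, argument than what you sketch.

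As a re-derivation attempt, your sketch has the right high-level shape (limit reduction, Noetherian induction on $\dim S$, alterations, and Lemma~\ref{l.Psibc} to move $\Psi$-goodness along finite and proper maps), but it leaves the hard parts unaddressed and some steps would not go through as stated. First, ``splitting $L|_V$ off as a direct summand of $Rh_* h^* L|_V$ prime by prime'' requires the alteration $h$ to be generically finite of degree prime to each residue characteristic of $\Lambda$; the plain de~Jong theorem does not give this, and one must instead invoke prime-to-$\ell$ alterations (Gabber's refinement), which you do not mention. Second, the crucial step --- that after modifying $S'$ along the bad locus $Z$ the nearby cycles over $Z$ only see data of strictly smaller base dimension, in a way compatible with all the previously chosen alterations and with the ``gluing'' of $\Psi$-goodness across the exceptional locus --- is exactly where the real content of \cite{Org} sits (its Sections~4--8), and you only flag it as ``the delicate part'' without indicating how it is resolved; one needs, for instance, a transitivity/composition lemma for nearby cycles over a closed subscheme of the base (this is essentially Lemma~\ref{l.Psibc}~(1), which you could use, but the accompanying bookkeeping of modifications and the crucial ``oriented base change'' inputs are missing). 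Third, the d\'evissage ``to $L = u_!\Lambda_U$ with $U$ regular'' is imprecise; standard d\'evissage only gives $u_!\cF$ with $\cF$ locally constant on a locally closed $U$, and reaching constant coefficients costs a further reduction as in \cite[Section~4.4]{Org}. Finally, for part~(2) your argument is not correct as written: you propose to ``stratify $S$ into locally closed subsets over which $(f,L)$ remains $\Psi$-good,'' but $\Psi$-goodness is a condition involving all base changes $T\to S$, not a stratifiable property on $S$, and the constructibility of $R\Psi_f L$ on the oriented product requires the criterion of \cite[Lemme 10.5]{Org} (constructibility after a proper surjective base change) rather than a naive stratification of $S$; moreover, the passage from $L\in D_c$ to a bounded truncation (needed since $R^i\Psi$ is computed degree by degree) is a genuine step that your sketch does not address.
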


A sheaf of $\Lambda$-modules $\cF$ on $X\atimes_S S$ is said to be
\emph{constructible} if every stalk is finitely generated and there exist
finite partitions $X=\bigcup_i X_i$, $S=\bigcup_j S_j$ into disjoint
constructible locally closed subsets such that the restriction of $\cF$ to
each $X_i \atimes_S S_j$ is locally constant.

\begin{proof}
The results of \cite{Org} are stated only for the case $\Lambda=\Z/m\Z$, but
as remarked in \cite[Theorem 1.6.1]{IZ}, the proofs can be easily adapted to
the case of general $\Lambda$ as above. Indeed, in the last paragraph of
\cite[Section 4.4]{Org}, we reduce to the case where $\cF$ is constant
instead of $\cF=\Lambda$, and at the end of the proofs \cite[Sections 5.1,
10.2, 10.3]{Org} we work with constant constructible sheaves instead of
$\Lambda$.

(1) is \cite[Th\'eor\`eme 2.1]{Org} (for general $\Lambda$). For (2), by
\cite[Lemme 10.5]{Org}, it suffices to show that for each $i$, there exists
a proper surjective morphism $T\to S$ such that $(R^i\Psi_f
L)|_{X_T\atimes_T T}$ is constructible. For $(f,L)$ $\Psi$-good, we have
$(R^i\Psi_f L)|_{X_T\atimes_T T}\simeq R^i\Psi_{f_T} (L|_{X_T})$. By
\cite[Proposition 3.1]{Org}, there exists a truncation $L'\in
D^b_c(X,\Lambda)$ of $L$ such that $R^i\Psi_{f_T}(L'|_{X_T})\simeq
R^i\Psi_{f_T}(L|_{X_T})$ for all $T\to S$. Then it suffices to apply
\cite[Th\'eor\`eme 8.1]{Org} to $L'$.
\end{proof}

\begin{remark}\label{r.Org}
It follows from Theorem \ref{t.Org} (1) that for $L\in D^b_c(X,\Lambda)$, if
$R\Psi_f L$ commutes with base change by modifications, then it commutes
with any base change.
\end{remark}

Assume $S$ Noetherian, $f$ of finite type, and $\Lambda$ Noetherian such
that $m\Lambda=0$ for some $m$ invertible on $S$.

\begin{cor}\label{c.Orgco}
For $L\in D^b_c(X,\Lambda)$, there exists an open subscheme $U\subseteq S$
of complement of codimension $\ge 2$ such that $(f_U,L|_{X_U})$ is
$\Psi$-good.
\end{cor}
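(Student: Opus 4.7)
The plan is as follows. First, apply Orgogozo's theorem (Theorem~\ref{t.Org}(1)) to obtain a modification $g\colon T\to S$ such that $(f_T, g_X^*L)$ is $\Psi$-good, where $g_X\colon X_T\to X$ is the base change. Let $U\subseteq S$ be the open locus over which $g$ is finite. Then $g_U\colon g^{-1}(U)\to U$ is finite surjective, the restriction of the $\Psi$-good pair $(f_T, g_X^*L)$ to $g^{-1}(U)$ is again $\Psi$-good, and \cite[Lemme~3.3]{Org} transports $\Psi$-goodness from $g^{-1}(U)$ down to $U$, yielding $(f_U, L|_{X_U})$ $\Psi$-good. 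This is the same descent strategy used in the proof of Example~\ref{e.Psigood}(2).

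The key technical step is to show that $U$ contains every point $\xi\in S$ of codimension $\le 1$, so that $S\setminus U$ automatically has codimension $\ge 2$. For this I would base-change $g$ to $S_\xi\colonequals \Spec\cO_{S,\xi}$. The resulting morphism $g_\xi\colon T\times_S S_\xi\to S_\xi$ is proper, and remains birational: any dense open $V\subseteq S$ on which $g$ is an isomorphism contains all generic points of $S$, hence meets $S_\xi$ in a dense open containing all generic points of $S_\xi$. Thus $g_\xi$ is a modification of the Noetherian scheme $S_\xi$ of dimension~$\le 1$. By Example~\ref{e.Psigood}(2)(b) --- whose proof invokes the dimension formula \cite[(5.6.5.1)]{EGAIV} to force a birational morphism over a $\le 1$-dimensional Noetherian base to be quasi-finite --- the morphism $g_\xi$ is quasi-finite, hence finite. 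In particular $g$ is quasi-finite at $\xi$, and openness of the quasi-finite locus \cite[EGA~IV 13.1.4]{EGAIV} combined with properness places a neighborhood of $\xi$ inside the finite locus $U$.

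The main obstacle is verifying that birationality really survives the base change to $S_\xi$, and that ``quasi-finite at codimension~$1$ points of $S$'' for a modification reduces correctly to Example~\ref{e.Psigood}(2)(b) applied to the localization. Once these local verifications are in place, everything else is formal: $U$ is open by construction; $S\setminus U$ has codimension $\ge 2$ since it contains no codimension~$\le 1$ point; restriction preserves $\Psi$-goodness; surjectivity of $g_U$ follows from surjectivity of $g$ and stability under base change; and \cite[Lemme~3.3]{Org} closes the argument.
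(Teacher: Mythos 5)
Your proposal is correct and takes essentially the same route as the paper. Both arguments start from Orgogozo's Theorem~\ref{t.Org}(1) to produce a modification $g\colon T\to S$ with $(f_T, g_X^* L)$ $\Psi$-good, both show that $g$ is quasi-finite (hence finite, by properness) over an open $U\subseteq S$ whose complement has codimension $\ge 2$, and both conclude by \cite[Lemme~3.3]{Org}. The paper does not detour through the localization $S_\xi$: it applies Chevalley's semicontinuity theorem directly to cut out the quasi-finite locus as an open, and the dimension formula \cite[(5.6.5.1)]{EGAIV} applied to the modification $g$ then shows that every codimension~$\le 1$ point lies in that locus. Your version --- restricting $g$ to $S_\xi$, invoking the argument of Example~\ref{e.Psigood}(2)(b), and then spreading out by openness of the quasi-finite locus --- repackages the same dimension-formula input, at the cost of having to track carefully that the base change $T\times_S S_\xi\to S_\xi$ is still a modification (properness and the existence of a dense open over which $g$ is an isomorphism do pass to $S_\xi$, so this verification does go through, but it is exactly the step the paper avoids by not localizing). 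Neither version is more general than the other; the paper's phrasing is just shorter.
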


\begin{proof}
There exists an open subscheme $U\subseteq S$ of complement of codimension
$\ge 2$ such that the restriction of the modification $g$ in Theorem
\ref{t.Org} (1) to $U$ is quasi-finite, by Chevalley's semicontinuity
theorem and dimension formula \cite[(5.6.5.1)]{EGAIV}, hence finite. We then
conclude by \cite[Lemme 3.3]{Org}.
\end{proof}

\begin{definition}
For $L\in D_c(X,\Lambda)$, we say that $(f,L)$ is \emph{weakly $\Psi$-good}
if $\Psi_{f}^{s}(L)$ belongs to $D_c(X_s\ttimes S_{(s)},\Lambda)$ for every
geometric point $s\to S$ and $\Psi_f (L)$ commutes with smooth base change.
\end{definition}

By Theorem \ref{t.Org} (2), for $L\in D_c$, if $(f,L)$ is $\Psi$-good, then
$(f,L)$ is weakly $\Psi$-good. We do not know whether the converse holds.

\begin{remark}\leavevmode\label{r.weakgood}
Consider a Cartesian square \eqref{e.CartT} with $g$ finite. Let $M\in
    D_c(X_T,\Lambda)$. If $(f_T,M)$ is weakly $\Psi$-good, then $(f,g_{X*}M)$
    is weakly $\Psi$-good. Indeed, $\Psi_{gf_T}(M)$ commutes with smooth
    base change by Lemma \ref{l.Psibc} (1) and $\Psi_{f}(g_{X*}M)$ commutes
    with smooth base change by Lemma \ref{l.Psibc} (4), and
    $\Psi_f^s(g_{X*}M)$ is in $D_c$ by Lemma \ref{l.Psi!}.
\end{remark}

The following is an analogue of Orgogozo's theorem for $!$-pullback. The
quasi-excellence of $S$ (not needed in Orgogozo's theorem) ensures that
$p^!_X L\in D^b_c$.

\begin{theorem}\label{t.good}
Assume $S$ quasi-excellent. Let $f\colon X\to S$ be a morphism of finite
type and let $L\in D^b_c(X,\Lambda)$. There exists a modification $p\colon
S'\to S$ such that $(f_{S'},p^!_XL)$ is weakly $\Psi$-good.
\end{theorem}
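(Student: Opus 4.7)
The plan is to prove this $!$-pullback analogue of Orgogozo's theorem by combining Orgogozo's theorem itself (Theorem \ref{t.Org}) with the sliced $!$-base change of Construction \ref{c.bc!} and the transfer lemmas \ref{l.Psibc!} and \ref{l.Psibc!2}. The argument proceeds by Noetherian induction on $S$.

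First I would reduce to the case where $L$ is a constructible sheaf: the class of $L\in D^b_c(X,\Lambda)$ for which the conclusion holds is stable under extensions, since any two modifications of $S$ admit a common refinement by a blow-up, and weak $\Psi$-goodness passes through distinguished triangles. Applying Theorem \ref{t.Org} (1) to $L$ yields a first modification $q\colon T\to S$ with $(f_T, q_X^*L)$ $\Psi$-good, so in particular $\Psi_{f_T}(q_X^*L)\in D_c(X_T\atimes_T T,\Lambda)$ by Theorem \ref{t.Org} (2) and commutes with all base change. The real task is then to replace $q_X^*L$ by the $!$-pullback $p_X^!L$ under some (possibly further) modification.

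For the induction step, I would choose a dense open $U\subseteq S$ on which $q$ restricts to an isomorphism and on which $(f_U,L|_{X_U})$ is already $\Psi$-good (such $U$ exists by Corollary \ref{c.Orgco} after shrinking to the locus where $q$ is an isomorphism). Over $X_U$ one has $p_X^!L \simeq p_X^*L$ whenever $p$ is chosen to be an isomorphism on $U$, so weak $\Psi$-goodness on $X_U$ follows from the already-established $*$-version together with Lemma \ref{l.Psibc!} (which identifies smooth base change with sliced $!$-base change). For the complementary closed $Z=S\setminus U$, the induction hypothesis applied to $(f_Z,L|_{X_Z})$ yields a modification $p_Z\colon Z'\to Z$ with $(f_{Z'}, (p_Z)_X^!(L|_{X_Z}))$ weakly $\Psi$-good. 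I would then construct $p\colon S'\to S$ by blowing up $T$ along a coherent ideal supported on $T_Z=T\times_S Z$ whose restriction to $T_Z$ refines $p_Z$, so that $p$ agrees with $q$ over $U$ and maps finite-surjectively onto $p_Z$ (up to a finite cover) over $Z$. Weak $\Psi$-goodness of $(f_{S'}, p_X^!L)$ is then verified by combining the open and closed cases: smooth base change across the exceptional divisor is transferred through finite covers by Lemma \ref{l.Psibc!2} (2), and constructibility of $\Psi^t_{f_{S'}}(p_X^!L)$ at each geometric point $t\to S'$ follows from Theorem \ref{t.Org} (2), Lemma \ref{l.Psi!} (for finite pushforward along the stratification), and Proposition \ref{p.prodcons} (preservation of $D^b_c$).

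The main obstacle will be the gluing step. Unlike $*$-pullback, $!$-pullback carries dualizing information along the exceptional locus of $p$ that must be tracked carefully, and the single modification $p$ has to simultaneously extend both $q$ (trivial on $U$, controlling the $*$-pullback globally) and the inductive modification $p_Z$ (controlling the $!$-pullback on the closed stratum). Quasi-excellence of $S$ enters critically, both to ensure $p_X^!L\in D^b_c$ via the preservation results of \cite[XVII]{ILO} and to guarantee the existence of the refining blow-ups used in the induction.
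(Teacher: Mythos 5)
Your proposal has the right overall shape (use Orgogozo's theorem for the $*$-pullback, then deal separately with the locus where $*$- and $!$-pullback differ), but two essential points are wrong or missing. First, the induction scheme does not terminate as stated. After the first modification $g\colon T\to S$, the locus where $g_X^!L$ and $j_!(L|_{X_U})$ differ is the exceptional locus $S_1\subseteq T$ --- a closed subscheme of $T$, not of $S$ --- and the complex one must handle there is the cone of $j_!(L|_{X_U})\to g_X^!L$, an essentially arbitrary object of $D^b_c$ over the base $S_1$, not $L|_{X_Z}$ over $Z=S\setminus U$. The recursion therefore produces a chain $S=S_0,S_1,S_2,\dots$ in which each $S_{i+1}$ is a nowhere dense closed subscheme of a \emph{modification} of $S_i$, and Noetherian induction on closed subsets of $S$ simply does not apply. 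The paper terminates this chain by a dimension-function argument (a sequence of generic points $\eta_i$ with $\delta_i(\eta_i)$ strictly decreasing, contradicting Noetherianness); this is precisely the step Gabber supplied to remove a finite-dimensionality hypothesis (Remark \ref{r.Gabber}), and your proposal has no substitute for it.

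Second, the ``gluing step'' you defer is not a technicality but the core of the proof. The paper isolates it as the special case $L=i_{X*}M$ with $i\colon F\to S$ a closed immersion such that $(f_{F'},m_X^!M)$ is weakly $\Psi$-good for some modification $m\colon F'\to F$, and must then produce a single modification $p\colon S'\to S$ making $(f_{S'},p_X^!i_{X*}M)$ weakly $\Psi$-good. Blowing up an ideal on $T$ that ``refines $p_Z$'' does not achieve this: there is no reason such a blow-up relates $p_X^!i_{X*}M$ to $m_X^!M$, since $!$-pullback does not commute with restriction to a modified closed stratum. The actual argument uses Orgogozo's flattening lemmas (Lemmes 3.2 and 4.3 of \cite{Org}) to sandwich the situation between finite surjective covers, then transfers weak $\Psi$-goodness via Lemma \ref{l.wg2} (descent along finite surjective $!$-pullback, ascent along arbitrary separated $!$-pullback of finite type) and Remark \ref{r.weakgood} (finite pushforward). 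Finally, quasi-excellence is needed to keep $!$-pullbacks in $D^+_c$ (Lemma \ref{l.pullback}, Proposition \ref{p.prodcons}), not to ``guarantee the existence of refining blow-ups.''
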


Our proof of the theorem relies on the following preliminary results on
$!$-pullback.

\begin{lemma}\label{l.wg2}
Assume $S$ quasi-excellent. Let $f\colon X\to S$ be a morphism of finite
type and let $L\in D^+_c(X,\Lambda)$.
\begin{enumerate}
\item If $(f,L)$ is weakly $\Psi$-good, then for every $g\colon T\to S$
    separated of finite type, $(f_T,g_X^! L)$ is weakly $\Psi$-good.
\item If for some $g\colon T\to S$ finite surjective, $(f_T,g_X^! L)$ is
    weakly $\Psi$-good, then $(f,L)$ is weakly $\Psi$-good.
\end{enumerate}
\end{lemma}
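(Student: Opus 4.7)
The plan is to decouple the two defining properties of weak $\Psi$-goodness (commutation of $R\Psi_f$ with smooth base change, and fibrewise constructibility of $R\Psi_f^s L$) and handle each via a distinct tool already in hand: Lemma \ref{l.Psibc!2} for the smooth base change transfer, and the sliced $!$-base change isomorphism (triggered by Lemma \ref{l.Psibc!}) together with preservation of $D_c$ by $(\id\ttimes g_{(t)})^!$ for the constructibility transfer.

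The smooth base change halves of (1) and (2) are exactly Lemma \ref{l.Psibc!2} (1) and (2) respectively, so only the constructibility halves require work. For (1), once smooth base change commutation for $(f,L)$ is known, Lemma \ref{l.Psibc!} makes $\BC^{!t}_{f,g}(L)$ an isomorphism, giving
\[
\Psi_{f_T}^t(g_X^! L) \simeq (\id\ttimes g_{(t)})^!\,\Psi_f^t L.
\]
Since $\Psi_f^t L \in D_c(X_t\ttimes_{t_0} S_{(s)},\Lambda)$ by hypothesis (with $s$ the image of $t$ in $S$), it suffices to show that the sliced $!$-pullback of Construction \ref{c.localized} preserves $D_c$. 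Decomposing $g_{(t)}$ in $\cC$ as the strict localization of a smooth morphism composed with a closed immersion and using the compatibility of Construction \ref{c.localized} with composition, this reduces to the smooth case (where $(\id\ttimes-)^!$ is a shifted pullback by Proposition \ref{l.trace}) and the closed immersion case (handled via Proposition \ref{p.prodcons} applied to the complementary open immersion together with the localization triangle), both applicable thanks to the quasi-excellence of $S$ and hence of all strict Henselizations $S_{(s)}$ and $T_{(t)}$.

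For (2), the same mechanism promotes $\BC^{!t}_{f,g}(L)$ to an isomorphism, giving $(\id\ttimes g_{(t)})^!\,\Psi_f^s L \simeq \Psi_{f_T}^t(g_X^! L) \in D_c$ for every geometric point $t$ in the finite nonempty fibre $T\times_S s$. Pushing forward by $(\id\ttimes g_{(t)})_*$ and applying Lemma \ref{l.Psi!} yields $\Psi_f^s(g_{X*} g_X^! L) \in D_c$, since each $(\id\ttimes g_{(t)})_*$ along the finite morphism $g_{(t)}$ preserves $D_c$. The remaining task is to descend constructibility from $g_{X*} g_X^! L \simeq R\cHom(g_{X*}\Lambda,L)$ back to $L$ itself; I would do this by Noetherian induction on $S$, using generic flatness of $g$ and the torsion hypothesis on $\Lambda$ to exhibit $L$ as a direct summand of $g_{X*} g_X^! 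L$ on a dense open $U\subseteq S$ where $g_{X*}\Lambda$ is locally free of rank invertible in $\Lambda$, and handling the closed complement by induction. The descent step is the main obstacle: the compatibility of the summand splitting with the sliced nearby cycle functor on fibre-product topoi must be verified carefully, and along the closed complement one must keep track of how the stratification of $S$ interacts with the constructibility partitions on $X_s\ttimes S_{(s)}$.
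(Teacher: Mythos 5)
Your treatment of the smooth--base--change halves and of part (1) matches the paper: both reduce to Lemma \ref{l.Psibc!2}, to the identification $\Psi^t_{f_T}(g_X^!L)\simeq (\id\ttimes g_{(t)})^!\Psi_f^sL$, and (for (1)) to the fact that $(\id\ttimes g_{(t)})^!$ preserves $D^+_c$, which the paper simply quotes from Proposition \ref{p.prodcons} and which your smooth/closed-immersion decomposition reproves. The problem is in the constructibility half of part (2), and it is the step you yourself flag as the main obstacle.

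Two things go wrong there. First, your splitting of $L$ off of $g_{X*}g_X^!L$ requires a dense open of $S$ over which $g_{X*}\Lambda$ is locally free of rank \emph{invertible in} $\Lambda$; no such open need exist (take $g$ finite flat of constant degree $\ell$ and $\ell\Lambda=0$), so the trace-splitting is not available. Second, and more structurally, the object whose constructibility is at stake is $\Psi_f^sL$ on the fiber-product topos $X_s\ttimes S_{(s)}$, so the d\'evissage has to be performed by stratifying $S_{(s)}$ (the second factor of the fiber product), not by Noetherian induction on $S$: restricting $L$ to a closed subscheme $Z\subseteq S$ and forming $\Psi^s_{f_Z}$ does not compute the restriction of $\Psi_f^sL$ to $X_s\ttimes Z_{(s)}$ without a base-change hypothesis you do not have. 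The paper avoids both issues by proving a self-contained descent statement on the fiber-product topos (Lemma \ref{l.pullback}): if $R(\id_X\ttimes g)^!M\in D^+_c$ for $g$ separated surjective of finite type, then $M\in D^+_c$. Its proof runs a Noetherian induction on closed subsets of the base of the fiber product, uses a generic quasi-section of $g$ to produce a finite surjective $g'$ over a dense open of each stratum, and exploits
$(\id_X\ttimes g')_*R(\id_X\ttimes g')^!M\simeq R\cHom(p_2^*g'_*\Lambda,M)$
together with the fact that, after shrinking, $g'_*\Lambda$ is locally constant with \emph{nonzero finite free} stalks $\Lambda^n$, $n\ge 1$; then $M$ is locally a direct summand of $R\cHom(\Lambda^n,M)\simeq M^n$ with no invertibility of $n$ needed, and the strata are reassembled via the localization triangle and Proposition \ref{p.prodcons}. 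Applying this with $X$ replaced by $X_s$, the base replaced by $S_{(s)}$, and $M=\Psi_f^sL$ gives (2) directly; your detour through $\Psi_f^s(g_{X*}g_X^!L)$ via Lemma \ref{l.Psi!} is then unnecessary.
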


\begin{proof}
The commutation with smooth base change follows from Lemma \ref{l.Psibc!2}.
In (1) and (2), for every geometric point $t\to T$ above $s\to S$, we have
$\Psi^t_{f_T}(g_X^! L)\simeq (\id\ttimes g_{(t)})^!\Psi_f^s L$. Then (1)
follows from the fact that $(\id\ttimes g_{(t)})^!$ preserves $D^+_c$
(Proposition \ref{p.prodcons}), and (2) follows from the following lemma.
\end{proof}

\begin{lemma}\label{l.pullback}
Let $X$ be a quasi-excellent scheme. Let $g\colon T\to S$ be a separated
surjective morphism of finite type between quasi-excellent schemes. Let
$\Lambda$ be a Noetherian commutative ring such that $m\Lambda=0$ for some
$m$ invertible on $X$ and on $S$. Let $L\in D^+(X\ttimes S,\Lambda)$ such
that $R(\id_X\ttimes g)^! L\in D^+_c(X\ttimes T,\Lambda)$. Then $L\in
D^+_c$.
\end{lemma}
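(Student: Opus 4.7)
The plan is to prove Lemma \ref{l.pullback} by Noetherian induction on $S$: assume the lemma for all quasi-excellent proper closed subschemes of $S$ and prove it for $S$. The required cohomological lower bound on $L$ is already furnished by Lemma \ref{l.cd} (1), so the task is to show the cohomology sheaves of $L$ are constructible. I first reduce to finding a dense open $U \subseteq S$ with $L|_{X \ttimes U} \in D^+_c$. Given such a $U$ with closed complement $i_Z \colon Z \hookrightarrow S$ and open inclusion $j_U \colon U \hookrightarrow S$, consider the distinguished triangle
\[
(\id_X \ttimes i_Z)_* R(\id_X \ttimes i_Z)^! L \to L \to R(\id_X \ttimes j_U)_* (L|_{X \ttimes U}).
\]
The third term is in $D^+_c$ by Proposition \ref{p.prodcons} (where quasi-excellence enters), and the closed-immersion pushforward preserves $D^+_c$, so it suffices to show $R(\id_X \ttimes i_Z)^! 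L \in D^+_c(X \ttimes Z)$. This is handled by the induction hypothesis applied to $Z$ and $g_Z \colon T \times_S Z \to Z$, using the identification
\[
R(\id_X \ttimes g_Z)^! R(\id_X \ttimes i_Z)^! L \simeq R(\id_X \ttimes i_T)^! R(\id_X \ttimes g)^! L
\]
with $i_T\colon T \times_S Z \to T$ a closed immersion; the right side is constructible by Proposition \ref{p.prodcons}.

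To produce the dense open $U$, I apply \cite[Proposition 17.16.4]{EGAIV} to the surjection $g$ as in the proof of Lemma \ref{l.cd} (1): there exist a dense open $V \subseteq S$ and a finite surjective morphism $h\colon T' \to V$ that factors as $T' \to T \times_S V \to V$ through $g_V$. Since $T' \to T$ is the composition of a closed immersion into $T \times_S V$ with the open immersion $T \times_S V \to T$, the hypothesis on $R(\id_X \ttimes g)^! L$ together with Proposition \ref{p.prodcons} yields $R(\id_X \ttimes h)^! L|_{X \ttimes V} \in D^+_c$. Over each generic point $\eta$ of $V$, the fiber $T'_\eta$, which I may take reduced (passage to the reduction is invisible to étale topoi and hence to $R^!$), is a finite product of finite field extensions of $\kappa(\eta)$; extracting the maximal separable subalgebra of each factor yields a factorization $T'_\eta \to T''_\eta \to \eta$ with the first map purely inseparable and the second finite étale. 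Spreading this out and shrinking $V$ to a dense open $U$, I obtain a factorization $h|_{T'|_U} = h' \circ \pi$ with $\pi \colon T'|_U \to T''$ a universal homeomorphism and $h' \colon T'' \to U$ finite étale surjective.

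Because $\pi$ is a universal homeomorphism, $\pi_\et$ is an equivalence of topoi by \cite[VIII, Th\'eor\`eme 1.1]{SGA4}, so the induced $\id_X \ttimes \pi$ is an equivalence of fiber-product topoi and $R(\id_X \ttimes \pi)^!$ is the identity under this equivalence; hence $R(\id_X \ttimes h')^! L|_{X \ttimes U} \in D^+_c$. For the finite étale $h'$, Proposition \ref{l.trace} (in dimension zero) identifies $R(\id_X \ttimes h')^!$ with $(\id_X \ttimes h')^*$, so $(\id_X \ttimes h')^* L|_{X \ttimes U} \in D^+_c$; étale descent of constructibility along the finite étale surjection $h'$ (reducing to the Galois case where $L|_{X \ttimes U}$ is recovered as $\Gal$-invariants of its pullback) then produces $L|_{X \ttimes U} \in D^+_c$. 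The main obstacle is the middle step: establishing the universal-homeomorphism-followed-by-finite-étale factorization of $h$ over a dense open of $V$, and carefully transferring the standard scheme-theoretic identifications of $R^!$ for each piece of this factorization to the fiber-product topos $X \ttimes (-)$; everything else is a formal assembly of Proposition \ref{p.prodcons}, Lemma \ref{l.cd}, and the induction hypothesis.
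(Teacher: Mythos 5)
Your Noetherian induction framing and the appeal to \cite[Proposition~17.16.4]{EGAIV} for a finite surjective quasi-section match the paper's opening moves; the substantial divergence comes after you have the finite surjective $h\colon T'\to V$. You then try to factor $h$ (generically, after shrinking) as a universal homeomorphism followed by a finite \'etale surjection and handle each piece by hand, flagging this factorization and its transfer to fiber-product topoi as "the main obstacle". The paper sidesteps all of this. With $g'\colon T'\to U$ finite surjective and $L'=R(\id_X\ttimes ij)^!L$, it invokes the adjunction formula from Corollary~\ref{c.pj},
\[
R\cHom\bigl((\id_X\ttimes g')_*\Lambda,\,L'\bigr)\simeq (\id_X\ttimes g')_*R(\id_X\ttimes g')^!L',
\]
observes that the right-hand side is in $D^+_c$ by the hypothesis (since $T'\to T$ is quasi-finite), and that after shrinking $U$ the sheaf $g'_*\Lambda$ is locally constant with nonzero finite free stalks; from $R\cHom(p_2^*g'_*\Lambda,L')\in D^+_c$ one reads off $L'\in D^+_c$ directly (locally $g'_*\Lambda\simeq\Lambda^n$, $n\ge 1$, so $L'^{\oplus n}\in D^+_c$). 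No factorization, no Galois invariants.

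Two concrete cautions about your version. First, the universal-homeomorphism-then-finite-\'etale factorization over a dense open is a real spreading-out argument (factor residue fields separably/purely-inseparably at generic points, then spread out) that you do not carry out; until it is, the proof has a gap. Second, the phrase "recovered as $\Gal$-invariants of its pullback" is misleading at the level of derived categories (homotopy fixed points are not naive invariants unless the order is invertible in $\Lambda$). What you actually need, and what also underlies the last step of the paper's proof, is simply that constructibility of a sheaf on $X\ttimes_s U$ can be tested after pullback by a finite \'etale surjection of $U$; that statement is true and should be stated in that form rather than via derived invariants. Finally, a small bookkeeping point: Proposition~\ref{p.prodcons} as stated preserves $D^b_c$ and $D_\cft$, not $D^+_c$; the extension to $D^+_c$ for the functors you use ($Rj_*$, closed $Ri^!$, quasi-finite $Rg^!$) follows from their finite cohomological dimension (Gabber's theorem, cf.\ Remark~\ref{r.cd}), but this should be said.
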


\begin{proof}
The proof is similar to that of Lemma \ref{l.cd} (1). Let $S'\subseteq S$ be
the smallest closed subset such that $L|_{X\ttimes (S-S')}$ belongs to
$D^+_c$. Assume $S'$ nonempty. Consider the diagram \eqref{e.cddiag} and let
$L'=R(\id_X\ttimes ij)^!L$. We have $R\cHom((\id_X\ttimes
g')_*\Lambda,L')\simeq (\id_X\ttimes g')_*R(\id_X\ttimes g')^! L'\simeq
(\id_X\ttimes g')_*R(\id_X\ttimes i_T h)^!R(\id_X\ttimes g)^!L$, which
belongs to $D^+_c$ by the assumptions. Recall that $g'$ is finite and
surjective. The stalks of $g'_*\Lambda$ are nonzero finite free
$\Lambda$-modules. Up to shrinking $U$, we may assume $g'_*\Lambda$ locally
constant. Then $L'\in D^+_c$. Let $S''=S'-U$ and let $u\colon U\to S-S''$
and $v\colon S-S'\to S-S''$ be the inclusions. The distinguished triangle
\[(\id_X\ttimes u)_* L'\to L|_{X\ttimes (S-S'')}\to R(\id_X\ttimes
v)_* L|_{X\ttimes (S-S')}\to (\id_X\ttimes u)_* L'[1]
\]
implies $L|_{X\ttimes (S-S'')}\in D^+_c$, contradicting the minimality of
$S'$. Thus $S'$ is empty and $L\in D^+_c$.
\end{proof}

\begin{proof}[Proof of Theorem \ref{t.good}]
We prove first the case $L=i_{X*}M$, where $i\colon F\subseteq S$ is a
closed immersion with $F$ reduced and there exists a modification $m\colon
F'\to F$ such that $(f_{F'},m^!_X M)$ is weakly $\Psi$-good. The proof is
similar to \cite[Section 4.2]{Org}. We construct a commutative diagram of
schemes
\[\xymatrix{G\ar[r]^q\ar[d] & F_{T}\ar[r]^{i_{T}}\ar[d]^{\pi_F} & T\ar[d]^\pi & T'\ar[l]\ar[d]^r\\
F'\ar[r]^m & F\ar[r]^i & S & S'\ar[l]_p,}
\]
where $p$ and $\pi_\red\colon T\to S_\red$ are modifications, $q$ and $r$
are finite surjective, and the square in the middle is Cartesian, as
follows. Applying \cite[Lemme 4.3]{Org} to $i_\red m$, we get the left and
middle squares. Applying \cite[Lemme 3.2]{Org} to $\pi$, we get the right
square. By Lemma \ref{l.wg2} (1), $(f_{G},q^!_X \pi_{X_F}^!M)$ is weakly
$\Psi$-good. By Lemma \ref{l.wg2} (2), $(f_{F_{T}},\pi_{X_F}^!M)$ is weakly
$\Psi$-good. By Remark \ref{r.weakgood}, $(f_{T},\pi_X^!L)$ is weakly
$\Psi$-good, as $\pi_X^!L\simeq i_{X_{T}*}p_{X_F}^!M$ by base change. By
Lemma \ref{l.wg2} (1), $(f_{T'},r_X^!p_X^!L)$ is weakly $\Psi$-good. By
Lemma \ref{l.wg2} (2), $(f_{S'},p_X^!L)$ is weakly $\Psi$-good.

For the general case, by Orgogozo's theorem (Theorem \ref{t.Org}), there
exists a modification $g\colon T\to S$ such that $(f_{T},g^*_X L)$ is
$\Psi$-good, hence weakly $\Psi$-good. Let $j\colon U\to T$ be the
complement of the exceptional locus $S_1\subseteq T$. Then $\Cone(j_!
(L|_U)\to g^*_X L)$ and $\Cone(j_! (L|_U)\to g^!_X L)$ are supported on
$X\times_S S_1$. By the special case above, we are reduced to proving the
theorem for $S_1$.

Repeating this process, we obtain a sequence $S=S_0\leftarrow S_1\leftarrow
S_2\leftarrow \dotsm$, each $S_{i+1}$ being the exceptional locus of a
modification of $S_i$. It remains to show that $S_n$ is empty for $n\gg 0$.
Assume the contrary. Up to replacing $S$ by an \'etale cover, we may assume
that $S$ admits a dimension function $\delta_0$ \cite[XIV Th\'eor\`eme
2.3.1]{ILO}. We equip $S_n$ with the induced dimension function $\delta_n$
\cite[XIV Corollaire 2.5.2]{ILO}. There exists a sequence of generic points
$\eta_i$ of $S_i$ such that $\eta_i$ specializes to the image of
$\eta_{i+1}$ in $S_i$. Then $\delta_{i+1}(\eta_{i+1})<\delta_i(\eta_i)$, so
that $\delta_i(\eta_i)\to -\infty$ as $i\to \infty$. Let $\eta'_i$ denote
the image of $\eta_i$ in $S$. Then $\delta_0(\eta'_i)\le \delta_i(\eta_i)$.
Thus $\delta(\eta'_i)\to -\infty$. Each $\eta'_{i+1}$ is a specialization of
$\eta'_i$. This contradicts the assumption that $S$ is Noetherian.
\end{proof}

\begin{remark}\label{r.Gabber}
We originally proved Theorem \ref{t.good} under the additional assumption
that $S$ is finite-dimensional. The argument of dimension function above
which allows to remove this assumption is due to Gabber.
\end{remark}

\section{Nearby cycles and duality}\label{s.3+}
In this section, we prove Theorem \ref{t.Psis} on the commutation of the
sliced nearby cycle functor with duality, and deduce Theorem \ref{t.Psisc}.
We then give applications to local acyclicity (Corollary \ref{c.DLA}) and
singular support (Corollary \ref{c.ss}).

\subsection{Duality}\label{s.5.1}
Let $S$ be a coherent scheme and let $\Lambda$ be a torsion commutative
ring. We fix $K_S\in D(S,\Lambda)$, not necessarily dualizing. For $a\colon
X\to S$ separated of finite type, we take $K_X=Ra^! K_S$. For a point $s$ of
$S$ with values in a field, we take $K_{X_s\ttimes_{s_0}
S_{(s)}}=R(a_s\ttimes_{s_0} \id)^! K_{S_{(s)}}$, where $K_{S_{(s)}}$ is the
restriction of $K_S$. Note that $\Psi_{\id_S}^s K_S\simeq K_{S_{(s)}}$.
Applying \eqref{e.Psis4} to $b=\id_S$ and $K_S$, we obtain $\Psi_a^s K_X\to
K_{X_s\ttimes_{s_0} S_{(s)}}$. Composing with \eqref{e.PsiHomS}, we get a
natural transformation
\begin{equation}\label{e.A}
A_{a}^s\colon \Psi_a^s D_{X} \to D_{X_s\ttimes_{s_0}
S_{(s)}}\Psi_a^s,
\end{equation}
where $D_X=R\cHom(-,K_X)$, $D_{X_s\ttimes_{s_0}
S_{(s)}}=R\cHom(-,K_{X_s\ttimes_{s_0} S_{(s)}})$.

By Remark \ref{r.Psiu}, $A_a^s$ is the composition of $i^*A_a$ with
\begin{equation}\label{e.iD}
i^*D_{X\atimes_S S}\to D_{X_s\ttimes_{s_0}
S_{(s)}} i^*,
\end{equation}
where $i\colon X_s\ttimes_{s_0} S_{(s)}\to X\atimes_S S$ and
\[A_a\colon \Psi_aD_{X}\simto D_{X\atimes_S S} \Psi_a\]
is the trivial duality. Here $D_{X\atimes_S S}\colonequals
R\cHom(-,K_{X\atimes_S S})$, $K_{X\atimes_S S}\colonequals \Psi_a K_X$.

In the rest of Subsection \ref{s.5.1}, let $S$ be a Noetherian scheme and
let $\Lambda$ be a Noetherian commutative ring with $m\Lambda=0$ for some
$m$ invertible on $S$. In this case, $K_{X_s\ttimes_{s_0} S_{(s)}}\simeq
(Ra_s^!\Lambda)\boxtimes^L K_{S_{(s)}}$ by Corollary \ref{c.Kunnup}.

\begin{theorem}\label{t.Psis}
Assume $S$ excellent. Let $a\colon X\to S$ be a separated morphism of finite
type and let $K\in D^b_c(S,\Lambda)$. Let $L\in D^-_c(X,\Lambda)$ such that
$(a,L)$ is $\Psi$-good and $\Psi_a(D_XL)$ commutes with smooth base change.
Then, for every point $s$ of $S$ with values in a field, the map $A_a^s(L)$
is an isomorphism.
\end{theorem}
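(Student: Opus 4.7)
The plan is to factor $A^s_a(L)$ through the trivial duality $A_a\colon \Psi_aD_X\simto D_{X\atimes_S S}\Psi_a$, an isomorphism by Remark \ref{r.Psiu}: by the construction just before the theorem, $A^s_a$ equals the composite of $i^*A_a$ with the natural transformation \eqref{e.iD} evaluated at $\Psi_a L$. It therefore suffices to show that the canonical map
\[i^*R\cHom_{X\atimes_S S}(\Psi_a L,K_{X\atimes_S S})\to R\cHom_{X_s\ttimes_{s_0} S_{(s)}}(\Psi_a^s L,K_{X_s\ttimes_{s_0} S_{(s)}})\]
is an isomorphism. This single map bundles together the commutation of $i^*$ with $R\cHom$ on $\Psi_a L$ and the identification $i^*K_{X\atimes_S S}=\Psi_a^s K_X\to K_{X_s\ttimes_{s_0} S_{(s)}}$ of dualizing data given by \eqref{e.Psis4}.

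Next I would reduce to $S=S_{(s)}$ strictly Henselian. The strict Henselization $S_{(s)}\to S$ is a cofiltered limit of étale (hence smooth) morphisms of finite type; since both $\Psi_a L$ (by $\Psi$-goodness) and $\Psi_a D_X L$ (by hypothesis) commute with smooth base change, and $\Psi_a$ has finite cohomological amplitude by \cite[Proposition 3.1]{Org}, a standard limit argument reduces the claim to the strictly Henselian case. In this case Theorem \ref{t.Org}(2) applied to the $\Psi$-good pair $(a,L)$ yields $\Psi_a L\in D_c(X\atimes_S S,\Lambda)$, and truncation (again using the finite cohomological amplitude of $\Psi_a$) lets us further assume $L\in D^b_c$ and $\Psi_a L\in D^b_c$.

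The main step is then a devissage on $L$. Noetherian induction on the support of $L$, combined with the open/closed distinguished triangle $j_!(L|_U)\to L\to i_*(L|_Z)$ on an open $U\subseteq X$ over which the cohomology sheaves of $L$ are locally constant, and finite étale trivialization of those sheaves, reduces the isomorphism claim to the generating case $L=j_!\Lambda$ for $j\colon U\to X$ an étale morphism of finite type with $U$ affine. For such $L$, Remark \ref{r.Psis0}(1) gives $\Psi_a^s j_!\Lambda\simeq (j_s\ttimes_{s_0}\id)_!\Psi_{aj}^s\Lambda$ and $D_Xj_!\Lambda\simeq Rj_* j^*K_X$, so via \eqref{e.1.13.2} the target of $A^s_a(j_!\Lambda)$ is identified with $R(j_s\ttimes_{s_0}\id)_*D_{U_s\ttimes_{s_0} S_{(s)}}\Psi_{aj}^s\Lambda$. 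A Nagata compactification $aj=p\bar{\jmath}$ with $\bar{\jmath}$ an open immersion and $p$ proper, together with the proper case of base change for $\Psi$ (Remark \ref{r.Psis0}(2)), a stratification of $\bar U$ into smooth pieces over $S$, the smooth trace isomorphism (Lemma \ref{l.trace2}), and the K\"unneth formula for duality (Corollary \ref{c.DKunn}) then settle the generating case.

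I expect the principal obstacle to lie in propagating the two hypotheses through the devissage: $(a,L)$ is fully $\Psi$-good, closed immersions are integral so Lemma \ref{l.Psibc}(1) transports $\Psi$-goodness cleanly through $i_*$, but the weaker smooth base change hypothesis on $\Psi_aD_X L$ must be propagated through $Rj_*$ as well, for which $\Psi$ does not commute a priori. Reconciling this requires careful use of the compatibility diagrams of Subsection \ref{s.4.3} together with the biduality results of Section \ref{s.fp}, and may well call on Theorem \ref{t.good} (the $!$-pullback analogue of Orgogozo) to ensure that after the various reductions the relevant duals remain constructible and their nearby cycles behave correctly.
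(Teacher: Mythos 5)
Your opening moves match the paper's: you correctly factor $A_a^s$ through the trivial duality $A_a$ and the map \eqref{e.iD}, and the reduction to the strict localization $S_{(s)}$ (via Lemma~\ref{l.local}) is the paper's first step as well. After that, however, the proposal departs from the paper's argument and runs into a genuine obstruction.

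The central gap is the dévissage. You propose Noetherian induction on the support plus open/closed triangles, trivialization by finite \'etale covers, and then a compactification followed by ``a stratification of $\bar U$ into smooth pieces over $S$'' to finish the generating case $L=j_!\Lambda$. Over an arbitrary excellent base $S$ there is no such stratification: a scheme of finite type over $S$ cannot, in general, be cut into locally closed pieces each smooth over $S$. This is precisely why the paper invokes plurinodal alterations (Orgogozo's Lemme~4.7, built on de~Jong) at the crucial step, reducing to a tower $X\to Y\to S$ with one curve at a time, and then handles the one-dimensional fibre via the smooth/\'etale locus and the cusp locus separately. Your proposal has no substitute for this geometric input, and the generating case does not reduce to the smooth trace isomorphism and K\"unneth alone.

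Using alterations of $S$ is itself not free: one needs to know that the conclusion descends along proper surjective base change. The paper establishes this at the outset via the conservativity of the family of $(\id\ttimes g_{(t)})^!$ (Lemma~\ref{l.cd}(2)), the compatibility diagram of Lemma~\ref{l.Ag}, and Remark~\ref{r.diagram}. This descent mechanism does double duty: it permits modifications of $S$ both for the plurinodal alteration and to arrange, via Orgogozo's theorem and Theorem~\ref{t.good}, that the hypotheses hold for the auxiliary complexes that appear in the dévissage (e.g.\ $u_!\Lambda$, the naive truncation $\sigma^{\le-1}L$, $v'_*\Lambda$). You flag the propagation-of-hypotheses problem at the end, correctly, but without the descent lemma there is no way to ``modify $S$'' in the middle of the induction, so the acknowledged difficulty is not a loose end to be tidied up but a structural hole. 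Finally, to handle $L\in D^-_c$ rather than $D^b_c$, the paper runs an additional induction on the degree $n$ below which $\Cone(A_a^s(L))$ vanishes; a simple truncation as you suggest is not enough, because the naive truncation $\sigma^{\le -1}L$ again has to be shown to satisfy the hypotheses, which brings you back to the same propagation issue. In sum, the proposal is correct up to the reduction to a strict Henselization, but the main body of the argument would need to be replaced by the paper's triple induction on $\dim S$, $\dim X_\eta$, and $n$, together with the proper-surjective descent lemma and the alteration machinery.
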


We refer to Example \ref{e.Psigood} for examples of $\Psi$-good pairs. As
$D_X=R\cHom(-,a^! K_S)$, the theorem can be seen as a dual of the projection
formula for $R\Psi$ \cite[Proposition A.6]{IZ}: if $\Psi_a L$ commutes with
finite base change, then
\[
    R\Psi_a L\otimes^L p_2^* M \simto R\Psi_a(L\otimes^L a^* M).
\]

\begin{remark}\leavevmode\label{r.Psis}
\begin{enumerate}
\item If $(a,L)$ satisfies the assumptions of the theorem, then for any
    morphism $g\colon T\to S$ separated and of finite type, the same holds
    for $(a_T,g_X^* L)$. Indeed, $R\Psi_{a_T}(D_{X_T}g_X^* L)$ commutes
    with smooth base change by Lemma \ref{l.Psibc!2} (1), since
    $D_{X_T}g_X^* L\simeq g_X^! D_X L$.

\item If $X\xrightarrow{f} Y \xrightarrow{b} S$ are separated morphisms of
    finite type with $f$ proper and if $(bf,L)$ satisfies the assumptions
    of the theorem, the same holds for $(b,f_*L)$ by Lemma \ref{l.Psibc}
    (3).
\end{enumerate}
\end{remark}

For the proof of Theorem \ref{t.Psis}, we need the following compatibilities
between $A_a^s$ and the constructions of Subsections \ref{s.4.3} and
\ref{s.4.4}.

\begin{lemma}\label{l.Ag}
For any separated morphism $g\colon T\to S$ of finite type and any geometric
point $t\to T$, the diagram
\begin{equation}\label{e.Ag}
\xymatrix{(\id\ttimes g_{(t)})^!\Psi_a^t D_X L\ar[r]^{\BC^{!t}_{a,g}(D_X L)}\ar[d]_{(\id\ttimes g_{(t)})^!A_a^t(L)} &
\Psi^t_{a_T}g_X^! D_X L\ar@{-}[r]^\sim & \Psi^t_{a_T} D_{X_T} g_X^* L\ar[d]^{A_{a_T}^t(g_X^* L)}\\
(\id\ttimes g_{(t)})^!D_{X_t\ttimes S_{(t)}}\Psi_a^tL\ar[r]^{\delta\Psi_a^tL} & D_{X_t\ttimes T_{(t)}}(\id\ttimes g_{(t)})^* \Psi_a^t L
& D_{X_t\ttimes T_{(t)}}\Psi^t_{a_T}g_X^* L\ar[l]_{D_{X_t\ttimes T_{(t)}}\BC^t_{a,g}(L)}}
\end{equation}
commutes. Here $\delta\colon (\id\ttimes g_{(t)})^!D_{X_t\ttimes S_{(t)}}\to
D_{X_t\ttimes T_{(t)}}(\id\ttimes g_{(t)})^*$ is \eqref{e.localshr}.
\end{lemma}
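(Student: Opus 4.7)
I would prove the commutativity of \eqref{e.Ag} by breaking $A_a^t$ and $A_{a_T}^t$ into two steps, following the decomposition explained after \eqref{e.A}: $A_a^t$ is the composition of $i^*(A_a)$ with the restriction-of-duality map \eqref{e.iD}, where $A_a\colon \Psi_a D_X \simto D_{X\atimes_S S}\Psi_a$ is the trivial duality coming from $\Psi_a^* R\Psi_a\simeq \id$ (Remark \ref{r.Psiu}). This decomposes each vertical arrow of \eqref{e.Ag} into two composites, splitting the pentagon into two smaller sub-diagrams, each of which can be analyzed separately.

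The first sub-diagram lives entirely on the vanishing topoi $X\atimes_S S$ and $X_T \atimes_T T$, and asserts that $A_a$ is intertwined with $A_{a_T}$ by the unsliced base change map $\BC^!_{a,g}\colon (g_X\atimes_g g)^! \Psi_a \to \Psi_{a_T} g_X^!$. This commutativity is essentially formal: since $A_a$ is built from $\Psi_a^* R\Psi_a\simeq \id$ and the naturality of $R\cHom$, it suffices to check that $\BC^!_{a,g}$ is compatible with the counit of $R\Psi_a$ and with the canonical isomorphism $g_X^! D_X \simeq D_{X_T} g_X^*$, which is immediate from the construction (and already encoded in \eqref{e.bccomp2} for the adjoint statement).

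The second sub-diagram is the heart of the lemma: it compares the restriction-to-slice map \eqref{e.iD} with the sliced $!$-base change $\BC^{!t}_{a,g}$ and the internal-Hom compatibility $\delta$ of \eqref{e.localshr}. By Construction \ref{c.bc!}, $\BC^{!t}_{a,g}$ is built from a factorization of (a neighborhood of $t$ in) $g$ as $g''\circ g'$ with $g'$ a closed immersion and $g''$ smooth, and $\delta$ decomposes similarly by construction. So I would reduce to the two cases $g$ smooth and $g$ a closed immersion. In the smooth case (of relative dimension $d$), Lemma \ref{l.trace2} identifies $\BC^{!t}_{a,g}$ with $\BC^t_{a,g}(d)[2d]$ via the trace map, and the corresponding $\delta$ is the obvious restriction isomorphism; the claim then reduces to naturality of the trivial duality under ordinary pullback. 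In the closed-immersion case, I would use Remark \ref{r.ggg} to express $\BC^{!t}_{a,g}$ in terms of the isomorphisms $\beta'^{-1}$ and $\alpha'^{-1}$ of Remark \ref{r.bc!}, and pass to the right adjoint $(\id\ttimes g_{(t)})_*$ via Construction \ref{c.adj}; using Lemma \ref{l.Psi!}, the claim is converted into the already-checked compatibility of \eqref{e.bccomp2} together with the naturality of $\BC^t_{a,g}$.

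The main obstacle I expect is bookkeeping in the closed-immersion case, where $\delta$ specializes to the non-trivial isomorphism \eqref{e.1.13.1}, and the verification simultaneously involves the adjunctions $R(\id_X\atimes_S g)_* \dashv R(\id_X\atimes_S g)^!$ on the vanishing topos (Lemma \ref{l.adjtriv}) and $(\id\ttimes g_{(t)})_*\dashv (\id\ttimes g_{(t)})^!$ on the slice (Lemma \ref{l.Psi!}). The cleanest route should be to work systematically with right adjoints, turning every $\BC^!$ into (the inverse of) the corresponding $\BC$ on $*$-pushforwards, where all the relevant naturalities have already been recorded in Lemma \ref{l.Psibc}.
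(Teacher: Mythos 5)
Your plan has the same skeleton as the paper's argument: reduce at the outset to $g$ smooth or a closed immersion, decompose $A_a^t$ as $\eqref{e.iD}\circ i^*A_a$ (and $A_{a_T}^t$ likewise), and use Remark~\ref{r.ggg} to write $\BC^{!t}_{a,g}$ as $i_{T,t}^*\BC^!_{a,g}\circ\gamma$. The paper then assembles a single $3\times 4$ grid by introducing an intermediate natural transformation $\epsilon\colon (g_X\atimes_g g)^!D_{X\atimes_S S}\to D_{X_T\atimes_T T}(g_X\atimes_g g)^*$ (built from a $\zeta$-step, which is the restriction map for $g$ smooth and \eqref{e.1.13.1} for $g$ a closed immersion, followed by $\BC^!_{a,g}(K_X)$), and observes that every interior cell commutes by construction. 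So your route is genuinely the same decomposition of the pentagon; the differences are in how the interior cells are dispatched.

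Two imprecisions you should fix. First, a placement issue: the reduction to $g$ smooth or a closed immersion cannot be deferred to your ``second sub-diagram.'' Your first sub-diagram already uses the unsliced base change $\BC^!_{a,g}\colon (g_X\atimes_g g)^!\Psi_a\to\Psi_{a_T}g_X^!$, which is only defined in these two cases (Remark~\ref{r.ggg} even notes explicitly that extending it to general $g$ is not available), so the case split must come before either sub-diagram is written. Second, the citation of \eqref{e.bccomp2} as the ``already-checked adjoint statement'' looks misdirected. That diagram records how $\BC_{f,r}$ and $\BC_{bf,r}$ relate when pushing forward along a $b$ in the chain $X\to Y\to S$ over a changing base; it has no duality content. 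What actually certifies the nontrivial cells here is the formal construction of $\epsilon$: the upper middle/right cell reduces to the observation that $A_a$, $A_{a_T}$, the canonical isomorphism $g_X^!D_X\simeq D_{X_T}g_X^*$, and $\BC^!_{a,g}$ are all built from adjunctions of $R\cHom$ and the counit $\Psi^*R\Psi\simeq\id$, and the lower left cell reduces to the compatibility of $\delta$ with $\gamma$ and $\epsilon$. Your idea of passing to the left adjoints $(\id\ttimes g_{(t)})_*$, $g_{X*}$ in the closed-immersion case is a viable substitute for the paper's direct check, but the target statement you land on is the naturality/compatibility of $\Psi^t$-pushforward and $\BC^t$, not \eqref{e.bccomp2}; if you want the adjoint route you should cite Lemma~\ref{l.Psi!} and Remark~\ref{r.bc!} and track the $\alpha'$, $\beta'$ explicitly, rather than pointing at \eqref{e.bccomp2}.
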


\begin{proof}
We may assume that $g$ is smooth or a closed immersion. By Remark
\ref{r.ggg}, with the notation of \eqref{e.topoi}, the diagram decomposes
into
\[\xymatrix@C=1em{(\id\ttimes g_{(t)})^! i^* \Psi_a D_X  \ar[r]^\gamma_\sim\ar[d]^{A_a}_{\simeq}
& i_{T,t}^*(g_X\atimes_g g)^! \Psi_a D_X \ar[r]^{\BC^!_{a,g}} \ar[d]^{A_a}_\simeq
& i_{T,t}^* \Psi_{a_T} g_X^! D_X  \ar@{-}[r]^\sim
& i_{T,t}^* \Psi_{a_T} D_{X_T} g_X^* \ar[d]^{A_{a_T}}_\simeq\\
(\id\ttimes g_{(t)})^! i^* D_{X\atimes_S S} \Psi_a  \ar[r]^\gamma_\sim\ar[d]^{\eqref{e.iD}}
& i_{T,t}^*(g_X\atimes_g g)^! D_{X\atimes_S S} \Psi_a  \ar[r]^\epsilon
& i_{T,t}^* D_{X_T\atimes_T T}(g_X\atimes_g g)^*\Psi_a \ar[d]^{\eqref{e.iD}}
& i_{T,t}^* D_{X_T\atimes_T T} \Psi_{a_T}g_X^*\ar[d]^{\eqref{e.iD}}\ar[l]_-{\BC_{a,g}}\\
(\id\ttimes g_{(t)})^! D_{X_t\ttimes S_{(t)}} i^*\Psi_a  \ar[r]^\delta
& D_{X_t\ttimes T_{(t)}} (\id\ttimes g_{(t)})^* i^* \Psi_a \ar@{-}[r]^\sim
& D_{X_t\ttimes T_{(t)}} i_{T,t}^*(g_X\atimes_g g)^* \Psi_a
& D_{X_t\ttimes T_{(t)}} i_{T,t}^*\Psi_{a_T} g_X^*,\ar[l]_-{\BC_{a,g}}}
\]
where $\epsilon\colon (g_X\atimes_g g)^! D_{X\atimes_S S}\to D_{X_T\atimes_T
T}(g_X\atimes_g g)^*$ is given by
\begin{multline*}
(g_X\atimes_g g)^! R\cHom(M,\Psi_a K_X) \xrightarrow{\zeta} R\cHom((g_X\atimes_g g)^*M, (g_X\atimes_g g)^!\Psi_a K_X) \\
\xrightarrow{\BC^!_{a,g}(K_X)} R\cHom((g_X\atimes_g g)^*M,\Psi_{a_T} g_X^! K_X).
\end{multline*}
Here $\zeta$ is the restriction map for $g$ smooth and the canonical
isomorphism for $g$ a closed immersion (in this case, as in the proof of
Lemma \ref{l.Psi!}, $g_X\atimes_{g}g$ can be identified with the closed
immersion $\id_X\atimes_S g$). The inner squares commute by construction.
\end{proof}

\begin{remark}\label{r.diagram}
We have $R(\id\ttimes g_{(t)})^! K_{X_t\ttimes S_{(t)}}\simeq K_{X_t\ttimes
T_{(t)}}$ by Remark \ref{r.localdesc}. Thus, by Lemma \ref{l.localshr},
$\delta$ is an isomorphism on $D^-_c$. It follows that for $L$ as in Theorem
\ref{t.Psis}, the horizontal arrows of \eqref{e.Ag} are isomorphisms.
\end{remark}

\begin{lemma}\label{l.Af}
Let $X\xrightarrow{f} Y\xrightarrow{b} S$ be morphisms of schemes with $Y$
coherent and $f$ separated of finite type. For any geometric point $s\to S$,
the following diagrams commute:
\begin{gather}\label{e.Afd}
\xymatrix{\Psi_b^s D_Y f_!L\ar[d]_{A_b^s(f_!L)}\ar@{-}[r]^{\sim}
& \Psi^s_b f_* D_XL\ar[r]^{\eqref{e.Psis2}} & (f_s\ttimes \id)_*\Psi_{bf}^s D_XL\ar[d]^{(f_s\ttimes \id)_*A_{bf}^s(L)}\\
D_{Y_s\ttimes S_{(s)}}\Psi^s_b f_!L\ar[r]^{\eqref{e.Psis3}} & D_{Y_s\ttimes S_{(s)}}(f_s\ttimes \id)_! \Psi^s_{bf}L\ar@{-}[r]^{\eqref{e.1.13.2}}_\sim & (f_s\ttimes \id)_* D_{X_s\ttimes S_{(s)}}\Psi_{bf}^sL}\\
\label{e.Afu}\xymatrix{\Psi_{bf}^s D_Y f^* M\ar[d]_{A_{bf}^s(f^*M)}\ar@{-}[r]^\sim &\Psi_{bf}^s f^!D_Y M\ar[r]^{\eqref{e.Psis4}}
& (f_s\ttimes \id)^! \Psi^s_bD_X M\ar[d]^{(f_s\ttimes \id)^!A_b^s(M)}\\
D_{X_s\ttimes S_{(s)}} \Psi_{bf}^s f^* M\ar[r]^{\eqref{e.Psis1}} & D_{X_s\ttimes S_{(s)}}(f_s\ttimes \id)^* \Psi^s_b M\ar@{-}[r]^{\eqref{e.1.13.1}}_\sim &(f_s\ttimes \id)^! D_{Y_s\ttimes S_{(s)}} \Psi^s_b M.}
\end{gather}
\end{lemma}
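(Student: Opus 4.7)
The plan is to verify the two diagrams by a direct diagram chase. By Remark \ref{r.Psiu}, the map $A_a^s$ is the composite of $i^*A_a$ (for the tautological isomorphism $A_a$ of Remark \ref{r.Psiu}, formed with respect to the ad hoc dualizing object $K_{X\atimes_S S}\colonequals \Psi_a K_X$) with \eqref{e.iD} and with the map induced by \eqref{e.Psis4} applied to $b=\id_S$ and to $K_S$.

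First I would observe that the two diagrams are mutually adjoint via Construction \ref{c.adj}. Indeed, the horizontal natural transformations are pairwise adjoint: \eqref{e.Psis3} and \eqref{e.Psis4} by their construction in \eqref{e.Psis3d}--\eqref{e.Psis4d}, the base change pair \eqref{e.Psis2}/\eqref{e.Psis1} as adjoints for the Cartesian square in \eqref{e.4.3}, and \eqref{e.1.13.2}/\eqref{e.1.13.1} by Corollary \ref{c.pj}. Moreover, since $A_a^s$ arises from a form transformation in the sense of Remark \ref{r.dual}, the vertical arrows $A_b^s$ and $A_{bf}^s$ correspond to each other under these adjunctions. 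Hence it suffices to verify \eqref{e.Afd}.

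To prove \eqref{e.Afd}, I would split the outer rectangle into two horizontal strata using the factorization of $A^s$ just described. The upper stratum lives on the unsliced topos $Y\atimes_S S$ and commutes because the natural transformation $(f\atimes_S\id_S)^*\Psi_b\to \Psi_{bf}f^*$ of \eqref{e.Psis01} is a morphism of right-lax symmetric monoidal functors whose pushforward adjoint \eqref{e.Psis02} is automatically compatible with the tautological $A$ and with \eqref{e.1.13.2}. The lower stratum, obtained after applying $i^*$, reduces to the compatibility of the base change map \eqref{e.Psis02} with \eqref{e.1.13.2} and with the pushforward $(f\atimes_S\id_S)_*$ on fiber products of topoi, which is one of the standard formulas of Section \ref{s.fp}.

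The main obstacle I anticipate is bookkeeping rather than content: the expanded diagram contains about a dozen cells, each commuting for a small formal reason (naturality, lax monoidality, adjointness of a base change map, or one of the identities \eqref{e.1.13.1}--\eqref{e.1.13.2}). No single cell is difficult, but the need to track precisely which adjunction is in use at each step makes this kind of verification tedious; the adjunction reduction in the first step at least halves the work.
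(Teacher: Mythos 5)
Your direct verification of \eqref{e.Afd} --- factoring $A^s$ through the trivial duality $A$ on the vanishing topos and the restriction map \eqref{e.iD}, then decomposing the rectangle into cells each commuting by construction --- is essentially the paper's argument (though your appeal to \eqref{e.Psis01} for the upper stratum is misplaced: the upper stratum of \eqref{e.Afd} rests on \eqref{e.Psis02}, on $\alpha'\colon (f\atimes_S\id)_!\Psi_{bf}\to\Psi_bf_!$, and on the exchange $D_{Y\atimes_SS}(f\atimes_S\id)_!\simeq(f\atimes_S\id)_*D_{X\atimes_SS}$ built from \eqref{e.1.13.2} and $\alpha^{-1}(K_Y)$; the map \eqref{e.Psis01} only enters the proof of \eqref{e.Afu}).

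The genuine gap is the opening reduction of \eqref{e.Afu} to \eqref{e.Afd}. It is true that the horizontal ingredients come in adjoint pairs (\eqref{e.Psis3}/\eqref{e.Psis4}, \eqref{e.Psis2}/\eqref{e.Psis1}, \eqref{e.1.13.2}/\eqref{e.1.13.1}), but the two squares are not mates of one another. The transpose, in the sense of Constructions \ref{c.adj} and \ref{c.d}, of the composite around \eqref{e.Afd} would involve the left adjoint of $\Psi_b^sf_*$ and the right adjoint of $(f_s\ttimes\id)_!\Psi_{bf}^s$, neither of which occurs in \eqref{e.Afu}; and the vertical arrows $A_b^s(f_!L)$, $A_{bf}^s(L)$ of \eqref{e.Afd} versus $A_{bf}^s(f^*M)$, $A_b^s(M)$ of \eqref{e.Afu} are the same natural transformations evaluated at unrelated objects, not adjoint transposes of each other. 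Concretely, \eqref{e.Afd} quantifies over $L\in D(X,\Lambda)$ and, unwound via the pairing description of duality, concerns $\Psi_{bf}^sL\otimes^L(f_s\ttimes\id)^*\Psi_b^sD_Yf_!L$, whereas \eqref{e.Afu} quantifies over $M\in D(Y,\Lambda)$ and concerns $\Psi_{bf}^sD_Xf^*M\otimes^L(f_s\ttimes\id)^*\Psi_b^sM$; these match only after setting $L=D_Xf^*M$ and invoking biduality, which is neither available in this generality nor what the lemma asserts. The two statements are Verdier-dual in flavor but logically independent, and the remedy is simply to prove \eqref{e.Afu} by the same direct cell decomposition, using \eqref{e.Psis4d} in place of \eqref{e.Psis3d}, with the middle stratum governed by the natural transformation $(f\atimes_S\id)^!D_{Y\atimes_SS}\to D_{X\atimes_SS}(f\atimes_S\id)^*$ obtained from \eqref{e.1.13.1} and $\alpha(K_Y)$.
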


For $f$ proper (resp.\ smooth), the horizontal arrows of \eqref{e.Afd}
(resp.\ \eqref{e.Afu}) are isomorphisms by Remark \ref{r.Psis0} (2) (resp.\
Remark \ref{r.Psis0} (1) and Lemma \ref{l.trace2}).

\begin{proof}
By \eqref{e.Psis3d}, with the notation of \eqref{e.4.3}, the diagram
\eqref{e.Afd} decomposes into
\[\xymatrix@C=1em{i_Y^*\Psi_bD_Y f_!  \ar@{-}[r]^\sim\ar[d]_{\simeq}^{A_b} & i_Y^*\Psi_b f_* D_X \ar@{-}[r]^\sim & i_Y^*(f\atimes_S \id)_* \Psi_{bf} D_X\ar[r]^{\eqref{e.Psis02}}\ar[d]^{A_{bf}}_\simeq & (f_s\ttimes \id)_* i_X^*\Psi_{bf} D_X\ar[d]^{A_{bf}}_\simeq\\
i_Y^* D_{Y\atimes_S S} \Psi_b f_! \ar[r]^-{\alpha'}\ar[d]^{\eqref{e.iD}} & i_Y^*D_{Y\atimes_S S} (f\atimes_S \id)_!\Psi_{bf} \ar@{-}[r]^\gamma_\sim\ar[d]^{\eqref{e.iD}} & i_Y^* (f\atimes_S \id)_*D_{X\atimes_S S}\Psi_{bf} \ar[r]^{\eqref{e.Psis02}} & (f_s\ttimes \id)_* i_X^*D_{X\atimes_S S}\Psi_{bf}\ar[d]^{\eqref{e.iD}}\\
D_{Y_s\ttimes S_{(s)}} i_Y^*\Psi_b f_! \ar[r]^-{\alpha'} & D_{Y_s\ttimes S_{(s)}}i_Y^*(f\atimes_S \id)_!\Psi_{bf}\ar@{-}[r]^{\beta'}_\sim & D_{Y_s\ttimes S_{(s)}}(f_s\ttimes \id)_! i_X^*\Psi_{bf}\ar[r]_\sim^{\eqref{e.1.13.2}} & (f_s\ttimes \id)_* D_{X_s\ttimes S_{(s)}}i_X^*\Psi_{bf},}
\]
where $\gamma\colon D_{Y\atimes_S S} (f\atimes_S \id)_!\simeq (f\atimes_S
\id)_*D_{X\atimes_S S}$ is given by
\begin{multline*}
R\cHom((f\atimes_S \id)_!N,\Psi_b K_Y)\xrightarrow[\sim]{\eqref{e.1.13.2}}(f\atimes_S \id)_*R\cHom(N,(f\atimes_S \id)^!\Psi_b K_Y)\\
\xrightarrow[\sim]{\alpha^{-1}(K_Y)} (f\atimes_S \id)_*R\cHom(N,\Psi_{bf}f^!K_Y).
\end{multline*}
Here $\alpha$ is defined in \eqref{e.alpha}. The inner squares commute by
construction.

The commutativity of \eqref{e.Afu} can be proved similarly, using
\eqref{e.Psis4d}.
\end{proof}

\begin{proof}[Proof of Theorem \ref{t.Psis}]
Parts of the proof are similar to \cite[Sections 4.4, 5.1]{Org}, but the
third induction step below is in the opposite direction. By Lemma
\ref{l.local}, up to replacing $S$ by the strict localization at a geometric
point above $s$, we may assume that $S$ is finite-dimensional and $s$ is a
geometric point. By Lemmas \ref{l.cd} (2), \ref{l.Ag} and Remark
\ref{r.diagram}, if the theorem holds for $(a_T,g_X^*L)$ for a proper
surjective morphism $g\colon T\to S$, then it holds for $(a,L)$. In
particular, we may alter $S$.

We proceed by a triple induction. First we proceed by induction on the
dimension $d_S$ of $S$. Up to replacing $S$ by an irreducible component, we
may assume $S$ integral, of generic point $\eta$. For $S$ empty the
assertion is trivial. For each $d_S\ge 0$, we proceed by induction on the
dimension $d_{X_\eta}$ of the generic fiber $X_\eta$ of~$a$. Note that the
assertion holds if $X_\eta$ is empty. Indeed, if $T\subseteq S$ denotes the
schematic image of $a$, then for $s\to S-T$ the source and target of
$A_a^s(L)$ are both zero. For $t\to T$, the source and target of $A_a^t(L)$
are supported on $X_t\ttimes T_{(t)}$. By Lemma \ref{l.Ag}, the restriction
of $A_a^t(L)$ to $X_t\ttimes T_{(t)}$ can be identified with $A_{a_T}^t(L)$,
which is an isomorphism by the induction hypothesis on $d_S$.

Assume $d_{X_\eta}\ge 0$. Note that if $L$ is supported on $Y\subseteq X$
such that $Y_\eta\subseteq  X_\eta$ is nowhere dense, then the assertion
holds. Indeed, if $L=i_*M$, where $i\colon Y\to X$ is the inclusion, then
$A^s_a(L)$ can be identified with $(i_s\ttimes \id)_*A^s_{ai}(M)$ by Lemma
\ref{l.Af}, and the induction hypothesis on $d_{X_\eta}$ applies to $ai$.
This applies in particular if $Y\subseteq X$ is nowhere dense.

For any alteration $g\colon T\to S$, $g_!$ has cohomological amplitude $\le
2d_g$, where $d_g$ denotes the maximum dimension of the fibers of $g$, so
that $g^!$ has cohomological amplitude $\ge -2d_g\ge -2d_S$ by dimension
formula \cite[(5.6.5.1)]{EGAIV}. Thus, up to replacing $K$ by a shift, we
may assume that for every alteration $g\colon T\to S$, $g^! K_S \in D^{\ge
0}$. We proceed by induction on $n$ to show that for every $L\in D^{\le
0}_c(X,\Lambda)$ satisfying the assumptions of the theorem, the cone of
$A_{a}^s(L)$ belongs to $D^{\ge n}$, and the same holds with $S$ replaced by
an alteration. As above, $a^!$ has cohomological amplitude $\ge -2d_a$,
where $d_a$ denotes the maximum dimension of the fibers of $a$. Thus the
source and target of $A_{a}^s(L)$ both belong to $D^{\ge -2d_a}$, and the
assertion is trivial for $n=-2d_a-1$.

Recall that every $C\in \Shv_c(X,\Lambda)$ is Noetherian. Thus, for any
epimorphism $M\to C$, there exists $u_!\Lambda\to M$ with $u\colon U\to X$
\'etale, separated and of finite type, such that the composition
$u_!\Lambda\to C$ is an epimorphism. By \cite[Lemma 13.2.1 (b)]{KS}, we may
assume that $L^q=0$ for $q>0$ and $L^0$ has the form $u_!\Lambda$. Note that
$D_X(u_!\Lambda)\simeq u_*K_U\in D^b_c$. By Orgogozo's theorem (Theorem
\ref{t.Org}) and Theorem \ref{t.good}, up to modifying $S$, we may assume
that $(a,u_!\Lambda)$ satisfies the assumptions of the theorem. It follows
that $(a,\sigma^{\le -1} L)$ satisfies the assumptions of the theorem, where
$\sigma^{\le -1} L$ denotes the naive truncation of $L$. By induction
hypothesis on $n$, we have $\Cone(A_a^s(\sigma^{\le -1} L))\in D^{\ge n}$.
Thus it suffices to show that $A_a^s(u_! \Lambda)$ is an isomorphism.

Choose a compactification $X\xrightarrow{f} Y\xrightarrow{b} S$ of $a$ with
$\dim Y_\eta=\dim X_\eta$.  By Zariski's main theorem, $fu$ admits a
factorization $U\xrightarrow{j} V\xrightarrow{v} Y$ with $v$ finite and $j$
a dominant open immersion. Let $V'$ be the disjoint union of the irreducible
components of $V$ and let $v'\colon V'\to X$. Then the composition
$(fu)_!\Lambda\to v_*\Lambda\to v'_*\Lambda$ is a monomorphism and the
cokernel is supported on a nowhere dense closed subset. Up to modifying $S$,
we may assume that $(b, (fu)_!\Lambda)$, $ (bv',\Lambda)$ and (consequently)
$(b,v'_*\Lambda)$ satisfy the assumptions of the theorem. By Lemma
\ref{l.Af}, $A^s_{a}(u_!\Lambda)$ can be identified with $(f_s\ttimes \id)^*
A_b^s((fu)_! L)$. Thus it suffices to show that $A_b^s(v'_*\Lambda)$ is an
isomorphism. By Lemma \ref{l.Af}, $A_b^s(v'_*\Lambda)$ can be identified
with $(v'_s\times\id)_*A_{bv'}^s(\Lambda)$. Changing notation, we are
reduced to showing that $A_a^s(\Lambda)$ is an isomorphism for $a$ proper,
$X$ integral, and $(a,\Lambda)$ satisfying the assumptions of the theorem.

By \cite[Lemme 4.7]{Org}, up to altering $S$, there exists an alteration
$X'\coprod X''\to X$ such that $X'\to S$ is a plurinodal morphism and
$X''\to S$ is non-dominant. Let $p\colon X'\to X$. There exists an open
immersion $j\colon U\to X$ with $j_\eta$ dominant such that $p_U$ is finite
flat and surjective. Consider the maps
\[\xymatrix{j_!p_{U*}\Lambda \ar[r]\ar[d] & j_!\Lambda\ar[d]\\
Rp_*\Lambda &\Lambda.}
\]
The horizontal map is induced by the trace map and is surjective. The cones
of the vertical maps are supported on closed subsets of $X$ having nowhere
dense intersections with $X_\eta$. Up to modifying $S$, we may assume that
the pairs $(a,j_!\Lambda)$, $(a,j_!p_{U*}\Lambda)$, $(ap,\Lambda)$ and
(consequently, by Remark \ref{r.Psis} (2)) $(a,Rp_*\Lambda)$ satisfy the
assumptions of the theorem. Thus it suffices to show that
$A_a^s(Rp_*\Lambda)$ is an isomorphism. By Lemma \ref{l.Af},
$A_a^s(Rp_*\Lambda)$ can be identified with
$(p_s\times\id)_*A_{ap}^s(\Lambda)$. Changing notation, we are reduced to
showing that $A_a^s(\Lambda)$ is an isomorphism for $a$ plurinodal and
$(a,\Lambda)$ satisfying the assumptions of the theorem.

If $d_{X_\eta}=0$, then $a$ is an isomorphism and $\Psi_a^s$ is the
restriction from $S$ to $S_{(s)}$, so that $A_a^s(\Lambda)$ can be
identified with the identity on $K_{S_{(s)}}$. Assume $d_{X_\eta}\ge 1$.
Then $a$ decomposes into $X\xrightarrow{f} Y\xrightarrow{b} S$, where
$\dim(Y_\eta)=d_{X_\eta}-1$ and $f$ is a projective flat curve with
geometric fibers having at most ordinary quadratic singularities. Up to
modifying $S$, we may assume that $(b,\Lambda)$ satisfies the assumptions of
the theorem. By Lemma \ref{l.Af}, $(f_s\ttimes \id)_* A^s_{a}(\Lambda)$ can
be identified with $A^s_{b}(f_*\Lambda)$. Since $(b,f_*\Lambda)$ satisfies
the assumption of the theorem, $A^s_{b}(f_*\Lambda)$ is an isomorphism by
the induction hypothesis on $d_{X_\eta}$, so that it suffices to show that
$\Cone(A^s_{a}(\Lambda))$ is supported on $(X-U)_s\ttimes S_{(s)}$, where
$U$ is the smooth locus of $f$ (with $X-U$ finite over $Y$). By Lemma
\ref{l.Af}, on $U_s\ttimes S_{(s)}$, $A^s_{a}(\Lambda)$ coincides with
$(f_s\ttimes\id)^* A^s_{b}(\Lambda)(1)[2]$, which is an isomorphism by the
induction hypothesis on $d_{X_\eta}$.
\end{proof}

\begin{proof}[Proof of Theorem \ref{t.Psisc}]
By Orgogozo's theorem (Theorem \ref{t.Org}) and Theorem \ref{t.good}, there
exists a modification $g\colon S'\to S$ such that $(a_{S'},L|_{X_{S'}})$ is
$\Psi$-good and $(a_{S'},g_X^! D_X L)$ is weakly $\Psi$-good. Since
$D_{X_{S'}}(L|_{X_{S'}})\simeq g_X^! D_X L$, Theorem \ref{t.Psis} applies to
$(a_T,L|_{X_T})$ for any $T\to S'$ separated of finite type by Remark
\ref{r.Psis} (1).
\end{proof}

\begin{cor}
Under the assumptions of Theorem \ref{t.Psisc}, there exists an open
subscheme $U\subseteq S$ of complement of codimension $\ge 2$, such that for
every morphism $T\to U$ separated of finite type, and for every point $t\in
T$, the map $A_{a_T}^t(L|_{X_T})$ is an isomorphism.
\end{cor}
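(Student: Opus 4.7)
The plan is to refine the proof of Theorem~\ref{t.Psisc} by replacing each of the two base modifications used there by an open subscheme of $S$ with complement of codimension $\ge 2$, mirroring the passage from Theorem~\ref{t.Org}~(1) to Corollary~\ref{c.Orgco}. The device is standard: every modification of $S$ becomes quasi-finite, hence finite (being proper), over such an open subscheme, by Chevalley's semicontinuity theorem and the dimension formula. Concretely, I will produce an open $U\subseteq S$ of complement codimension $\ge 2$ on which $(a_U,L|_{X_U})$ is $\Psi$-good and $\Psi_{a_U}(D_{X_U}(L|_{X_U}))$ commutes with smooth base change. The conclusion for an arbitrary $T\to U$ separated of finite type then follows from Theorem~\ref{t.Psis}, since both hypotheses are preserved under such base change by Remark~\ref{r.Psis}~(1).

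For the $\Psi$-goodness of $L$, Corollary~\ref{c.Orgco} directly supplies an open $U_1\subseteq S$ of complement codimension $\ge 2$ such that $(a_{U_1},L|_{X_{U_1}})$ is $\Psi$-good. For the smooth base change condition on $\Psi_a(D_XL)$, I first establish a codimension-$\ge 2$ analogue of Theorem~\ref{t.good}. Since $D_XL\in D^b_c(X,\Lambda)$ by hypothesis, Theorem~\ref{t.good} produces a modification $p\colon S'\to S$ for which $(a_{S'},p_X^!D_XL)$ is weakly $\Psi$-good; the identity $p_X^!D_XL\simeq D_{X_{S'}}(L|_{X_{S'}})$ recasts this as weak $\Psi$-goodness of $(a_{S'},D_{X_{S'}}(L|_{X_{S'}}))$. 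Chevalley's semicontinuity and the dimension formula supply an open $U_2\subseteq S$ of complement codimension $\ge 2$ such that $q\colon V\colonequals p^{-1}(U_2)\to U_2$ is quasi-finite, hence finite, and moreover finite surjective ($p$ being a proper birational morphism). Since weak $\Psi$-goodness restricts to open subschemes of the base, $(a_V,D_{X_V}(L|_{X_V}))$ is weakly $\Psi$-good, and Lemma~\ref{l.wg2}~(2) descends this to weak $\Psi$-goodness of $(a_{U_2},D_{X_{U_2}}(L|_{X_{U_2}}))$.

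Taking $U=U_1\cap U_2$, the complement $S\setminus U$ still has codimension $\ge 2$; weak $\Psi$-goodness of $(a_{U_2},D_{X_{U_2}}(L|_{X_{U_2}}))$ restricts to $U$ and implies, in particular, that $\Psi_{a_U}(D_{X_U}(L|_{X_U}))$ commutes with smooth base change. Both hypotheses of Theorem~\ref{t.Psis} therefore hold for $(a_U,L|_{X_U})$ with respect to $K_S|_U$, and combining Theorem~\ref{t.Psis} with Remark~\ref{r.Psis}~(1) delivers the corollary. The only genuinely new step beyond Theorem~\ref{t.Psisc} is the codimension-$\ge 2$ analogue of Theorem~\ref{t.good} used above; this is the main potential obstacle, but I expect it to be a routine adaptation of the Chevalley--dimension formula argument of Corollary~\ref{c.Orgco}, with Lemma~\ref{l.wg2}~(2) taking the place of Orgogozo's \cite[Lemme 3.3]{Org}.
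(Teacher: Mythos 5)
Your argument is correct, and its first half (applying Corollary \ref{c.Orgco} to $L$ to get $U_1$, then feeding both hypotheses into Theorem \ref{t.Psis} and Remark \ref{r.Psis} (1)) is exactly the paper's proof. Where you diverge is in handling the dual side: you route through Theorem \ref{t.good} (producing a modification $p\colon S'\to S$ with $(a_{S'},p_X^!D_XL)$ weakly $\Psi$-good), then shrink to a codimension-$\ge 2$ open $U_2$ over which $p$ becomes finite surjective and descend via Lemma \ref{l.wg2} (2). This works --- the restriction of weak $\Psi$-goodness to an open subscheme of the base is harmless since the sliced nearby cycles only depend on Henselizations and base change by open immersions is automatic --- but it is a detour. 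The paper instead exploits the observation that over an \emph{open} subscheme the $!$-restriction and the $*$-restriction coincide, so $D_{X_U}(L|_{X_U})\simeq (D_XL)|_{X_U}$; since $D_XL\in D^b_c$ by hypothesis, one can apply Corollary \ref{c.Orgco} directly to $D_XL$ and obtain $\Psi$-goodness (hence weak $\Psi$-goodness, by Theorem \ref{t.Org} (2)) of $(a_U,(D_XL)|_{X_U})$ in one stroke, with no appeal to Theorem \ref{t.good} or Lemma \ref{l.wg2}. In other words, the whole point of the codimension-$\ge 2$ statement is that the delicate interplay between $*$- and $!$-pullback which forces the two-theorem strategy in Theorem \ref{t.Psisc} disappears over opens; your ``codimension-$\ge 2$ analogue of Theorem \ref{t.good}'', which you correctly flag as the one new ingredient and then essentially prove, is thus true but unnecessary. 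Both routes buy the same conclusion; the paper's is shorter, yours is a faithful dualization of the proof of Theorem \ref{t.Psisc}.
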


\begin{proof}
By Corollary \ref{c.Orgco}, there exists an open subscheme $U\subseteq S$ of
complement of codimension $\ge 2$, such that $(a_U,L|_{X_{U}})$ and
$(a_U,(D_X L)|_{X_U})$ are $\Psi$-good and weakly $\Psi$-good. Since
$D_{X_{U}}(L|_{X_{U}})\simeq (D_X L)|_{X_U}$, we then conclude by Theorem
\ref{t.Psis} and Remark \ref{r.Psis} (1).
\end{proof}

In the case where $K_S$ is a dualizing complex, Theorem \ref{t.Psis} has the
following dual.

\begin{cor}\label{c.DPsis}
Assume $S$ excellent equipped with a dimension function and let $K_S$ be a
dualizing complex for $D_\cft(S,\Lambda)$. Let $a\colon X\to S$ be a
separated morphism of finite type and let $L\in D^b_c(X,\Lambda)$ such that
$(a,L)$ is weakly $\Psi$-good and $(a,D_X L)$ is $\Psi$-good. Assume either
$L\in D_{\cft}$ or $\Lambda$ Gorenstein. Then for every point $s$ of $S$
with values in a field, $A_a^s(L)$ is an isomorphism.
\end{cor}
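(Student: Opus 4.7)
The plan is to derive the corollary from Theorem \ref{t.Psis} applied to $L' \colonequals D_X L$. Since $K_X = Ra^! K_S$ is dualizing on $D_\cft(X,\Lambda)$, and also on $D^b_c(X,\Lambda)$ when $\Lambda$ is Gorenstein, biduality gives $L \simeq D_X L'$ (Proposition \ref{p.dual} with $Y = s$). The hypotheses of Theorem \ref{t.Psis} then hold for $L'$: the pair $(a, L') = (a, D_X L)$ is $\Psi$-good by assumption, and $\Psi_a(D_X L') \simeq \Psi_a L$ commutes with smooth base change by the weak $\Psi$-goodness of $(a, L)$. The theorem yields that $A_a^s(L') = A_a^s(D_X L)$ is an isomorphism.

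Next, the plan is to identify $A_a^s(L)$ with $A_a^s(D_X L)$ by a formal duality argument on the fiber product $X_s \ttimes_{s_0} S_{(s)}$. Both maps arise from the common pairing
\[
\Psi_a^s L \otimes^L \Psi_a^s D_X L \to \Psi_a^s(L \otimes^L D_X L) \xrightarrow{\Psi_a^s(\ev)} \Psi_a^s K_X \to K_{X_s \ttimes_{s_0} S_{(s)}},
\]
where the first arrow comes from the lax symmetric monoidal structure of $\Psi_a^s$ (see Remark \ref{r.Psiu}) and the last is \eqref{e.Psis4} applied to $b = \id_S$ and $K_S$. Its two adjoints give $A_a^s(L)\colon \Psi_a^s D_X L \to D_{X_s \ttimes_{s_0} S_{(s)}} \Psi_a^s L$ and a ``twin'' map $B\colon \Psi_a^s L \to D_{X_s \ttimes_{s_0} S_{(s)}} \Psi_a^s D_X L$. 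By symmetry of the evaluation $L \otimes^L D_X L \to K_X$ together with biduality $L \simeq D_X D_X L$, the map $B$ is identified with $A_a^s(D_X L)$. Following Remark \ref{r.dual}, $D_{X_s \ttimes_{s_0} S_{(s)}}(A_a^s(L))$ then coincides, up to the biduality evaluation on $\Psi_a^s L$, with $B$.

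To conclude, biduality on the fiber product must be available for $\Psi_a^s L$ and $\Psi_a^s D_X L$. By Theorem \ref{t.Org}~(2) applied to the $\Psi$-good pair $(a, D_X L)$, we have $\Psi_a^s D_X L \in D^b_c$; by weak $\Psi$-goodness of $(a, L)$, we have $\Psi_a^s L \in D^b_c$ (boundedness following from the finiteness of the cohomological dimension of $\Psi$ \cite[Proposition 3.1]{Org}). Under the Gorenstein hypothesis, Proposition \ref{p.dual}~(2) yields biduality on $D^b_c$ of the fiber product; under the $D_\cft$ hypothesis, one first checks that $\Psi_a^s L$ and $\Psi_a^s D_X L$ belong to $D_\cft$ of the fiber product (using the projection formula and preservation of tor-amplitude under the relevant $\Psi$-goodness), and then applies Proposition \ref{p.dual}~(1). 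Since $A_a^s(D_X L)$ is an isomorphism, so is $B$, whence $D_{X_s \ttimes_{s_0} S_{(s)}}(A_a^s(L))$ is an isomorphism; applying $D_{X_s \ttimes_{s_0} S_{(s)}}$ once more together with biduality on $\Psi_a^s D_X L$ shows that $A_a^s(L)$ is an isomorphism.

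The hard part will be the formal identification of $B$ with $A_a^s(D_X L)$: one must carefully track the symmetric constraint on $\otimes^L$ against the lax monoidal structure of $\Psi_a^s$ and the evaluation pairing $L \otimes^L D_X L \to K_X$, then combine these with the duality adjunction of Remark \ref{r.dual}. A secondary subtlety is the $D_\cft$ case, where one must verify that $\Psi_a^s$ preserves tor-amplitude under the hypotheses, so that biduality applies in the appropriate subcategory of the fiber product.
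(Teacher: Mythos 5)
Your proof follows the paper's own argument: you apply Theorem \ref{t.Psis} to $D_XL$ (verifying its hypotheses exactly as the paper does, via biduality on $X$ and the weak $\Psi$-goodness of $(a,L)$) and then transfer the conclusion to $L$ by the formal compatibility of $A_a^s$ with double duality, which the paper records as a commutative square citing \cite{SZ} and which you rephrase equivalently via the pairing and its two transposes as in Remark \ref{r.dual}. The only (harmless) divergence is that your last step also invokes biduality for $\Psi_a^s L$, which the paper's square avoids (it needs biduality only for $\Psi_a^s D_XL$ and for $L$ on $X$); in the $D_\cft$ case the cleanest justification is not your projection-formula argument but simply that $\Psi_a^s L\simeq D_{X_s\ttimes_{s_0}S_{(s)}}\Psi_a^s D_XL$ via the already-established isomorphism $A_a^s(D_XL)$, and $D_{X_s\ttimes_{s_0}S_{(s)}}$ preserves $D_\cft$ by Proposition \ref{p.dual}.
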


\begin{proof}
The assumption that $L\in D_{\cft}$ or $\Lambda$ Gorenstein implies that
$L\to D_X D_X L$ is an isomorphism and $M\to D_{X_s\ttimes_{s_0} S_{(s)}}
D_{X_s\ttimes_{s_0} S_{(s)}} M$ is an isomorphism for $M=\Psi_a^s D_X L$ by
Proposition \ref{p.dual}. (For $L\in D_\cft$, $D_XL\in D_\cft$, hence $M\in
D_\cft$ by \cite[Remarque 8.3]{Org}.) Thus $A_a^s(D_X L)$ is an isomorphism
Theorem \ref{t.Psis}. The corollary follows from the following formal result
(see for example \cite[Constructions A.4.5, A.4.6]{SZ}).
\end{proof}

\begin{lemma}
The square
\[\xymatrix{\Psi_a^s D\ar[r]^{A_a^s}\ar[d] & D\Psi_a^s\\
DD\Psi_a^s D\ar[r]^{DA_a^s D} & D\Psi_a^s DD\ar[u],}
\]
where the vertical arrow are induced by the evaluation maps $\id\to DD$, is
commutative.
\end{lemma}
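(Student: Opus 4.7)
The plan is to verify that $A^s_a$ is a \emph{form transformation} in the sense of Remark \ref{r.dual} (or \cite[Definition A.3.3]{SZ}), so that the square is the defining compatibility with biduality. The key observation is that $A^s_a$ is constructed purely from (i) the right-lax symmetric monoidal structure on $\Psi^s_a$ (see \cite[Construction 3.7]{IZQ}) and (ii) the fixed morphism $c\colon \Psi^s_a K_X\to K_{X_s\ttimes_{s_0} S_{(s)}}$ obtained from \eqref{e.Psis4} applied to $K_S$. In any closed symmetric monoidal setting, a natural transformation of the form $A^s_a(L)=[c\circ\Psi^s_a(\mathrm{ev}_L)]$-composed-with-the-lax-structure is automatically a form transformation. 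The formal proof of this principle is carried out in \cite[Constructions A.4.5, A.4.6]{SZ}, and the lemma is an instance.

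Concretely, I would rewrite $A^s_a(L)$ as the adjoint of the pairing
\[
\pi_L\colon \Psi^s_a L\otimes^L \Psi^s_a D_X L\to \Psi^s_a(L\otimes^L D_X L)\xrightarrow{\Psi^s_a(\mathrm{ev}'_L)}\Psi^s_a K_X\xrightarrow{c} K_{X_s\ttimes_{s_0} S_{(s)}},
\]
where the first arrow is the lax structure and $\mathrm{ev}'_L\colon L\otimes^L D_X L\to K_X$ is the standard evaluation (adjoint to $\mathrm{id}_{D_X L}$). The heart of the matter is the symmetry of $\pi_L$: under the symmetric constraint $\sigma\colon L\otimes^L D_X L\simeq D_X L\otimes^L L$, the evaluation $\mathrm{ev}'_L$ corresponds to $\mathrm{ev}'_{D_X L}\circ(\mathrm{ev}_L\otimes\mathrm{id})$, where $\mathrm{ev}_L\colon L\to D_X D_X L$ is the biduality map. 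This is precisely the self-duality condition $\alpha^{\op}=\alpha$ satisfied by the closed-$\cHom$ duality (Remark \ref{r.dual}). Applying $\Psi^s_a$ to this identity and post-composing with $c$ yields a symmetric identification between $\pi_L$ and $\pi_{D_X L}\circ(\sigma)\circ(\Psi^s_a\mathrm{ev}_L\otimes\mathrm{id})$.

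Unwinding the $\cHom$--$\otimes$ adjunction on both sides, this symmetric identification of pairings is exactly the identity of morphisms $\Psi^s_a D_X L\to D_{X_s\ttimes_{s_0} S_{(s)}}\Psi^s_a L$ asserted by the lemma: the top route gives $A^s_a(L)$, while the bottom route $D(\Psi^s_a\mathrm{ev}_L)\circ D(A^s_a(D_X L))\circ\mathrm{ev}_{\Psi^s_a D_X L}$ is what one gets by transposing $\pi_{D_X L}$ along the second factor, pre-composing with the biduality in the first factor, and using the swap $\sigma$. The main (purely bookkeeping) obstacle is to keep track of the two adjunctions $(-\otimes M)\dashv R\cHom(-,K)$ on both categories and confirm that they interchange correctly under $\Psi^s_a$ and the lax structure; but since no new analytic input is required and the whole verification is carried out in the closed symmetric monoidal framework of \cite{SZ}, we simply invoke that reference.
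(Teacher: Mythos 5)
Your proposal is correct and takes essentially the same route as the paper, which disposes of this lemma by invoking the formal framework of form transformations in \cite{SZ}*{Constructions A.4.5, A.4.6}. Your unwinding of $A_a^s(L)$ as the transform of the symmetric pairing $\pi_L$ built from the right-lax symmetric monoidal structure on $\Psi_a^s$ and the map $\Psi_a^s K_X\to K_{X_s\ttimes_{s_0}S_{(s)}}$, together with the identity $\alpha^{\op}=\alpha$ for the closed-$\cHom$ duality, is precisely the verification that this formal result applies.
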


In the case where $K_S$ is a dualizing complex,  Theorem \ref{t.Psisc} has
the following dual.

\begin{cor}\label{c.DPsisc}
Assume $S$ excellent equipped with a dimension function and let $K_S$ be a
dualizing complex for $D_\cft(S,\Lambda)$. Let $a\colon X\to S$ be a
separated morphism of finite type and let $L\in D^b_{c}(X,\Lambda)$. Assume
either $L\in D_{\cft}$ or $\Lambda$ Gorenstein. Then there exists a
modification $g\colon S'\to S$ such that for every morphism $T\to S'$
separated of finite type, and for every point $t$ of $T$, $A_{a_T}^t(h_X^!
L)$ is an isomorphism. Here $h$ denotes the composition $T\to
S'\xrightarrow{g} S$.
\end{cor}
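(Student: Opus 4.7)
The plan is to mimic the proof of Theorem \ref{t.Psisc} with the roles of $*$-pullback and $!$-pullback interchanged, replacing Theorem \ref{t.Psis} by Corollary \ref{c.DPsis}. The translation is made rigorous by the biduality identification
\[
D_{X_T}(h_X^! L)\simeq h_X^{*} D_X L,
\]
which follows from Proposition \ref{p.dual} under either of the stated hypotheses ($L\in D_\cft$ or $\Lambda$ Gorenstein): indeed $K_X=Ra^! K_S$ is then a dualizing complex on $X$, biduality holds, and duality commutes with $*$/$!$-pullback on dualizable complexes.

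First I would apply Theorem \ref{t.good} to $L\in D^b_c(X,\Lambda)$ to obtain a modification $g_1\colon S_1\to S$ with $(a_{S_1},g_{1,X}^! L)$ weakly $\Psi$-good; then I would apply Orgogozo's theorem (Theorem \ref{t.Org}(1)) to $D_X L\in D^b_c(X,\Lambda)$ to obtain a modification $g_2\colon S_2\to S$ with $(a_{S_2},g_{2,X}^* D_X L)$ $\Psi$-good. Dominating $S_1$ and $S_2$ by a single modification $g\colon S'\to S$, both properties hold simultaneously over $S'$.

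For any $T\to S'$ separated of finite type with composite $h\colon T\to S$, Lemma \ref{l.wg2}(1) propagates weak $\Psi$-goodness from $(a_{S'},g_X^! L)$ to $(a_T,h_X^! L)$, while the base-change stability of $\Psi$-goodness (compatibility with composition of base-change maps, via Lemma \ref{l.Psibc}) transfers $\Psi$-goodness from $(a_{S'},g_X^* D_X L)$ to $(a_T,h_X^* D_X L)$. Via the displayed isomorphism the latter is $\Psi$-goodness of $(a_T,D_{X_T}(h_X^! L))$, so Corollary \ref{c.DPsis} applies to the pair $(a_T,h_X^! L)$ and yields that $A^t_{a_T}(h_X^! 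L)$ is an isomorphism for every point $t\in T$.

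The main subtlety I expect is the bookkeeping around biduality: one must verify that the map controlled by the above argument really is the natural map $A^t_{a_T}(h_X^! L)$ of \eqref{e.A}. This coherence reduces to the compatibility of $A^t$ with $!$-base change (Lemma \ref{l.Ag}) together with the formal compatibility of $A^t$ with the evaluation $\id\to DD$ (the unlabeled lemma immediately preceding this corollary), so it should not require any further ingredient beyond those already used.
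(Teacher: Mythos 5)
Your argument is correct and is essentially identical to the paper's proof: both obtain a single modification $g\colon S'\to S$ over which $(a_{S'},g_X^!L)$ is weakly $\Psi$-good (Theorem \ref{t.good}) and $(a_{S'},g_X^*D_XL)$ is $\Psi$-good (Theorem \ref{t.Org}), propagate these properties to any $T\to S'$, identify $D_{X_T}h_X^!L\simeq h_X^*D_XL$, and invoke Corollary \ref{c.DPsis}. The coherence worry in your last paragraph is already absorbed into the statement of Corollary \ref{c.DPsis}, so no extra verification is needed there.
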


\begin{proof}
By Orgogozo's theorem (Theorem \ref{t.Org}) and Theorem \ref{t.good}, there
exists $g$ such that $(a_{S'},g_X^!L)$ is weakly $\Psi$-good and
$(a_{S'},g_X^*D_XL)$ is $\Psi$-good. Then $(a_{T},h_X^!L)$ is weakly
$\Psi$-good and $(a_T,D_{X_T}h_X^! L)$ is $\Psi$-good, as $D_{X_T}h_X^!
L\simeq h_X^*D_X L$. We conclude by Corollary \ref{c.DPsis}.
\end{proof}

\begin{remark}\label{r.fail}
The analogue of Theorem \ref{t.Psisc} does not hold for the restriction to
shreds or local sections (Remark \ref{r.local}). In fact, the shredded
nearby cycle functor $\Psi^s_s=(i^s)^*$ typically does not commute with
duality, even if $X=S$. Moreover, for $x\to X$ a geometric point above a
geometric generic point $s\to S$, the local section $x\ttimes S_{(s)}$ is a
point and the restriction $(\Psi^s L)|_{x\ttimes S_{(s)}}$ can be identified
with $L_x$, and $L\mapsto L_x$ typically does not commute with duality, even
if $S$ is a point.
\end{remark}

\subsection{Vanishing cycles and local acyclicity}\label{s.van}
The functor $L\Co$ studied in Section \ref{s.2} admits the following
generalization.

\begin{remark}\label{r.glue}
Let $X$ be a topos glued from an open subtopos $U$ and a closed subtopos
$Y$. Let $j\colon U\to X$ and $i\colon Y\to X$ be the embeddings. Then $i$
admits a left adjoint $\pi\colon X\to Y$ if and only if the gluing functor
$p_*=i^*j_*\colon U\to Y$ admits an exact left adjoint $p^*$. For the ``only
if'' part we take $p=\pi j$. For the ``if'' part, we take $\pi^* \cG$ to be
$(\cG,p^*\cG,\varphi)$, where $\varphi\colon \cG\to p_*p^*\cG$ is the
adjunction.

Assume that the above conditions hold. Sheaves $\cF$ on $X$ are triples
$(\cF_Y,\cF_U,\phi)$, where $\phi\colon p^*\cF_Y\to \cF_U$ is a morphism.
Between $\Shv(-,\Lambda)$, we have adjoint functors
\[ \Co \dashv j_! \dashv j^* \dashv j_*,\quad \pi^*\dashv\pi_*=i^*\dashv i_* \dashv i^!,\]
where $\Co \cF=\Coker(\phi\colon p^*\cF_Y\to \cF_U)$. Between derived
categories $D(-,\Lambda)$, we have adjoint functors
\[L\Co \dashv j_!\dashv j^* \dashv Rj_*, \quad .\]
For $M\in D(X,\Lambda)$, $L\Co M$ is computed by $\Co M'$, where $M'\to M$
is a quasi-isomorphism and $M'^q=(M'^q_Y,M'^q_U, \phi^q)$ with $\phi^q$ a
monomorphism for every $q$. In fact, we can take $M'$ to be $\Ker(M\oplus
\Cone(\id_{\pi^*\pi_* M})\to \Cone(\id_{i_*i^*M}))$, where the map is
induced by the adjunctions $\pi^*\pi_* M\to M\to i_*i^*M$, and take $M'\to
M$ to be the map induced by projection.

Generalizing \eqref{e.tr1}, we have a distinguished triangle
\[\pi^*\pi_*M\to M\to j_!L\Co M\to \pi^*\pi_*M[1].\]
\end{remark}

Let $(S,s)$ be a Henselian local pair. Let $Y$ be a locally coherent topos
over $s$. Let $S^\circ\colonequals S-\{s\}$. Remark \ref{r.glue} applies to
the inclusions $j\colon Y\ttimes_{s} S^\circ \to Y\times_{s} S$ and $i\colon
Y\simeq s\ttimes_s Y\to Y\ttimes_{s} S$, with $\pi=p_1\colon Y\ttimes_s S\to
Y$ given by the first projection. In particular, we have the functor
\[L\Co\colon D(Y\ttimes_s S)\to D(Y\ttimes_s S^\circ).\]
The following remark is not needed in this paper.

\begin{remark}
If $S$ is Noetherian and $m\Lambda=0$ with $m$ invertible on $S$, then
$Rj_*$ has finite-cohomological dimension and admits a right adjoint
$\Co\spcheck$. If, moreover, $S$ is excellent equipped with a dimension
function, $K_S$ is a dualizing complex for $D_\cft(S,\Lambda)$, and $Y$ is a
scheme separated of finite type over $s$, then we have $D_{Y\ttimes
S^\circ}(L\Co)\simeq \Co\spcheck D_{Y\ttimes S}$, similarly to Theorem
\ref{t.LPhi}. However, unlike the case of Section \ref{s.2}, $\Co\spcheck$
is very different from $L\Co$ in general. For example, if $S$ is regular,
$Y=s$, $v\colon T\to S$ is the inclusion of a nonempty regular closed
subscheme of codimension $c$, then $\Co\spcheck(v_*\Lambda)\simeq
Ru_*\Lambda(c)[2c-1]$ while $L\Co(v_*\Lambda)\simeq u_!\Lambda[1]$, where
$u\colon S-T\to S^\circ$.
\end{remark}

Let $S$ be an arbitrary scheme. For a morphism of schemes $a\colon X\to S$
and a point $s$ of $S$ with values in a field, we define the sliced
vanishing cycle functor $\Phi^s_a$ to be the composition
\[D(X,\Lambda) \xrightarrow{R\Psi^s_a} D(X_s\ttimes_{s_0} S_{(s)},\Lambda)\xrightarrow{L\Co} D(X_s\ttimes_{s_0}S_{(s)}^\circ,\Lambda),\]
where $s_0$ is the closed point of $S_{(s)}$.

\begin{cor}\label{c.DLA}
Let $a\colon X\to S$ be a morphism of finite type of excellent schemes, with
$S$ regular. Let $\Lambda$ be a Noetherian commutative ring such that
$m\Lambda=0$ for some $m$ invertible on~$S$. Let $K_X$ be a dualizing
complex for $D_{\cft}(X,\Lambda)$ and let $L\in D^b_c(X,\Lambda)$. Assume
either $L\in D_{\cft}$ or $\Lambda$ Gorenstein. Assume $(a,L)$ is
universally locally acyclic. Then $(a,D_XL)$ is universally locally acyclic.
Here $D_X=R\cHom(-,K_X)$.
\end{cor}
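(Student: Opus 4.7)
The plan is to verify the two characterizing conditions of universal local acyclicity from Example \ref{e.Psigood} (1) for $(a, D_X L)$: that $(a, D_X L)$ is $\Psi$-good and $R\Phi_a(D_X L) = 0$. Together, these say that for every morphism $g\colon T\to S$ separated of finite type and every geometric point $t\in T$, the canonical map
\[p_1^*\bigl((D_X L|_{X_T})|_{X_t}\bigr) \to R\Psi^t_{a_T}(D_X L|_{X_T})\]
is an isomorphism. I would reduce the problem to this pointwise statement.

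The first step is to apply Theorem \ref{t.Psisc} to $L$. Since $D_X L \in D^b_c(X,\Lambda)$ under either hypothesis on $L$ or $\Lambda$, the theorem yields a modification $p\colon S'\to S$ such that for every $T\to S'$ separated of finite type and every $t\in T$, the canonical map $A^t_{a_T}(L|_{X_T})$ is an isomorphism. Since $(a, L)$ is universally locally acyclic, so is $(a_T, L|_{X_T})$, and hence $R\Psi^t_{a_T}(L|_{X_T}) \simeq p_1^*(L|_{X_t})$. Dualizing and applying the K\"unneth formula (Corollary \ref{c.DKunn}), we find for $T\to S'$,
\[R\Psi^t_{a_T}(D_{X_T}(L|_{X_T})) \simeq D_{X_t\ttimes T_{(t)}}(p_1^*L|_{X_t}) \simeq p_1^* D_{X_t}(L|_{X_t}) \otimes^L p_2^* K_{T_{(t)}}.\]

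The second step identifies this expression with the ULA form $p_1^*((D_X L|_{X_T})|_{X_t})$. Here the regularity of $S$ is essential: $K_S$ may be chosen so that locally it is a shift-twist of $\Lambda_S$, whereby the $!$-pullback $K_{T_{(t)}}$ is locally trivialized, and its stalk at $t$ precisely accounts for the twist relating $K_{X_T}|_{X_t}$ to $K_{X_t}$. Combined with ULA of $L$, which via Lemma \ref{l.local} (b) gives $(D_X L|_{X_T})|_{X_t} \simeq R\cHom(L|_{X_t}, K_{X_T}|_{X_t}) \simeq D_{X_t}(L|_{X_t}) \otimes^L (K_{T_{(t)}}|_t)$, one matches the two sides and establishes the ULA condition for all $T\to S'$.

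It remains to remove the restriction that $T$ factor through $S'$. The modification $p$ is an isomorphism over a dense open $U\subseteq S$, so the conclusion holds directly for $T\to U$. The main obstacle is to descend the conclusion from $(a_{S'}, D_X L|_{X_{S'}})$ over $S'$ to $(a, D_X L)$ over $S$ along the proper surjective morphism~$p$. For this I would work Henselian-locally: at each geometric point $s\to S$, the induced morphism $S'\times_S S_{(s)}\to S_{(s)}$ is proper surjective, and one combines Lemma \ref{l.cd} (which provides conservativity of the family of $!$-pullbacks along the geometric points of $S'$ over $s$) with the compatibility of $A^t_{a_T}$ with base change (Lemma \ref{l.Ag}) to transfer the isomorphism of sliced nearby cycles from $S'$ back to $S$. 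Regularity of $S$ ensures that the twists in $K_S$ behave coherently throughout this descent, making the pointwise ULA condition on $(a, D_X L)$ follow from its analogue on~$S'$.
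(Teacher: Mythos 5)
Your overall strategy---reduce to verifying that $R\Psi^t_{a_T}((D_XL)|_{X_T})$ is of the form $p_1^*(\text{something})$ for all $T\to S$ of finite type, then conclude by descent along a modification---is the right shape, and it is essentially the one the paper follows. But your first two steps go off the rails because of a $*$-pullback versus $!$-pullback mismatch, and the regularity of $S$ gets used at the wrong place.

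To establish universal local acyclicity of $D_XL$ you must control $R\Psi^t_{a_T}(g_X^*(D_XL))$, the sliced nearby cycles of the \emph{ordinary} pullback of $D_XL$. By biduality, $g_X^*(D_XL)\simeq D_{X_T}(g_X^!L)$, so you want to know that $A^t_{a_T}(g_X^!L)$ is an isomorphism; this is precisely Corollary \ref{c.DPsisc}, the dual version of Theorem \ref{t.Psisc}. Instead you invoke Theorem \ref{t.Psisc} applied to $L$, which controls $A^t_{a_T}(g_X^*L)$, i.e.\ $R\Psi^t_{a_T}(D_{X_T}(g_X^*L))=R\Psi^t_{a_T}(g_X^!(D_XL))$---the $!$-pullback of $D_XL$, which is the wrong object.

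This mismatch is what forces the false claim in your second step. After your K\"unneth computation you arrive at $p_1^*D_{X_t}(L|_{X_t})\otimes^L p_2^*K_{T_{(t)}}$, which is genuinely not of the form $p_1^*(\text{something})$: the base $T$ is an arbitrary finite-type $S'$-scheme, hence need not be regular, and $K_{T_{(t)}}=g_{(t)}^!K_{S_{(s)}}$ is not locally constant. Regularity of $S$ does not trivialize dualizing complexes on arbitrary $T$. The paper avoids this by applying K\"unneth over $S_{(t)}$---where the dualizing complex \emph{is} constant because $S$ is regular---and only afterward applying the exact functor $(\id\ttimes g_{(t)})^*$; concretely, one uses the sliced $!$-base change identity $\Psi^t(g_X^!L)\simeq(\id\ttimes g_{(t)})^!\Psi^t L$, biduality, and Lemma \ref{l.localshr} to convert the $!$-pullback of a dual into the $*$-pullback, landing on $(\id\ttimes g_{(t)})^*p_1^*D_{X_t}L_t\simeq p_1^*D_{X_t}L_t$. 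There is no leftover $K_{T_{(t)}}$ factor. The invocation of Lemma \ref{l.local} (b) at the end of your second step is also suspect: that lemma requires the morphism $g_X$ itself to be universally locally acyclic, which is not the case for general $g$.

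Your final descent step is too vague and probably cannot be carried out as described. The paper's mechanism is clean: once one knows $\Phi_{a_T}((D_XL)|_{X_T})=0$ for all $T\to S_0$ (the modification) separated of finite type, one uses Lemma \ref{l.descent}---proper hypercovering descent for vanishing cycles---applied to the \v{C}ech nerve of $S_0\times_S S'\to S'$ to obtain $\Phi_{a_{S'}}((D_XL)|_{X_{S'}})=0$ for any finite-type $S'\to S$, and concludes by Remark \ref{r.Org}. Conservativity of $!$-pullback (Lemma \ref{l.cd}) is relevant to the internal proof of Theorem \ref{t.Psis}, but by itself it does not provide the needed descent of the vanishing-cycle condition from $S_0$ back to $S$.
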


This gives an affirmative answer to a question of Illusie. Note that since
$S$ is regular, $\Lambda_S$ is a dualizing complex for
$D_{\cft}(S,\Lambda)$. See also \cite[B.6 2)]{BG} for $S$ smooth over a
field.

\begin{proof}
We may assume $a$ separated. We have $K_X\simeq a^! \Lambda_S\otimes^L M$
for some invertible object $M$ of $D_{\cft}(X,\Lambda)$. Since $D_X L\simeq
R\cHom(L\otimes^L M,a^!\Lambda_S)$ with $L\otimes^L M$ universally locally
acyclic, we may assume $K_X=a^!\Lambda_S$. By Corollary \ref{c.DPsisc},
there exists a modification $S_0\to S$ such that for $T\to S_0$ separated of
finite type, and for every geometric point $t\to T$, we have
\[
\Psi^t ((D_XL)|_{X_T})\simeq \Psi^t D_{X_T}g_X^! L \xrightarrow[\sim]{A^t_{a_T}(g_X^! L)} D_{X_t\ttimes T_{(t)}}\Psi^t g_X^! L,
\]
where $g\colon T\to S$. Let $L_t=L|_{X_t}$. By Example \ref{e.Psigood} (1),
$(a,L)$ is $\Psi$-good and $\Psi^t L=p_1^* L_t\in D^b_c$. Thus, by Lemmas
\ref{l.Psibc!} and \ref{l.localshr} and biduality (Proposition
\ref{p.dual}),
\begin{multline*}
D_{X_t\ttimes T_{(t)}}\Psi^t g_X^! L\simeq D_{X_t\ttimes
T_{(t)}}(\id \ttimes g_{(t)})^!\Psi^t L\simeq D_{X_t\ttimes
T_{(t)}}(\id \ttimes g_{(t)})^! D_{X_t\ttimes S_{(t)}}D_{X_t\ttimes S_{(t)}}p_1^* L_t\\
\xleftarrow[\sim]{\eqref{e.localshr}} D_{X_t\ttimes
T_{(t)}}D_{X_t\ttimes
T_{(t)}}(\id \ttimes
g_{(t)})^*D_{X_t\ttimes S_{(t)}}p_1^* L_t\simeq (\id \ttimes
g_{(t)})^*D_{X_t\ttimes S_{(t)}}p_1^* L_t.
\end{multline*}
By K\"unneth formula (Corollary \ref{c.DKunn}), $D_{X_t\ttimes
S_{(t)}}p_1^*L_t\simeq p_1^*D_{X_t} L_t$. Thus $\Psi^t ((D_XL)|_{X_T})$ has
the form $p_1^* L'$. It follows that $\Phi_{a_T}((D_X L)|_{X_T})=0$. Thus,
for any morphism $S'\to S$ separated and of finite type, $\Phi_{a_{S'}}((D_X
L)|_{X_{S'}})=0$ by Lemma \ref{l.descent} applied to the \v Cech nerve of
$S_0\times_S S'\to S'$. We conclude by Remark \ref{r.Org}.
\end{proof}

Gabber showed that universal local acyclicity in Corollary \ref{c.DLA} is
equivalent to local acyclicity. He also gave a different proof of the case
$L\in D_{\cft}$ of Corollary \ref{c.DLA}, independent of Corollary
\ref{c.DPsisc}. We present Gabber's results in Section~\ref{s.Gabber}.

Generalizing constructions of Beilinson \cite{BeiSS}, Hu and Yang \cite{HY}
recently defined relative versions of singular support and weak singular
support over a Noetherian base scheme $S$, which exist and are equal on a
dense open subscheme of $S$. For $X\to S$ smooth of finite type and $L\in
D^b_c(X,\Lambda)$, the weak singular support $\rSS^w(L,X/S)$ is defined to
be the smallest element of $\cC^w(L,X/S)$ (if it exists), where
$\cC^w(L,X/S)$ denotes the set of closed conical subsets $C$ of the
cotangent bundle $T^*(X/S)$ such that $L$ is weakly micro-supported on $C$
relatively to $S$. $\rSS^w(L,X/S)$ exists if $(f,L)$ is universally locally
acyclic \cite[4.3, Proposition 4.5]{HY}.

The preservation of local acyclicity by duality implies the following
compatibility of weak singular support with duality.

\begin{cor}\label{c.ss}
Let $X\to S$ be a smooth morphism of regular excellent schemes. Let
$\Lambda$ and $K_X$ be as in Corollary \ref{c.DLA} and let $L\in
D^b_c(X,\Lambda)$. Assume either $L\in D_{\cft}$ or $\Lambda$ Gorenstein.
Then
\[\cC^w(L,X/S)=\cC^w(D_XL,X/S), \quad \rSS^w(L,X/S)=\rSS^w(D_XL,X/S).\]
(The second equality means that $\rSS^w(L,X/S)$ exists if and only if
$\rSS^w(D_XL,X/S)$ exists and the two are equal when they exist.)
\end{cor}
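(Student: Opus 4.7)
The plan is to derive the corollary from Corollary \ref{c.DLA} (duality preserves universal local acyclicity over a regular excellent base) together with Corollary \ref{c.Beilinson} (Beilinson's theorem that the vanishing cycle functor commutes with duality up to the Iwasawa twist). Under the hypothesis that $L \in D_\cft$ or $\Lambda$ is Gorenstein, biduality holds by Proposition \ref{p.dual}, so $L$ and $D_X L$ play symmetric roles; in particular the $\rSS^w$ part of the statement reduces to the $\cC^w$ part, since the smallest elements of two equal sets are equal (and exist simultaneously).

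Next I would unpack the definition of weak micro-support of Beilinson \cite{BeiSS}, as extended to general bases by Hu--Yang \cite{HY}: a closed conical $C \subseteq T^*(X/S)$ lies in $\cC^w(L,X/S)$ precisely when, for every $C$-transversal test pair consisting of an \'etale morphism $h \colon U \to X$ and a morphism $f \colon U \to \A^1_S$, and every geometric point $u$ of $U$ witnessing the transversality, the germ at $u$ of the relative vanishing cycle $\Phi^s_f(h^* L)$ vanishes, where $s$ denotes the image of $u$ in $\A^1_S$. Since $h$ is \'etale we have $h^!=h^*$, hence $h^* D_X L \simeq D_U h^* L$ canonically. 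After Henselizing $\A^1_S$ at $s$, the morphism $f$ fits the framework of Section \ref{s.2+}, and Corollary \ref{c.Beilinson} supplies a natural isomorphism
\[
D_{U_s \ttimes_s \eta}\, \Phi^s_f(h^* L) \simeq \tau\, \Phi^s_f(h^* D_X L).
\]
Both $\tau$ (Definition \ref{d.Iwasawa}) and $D_{U_s \ttimes_s \eta}$ (Proposition \ref{p.dual}) are invertible auto-equivalences on the relevant subcategories, and the isomorphism is local on $U_s$, so $\Phi^s_f(h^* L)$ vanishes in a neighbourhood of $u$ if and only if $\Phi^s_f(h^* D_X L)$ does. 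Because $C$-transversality is a property of the pair $(h, f)$ alone, this yields $\cC^w(L, X/S) = \cC^w(D_X L, X/S)$.

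For the existence part: by Corollary \ref{c.DLA} combined with biduality, $(a, L)$ is universally locally acyclic if and only if $(a, D_X L)$ is; and by \cite[4.3, Proposition 4.5]{HY} universal local acyclicity guarantees the existence of $\rSS^w$ as the minimum of $\cC^w$. The minima of the two equal sets $\cC^w$ therefore coincide.

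The main obstacle I anticipate is matching Hu--Yang's precise formulation of $C$-transversality and the relative vanishing cycle used to define weak micro-support with the sliced vanishing cycle functor $\Phi^s = L\Co \circ R\Psi^s$ of Section \ref{s.van}. This should be a direct translation, since both approaches are built from relative vanishing cycles at geometric points of $\A^1_S$, and Henselization at such a point reduces everything to the DVR setting of Section \ref{s.2+} where Corollary \ref{c.Beilinson} applies; no deeper input than biduality plus the DVR case of duality for $\Phi$ is needed.
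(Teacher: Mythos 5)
Your approach differs from the paper's and contains a genuine gap. The paper argues via local acyclicity directly, bypassing Beilinson's theorem entirely: for a test pair $X \xleftarrow{h} U \xrightarrow{g} Y$ with $h$ smooth of relative dimension $d$ and all schemes smooth of finite type over $S$, it observes $h^*D_XL(d)[2d] \simeq D_U h^*L$ and then invokes Corollary~\ref{c.la} (local acyclicity implies universal local acyclicity over a Noetherian base) and Corollary~\ref{c.DLA} (duality preserves universal local acyclicity over a regular excellent base), together with biduality, to conclude that $(g,h^*L)$ is locally acyclic if and only if $(g,h^*D_XL)$ is. No appeal to vanishing cycles over a discrete valuation ring is made.

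The gap in your argument is the claimed reduction to the framework of Section~\ref{s.2+}: you assert that after Henselizing $\A^1_S$ at a geometric point $s$, the morphism $f$ fits the DVR setting where Corollary~\ref{c.Beilinson} applies. When $\dim S \ge 1$ this fails. The local ring of $\A^1_S$ at the underlying point $s_0$ of $s$ can have dimension up to $\dim S + 1$, Henselization preserves dimension, and the resulting strictly Henselian local ring is a DVR only when $s_0$ has codimension one in $\A^1_S$. Theorem~\ref{t.LPhi} and Corollary~\ref{c.Beilinson} are proved only over Henselian discrete valuation rings; the quasi-periodic adjunction $Rj_* \dashv \tau L\Co$ of Theorem~\ref{t.adj} underlying them breaks down over higher-dimensional Henselian local bases, as the paper explicitly records in the remark following Remark~\ref{r.glue}, where the genuine right adjoint $\Co\spcheck$ of $Rj_*$ is shown to differ drastically from $L\Co$. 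Local acyclicity of $(f,h^*L)$ over $\A^1_S$ with $\dim S\ge 1$ therefore cannot be controlled by a single vanishing-cycle computation over a DVR, and the displayed isomorphism you invoke does not make sense in the required generality. Your reduction of the $\rSS^w$ statement to the $\cC^w$ statement, and the identification $D_U h^*L \simeq h^*D_XL$ for $h$ \'etale, are both correct; the defect lies precisely in the central step where the duality input is supposed to enter. A correct route along your lines would have to pass through Gabber's test-curve criterion (Theorem~\ref{t.ULA}, Remark~\ref{r.ULA}) to reduce to genuine DVRs before applying Corollary~\ref{c.Beilinson} --- but this is exactly Gabber's alternative proof of the $D_\cft$ case of Corollary~\ref{c.DLA} at the end of Section~\ref{s.Gabber}, so one might as well cite Corollary~\ref{c.DLA} directly, as the paper does.
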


In the case where $S$ is the spectrum of a field, one recovers
\cite[Corollary 4.9]{Saito}.

\begin{proof}
The second equality follows from the first equality. For the first equality,
we show more generally that for every test pair $X\xleftarrow{h}
U\xrightarrow{g} Y$ of smooth schemes of finite type over $S$ with $h$
smooth of relative dimension $d$, $(g,h^*L)$ is locally acyclic if and only
if $(g,h^*D_XL)$ is locally acyclic. Since $(h^*D_XL)(d)[2d]\simeq D_Uh^*L$,
this follows from Corollaries \ref{c.DLA} and \ref{c.la}.
\end{proof}

\section{Local acyclicity (after Gabber)}\label{s.Gabber}

The results of this section are due to Ofer Gabber.

\begin{lemma}\label{l.descent}
Let $a\colon X\to S$ be a morphism of schemes and let $L\in D^+(X,\Lambda)$.
Assume that there exists a hypercovering $g_\bullet\colon S_\bullet\to S$
for the $h$-topology such that $\Phi_{a_{n}}(L|_{X_{n}})=0$ for all $n\ge
0$, where $a_n\colon X_n\to S_n$ denotes the base change of $a$ by $g_n$.
Then $\Phi_a(L)=0$.
\end{lemma}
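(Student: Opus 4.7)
The plan is to check $\Phi_a L = 0$ pointwise on the vanishing topos and then invoke $h$-descent at each point. Since $X \atimes_S S$ has enough points, it suffices to show that the canonical map of stalks
\[
L_x \longrightarrow R\Gamma(X_{(x)t}, L)
\]
is an isomorphism for every geometric point $(x,t,\phi)$ of $X\atimes_S S$, where $s = a(x)$ and $X_{(x)t} = X_{(x)} \times_{S_{(s)}} t$ is the Milnor fiber. This reduction uses that $\Phi_a L$ is (up to shift) the cone of the canonical map $p_1^* L \to R\Psi_a L$, whose stalks at $(x,t,\phi)$ are respectively $L_x$ and $R\Gamma(X_{(x)t}, L)$ by the computation of $R\Psi_a L$ on local sections (Remark \ref{r.local}).

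Fix such $(x,t,\phi)$. Pulling back $g_\bullet$ along $X_{(x)t}\to X$ yields an $h$-hypercovering $X_{(x)t}\times_X X_\bullet \to X_{(x)t}$ of the pro-scheme $X_{(x)t}$. By proper (and hence $h$-) cohomological descent for torsion \'etale sheaves, applied after reduction to a cofinal system of quasi-compact \'etale neighborhoods of $x$, we obtain
\[
R\Gamma(X_{(x)t}, L) \simeq \mathrm{Tot}_n\, R\Gamma\bigl(X_{(x)t}\times_X X_n,\ L|_{X_n}\bigr).
\]
Next, I would identify each $X_{(x)t}\times_X X_n = X_{(x)} \times_{S_{(s)}} (t \times_S S_n)$ with a disjoint union $\coprod_{(x_n,t_n)} X_{n,(x_n)t_n}$ indexed by compatible geometric-point lifts $t_n\to S_n$ above $t$ and $x_n\to X_n$ above $x$, where $X_{n,(x_n)t_n}$ is the Milnor fiber of $a_n$ at $(x_n,t_n)$; this uses the compatibility of strict Henselization with the base change $g_n$.

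By the hypothesis $\Phi_{a_n}(L|_{X_n}) = 0$, each map $L_x \simeq L_{x_n} \to R\Gamma(X_{n,(x_n)t_n}, L|_{X_n})$ is an isomorphism. Substituting, the $n$-th level becomes a product of copies of $L_x$, and the totalization collapses to $L_x$ because the simplicial set of lifts is an $h$-hypercovering of the geometric point $(x,t)$ and hence cohomologically trivial for the constant coefficient $L_x$. The main obstacle is a careful justification of the two descent steps: first, extending proper $h$-cohomological descent for torsion \'etale sheaves from finite-type schemes to the pro-scheme $X_{(x)t}$, which requires a limit argument controlled by the lower tor-boundedness of $L$ and the finite cohomological dimension of the approximating quasi-compact \'etale neighborhoods; second, matching the base change $X_{(x)t}\times_X X_n$ with the disjoint union of Milnor fibers for $a_n$, which amounts to commuting strict Henselization with the fibered product $X\times_S S_n$ and keeping track of the geometric-point lifts at each simplicial degree.
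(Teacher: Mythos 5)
Your reduction to checking the canonical map $L_x \to R\Gamma(X_{(x)t}, L)$ on stalks of the vanishing topos $X\atimes_S S$ is sound, and the instinct to bring in $h$-descent at that stage is reasonable. But the key identification you propose is false: $t\times_S S_n$ is not a disjoint union of geometric points over $t$. For instance if $g_0\colon S_0\to S$ is a blowup and the image of $t$ in $S$ lies in the center, then $t\times_S S_0$ is a positive-dimensional projective scheme over $t$. Consequently $X_{(x)t}\times_X X_n \simeq X_{(x)}\times_{S_{(s)}}(t\times_S S_n)$ is not a disjoint union of Milnor fibers of $a_n$, and the rest of the argument---replacing each simplicial level by a product of copies of $L_x$ and collapsing the totalization---does not go through. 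One could try to repair it by pushing forward along $X_{(x)t}\times_t U_n \to U_n$, where $U_n=t\times_S S_n$, and arguing that the pushforward is constant; but the stalk of this pushforward at a geometric point $u_n$ of $U_n$ is $R\Gamma$ over $X_{(x)}\times_{S_{(s)}}(U_n)_{(u_n)}$, which is again not a Milnor fiber of $a_n$ unless $U_n$ is zero-dimensional, and relating it to the hypothesis $\Phi_{a_n}(L|_{X_n})=0$ would require a base-change statement for $R\Psi_{a_n}$ that is not available from the vanishing of $\Phi$ alone.

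The paper avoids the pointwise analysis entirely. It forms the simplicial morphism of vanishing topoi $\vec{g}_\bullet = g_{X\bullet}\atimes_{g_\bullet}g_\bullet\colon X_\bullet\atimes_{S_\bullet}S_\bullet\to X\atimes_S S$ alongside $g_{X\bullet}\colon X_\bullet\to X$, and compares the unit of the comparison $p_1^*\to\Psi_a$ with $\vec{g}_{\bullet*}(p_{1\bullet}^*\to\Psi_{a_\bullet})g_{X\bullet}^*$. The two unit maps $\id\to g_{X\bullet*}g_{X\bullet}^*$ and $\id\to \vec{g}_{\bullet*}\vec{g}_\bullet^*$ are isomorphisms on $D^+$ by $h$-cohomological descent on $X$ and by \emph{oriented} cohomological descent on $X\atimes_S S$, the latter being Gabber's theorem (\cite[XII$_{\mathrm{A}}$ Th\'eor\`eme 2.2.3]{ILO}) which you would need to invoke in some form in any case. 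The hypothesis makes the right vertical arrow an isomorphism, hence so is the left, i.e.\ $\Phi_a L=0$. If you want to retain the spirit of your approach, the fix is to absorb the descent into the topos-level statement rather than attempting to decompose the fibers of $g_\bullet$ into points.
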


\begin{proof}
Let $g_{X\bullet}$ be the base change of $g_\bullet$ to $X$ and let
$\vec{g}_\bullet=g_{X\bullet}\atimes_{g_\bullet} g_\bullet$. We have a
commutative diagram
\[\xymatrix{p_1^*\ar[d]\ar[r] & \vec{g}_{\bullet*}\vec{g}_\bullet^*p_1^*\ar@{-}[r]^{\sim} & \vec{g}_{\bullet*}p_{1\bullet}^*g_{X\bullet}^*\ar[d]\\
\Psi_a\ar[r] & \Psi_a g_{X\bullet *}g_{X\bullet}^* \ar@{-}[r]^\sim & \vec{g}_{\bullet*}\Psi_{a_{\bullet}}g_{X\bullet}^*.}
\]
By cohomological descent and oriented cohomological descent
\cite[XII$_{\textrm{A}}$ Th\'eor\`eme 2.2.3]{ILO}, the horizontal arrows are
isomorphisms on $D^+$. The right vertical arrow is an isomorphism on $L$ by
assumption. Thus the left vertical arrow is an isomorphism on $L$.
\end{proof}

\begin{lemma}\label{l.alg}
Let $S$ be a Noetherian scheme and let $U\subseteq S$ be a dense open
subset. For any $s\in S-U$, there exists an immediate Zariski generization
$t$ of $s$ in $S$ with $t\in U$.
\end{lemma}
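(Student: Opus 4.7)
Plan: I would reduce to a purely commutative-algebra statement. Replacing $S$ by $\mathrm{Spec}\,\mathcal{O}_{S,s}$, I may assume $S = \mathrm{Spec}\, R$ for a Noetherian local ring $R$ with closed point $s$, and $U\subseteq S$ is still dense open (the generic points of the irreducible components of $S$ through $s$ all lie in $U$ by density). Let $I\subseteq R$ be an ideal with $V(I)=S\setminus U$. Then $I\subseteq\mathfrak m$ because $s\in V(I)$, and $I$ is contained in no minimal prime of $R$ because $V(I)$ contains no generic point of $S$. The immediate generizations of $s$ correspond to primes $\mathfrak p\subset R$ with $\dim R/\mathfrak p=1$, and such a $\mathfrak p$ corresponds to a point of $U$ iff $I\not\subseteq\mathfrak p$. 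So I need to produce a prime $\mathfrak p$ with $\dim R/\mathfrak p=1$ and $I\not\subseteq\mathfrak p$.

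Since $R$ has finitely many minimal primes, prime avoidance yields $f\in I$ not lying in any minimal prime; in particular $\dim R\geq 1$. It now suffices to find a prime $\mathfrak p\subset R$ with $\dim R/\mathfrak p=1$ and $f\notin\mathfrak p$.

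Pick a minimal prime $\mathfrak q$ of $R$ achieving the dimension, $\dim R/\mathfrak q=\dim R=:d$. In the $d$-dimensional Noetherian local domain $\bar R:=R/\mathfrak q$, the image $\bar f$ of $f$ is nonzero, so $\dim\bar R/(\bar f)=d-1$ by Krull's Hauptidealsatz; thus $\bar f$ extends to a system of parameters $\bar f,y_2,\dots,y_d$ of $\bar R$. Set $R':=\bar R/(y_2,\dots,y_d)$. Then $\dim R'=1$ and the image $\bar{\bar f}$ of $f$ in $R'$ is a parameter (i.e.\ $\dim R'/(\bar{\bar f})=0$). Consequently $\bar{\bar f}$ avoids every minimal prime of $R'$ of coheight~$1$. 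Choose any such minimal prime $\mathfrak p'\subset R'$ (one exists since $\dim R'=1$) and let $\mathfrak p\subset R$ be its preimage. Then $\mathfrak q\subseteq\mathfrak p$, $R/\mathfrak p\cong R'/\mathfrak p'$ has dimension $1$, and $f\notin\mathfrak p$; the point $t\in S$ corresponding to $\mathfrak p$ is the desired immediate generization of $s$ lying in $U$.

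No step poses a real obstacle: the argument is a direct application of prime avoidance, Krull's principal ideal theorem, and the extension of an element of the maximal ideal to a system of parameters in a Noetherian local domain. The only mild subtlety is ensuring that the minimal prime $\mathfrak p'$ one selects in the $1$-dimensional ring $R'$ has coheight exactly $1$ (since $R'$ can also have minimal primes equal to its maximal ideal), but this is automatic from $\dim R'=1$.
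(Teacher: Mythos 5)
Your argument is correct, but it is organized differently from the paper's. The paper also reduces to $R=\mathcal{O}_{S,s}$ and then runs an induction on $d=\dim R$: for $d=1$ any point of $U$ works, and for $d>1$ it shows (Krull's principal ideal theorem plus prime avoidance) that $R$ has infinitely many height-one primes, so one can pick a codimension-one point $x$ that is not among the finitely many maximal points of $S-U$; such an $x$ lies in $U$, and the induction hypothesis applied to $\overline{\{x\}}$ (which is local of center $s$ and of strictly smaller dimension, with $U\cap\overline{\{x\}}$ dense) finishes the proof. You instead give a direct, induction-free construction: prime avoidance produces $f\in I$ off all minimal primes, and then passing to a quotient domain $\bar R=R/\mathfrak q$ of maximal dimension, extending $\bar f$ to a system of parameters, and killing the complementary parameters yields a one-dimensional quotient in which any coheight-one minimal prime misses $\bar f$; its preimage is the desired prime. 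Both proofs rest on the same two tools (Krull and prime avoidance), so neither is more elementary, but yours produces the coheight-one prime in one shot at the cost of invoking the extension of a parameter to a full system of parameters, while the paper's only needs the weaker fact that a local ring of dimension $>1$ has infinitely many height-one primes. All the small points you flag (density of $U$ after localization, $\dim R'\le 1$ via $\dim R'/(\bar{\bar f})=0$, and coheight exactly one for the chosen minimal prime) check out.
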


This follows from \cite[Section 31, Lemma~1]{Matsumura}. We include a proof
for completeness.

\begin{proof}
We may assume $S$ local of center $s$. We proceed by induction on the
dimension $d$ of $S$. For $d=1$, any $t\in U$ works. For $d>1$, there are
infinitely many codimension $1$ points of $S$. Indeed, for $S=\Spec(R)$, by
Krull's principal ideal theorem, each proper principal ideal of $R$ is
contained in a height $1$ prime ideal, so that the maximal ideal $\fm$ of
$R$ is the union of all height $1$ prime ideals. On the other hand, by the
prime avoidance lemma, $\fm$ cannot be the union of finitely many such
primes, as $\fm$ has height $>1$. Let $x$ be a codimension $1$ point of $S$
that is not a maximal point of $S-U$. Then $x\in U$. We conclude by
induction hypothesis applied to the closure of $x$ in $S$.
\end{proof}

\begin{lemma}\label{l.points}
Let $f\colon X\to S$ be a morphism of finite type of Noetherian schemes, $T$
a subscheme of $S$, $(\bar x,\bar t)$ a point of $P=X\atimes_S T$ ($\bar x$
being a point of $X_\et$, $\bar t$ being a point of $T_\et$, and $\bar
t\rightsquigarrow f(\bar x)$ being a morphism of points of $S_{\et}$). Then
$(\bar x,\bar t)$ specializes to a point $(\bar x',\bar t')$ of $P$ such
that the image $x'\in X$ of $\bar x'$ is closed and $\bar t'\rightsquigarrow
f(\bar x')$ is an (\'etale) specialization of codimension $\le 1$.
\end{lemma}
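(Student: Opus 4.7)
My plan is to proceed in two geometric reductions followed by a lifting to \'etale specializations. First, I would reduce to the case where the image of $\bar x'$ is closed in $X$: take $Y = \overline{\{x\}}_X$ with the reduced structure, pick any closed point $x_1$ of the Noetherian scheme $Y$ (which is then closed in $X$), and choose a geometric point $\bar x_1$ above $x_1$ with a specialization $\bar x \rightsquigarrow \bar x_1$ in $X_{\et}$. This produces a specialization $(\bar x,\bar t)\rightsquigarrow(\bar x_1,\bar t)$ in $P$, together with the composed \'etale specialization $\phi'\colon \bar t\rightsquigarrow f(\bar x_1)$ in $S_{\et}$. Set $s_1=f(x_1)$ and $Z=\overline{\{t\}}_S$ (reduced); then $s_1\in Z$ and $Z$ is a Noetherian integral closed subscheme of $S$ with generic point $t$.

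Second, I would locate the candidate $t'\in T$ by a Noetherian-topology argument in $Z$. Since $T\hookrightarrow S$ is locally closed and $t\in T$ is the generic point of $Z$, writing $T=U\cap V$ with $U$ open and $V$ closed forces $Z\subseteq V$, so $T\cap Z=U\cap Z$ is open and dense in $Z$. I now apply Lemma~\ref{l.alg} to the Noetherian local scheme $Z_{(s_1)}$ with its dense open $(T\cap Z)_{(s_1)}$: either $s_1\in T$, in which case $t'=s_1$ works (codimension~$0$), or one obtains an immediate Zariski generization $t'\in T\cap Z$ of $s_1$, giving codimension exactly~$1$. In both cases $t'\in T$, $t\rightsquigarrow t'\rightsquigarrow s_1$ in $S$, and $\codim_{\overline{\{t'\}}_S}(s_1)\le 1$.

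Third, I lift this to an \'etale factorization. Let $A=\cO^{\mathrm{sh}}_{S,\bar s_1}$, and let $\mathfrak{p}\subseteq A$ be the prime encoding $\phi'$, so $\mathfrak{p}\cap\cO_{S,s_1}$ is the prime of $t$. Let $\mathfrak{q}_0\subseteq\cO_{S,s_1}$ be the prime of $t'$, so $\mathfrak{p}\cap\cO_{S,s_1}\subseteq\mathfrak{q}_0$ and $\dim\cO_{S,s_1}/\mathfrak{q}_0\le 1$. Going-up for the strict Henselization $\cO_{S,s_1}\to A$ -- applied layer-by-layer via Cohen--Seidenberg on each finite \'etale step and then assembled from the resulting cofiltered limit of nonempty finite sets -- yields a prime $\mathfrak{q}\subseteq A$ with $\mathfrak{p}\subseteq\mathfrak{q}$ and $\mathfrak{q}\cap\cO_{S,s_1}=\mathfrak{q}_0$. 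By flatness of $\cO_{S,s_1}\to A$, $\dim A/\mathfrak{q}\le\dim A/\mathfrak{q}_0 A=\dim\cO_{S,s_1}/\mathfrak{q}_0\le 1$. Choosing a geometric point $\bar t'$ above $t'$ with $\kappa(\bar t')\supseteq\kappa(\mathfrak{q})^{\mathrm{sep}}$ gives an \'etale specialization $\bar t'\rightsquigarrow f(\bar x_1)$ of codimension~$\le 1$. To factor $\phi'$ through $\bar t'$, I identify $\cO^{\mathrm{sh}}_{S,\bar t'}\simeq (A_{\mathfrak{q}})^{\mathrm{sh}}$, lift $\mathfrak{p}$ (viewed as a prime of $A_{\mathfrak{q}}$) to a prime $\mathfrak{r}$ of $(A_{\mathfrak{q}})^{\mathrm{sh}}$ by faithful flatness, and extend the residue-field embedding supplied by $\phi'$. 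Since $\mathfrak{r}$ lies over the prime of $t$ in $\cO_{S,t'}$ and $t\in T$, this defines $\bar t\rightsquigarrow\bar t'$ in $T_{\et}$, whose composite with $\bar t'\rightsquigarrow f(\bar x_1)$ recovers $\phi'$ by construction.

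I expect the main obstacle to be precisely this \'etale lifting: one must combine the going-up property for strict Henselizations with careful bookkeeping of residue-field embeddings to guarantee that the two composites representing $\bar t\rightsquigarrow f(\bar x_1)$ -- the given one and the one through $\bar t'$ -- agree as morphisms of points of $S_{\et}$.
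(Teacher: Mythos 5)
Your Steps~1 and~2 run parallel to the paper's, but there is a genuine gap in Step~3: \emph{going-up fails for strict Henselization}, so the prime $\mathfrak q\supseteq\mathfrak p$ lying over $\mathfrak q_0$ that you need may not exist. Your appeal to ``Cohen--Seidenberg on each finite \'etale step'' does not repair this, because $\cO_{S,s_1}\to A=\cO^{\mathrm{sh}}_{S,\bar s_1}$ is a filtered colimit of \emph{localizations} of \'etale $\cO_{S,s_1}$-algebras (the \'etale-local ones), not of finite \'etale ones, and going-up does not survive localization. Here is a concrete counterexample to the going-up you invoke. Take $S=\Spec(\bar R)$ with $\bar R=\bigl(k[x,y,z]/(y^2-x^2(1+z))\bigr)_{(x,y,z)}$, $k$ algebraically closed of characteristic $\ne 2$, and $s_1$ the closed point. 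In $A=\bar R^{\mathrm{sh}}$ one has $y^2-x^2(1+z)=(y-x\sqrt{1+z})(y+x\sqrt{1+z})$, so $A$ has two minimal primes; set $\mathfrak p=(y-x\sqrt{1+z})$, which contracts to $0\subset\bar R$. Then $\mathfrak q_0=(x+y,z)\subset\bar R$ is a height-one prime containing $\mathfrak p\cap\bar R$, yet no prime $\mathfrak q$ of $A$ with $\mathfrak p\subseteq\mathfrak q$ lies over $\mathfrak q_0$: if $x+y,z\in\mathfrak q$ and $\mathfrak p\subseteq\mathfrak q$, then $x(1+\sqrt{1+z})\equiv x+y\in\mathfrak q$, and $1+\sqrt{1+z}$ is a unit (it reduces to $2$ in the residue field), so $x\in\mathfrak q$ and $\mathfrak q$ is the maximal ideal, which contracts to $\fm_{\bar R}\ne\mathfrak q_0$. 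In your setup this situation is reachable (take $\bar t$ over the generic point, so $Z=S$, and $T=S\setminus\{s_1\}$, which contains $\mathfrak q_0$); Lemma~\ref{l.alg} gives no control over \emph{which} immediate generization of $s_1$ is produced, so Step~2 can hand Step~3 a $t'$ with no lift.

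The fix is to choose $t'$ \emph{after} passing to the strict localization, which is exactly what the paper does: let $t$ be the image of $\bar t$ in $S_{(f(\bar x'))}$ (not merely in $S$), so that $\overline{\{t\}}$ is a Noetherian integral closed subscheme of $S_{(f(\bar x'))}$ with closed point $f(\bar x')$, and $U=T\times_S\overline{\{t\}}$ is a dense open of it containing $t$. Applying Lemma~\ref{l.alg} \emph{there} produces a $t'$ that is already a prime of $A/\mathfrak p$, so the containment $\mathfrak p\subseteq\mathfrak p_{t'}$ is free, the \'etale specialization $\bar t'\rightsquigarrow f(\bar x')$ is immediate, and the bound $\dim(A/\mathfrak p_{t'})\le 1$ is exactly what the lemma gives. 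In short, the point $t$ of the strict localization remembers the branch encoded by $\phi'$, while your Zariski point $t\in S$ forgets it; your proof discards the branch information between Steps~2 and~3 and then cannot recover it.
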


\begin{proof}
The image $x\in X$ of $\bar x$ specializes to a closed point $x'\in X$ and
$\bar x$ specializes to a point $\bar x'$ of $X_\et$ above $x'$. Let $t$ be
the image of $\bar t \to S_{(f(\bar x))}\to S_{(f(\bar x'))}$ and let
$U=T\times_S \overline{\{t\}}$, which is locally closed in
$\overline{\{t\}}$ and contains $t$, hence open dense in $\overline{\{t\}}$.
If $U=\overline{\{t\}}$, it suffices to take $\bar t'=f(\bar x')$. Otherwise
$f(\bar x')\not\in U$ admits an immediate Zariski generization $t'\in U$ by
Lemma \ref{l.alg} and it suffices to take $\bar t'$ to be a geometric point
above $t'$.
\end{proof}

\begin{lemma}\label{l.constr}
Let $f\colon X\to S$ be a morphism of finite type of Noetherian schemes. The
points $(\bar x,\bar t)$ of $X\atimes_S S$ such that the image $x\in X$ of
$\bar x$ is locally closed and $\bar t\rightsquigarrow f(\bar x)$ is an
(\'etale) specialization of codimension $\le 1$, form a conservative family
$\cP$ for the category of constructible sheaves.
\end{lemma}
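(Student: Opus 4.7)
The plan is to reduce conservativity to the assertion that no nonzero constructible sheaf has all its stalks at points of $\cP$ equal to zero, then use the stratification built into the definition of constructibility together with Lemma \ref{l.points} to produce a point of $\cP$ where a given nonzero constructible sheaf has nonzero stalk. First, I would use that $\Shv_c(X\atimes_S S,\Lambda)$ is stable under kernels and cokernels (recorded before Lemma \ref{l.gen}), so a morphism of constructible sheaves is an isomorphism iff its kernel and cokernel vanish. Since $X\atimes_S S$ is locally coherent, it has enough points by Deligne's theorem, and the lemma reduces to showing: if $\cF\in\Shv_c(X\atimes_S S,\Lambda)$ is nonzero, then $\cF_p\neq 0$ for some $p\in\cP$.

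Second, by the definition of constructibility there exist finite partitions $X=\bigcup_i X_i$, $S=\bigcup_j S_j$ into locally closed constructible subsets, equipped with the reduced subscheme structure, such that $\cF|_{P_{ij}}$ is locally constant on $P_{ij}\colonequals X_i\atimes_S S_j$ for every $i,j$. Since $\cF\neq 0$, some $\cF|_{P_{ij}}$ is a nonzero locally constant sheaf on $P_{ij}$; in particular $P_{ij}$ is nonempty and, by Deligne's theorem applied to the locally coherent topos $P_{ij}$, admits a point $p_0$ at which this restriction has nonzero stalk. I would then apply Lemma \ref{l.points} to the finite type morphism $f|_{X_i}\colon X_i\to S$ of Noetherian schemes and to the subscheme $T=S_j\subseteq S$: it yields a specialization $p_0\rightsquigarrow p_1$ inside $P_{ij}$, with $p_1=(\bar x',\bar t')$ such that $x'$ is closed in $X_i$ and $\bar t'\rightsquigarrow f(\bar x')$ has codimension $\le 1$. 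Because $X_i$ is locally closed in $X$, the closed point $x'\in X_i$ is locally closed in $X$. Viewing $p_1$ as a point of $X\atimes_S S$ through the morphism of topoi $X_i\atimes_S S_j\to X\atimes_S S$ induced by the inclusions, we obtain $p_1\in\cP$.

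Finally, I would conclude by invoking the standard topos-theoretic fact that for a locally constant sheaf on a topos every specialization induces an isomorphism on stalks (locally constant sheaves factor through the fundamental groupoid, all of whose morphisms are invertible). Applied to the specialization $p_0\rightsquigarrow p_1$ in $P_{ij}$ and to the locally constant sheaf $\cF|_{P_{ij}}$, this gives $(\cF|_{P_{ij}})_{p_1}\simeq (\cF|_{P_{ij}})_{p_0}\neq 0$; since stalks are preserved under the morphism $P_{ij}\to X\atimes_S S$, one concludes $\cF_{p_1}\neq 0$. The main point requiring care is this last step: one must verify that the restriction of $\cF$ to the oriented fiber-product topos $P_{ij}$ is locally constant in a sense strong enough to make the specialization maps furnished by Lemma \ref{l.points} into isomorphisms, and one must check the compatibility of stalks with the inclusion $P_{ij}\hookrightarrow X\atimes_S S$. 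Both are standard but deserve explicit justification.
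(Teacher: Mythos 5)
Your argument is correct and follows essentially the same route as the paper's: both proofs stratify using the definition of constructibility, invoke Lemma \ref{l.points} (applied to $X_i\to S$ and $T=S_j$) to specialize any point of $P_{ij}$ into $\cP\cap P_{ij}$, and conclude via the fact that specialization maps of locally constant sheaves are isomorphisms. The only cosmetic difference is that you first reduce conservativity to a vanishing statement via kernels and cokernels, whereas the paper stratifies simultaneously for $\cF$ and $\cG$ and argues about the morphism $\alpha$ directly; the substance is identical.
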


Note that by Chevalley's constructibility theorem, $x\in X$ locally closed
implies $f(x)\in S$ locally closed. Thus $x\in X$ is locally closed if and
only if $s=f(x)\in S$ is locally closed and $x$ is closed in the fiber
$X_s$.

\begin{proof}
Let $\alpha\colon \cF\to \cG$ be a morphism of constructible sheaves such
that $\alpha_{(\bar x,\bar t)}$ is an isomorphism for all $(\bar x,\bar t)$
in $\cP$. There exist partitions $X=\bigcup_i X_i$ and $S=\bigcup_j S_j$
into locally closed subsets such that the restrictions of $\cF$ and $\cG$ to
$P_{i,j}=X_i\atimes_S S_j$ are locally constant. By Lemma \ref{l.points},
each point of $P_{i,j}$ specializes to a point of $P_{i,j}$ in $\cP$. Thus
$\alpha|_{P_{i,j}}$ is an isomorphism.
\end{proof}

\begin{theorem}\label{t.ULA}
Let $f\colon X\to S$ be a morphism of finite type of Noetherian schemes. Let
$\Lambda$ be a Noetherian commutative ring such that $m\Lambda=0$ for some
$m$ invertible on $S$ and let $L\in D^b_c(X,\Lambda)$. Assume that for every
geometric point $\bar s$ of $S$ above $s\in S$ locally closed and every
strictly local test curve $C\to S_{(\bar s)}$, $(f_C,L|_{X_C})$ is locally
acyclic. Then $(f,L)$ is universally locally acyclic.
\end{theorem}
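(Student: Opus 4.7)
The goal is to show $(f,L)$ is universally locally acyclic, which by Example \ref{e.Psigood}(1) is equivalent to $(f,L)$ being $\Psi$-good together with $\Phi_f L=0$. The plan is to first achieve constructibility of $R\Psi_f L$ (equivalently of $\Phi_f L$) by means of Orgogozo's theorem (Theorem \ref{t.Org}) and $h$-descent (Lemma \ref{l.descent}), then use Lemma \ref{l.constr} to verify the vanishing of $\Phi_f L$ stalkwise on a conservative family, and at each point of that family to extract a strictly local test curve on which the hypothesis supplies the required local acyclicity.

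For the first stage, Orgogozo's theorem (Theorem \ref{t.Org}(1)) produces a modification $g\colon S'\to S$ such that $(f_{S'},L|_{X_{S'}})$ is $\Psi$-good, and then Theorem \ref{t.Org}(2) makes $R\Psi_{f_{S'}}(L|_{X_{S'}})$ constructible on $X_{S'}\atimes_{S'}S'$. The \v{C}ech nerve $S'_\bullet\to S$ of $g$ is an $h$-hypercovering with Noetherian terms of finite type over $S$; iterating Orgogozo's theorem on each level we arrange simultaneous $\Psi$-goodness. By Lemma \ref{l.descent}, it then suffices to prove $\Phi_{f_{S'_n}}(L|_{X_{S'_n}})=0$ on each level. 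The passage from the given hypothesis on $S$ to the analogous information on the modified bases $S'_n$ is handled by Noetherian induction on $\dim S$: the exceptional locus of any modification has strictly smaller dimension, so the inductive hypothesis disposes of the contribution of points of $S'_n$ not corresponding to locally closed points of $S$.

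In the constructible case, Lemma \ref{l.constr} reduces vanishing of $\Phi_f L$ to vanishing of its stalks at points $(\bar x,\bar t)\in X\atimes_S S$ with $x\in X$ locally closed and $\bar t\rightsquigarrow f(\bar x)=\bar s$ an \'etale specialization of codimension $\le 1$. By Chevalley's theorem, $s=f(x)$ is locally closed in $S$, so the hypothesis of the theorem applies at the geometric point $\bar s$. The codimension-$0$ case is tautological, since the restriction of $R\Psi_f L$ to $\bar x\ttimes s$ is identified with $L_{\bar x}$. For codimension $1$, $\bar t$ corresponds to a height-$1$ prime of the strict Henselization $\cO_{S,\bar s}$; a valuation-ring construction yields a strictly local $1$-dimensional scheme $C\to S_{(\bar s)}$ whose closed point maps to $\bar s$ and whose generic point maps to $\bar t$. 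Applying the hypothesis to this test curve $C$ gives local acyclicity of $(f_C,L|_{X_C})$ at the corresponding point of $X_C\atimes_C C$, which via the identification of stalks is precisely $(\Phi_f L)_{(\bar x,\bar t)}=0$.

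The main obstacle is the bookkeeping of the first stage, where the induction on $\dim S$ must propagate the hypothesis through the \v{C}ech hypercovering and through modifications, so that Lemma \ref{l.constr} applies uniformly at each level; in particular, locally closed points of $S'_n$ do not in general map to locally closed points of $S$, and this mismatch is what forces the induction. Once this is arranged, the second stage is relatively formal: the hypothesis is precisely tailored to the points singled out by Lemma \ref{l.constr}, and the extraction of the test curve proceeds entirely inside the strictly Henselian local ring $\cO_{S,\bar s}$.
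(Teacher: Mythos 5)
Your overall skeleton (Orgogozo's theorem, $h$-descent via Lemma \ref{l.descent}, the conservative family of Lemma \ref{l.constr}, and test curves extracted from codimension $\le 1$ \'etale specializations) matches the paper's, but the two stages of your argument do not fit together, and the central difficulty of the proof is left unresolved. Constructibility of $R\Psi L$ is only available after a modification $g\colon T\to S$, so the conservative-family argument must be run on $X_T\atimes_T T$, not on $X\atimes_S S$ as in your second stage. On $T$, a point $(\bar x,\bar t)$ of the family has $\bar t\rightsquigarrow f_T(\bar x)$ of codimension $\le 1$ \emph{in $T$}, and this specialization may be contracted by $g$ to an isomorphism in $S$ (the curve $C=\overline{\{\bar t\}}\subseteq T_{(f_T(\bar x))}$ then lies in a fibre of $g$ and is not finite over $S_{(\bar s)}$, so no strictly local test curve sees this stalk). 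The paper handles exactly this case by invoking Deligne's generic local acyclicity over a field (\cite[Th.\ finitude, Corollaire 2.16]{SGA4d}), an ingredient entirely absent from your proposal --- note it is already indispensable when $\dim S=0$, where your hypothesis is vacuous but the conclusion is not. In the non-contracted case one must still prove that $C\to S_{(\bar s)}$ is \emph{finite} (not merely integral and one-dimensional); the paper does this by a limit/isolated-point/Chevalley-semicontinuity argument, using that $f_T(\bar x)$ is closed in the fibre $T_{\bar s}$. Your ``valuation-ring construction'' works only if you are on $S$ itself, where you have no constructibility.

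The proposed ``Noetherian induction on $\dim S$'' to propagate the hypothesis to the levels of the \v Cech nerve is both misdirected and unnecessary. The obstruction is not that locally closed points of $S'_n$ fail to map to locally closed points of $S$ (they do, by Chevalley), but the contraction phenomenon above; and the exceptional locus having smaller dimension does not by itself dispose of it. The paper's descent runs in the opposite direction and needs no induction: one proves $\Phi_{f_T}(L|_{X_T})=0$ on the single modification $T$; combined with $\Psi$-goodness this makes $(f_T,L|_{X_T})$ universally locally acyclic, so $\Phi$ vanishes on every scheme over $T$, in particular on every level of the \v Cech nerve of $T\times_S S'\to S'$ for arbitrary $S'\to S$; Lemma \ref{l.descent} then gives $\Phi_{f_{S'}}(L|_{X_{S'}})=0$, and Remark \ref{r.Org} concludes.
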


By a \emph{strictly local test curve} we mean a finite morphism $C\to
S_{(\bar s)}$ such that $C$ is integral and one-dimensional. Note that $C$
is strictly local.

\begin{proof}
By Orgogozo's theorem (Theorem \ref{t.Org}), there exists a modification
$g\colon T\to S$ such that $R\Psi_{f_{T}} (L|_{X_{T}})$ is constructible and
commutes with base change. We claim $\Phi_{f_T} (L|_{X_{T}})=0$. Assuming
the claim, we have, for any morphism $S'\to S$ separated and of finite type,
$\Phi_{f_{S'}}(L|_{X_{S'}})=0$ by Lemma \ref{l.descent} applied to the \v
Cech nerve of the base change of $g$ by $S'\to S$. We conclude by Remark
\ref{r.Org}.

To prove the claim, let $(\bar x,\bar t)$ be a point of $X_T\atimes_T T$ as
in Lemma \ref{l.constr}.  Let $C$ be the closure of the image of $\bar t$ in
$T_{(f_T(\bar x))}$ (equipped with the reduced scheme structure).  It
suffices to show $\Phi_{f_{C}}(L|_{X_{C}})=0$. Let $\bar s=g(f_T(\bar x))\to
S$. If the specialization $g(\bar t)\rightsquigarrow \bar s$ is an
isomorphism, this follows from Deligne's theorem \cite[Th.\ finitude,
Corollaire 2.16]{SGA4d} that $(f_{\bar s},L|_{X_{\bar s}})$ is universally
locally acyclic. Otherwise $C\to S_{(\bar s)}$ is a strictly local test
curve and $\Phi_{f_{C}}(L|_{X_{C}})=0$ by assumption. Indeed, $C$ is clearly
integral and one-dimensional. Moreover, $T_{(f_T(\bar x))}$ is the strict
localization of $T_{(\bar s)}\colonequals T\times_S S_{(\bar s)}$ at a
closed point of the fiber $T_{\bar s}$. Thus $C$ is the limit of a system of
affine schemes quasi-finite over $T_{(\bar s)}$ with \'etale transition
maps, hence a strict localization of $U$ at a closed point $u\in h^{-1}(\bar
s)$, with $h\colon U\to S_{(\bar s)}$ of finite type. The localization
$\Spec(\cO_{U,u})$ is irreducible of dimension $1$ and the image in
$S_{(\bar s)}$ is not a point. It follows that $u$ is an isolated point
$h^{-1}(\bar s)$. By Chevalley's semicontinuity theorem, $h$ is quasi-finite
at $u$. Thus $C$ is finite over $S_{(\bar s)}$.
\end{proof}

Since local acyclicity is stable under quasi-finite base change, we
immediately deduce the following.

\begin{cor}\label{c.la}
Let $f$ and $\Lambda$ be as in Theorem \ref{t.ULA}. Let $L\in
D^b_c(X,\Lambda)$ such that $(f,L)$ is locally acyclic. Then $(f,L)$ is
universally locally acyclic.
\end{cor}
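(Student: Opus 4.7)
The plan is to deduce this directly from Theorem~\ref{t.ULA} by verifying its hypothesis. Given $(f,L)$ locally acyclic, I need to show that for every geometric point $\bar s$ above a locally closed point $s\in S$ and every strictly local test curve $C\to S_{(\bar s)}$, the pair $(f_C,L|_{X_C})$ is locally acyclic.

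First I would reduce from $S$ to $S_{(\bar s)}$. The strict localization $S_{(\bar s)}\to S$ is a cofiltered limit of étale neighborhoods of $\bar s$, and local acyclicity is preserved under étale base change (it is local on $S$ for the étale topology by Definition~\ref{d.slc}). A standard limit argument (noting that $L\in D^b_c$ and $m\Lambda=0$ with $m$ invertible on $S$, so Milnor fibers are controlled by finite-type data) then yields that $(f_{S_{(\bar s)}},L|_{X_{S_{(\bar s)}}})$ is locally acyclic. Second, a strictly local test curve $C\to S_{(\bar s)}$ is finite, in particular quasi-finite; hence invoking the classical stability of local acyclicity under quasi-finite base change (which is immediate from the definition in terms of Milnor fibers, since a quasi-finite morphism identifies the relevant Milnor fibers after base change), I conclude that $(f_C,L|_{X_C})$ is locally acyclic.

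With the hypothesis of Theorem~\ref{t.ULA} thus verified, the corollary follows. The only subtlety is the reduction to $S_{(\bar s)}$, since Definition~\ref{d.slc} quantifies over all geometric points and algebraic geometric points $t\to S_{(x)}$, so the preservation under pro-étale and quasi-finite base changes reduces to checking that the relevant Milnor fibers agree—and both $S_{(\bar s)}\to S$ and $C\to S_{(\bar s)}$ are compatible with strict localization at geometric points of $X$, so the Milnor fibers upstairs are cofinal in those downstairs. No serious obstacle arises; the content of the corollary is essentially the content of Theorem~\ref{t.ULA}, with the stability under quasi-finite base change serving as the bridge.
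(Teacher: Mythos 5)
Your proof is correct and takes essentially the same route as the paper: the paper deduces the corollary from Theorem \ref{t.ULA} in one line by invoking the stability of local acyclicity under quasi-finite base change, which is precisely the bridge you build (with the pro-\'etale reduction to $S_{(\bar s)}$ and the identification of Milnor fibers made explicit). Nothing further is needed.
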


In the case $L=\Lambda$, this answers a question of M.~Artin \cite[XV
Remarque 1.8 b)]{SGA4} (local variant) under the above assumptions. Compare
with Example \ref{e.Psigood} (2).

\begin{remark}\leavevmode\label{r.ULA}
\begin{enumerate}
\item We may replace the assumption in Theorem \ref{t.ULA} by the
    existence, for every strictly local test curve $C\to S_{(\bar s)}$
    with $s\in S$ locally closed, of a surjective morphism of integral
    schemes $g\colon C'\to C$ such that $(f_{C'},L|_{X_{C'}})$ is locally
    acyclic. Indeed, $(f_{C},L|_{X_{C}})$ is $\Psi$-good (by Example
    \ref{e.Psigood} (2) or by the proof of Theorem \ref{t.ULA}) and
    $(g_X\atimes_g g)^*$ is conservative. For $S$ universally Japanese,
    taking $C'$ to be the normalization of $C$, it thus suffices in
    Theorem \ref{t.ULA} to assume the local acyclicity of
    $(f_{C},L|_{X_{C}})$ for strictly local test curves $C\to S_{(\bar
    s)}$ with the additional hypothesis that $C$ is regular. Note that
    such a $C$ is the spectrum of a strictly Henselian discrete valuation
    ring.
\item Assume that $S$ is of finite type over a field or over $\Spec(\Z)$.
    Then it suffices in Theorem \ref{t.ULA} to assume the local acyclicity
    of $(f_{T},L|_{X_{T}})$ for $T\to S$ quasi-finite with $T$ regular of
    dimension $1$ (cf.\ \cite[B.6 5)]{BG}). Indeed, any $C\to S$ as above
    with $C$ regular factorizes through some $T\to S$.
\end{enumerate}
\end{remark}

Gabber's proof of the case $L\in D_\cft$ of Corollary \ref{c.DLA} relies on
Remark \eqref{r.ULA} (1) and the following consequence of the absolute
purity theorem, which is a variant of \cite[Corollary 8.10]{Saito}.

\begin{theorem}\label{t.Gysin}
Let $g\colon T\to S$ be an immersion of Noetherian regular schemes, of
codimension~$c$. Let $\Lambda$ be a Noetherian commutative ring with
$m\Lambda=0$ for some $m$ invertible on $S$. Let $f\colon X\to S$ be a
morphism of schemes and let $L\in D^b(X,\Lambda)$ of finite tor-amplitude
such that $(f,L)$ is strongly locally acyclic (Definition \ref{d.slc}). Then
the Gysin map $a\colon g_X^*L(-c)[-2c]\to Rg_X^!L$ is an isomorphism.
\end{theorem}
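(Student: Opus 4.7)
My plan is to reduce the theorem to Gabber's absolute purity on the regular base $S$, using strong local acyclicity of $(f,L)$ to transport the statement from the constant sheaf $\Lambda_S$ to the coefficient $L$. The statement being \'etale local on $S$ and the Gysin map being compatible with factoring $g$ as an open immersion followed by a closed immersion (for an open immersion $Rg_X^!=g_X^*$ and the assertion is trivial), I first reduce to the case where $g\colon T\hookrightarrow S$ is a closed immersion of codimension $c$ between Noetherian regular schemes. By Gabber's absolute purity theorem, the cycle class map $c_g\colon\Lambda_T(-c)[-2c]\simto Rg^!\Lambda_S$ is then an isomorphism.

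By construction, the Gysin map $a$ factors as
\[
g_X^*L(-c)[-2c]\xrightarrow[\sim]{\id\otimes f_T^*c_g}g_X^*L\otimes^L f_T^*Rg^!\Lambda_S\xrightarrow{\gamma_L}Rg_X^!L,
\]
where $\gamma_L$ assembles the base change map $f_T^*Rg^!\to Rg_X^!f^*$ (applied to $\Lambda_S$) with the projection formula map $g_X^*L\otimes^L Rg_X^!(-)\to Rg_X^!(L\otimes^L-)$. The task thus reduces to showing that $\gamma_L$ is an isomorphism, which I would verify stalkwise on $X_T$. At a geometric point $y$ of $X_T$ with images $x\in X$ and $s\in T\subseteq S$, the localization triangles for $g$ on $S$ and for $g_X$ on $X$ give
\[
(f_T^*Rg^!\Lambda_S)_y\simeq R\Gamma_T(S_{(s)},\Lambda),\qquad (Rg_X^!L)_y\simeq R\Gamma_{X_T}(X_{(y)},L),
\]
and $\gamma_L$ at $y$ becomes the canonical K\"unneth-type comparison
\[
L_y\otimes^L R\Gamma(S_{(s)}\setminus T,\Lambda)\longrightarrow R\Gamma(X_{(y)}\setminus X_T,L).
\]

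The final step is to show this K\"unneth-type map is an isomorphism, and this is where strong local acyclicity enters. Writing $\pi\colon X_{(y)}\to S_{(s)}$ for the base change of $f$, strong local acyclicity should imply the ``derived constant family'' formula $R\pi_*(L|_{\pi^{-1}(V)})\simeq L_y\otimes^L\Lambda_V$ for every \'etale $V\to S_{(s)}$, obtained by combining the pointwise identity $L_y\simto R\Gamma((X_{(y)})_{\bar t},L)$ at each algebraic geometric $\bar t\to S_{(s)}$ with Lemma \ref{l.slc}, which propagates local acyclicity through tensor products by arbitrary $\Lambda$-modules. Taking $V=S_{(s)}\setminus T$ and applying $R\Gamma$ with the projection formula then yields the desired iso, with the finite tor-amplitude of $L$ and Gabber's finiteness of cohomological dimension \cite[XVIII$_{\text{A}}$ Corollary 1.4]{ILO} ensuring that tensor products commute with the relevant cohomology.

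The main obstacle I foresee is the rigorous justification of this ``derived constant family'' formula for $R\pi_*L$ in the unbounded setting. My plan is to verify it on stalks via pointwise local acyclicity at every algebraic geometric point of $S_{(s)}$, and to promote this to a sheaf-theoretic identification by a descent-type argument exploiting that strong local acyclicity persists after tensoring $L$ by arbitrary $\Lambda$-modules (Lemma \ref{l.slc}); finite tor-amplitude of $L$ is used to control the derived tensor products and to ensure the inverse limits involved converge.
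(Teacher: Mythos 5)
Your proof follows essentially the same strategy as the paper: reduce to a closed immersion, then combine absolute purity on $S$ with a projection-formula comparison map controlled by strong local acyclicity, and check the latter via the localization triangles for $g$ and $g_X$. The paper packages this as the auxiliary map $b\colon L\otimes^L f^*g_*Rg^!\Lambda\to g_{X*}Rg_X^!L$, which is $g_{X*}$ applied to your $\gamma_L$, and reads off that $b$ is an isomorphism from the third-term comparison $c\colon L\otimes^L f^*Rj_*\Lambda\to Rj_{X*}j_X^*L$ via a morphism of distinguished triangles; your stalkwise reduction of $\gamma_L$ to the K\"unneth-type map is the same observation, localized. The ``main obstacle'' you flag at the end is not a gap to be filled but a known result: the isomorphism $L\otimes^L f^*Rj_*\Lambda\simto Rj_{X*}j_X^*L$ for $(f,L)$ strongly locally acyclic (equivalently, the stalkwise statement your ``derived constant family formula'' plus projection formula is aiming at) is precisely \cite[Th.\ finitude, App., Proposition 2.10]{SGA4d} (see also \cite[Lemma 7.6.7 (b)]{Fu}), which the paper simply cites; you need not re-derive it. One small correction: the hypothesis already being strong local acyclicity, the passage to $L\otimes^L M$ for arbitrary $\Lambda$-modules $M$ is by definition and does not invoke Lemma \ref{l.slc}, which goes in the opposite direction (upgrading local acyclicity to strong local acyclicity under extra finiteness hypotheses).
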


For $f=\id_S$ and $L=\Lambda$, we recover the absolute purity theorem.

\begin{proof}
We may assume that $g$ is a closed immersion. Let $j\colon U\to S$ denote
the complement of~$g$. We have a morphism of distinguished triangles
\[\xymatrix{L\otimes^L f^*g_* Rg^! \Lambda\ar[r]\ar[d]_b & L\ar@{=}[d]\ar[r] & L\otimes^L
f^*Rj_*\Lambda\ar[d]^c\ar[r] & \\
g_{X*}Rg_X^! L\ar[r] & L\ar[r] & Rj_{X*}j_X^* L\ar[r] &,}
\]
where $c$ is an isomorphism by \cite[Th.\ finitude, App., Proposition
2.10]{SGA4d} (see \cite[Lemma 7.6.7 (b)]{Fu} for a more detailed proof). It
follows that $b$ is an isomorphism. We have a commutative square
\[\xymatrix{L\otimes^L f^*g_* \Lambda (-c)[-2c]\ar[d]_\simeq\ar[r]^\sim & L\otimes^L f^*g_* Rg^!\Lambda\ar[d]^b_\simeq\\
g_{X*}g_X^*L(-c)[-2c]\ar[r]^{g_{X*}a} & g_{X*}Rg^! L,}
\]
where the upper horizontal isomorphism is the absolute purity theorem. Thus
$g_{X*}a$ is an isomorphism, and so is $a$.
\end{proof}

\begin{cor}\label{c.Gysin}
Let $g\colon T\to S$ be a morphism of finite type of Noetherian regular
schemes admitting ample invertible sheaves. Let $(f,L)$ be as in Theorem
\ref{t.Gysin} with $(f,L)$ universally strongly locally acyclic. Then the
Gysin map $g_X^*L(d)[2d]\to Rg_X^!L$ is an isomorphism, where $d$ is the
virtual relative dimension of $g$.
\end{cor}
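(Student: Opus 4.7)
The plan is to factor $g$ as a closed immersion followed by a smooth morphism, and then apply Theorem~\ref{t.Gysin} to the immersion part and classical smooth duality to the smooth part.

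The ample invertible sheaves on $S$ and on $T$, together with the finite-type assumption, make $g$ quasi-projective: there is a factorization $T \xrightarrow{i} P \xrightarrow{p} S$ with $i$ a closed immersion and $p$ smooth of some relative dimension $n$ (for instance $P$ can be taken to be an open subscheme of a projective bundle over $S$). Since $T$ and $P$ are both regular, $i$ is a regular closed immersion; let $c$ denote its codimension, so that $d = n - c$ is the virtual relative dimension of $g = pi$. Form the Cartesian diagram of base changes to $X$:
\[
\xymatrix{X_T \ar[r]^{i_X}\ar[d]_{f_T} & X_P \ar[r]^{p_X}\ar[d]_{f_P} & X\ar[d]^{f}\\
T\ar[r]^{i} & P\ar[r]^{p} & S.}
\]

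By the standard functoriality of cycle class maps for local complete intersection morphisms \cite[XVI]{ILO}, the Gysin map of $g_X$ decomposes as
\[
g_X^* L(d)[2d] \simeq i_X^* p_X^* L (-c)[-2c](n)[2n] \xrightarrow{\alpha(n)[2n]} Ri_X^!(p_X^* L)(n)[2n] \xrightarrow[\sim]{\beta} Ri_X^! Rp_X^! L \simeq Rg_X^! L,
\]
where $\alpha$ is the Gysin map for the closed immersion $i_X$ applied to the complex $p_X^* L$, and $\beta$ is induced by the classical smooth duality isomorphism $p_X^* L(n)[2n] \simto Rp_X^! L$ for the smooth morphism $p_X$ (cf.\ \cite[XVIII Th\'eor\`eme 3.2.5]{SGA4}). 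It thus suffices to check that $\alpha$ is an isomorphism.

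Since $(f, L)$ is universally strongly locally acyclic, the pair $(f_P, p_X^* L)$ obtained by base change along $p\colon P \to S$ is strongly locally acyclic, and $p_X^* L$ inherits finite tor-amplitude from $L$ because $p_X^*$ is exact. Therefore Theorem~\ref{t.Gysin} applies to the regular closed immersion $i\colon T \to P$ and the complex $p_X^* L$, showing that $\alpha$ is an isomorphism. Combining, $g_X^* L(d)[2d] \to Rg_X^! L$ is an isomorphism. The main technical point beyond Theorem~\ref{t.Gysin} is the functoriality of the Gysin map under the decomposition $g = pi$, which is a standard consequence of the compatibility of cycle class maps with composition for local complete intersection morphisms.
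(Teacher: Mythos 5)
Your proof is correct and follows the same route as the paper's: factor $g = p\circ i$ with $i$ a (closed) immersion and $p$ smooth, handle the smooth part by Poincar\'e duality, and apply Theorem~\ref{t.Gysin} to the immersion after noting that universal strong local acyclicity passes to $(f_P, p_X^*L)$. The paper's own proof is just the terse version of exactly this argument.
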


\begin{proof}
We factorize $g$ into an immersion followed by a smooth morphism. The case
of a smooth morphism is obvious. The case of an immersion follows from
Theorem \ref{t.Gysin}.
\end{proof}

Here is Gabber's proof of the case $L\in D_\cft$ of Corollary \ref{c.DLA}.
We may assume $K_X=a^!\Lambda_S$. By Remark \ref{r.ULA} (1), it suffices to
show that $(a_C,g_X^*(D_XL)_{(\bar s)})$ is locally acyclic for strictly
local test curves $g\colon C\to S_{(\bar s)}$ with $C$ the spectrum of a
strictly Henselian discrete valuation ring. By Corollary \ref{c.la} and
Lemma \ref{l.slc} (1), $(f,L)$ is universally strongly locally acyclic. Thus
$g_X^*(D_XL)_{(\bar s)}\simeq D_{X_C}Rg_X^! L_{(\bar s)}\simeq D_{X_C}(g_X^*
L_{(\bar s)}(d)[2d])$ by Corollary \ref{c.Gysin}. Thus it suffices to show
that $D_{X_C}$ preserves local acyclicity over $C$. Changing notation, it
suffices to show $\Phi_f D_X L=0$ for $S$ the spectrum of a strictly
Henselian discrete valuation ring. This follows from Beilinson's theorem
(Corollary \ref{c.Beilinson}), or from Theorem \ref{t.PsiS}: $\Psi_f^{s}D_X
L\simeq D_{X_{s}\ttimes S}\Psi_f^{s} L\simeq D_{X_{s}\ttimes
S}p_1^*(L|_{X_{s}})\simeq p_1^*D_{X_{s}}(L|_{X_{s}})$, where the last
isomorphism is K\"unneth formula (Corollary \ref{c.DKunn}).

\begin{bibdiv}
\begin{biblist}
\bib{EGAIV}{article}{
   label={EGAIV},
   author={Grothendieck, A.},
   title={\'El\'ements de g\'eom\'etrie alg\'ebrique (avec la collaboration de J. Dieudonn\'e). IV. \'Etude locale des sch\'emas et
   des morphismes de sch\'emas},
   language={French},
   journal={Inst. Hautes \'Etudes Sci. Publ. Math.},
   number={20, 24, 28, 32},
   date={1964--1967},
   issn={0073-8301},
   review={\MR{0173675}, \MR{0199181}, \MR{0217086}, \MR{0238860}},
}

\bib{SGA4}{book}{
   label={SGA4},
   title={Th\'eorie des topos et cohomologie \'etale des sch\'emas},
   series={Lecture Notes in Mathematics, Vol. 269, 270, 305},
   note={S\'eminaire de G\'eom\'etrie Alg\'ebrique du Bois-Marie 1963--1964
   (SGA 4);
   Dirig\'e par M. Artin, A. Grothendieck, et J. L. Verdier. Avec la
   collaboration de N. Bourbaki, P. Deligne et B. Saint-Donat},
   publisher={Springer-Verlag},
   place={Berlin, 1972--1973},
   review={\MR{0354652 (50 \#7130)}},
   review={\MR{0354653 (50 \#7131)}},
   review={\MR{0354654 (50 \#7132)}},
}

\bib{SGA5}{book}{
    label={SGA5},
        title={Cohomologie $l$-adique et fonctions $L$},
        language={French},
        series={Lecture Notes in Mathematics},
        volume={589},
        note={S\'eminaire de G\'eometrie Alg\'ebrique du Bois-Marie 1965--1966
   (SGA 5). Dirig\'e par A. Grothendieck, avec la collaboration de I. Bucur, C. Houzel, L. Illusie, J.-P. Jouanolou et J.-P. Serre},
        publisher={Springer-Verlag},
        place={Berlin},
        date={1977},
        isbn={3-540-08248-4},
        review={\MR{0491704 (58 \#10907)}},
}

\bib{SGA7II}{book}{
   label={SGA7II},
   title={Groupes de monodromie en g\'eom\'etrie alg\'ebrique. II},
   author={Deligne, Pierre},
   author={Katz, Nicholas},
   language={French},
   series={Lecture Notes in Mathematics, Vol. 340},
   note={S\'eminaire de G\'eom\'etrie Alg\'ebrique du Bois-Marie 1967--1969 (SGA 7
   II)},
   publisher={Springer-Verlag, Berlin-New York},
   date={1973},
   pages={x+438},
   review={\MR{0354657}},
}

\bib{AG}{collection}{
   author={Abbes, Ahmed},
   author={Gros, Michel},
   author={Tsuji, Takeshi},
   title={The $p$-adic Simpson correspondence},
   series={Annals of Mathematics Studies},
   volume={193},
   publisher={Princeton University Press, Princeton, NJ},
   date={2016},
   pages={xi+603},
   isbn={978-0-691-17029-9},
   isbn={978-0-691-17028-2},
   review={\MR{3444777}},
}

\bib{Ayoub}{article}{
   author={Ayoub, Joseph},
   title={Les six op\'erations de Grothendieck et le formalisme des cycles
   \'evanescents dans le monde motivique. I},
   language={French, with English and French summaries},
   journal={Ast\'erisque},
   number={314},
   date={2007},
   pages={x+466 pp. (2008)},
   issn={0303-1179},
   isbn={978-2-85629-244-0},
   review={\MR{2423375}},
}

\bib{Beilinson}{article}{
   author={Be\u\i linson, A. A.},
   title={How to glue perverse sheaves},
   conference={
      title={$K$-theory, arithmetic and geometry},
      address={Moscow},
      date={1984--1986},
   },
   book={
      series={Lecture Notes in Math.},
      volume={1289},
      publisher={Springer, Berlin},
   },
   date={1987},
   pages={42--51},
   review={\MR{923134}},
   doi={10.1007/BFb0078366},
}

\bib{BeiSS}{article}{
   author={Beilinson, A.},
   title={Constructible sheaves are holonomic},
   journal={Selecta Math. (N.S.)},
   volume={22},
   date={2016},
   number={4},
   pages={1797--1819},
   issn={1022-1824},
   review={\MR{3573946}},
}

\bib{BB}{article}{
   author={Be\u\i linson, A.},
   author={Bernstein, J.},
   title={A proof of Jantzen conjectures},
   conference={
      title={I. M. Gel\cprime fand Seminar},
   },
   book={
      series={Adv. Soviet Math.},
      volume={16},
      publisher={Amer. Math. Soc., Providence, RI},
   },
   date={1993},
   pages={1--50},
   review={\MR{1237825}},
}

\bib{BS}{article}{
   author={Bhatt, Bhargav},
   author={Scholze, Peter},
   title={The pro-\'{e}tale topology for schemes},
   language={English, with English and French summaries},
   journal={Ast\'{e}risque},
   number={369},
   date={2015},
   pages={99--201},
   issn={0303-1179},
   isbn={978-2-85629-805-3},
   review={\MR{3379634}},
}

\bib{BG}{article}{
   author={Braverman, A.},
   author={Gaitsgory, D.},
   title={Geometric Eisenstein series},
   journal={Invent. Math.},
   volume={150},
   date={2002},
   number={2},
   pages={287--384},
   issn={0020-9910},
   review={\MR{1933587}},
   doi={10.1007/s00222-002-0237-8},
}

\bib{Conrad}{article}{
   author={Conrad, Brian},
   title={Deligne's notes on Nagata compactifications},
   journal={J. Ramanujan Math. Soc.},
   volume={22},
   date={2007},
   number={3},
   pages={205--257},
   issn={0970-1249},
   review={\MR{2356346 (2009d:14002)}},
}

\bib{SGA4d}{book}{
   author={Deligne, P.},
   title={Cohomologie \'etale},
   series={Lecture Notes in Mathematics, Vol. 569},
   note={S\'eminaire de G\'eom\'etrie Alg\'ebrique du Bois-Marie SGA
   4$\frac{1}{2}$.
   Avec la collaboration de J. F. Boutot, A. Grothendieck, L. Illusie et J.
   L. Verdier},
   publisher={Springer-Verlag},
   place={Berlin},
   date={1977},
   review={\MR{0463174 (57 \#3132)}},
}

\bib{Deligne}{misc}{
   author={Deligne, P.},
   title={Lettre \`a L. Illusie},
   date={19 mars 1999},
}

\bib{Fu}{book}{
   author={Fu, Lei},
   title={Etale cohomology theory},
   series={Nankai Tracts in Mathematics},
   volume={14},
   edition={Revised edition},
   publisher={World Scientific Publishing Co. Pte. Ltd., Hackensack, NJ},
   date={2015},
   pages={x+611},
   isbn={978-981-4675-08-6},
   review={\MR{3380806}},
   doi={10.1142/9569},
}

\bib{HY}{article}{
   author={Hu, Haoyu},
   author={Yang, Enlin},
   title={Relative singular support and the semi-continuity of
   characteristic cycles for \'{e}tale sheaves},
   journal={Selecta Math. (N.S.)},
   volume={24},
   date={2018},
   number={3},
   pages={2235--2273},
   issn={1022-1824},
   review={\MR{3816504}},
   doi={10.1007/s00029-017-0355-1},
}

\bib{IllusieAutour}{article}{
   author={Illusie, Luc},
   title={Autour du th\'eor\`eme de monodromie locale},
   language={French},
   note={P\'eriodes $p$-adiques (Bures-sur-Yvette, 1988)},
   journal={Ast\'erisque},
   number={223},
   date={1994},
   pages={9--57},
   issn={0303-1179},
   review={\MR{1293970}},
}

\bib{IllPerv}{article}{
   author={Illusie, Luc},
   title={Perversit\'e et variation},
   language={French, with English summary},
   journal={Manuscripta Math.},
   volume={112},
   date={2003},
   number={3},
   pages={271--295},
   issn={0025-2611},
   review={\MR{2067039}},
}

\bib{IZ}{article}{
   author={Illusie, Luc},
   title={Around the Thom-Sebastiani theorem, with an appendix by Weizhe
   Zheng},
   journal={Manuscripta Math.},
   volume={152},
   date={2017},
   number={1-2},
   pages={61--125},
   issn={0025-2611},
   review={\MR{3595371}},
   doi={10.1007/s00229-016-0852-0},
}

\bib{ILO}{collection}{
   title={Travaux de Gabber sur l'uniformisation locale et la cohomologie
   \'etale des sch\'emas quasi-excellents},
   language={French},
   editor={Illusie, Luc},
   editor={Laszlo, Yves},
   editor={Orgogozo, Fabrice},
   note={S\'eminaire \`a l'\'Ecole Polytechnique 2006--2008. [Seminar of the
   Polytechnic School 2006--2008];
   With the collaboration of Fr\'ed\'eric D\'eglise, Alban Moreau, Vincent
   Pilloni, Michel Raynaud, Jo\"el Riou, Beno\^\i t Stroh, Michael Temkin and
   Weizhe Zheng;
   Ast\'erisque No. 363-364 (2014) (2014)},
   publisher={Soci\'et\'e Math\'ematique de France, Paris},
   date={2014},
   pages={i--xxiv and 1--625},
   issn={0303-1179},
   review={\MR{3309086}},
}

\bib{IZQ}{article}{
   author={Illusie, Luc},
   author={Zheng, Weizhe},
   title={Quotient stacks and equivariant \'etale cohomology algebras:
   Quillen's theory revisited},
   journal={J. Algebraic Geom.},
   volume={25},
   date={2016},
   number={2},
   pages={289--400},
   issn={1056-3911},
   review={\MR{3466353}},
}

\bib{KS}{book}{
   author={Kashiwara, Masaki},
   author={Schapira, Pierre},
   title={Categories and sheaves},
   series={Grundlehren der Mathematischen Wissenschaften [Fundamental
   Principles of Mathematical Sciences]},
   volume={332},
   publisher={Springer-Verlag, Berlin},
   date={2006},
   pages={x+497},
   isbn={978-3-540-27949-5},
   isbn={3-540-27949-0},
   review={\MR{2182076 (2006k:18001)}},
   doi={10.1007/3-540-27950-4},
}

\bib{Laumon}{article}{
   author={Laumon, G.},
   title={Vanishing cycles over a base of dimension $\geq 1$},
   conference={
      title={Algebraic geometry},
      address={Tokyo/Kyoto},
      date={1982},
   },
   book={
      series={Lecture Notes in Math.},
      volume={1016},
      publisher={Springer, Berlin},
   },
   date={1983},
   pages={143--150},
   review={\MR{726426}},
}

\bib{Matsumura}{book}{
   author={Matsumura, Hideyuki},
   title={Commutative ring theory},
   series={Cambridge Studies in Advanced Mathematics},
   volume={8},
   edition={2},
   note={Translated from the Japanese by M. Reid},
   publisher={Cambridge University Press, Cambridge},
   date={1989},
   pages={xiv+320},
   isbn={0-521-36764-6},
   review={\MR{1011461}},
}

\bib{Org}{article}{
   author={Orgogozo, Fabrice},
   title={Modifications et cycles proches sur une base g\'en\'erale},
   language={French},
   journal={Int. Math. Res. Not.},
   date={2006},
   pages={Art. ID 25315, 38},
   issn={1073-7928},
   review={\MR{2249998}},
   doi={10.1155/IMRN/2006/25315},
}

\bib{Saito}{article}{
   author={Saito, Takeshi},
   title={The characteristic cycle and the singular support of a
   constructible sheaf},
   journal={Invent. Math.},
   volume={207},
   date={2017},
   number={2},
   pages={597--695},
   issn={0020-9910},
   review={\MR{3595935}},
}

\bib{Saito2}{article}{
   author={Saito, Takeshi},
   title={Characteristic cycle of the external product of constructible
   sheaves},
   journal={Manuscripta Math.},
   volume={154},
   date={2017},
   number={1-2},
   pages={1--12},
   issn={0025-2611},
   review={\MR{3682201}},
}

\bib{SZ}{article}{
   author={Sun, Shenghao},
   author={Zheng, Weizhe},
   title={Parity and symmetry in intersection and ordinary cohomology},
   journal={Algebra Number Theory},
   volume={10},
   date={2016},
   number={2},
   pages={235--307},
   issn={1937-0652},
   review={\MR{3477743}},
   doi={10.2140/ant.2016.10.235},
}

\bib{six}{article}{
   author={Zheng, Weizhe},
   title={Six operations and Lefschetz-Verdier formula for Deligne-Mumford
   stacks},
   journal={Sci. China Math.},
   volume={58},
   date={2015},
   number={3},
   pages={565--632},
   issn={1674-7283},
   review={\MR{3319927}},
   doi={10.1007/s11425-015-4970-z},
}

\bib{glue}{article}{
   author={Zheng, Weizhe},
   title={Gluing pseudo functors via $n$-fold categories},
   journal={J. Homotopy Relat. Struct.},
   volume={12},
   date={2017},
   number={1},
   pages={189--271},
   issn={2193-8407},
   review={\MR{3613026}},
}
\end{biblist}
\end{bibdiv}
\end{document}